\documentclass[10pt]{amsart}
\usepackage{amsmath, amscd, amsthm} 
\usepackage{amssymb, amsfonts} 
\usepackage[all]{xy} 
\subjclass[2010]{Primary: 19E08, Secondary: 14F43, 19L47, 55P91}
\setcounter{secnumdepth}{2}
\setcounter{tocdepth}{1}

\newcommand{\C}{\mathbb{C}} 
\newcommand{\A}{\mathbb{A}}
\renewcommand{\P}{\mathbb{P}}
\newcommand{\Z}{\mathbb{Z}}
\newcommand{\R}{\mathbb{R}}
 
\renewcommand{\S}{\mathbb{S}}

\newcommand{\GG}{\mathcal{G}}
\newcommand{\FF}{\mcal{F}}
\renewcommand{\AA}{\mathcal{A}}
\newcommand{\KK}{\mathcal{K}}
\newcommand{\mf}[1]{\mathfrak{#1}}

\newcommand{\iso}{\cong}
\newcommand{\wkeq}{\simeq}
\newcommand{\onto}{\twoheadrightarrow}
\newcommand{\sst}{\mathrm{sst}}
\newcommand{\semi}{\mathrm{semi}}
\newcommand{\dtop}{\Delta^{\bullet}_{top}}
\newcommand{\dk}{\Delta^{\bullet}_{k}}
\newcommand{\dC}{\Delta^{\bullet}_{\C}}
\newcommand{\bu}{\mathbf{bu}}
\newcommand{\Sch}{\mathrm{Sch}}
\newcommand{\Sm}{\mathrm{Sm}}

\newcommand{\Top}{\mathcal{T}}
\newcommand{\GTop}{G\mathcal{T}}
\newcommand{\Set}{\mathrm{Set}}
\newcommand{\sSet}{\mathrm{sSet}}
\newcommand{\Sym}{\mathrm{Sym}}

\newcommand{\Grass}{\mathrm{Grass}}
\newcommand{\orth}{\mathrm{orth}}

\newcommand{\mcal}[1]{\mathcal{#1}}

\DeclareMathOperator*{\colim}{\mathrm{colim}}
\DeclareMathOperator*{\holim}{\mathrm{holim}}
\DeclareMathOperator*{\hocolim}{\mathrm{hocolim}}

\DeclareMathOperator{\sing}{\mathrm{Sing}}

\DeclareMathOperator{\spec}{\mathrm{Spec}}
\DeclareMathOperator{\im}{\mathrm{im}}
\DeclareMathOperator{\coker}{\mathrm{coker}}
\DeclareMathOperator{\Hom}{Hom}
\DeclareMathOperator{\Map}{Map}
\DeclareMathOperator{\Mor}{Mor}
\DeclareMathOperator{\End}{End}

\DeclareMathOperator{\Rep}{Rep}



\numberwithin{equation}{section} 

\theoremstyle{plain}
\newtheorem{theorem}[equation]{Theorem}
\newtheorem*{theorem*}{Theorem}
\newtheorem{proposition}[equation]{Proposition}
\newtheorem{lemma}[equation]{Lemma}
\newtheorem{corollary}[equation]{Corollary}

\theoremstyle{definition}

\newtheorem{definition}[equation]{Definition}

\newtheorem{remark}[equation]{Remark}

\makeatletter
\let\@wraptoccontribs\wraptoccontribs
\makeatother

\begin{document}
\title[Equivariant semi-topological $K$-homology]{Equivariant semi-topological $K$-homology
and a theorem of Thomason}

\author{Jeremiah Heller \and Jens Hornbostel}
\address{Bergische Universit\"at Wuppertal, Gau{\ss}str. 20, D-42119 Wuppertal, Germany}
\email{heller@math.uni-wuppertal.de}
\email{hornbostel@math.uni-wuppertal.de}

\begin{abstract}
We generalize several comparison results between algebraic,
semi-topological and topological $K$-theories to the
equivariant case with respect to a finite group.
\end{abstract}

\maketitle
\tableofcontents
\section{Introduction}

In his landmark article \cite{Thomason:famous},
Thomason establishes an \'etale Atiyah-Hirzebruch type spectral sequence relating \'etale cohomology and Bott-inverted algebraic $K$-theory with finite coefficients. 
When restricted to smooth complex varieties his results say, amongst other things, that there is an
isomorphism
$$
K_{*}^{alg}(X;\Z/n)[\beta^{-1}]\iso KU^{-*}(X^{an};\Z/n)
$$ 
between Bott-inverted algebraic $K$-theory with finite coefficients  and topological $K$-theory with finite coefficients.

In the last decade,
Friedlander and Walker (see e.g., \cite{FW:compK},
\cite{FW:ratisos}) have refined the comparison map $\mcal{K}^{alg}(X)\to \mcal{K}^{top}(X^{an})$
between the algebraic and the topological $K$-theory of complex varieties by introducing
an intermediate theory $\KK^{sst}(X)$, called
{\it semi-topological $K$-theory}. 
One has natural morphisms of spectra
$$\KK^{alg}(X) \to \KK^{sst}(X) \to \mcal{K}^{top}(X^{an})$$
where the left hand one induces a weak equivalence 
$$
\KK^{alg}(X;\Z/n) \xrightarrow{\wkeq} \KK^{sst}(X;\Z/n)
$$
for smooth quasi-projective
$X$.  Upon inverting (the unique lift of) the topological Bott element $\beta$, the right hand map induces a weak equivalence 
$$
\KK^{sst}(X)[\beta^{-1}] \xrightarrow{\wkeq} \mcal{K}^{top}(X^{an})
$$
for smooth quasi-projective $X$. 

\medskip
 
There have been several proofs of the result that Bott-inverted semi-topological $K$-theory agrees with topological $K$-theory; the first ones relying on Thomason's result itself.
In \cite{Walker:Thomason} Walker introduces a {\it bivariant} semi-topological $K$-theory for quasi-projective complex varieties. One of the main results of that article is that the semi-topological $K$-homology of a smooth quasi-projective complex variety is isomorphic to the topological $K$-homology of its underlying complex manifold $X^{an}$. Using this result, Walker gives a new proof, in the case of smooth projective complex varieties, that Bott-inverted semi-topological $K$-theory agrees with complex $K$-theory. His proof does not rely on Thomason's theorem and thus specializes to  give a particularly elegant alternate proof  of Thomason's celebrated theorem comparing algebraic and complex $K$-theory with finite coefficients, in case of smooth projective complex varieties. 

\medskip

In the present article, we generalize Walker's results
mentioned above to the equivariant setting with respect to an arbitrary 
finite group $G$. We begin by introducing a bivariant equivariant semi-topological $K$-theory $\mcal{K}_{G}^{sst}(X,Y)$ for  quasi-projective $G$-varieties $X$ and $Y$. To construct and study this bivariant theory, which is constructed as a $G$-spectrum, we rely on the machinery of equivariant $\Gamma$-spaces, established by Shimakawa. An important case is when $Y=\spec(\C)$, in which case $\mcal{K}_{G}^{sst}(X,\C)$  defines equivariant semi-topological  $K$-theory. 
Similarly the equivariant semi-topological $K$-homology of $Y$ is $\mcal{K}_{G}^{sst}(\C,Y)$.

Our first main result is the following generalization of Walker's comparison theorem, appearing as Theorem \ref{mainthm} below.
\begin{theorem}
 Let $Y$ be a smooth quasi-projective $G$-variety. Then there is a natural  weak
equivalence of $G$-spectra
$$
\bu^{\mf{c}}_{G}(S^{0},Y^{an})\xrightarrow{\wkeq} \mcal{K}^{sst}_{G}(\C,Y).
$$
\end{theorem}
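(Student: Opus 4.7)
The plan is to realize both sides as equivariant $\Gamma_G$-spaces in Shimakawa's sense via explicit morphism-space models, and then reduce the weak equivalence of $G$-spectra to a fixed-point comparison. Since a map of $G$-spectra is a weak equivalence if and only if it induces a weak equivalence on all genuine $H$-fixed points for $H\leq G$, and both theories are compatible with Shimakawa's framework, the argument is driven by analyzing these fixed points and matching them through Walker's non-equivariant theorem.

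First I would construct the comparison transformation explicitly: it is induced by sending an algebraic $G$-morphism to its analytification, and it promotes to a map of $\Gamma_G$-spaces because both sides are built functorially out of morphism spaces into the same system of labelling objects (Grassmannians and their associated symmetric products, equipped with the natural $G$-action through $Y$). At the level of each $H$-fixed point, I would establish a tom Dieck-style splitting on both sides, decomposing $\bu^{\mf{c}}_G(S^0,Y^{an})^H$ and $\mcal{K}^{sst}_G(\C,Y)^H$ into wedges indexed by conjugacy classes $(K)\leq H$. The summands on the topological side are non-equivariant $\bu$-homologies of the analytic fixed loci $(Y^{an})^K$ (with residual $W_{H}(K)$-quotient structure), and the summands on the semi-topological side should be non-equivariant semi-topological $K$-homologies of the corresponding algebraic fixed loci $Y^K$. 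Since $Y$ is a smooth quasi-projective $G$-variety and $G$ is a finite group, each $Y^K$ is again smooth and quasi-projective, so Walker's non-equivariant comparison theorem identifies the matching summands and assembles into the desired equivalence.

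The main obstacle will be establishing and matching these splittings at the level of $\Gamma_G$-spaces, so that the comparison map is compatible term-by-term with the decompositions. Three things require careful attention: (i) a functorial identification of $H$-fixed points of algebraic morphism spaces $\Mor(-,Y)^H$ with morphism spaces into $Y^H$, which is nontrivial when $Y^H$ has components of varying dimensions; (ii) an equivariant general-position or moving argument, ensuring that $H$-equivariant algebraic morphisms into equivariant configuration models exist and compute the expected homotopy type without obstruction from the group action; and (iii) verifying that the Shimakawa $\Gamma_G$-structure interacts correctly with these fixed-point identifications so as to reproduce the wedge decomposition coming from the topological side. Once these structural inputs are in place, Walker's theorem on each wedge summand yields the weak equivalence on each $H$-fixed point, and the theorem follows.
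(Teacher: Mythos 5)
Your reduction hinges on a tom Dieck-style splitting of the $H$-fixed points of both sides into wedges indexed by conjugacy classes $(K)\leq H$ with summands the non-equivariant theories of the fixed loci $(Y^{an})^{K}$, resp.\ $Y^{K}$. That splitting exists for suspension $G$-spectra, but it fails for the spectra in question, which are $K$-theoretic rather than stably split. Already for $Y=\spec(\C)$ one has $\pi_{0}^{H}\bu^{\mf{c}}_{G}(S^{0},S^{0})\iso R(H)$, whose rank is the number of conjugacy classes of \emph{elements} of $H$, whereas your proposed decomposition would produce a free group of rank equal to the number of conjugacy classes of \emph{subgroups} (e.g.\ $H=\Z/p$: rank $p$ versus rank $2$). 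The same problem occurs on the semi-topological side: by Proposition \ref{prop:fix} and Lemma \ref{lem:delsst}, $\pi_{*}^{H}\mcal{K}^{sst}_{G}(\C,Y)$ is the $K$-theory of $H$-equivariant coherent modules on $\Delta^{\bullet}_{top}\times Y$, and an $H$-fixed point of $\GG^{V}_{Y}(n)^{an}$ can be supported on a free orbit $H\cdot y$ with $y\notin Y^{H}$ (contributing an induced module), so the fixed-point spaces are not assembled from the fixed loci $Y^{K}$ in the way your item (i) presupposes. Consequently the term-by-term application of Walker's non-equivariant theorem has nothing to apply to, and the argument does not go through.

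The actual proof works at the level of the spaces themselves rather than via a splitting: one shows that the natural inclusion $\mcal{F}^{V}_{Y^{an}}(n)\to \GG^{V}_{Y}(n)^{an}$ is an equivariant weak equivalence (Theorem \ref{thm:ew}), by fibering both sides over symmetric products via the support map, using equivariant \'etale slices $H\times^{H_{i}}T_{y_{i}}Y$ to localize near an $H$-invariant configuration of orbits, contracting onto the orbits themselves, and then comparing, for each $H$-module quotient of $V^{n}$, the space of direct-sum decompositions with induced-module structure against the space of orthogonal such decompositions; these are matched by explicit fibration arguments with contractible fibers. Passing to homotopy colimits over $I$ and applying the $\Gamma$-space machinery (Corollary \ref{cor:qtopeq}) then yields the equivalence of $G$-spectra. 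If you want to salvage your outline, the fixed-point analysis must be done in these representation-theoretic terms (induced $H_{i}$-modules supported on orbits), not by reduction to the trivial group on fixed loci.
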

Here $\bu^{\mf{c}}_{G}(S^{0},Y^{an})$ is the equivariant topological  $K$-homology introduced in Section \ref{sec:top} and is  shown to be equivariantly weakly equivalent to $Y^{an}\wedge \bu_{G}$, where $\bu_{G}$ is the connective cover of the $G$-spectrum $KU_{G}$ representing equivariant complex $K$-theory.

The topological Bott element lifts (uniquely) to a ``semi-topological Bott element'', $\beta_{2}\in K_{2}^{G,\,sst}(\C,\C)$ and our second main result is Theorem \ref{mainthmbottinv}, establishing that Bott-inverted equivariant semi-topological $K$-theory and equivariant topological  $K$-theory agree.
\begin{theorem}
Let $X$ be a smooth complex projective
$G$-variety. Then there are natural isomorphisms 
$$
K^{G,sst}_{*}(X,\C)[\beta^{-1}_{2}] \xrightarrow{\iso}
KU_{G}^{-*}(X^{an}).
$$
\end{theorem}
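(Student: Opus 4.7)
The plan is to imitate Walker's derivation of the Bott-inversion comparison for smooth projective complex varieties, using Theorem~\ref{mainthm} as the equivariant analogue of Walker's bivariant $K$-homology comparison. The argument proceeds in three stages: (i) establish an equivariant bivariant Poincar\'e self-duality $\mcal{K}^{sst}_G(X,\C)\wkeq \mcal{K}^{sst}_G(\C,X)$ for smooth projective $G$-varieties $X$; (ii) apply Theorem~\ref{mainthm} to identify the right-hand side with $X^{an}\wedge \bu_G$; (iii) invert $\beta_2$ and conclude via topological equivariant Poincar\'e duality on the closed complex $G$-manifold $X^{an}$.

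For (i), I would construct an equivariant semi-topological fundamental class associated to the diagonal $\Delta\colon X\to X\times X$, say via a $G$-equivariant Koszul-type resolution of $\mcal{O}_\Delta$ by locally free sheaves. Combined with the bivariant multiplicative structure on $\mcal{K}^{sst}_G(-,-)$, this class defines a natural transformation between $\mcal{K}^{sst}_G(X,\C)$ and $\mcal{K}^{sst}_G(\C,X)$ whose inverse comes symmetrically from the same class; smoothness and projectivity of $X$ should ensure the composites are equivalences. Verifying this genuinely equivariantly, i.e.\ on $H$-fixed points for each $H\le G$, is the technical heart of the argument, and is where the Shimakawa $\Gamma$-$G$-space machinery set up earlier in the paper must be exploited carefully.

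Stage (ii) is immediate: Theorem~\ref{mainthm} gives $\mcal{K}^{sst}_G(\C,X)\wkeq \bu^{\mf{c}}_G(S^0,X^{an})\wkeq X^{an}\wedge \bu_G$. By naturality of the comparison between semi-topological and topological $K$-theory, $\beta_2$ maps to a class lifting the classical Bott generator in $\pi_2^G(\bu_G)$, so inverting $\beta_2$ on the semi-topological side corresponds to passing from $\bu_G$ to its Bott-inverted periodic form $KU_G$. The right-hand side then becomes $X^{an}\wedge KU_G$, whose homotopy groups are the equivariant $KU_G$-homology of $X^{an}$.

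Stage (iii) is classical. Since $X^{an}$ is a closed complex $G$-manifold, its tangent bundle is an equivariant complex vector bundle and therefore carries a canonical equivariant $KU_G$-Thom class; Atiyah duality in the genuine equivariant stable homotopy category together with this $KU_G$-orientation gives a Poincar\'e duality isomorphism $KU^G_*(X^{an})\iso KU_G^{-*}(X^{an})$ (the resulting shift by the complex tangent representation is absorbed after Bott-inversion). Concatenating the three stages yields the desired isomorphism. The principal obstacle is stage (i); once the equivariant bivariant self-duality is in hand, stages (ii) and (iii) are essentially formal consequences of Theorem~\ref{mainthm} and of the standard Thom isomorphism for equivariant complex $K$-theory.
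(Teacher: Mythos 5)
Your stages (ii) and (iii) are fine and match ingredients the paper also uses (Theorem \ref{mainthm}, Corollary \ref{cor:3.17}, Bott inversion, and $KU_{G}$-Poincar\'e duality via the Thom class of the tangent bundle), but stage (i) contains a genuine error, and it is exactly where the real content of the theorem lies. There is no equivalence $\mcal{K}^{sst}_{G}(X,\C)\wkeq\mcal{K}^{sst}_{G}(\C,X)$ \emph{before} inverting $\beta_{2}$: already for $G$ trivial and $X=\P^{1}$, Theorem \ref{mainthm} gives $K^{sst}_{0}(\C,\P^{1})\iso bu_{0}((\P^{1})^{an})\iso\Z$, while $K_{0}^{sst}(\P^{1},\C)\iso\Z^{2}$, so the two sides differ. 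The duality maps you propose (cap with a class coming from $\mcal{O}_{\Delta}$, and the symmetric slant operation) do not compose to the identity; their composite is multiplication by $\beta_{2}^{d}$ up to a unit, which is \emph{not} invertible on the connective/semi-topological theories. Also note the two operations are built from different classes --- a fundamental class $[X]\in K^{G,sst}_{2d}(\C,X)$ on one side and the cohomological class $\delta_{X}\in K_{0}^{G,sst}(X\times X,\C)$ on the other --- so the ``inverse comes symmetrically from the same class'' step is not available even in principle.

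The correct substitute, and the heart of the paper's proof, is Theorem \ref{walker7.10equi}: the composite
$$
K^{G,sst}_{*}(X,\C)\xrightarrow{-\cap[X]}K^{G,sst}_{*+2d}(\C,X)\xrightarrow{\delta_{X}/-}K^{G,sst}_{*+2d}(X,\C)
$$
is multiplication by $\beta_{2}^{d}\cup u$ for a unit $u$, together with the analogous topological statement; the theorem then follows from a sandwich diagram chase in which the middle vertical map is the isomorphism of Theorem \ref{mainthm} and the horizontal composites become isomorphisms only after inverting $\beta_{2}$. Proving the Bott-multiplication identity requires ingredients absent from your outline: a theory of equivariant Thom and fundamental classes for a complex stable theory (here one must use $KU_{G}$ rather than $ku_{G}$, since $ku_{G}$ is not complex stable, and lift fundamental classes back via Lemma \ref{kuisalmostKU}); the identification $\delta_{X^{an}}=\beta_{2}^{d}\cup j^{*}(t)$ for a Thom class supported on the diagonal (Lemma \ref{deltathom}, which uses the equivariant resolution and localization theorems and the compatibility of transfers); and a reduction to $X=\P^{d}_{\C}$ via a finite surjective equivariant map obtained by Noether normalization of $X/G$, using the projection formula for the slant product. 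Without these, the claim that ``smoothness and projectivity should ensure the composites are equivalences'' fails, and the proposal does not yield the theorem.
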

This result is proved following the outline of the argument in \cite{Walker:Thomason}. Namely, the proof relies on the equivariant version of Walker's comparison theorem mentioned above, the pairings constructed in Section \ref{sec:pair}, and a good equivariant
theory of fundamental and Thom classes. Regarding this last item, Walker uses the fact that nonequivariantly connective $K$-theory has Thom classes and satisfies Poincare duality. Here some nontrivial changes need to be made in the equivariant setting since the version of connective equivariant $K$-theory that appears in our work is not {\it complex stable}, see the discussion before Lemma \ref{kuisalmostKU}.

As a consequence of the rigidity property for equivariant algebraic $K$-theory established by Yagunov-{\O}stv{\ae}r \cite{YO:equirigid} and Friedlander-Walker's recognition principle \cite{FW:ratisos}, we establish in Theorem \ref{KalgKsemi} an isomorphism 
$$
K^{G,\,alg}_{*}(X;\Z/n) \xrightarrow{\iso} K_{*}^{G,\,sst}(X;\Z/n)
$$
for smooth $X$. Here, in order to have a comparison map between our equivariant algebraic and semi-topological $K$-theories, it is important to have available an equivariant version of the Grayson-Walker theorem concerning geometric models for $K$-theory spectra. This result is proved by {\O}stv{\ae}r in \cite{Ostvaer}.
As a consequence of the above isomorphism, Theorem \ref{mainthmbottinv} specializes to give an alternate proof (in the case of smooth projective $G$-varieties) of the equivariant version of Thomason's theorem \cite[Theorem 5.9]{Thomason:famousequi}, comparing Bott-inverted equivariant algebraic $K$-theory and equivariant complex $K$-theory (with finite coefficients).

Due to considerations of length, we have not discussed here several other generalizations and related results
which are likely to be true. First, (some version of) Theorem \ref{mainthm}
should be true for Real semi-topological $K$-homology and real varieties. 
Second, the results of section \ref{sec:sstThom}
probably hold for quasi-projective varieties as well, by replacing the homology theories appearing there with a Borel-Moore type homology theory.
Third, Theorem \ref{KalgKsemi} probably holds for bivariant 
algebraic $K$-theory as well, as the base change, normalization
and additivity property necessary to establish rigidity
seem to extend to the corresponding  
categories of $G$-modules. Fourth, the ring structure on the equivariant algebraic $K$-theory introduced in section \ref{sec:pair} presumably coincides with the previously considered ring
structure  and similarly for the topological theory.  (Note that Proposition \ref{prop:conn} implies the product on the cohomology theory is the same in positive degrees, see also
\cite[Remark 6.11]{Walker:Thomason}.) Finally, in light of the equivariant generalizations of  \cite{Walker:Thomason} presented here, it would be interesting to know whether the more general results of \cite{Walker:Thomalg} admit an equivariant generalization as well. 

\medskip

We conclude with an overview of the article.

In section \ref{sec:pre}, we review some material 
on equivariant stable homotopy theory, in particular
about equivariant $\Gamma$-spaces and equivariant group completion. In
Section 3 we introduce and study various models
for equivariant bivariant algebraic, semi-topological and
topological $K$-theory we need to consider. 

In section \ref{sec:comp} we establish the equivariant version of Walker's  comparison theorem between equivariant semi-topological and
equivariant topological $K$-homology. 
Section \ref{sec:pair} is devoted to a detailed study of pairings
and operations (e.g. slant products) for the
various equivariant $K$-theories appearing in this article.
  
In section \ref{sec:sstThom}, we establish that Bott-inverted equivariant semi-topological $K$-theory and equivariant topological $K$-theory agree, for smooth projective complex $G$-varieties. In section \ref{sec:algThom} we show how the semi-topological result implies Thomason's result, thus giving a new proof in the the equivariant setting for smooth projective complex varieties. 

In a companion article \cite{Ostvaer}, {\O}stv{\ae}r shows that the 
Grayson-Walker model of algebraic $K$-theory
\cite{GW:Kmodels} allows for an equivariant 
generalization (which we use  to write
down the comparison map between the equivariant algebraic and semi-topological $K$-theories that is used in  Section \ref{sec:algThom}). We thank him for helpful discussions regarding this result.

We are grateful to the referee for pointing out an error in our previous proof of Theorem \ref{mainthm}.
Additionally, the first author would like to thank M. Voineagu for several useful conversations on closely related topics.

\textbf{Notation:} 
Unless stated otherwise, $G$ will be a finite group. 
We write $\Sch/\C$ for the category of quasi-projective complex varieties and $\Sm/\C$ for the full subcategory of smooth quasi-projective complex varieties. For a complex variety $X$, the set of complex points equipped with the Euclidean topology is denoted by $X^{an}$.

\section{Preliminaries}\label{sec:pre}

\subsection{Stable equivariant homotopy theory, {$\Gamma_{G}$}-spaces, and {$\mcal{W}_{G}$}-spaces}
We write $\GTop$ for the category whose objects are compactly generated 
Hausdorff spaces with $G$-action together with a $G$-invariant base-point and morphisms are based $G$-equivariant maps. Write $\Top_{G}$ for the category with the same objects as  $\GTop$,
but morphisms are all based continuous maps, hence $g(f(x)):=gf(g^{-1}x)$
defines a $G$-action on the morphism sets. Both these categories are enriched over topological spaces and $\Top_{G}$ is enriched over $\GTop$.

In this paper, a $G$-spectrum means an orthogonal $G$-spectrum, unless otherwise specified; we usually omit the adjective orthogonal. If $A$ is an orthogonal $G$-spectrum we also write $A$ for its underlying (pre)-spectrum. We refer to \cite{May:equihomotopy} for background on and a good survey of equivariant stable homotopy theory, \cite{LLM} for further details concerning ``classical'' spectra in the equivariant setting, and \cite{MM:O} for equivariant orthogonal spectra. As is customary we write $[X,Y]_{G}$ for maps in the $G$-equivariant stable homotopy category. For a representation $V$ we write 
$$
\pi_{V}^{G}X = [S^{V},X]_{G}.
$$

In this paper our spectra arise primarily via equivariant $\Gamma$-spaces and $\mcal{W}_{G}$-spaces, and we now recall some details on these. Let $\mcal{W}_{G}$ denote the category of based $G$-spaces that are homeomorphic to finite $G$-$CW$-complexes and maps are all (base-point preserving) maps. A \textit{$\mcal{W}_{G}$}-space is a based, equivariant functor $X:\mcal{W}_{G}\to \Top_{G}$ such that 
$$
X:\Map(A,B) \to \Map(X(A),X(B))
$$ 
is an equivariant continuous map of $G$-spaces.
We have a map of $G$-spaces 
$X(A)\wedge B \to X(A\wedge B)$ obtained as the adjoint of the composition $B\to \Map(A,A\wedge B) \to \Map(X(A), X(A\wedge B))$. In particular, a $\mcal{W}_{G}$-space $X$ functorially determines an orthogonal $G$-spectrum $\mathbb{U}X$ via $(\mathbb{U}X)(V) = X(S^{V})$ and hence it also determines a $G$-prespectrum. Moreover $\mathbb{U}$ is the right adjoint in a Quillen equivalence between $\mcal{W}_{G}$-spaces and orthogonal $G$-spectra (indexed on a complete universe) and the category of $\mcal{W}_{G}$-spaces has a smash-product such that $\mathbb{U}$ is lax symmetric monoidal,  see \cite{AB} for details.

There are two equivalent formulations of equivariant $\Gamma$-spaces. The first is as follows. Let $\Gamma$ denote the category whose objects are pointed sets $\underline{n}_{+}=\{0,1,\ldots,n\}$, pointed at $0$. Maps are base-point preserving set maps. An equivariant $\Gamma$-space is a functor  $X:\Gamma \to \GTop$ such that $X(0)= *$. Write $\Gamma[\GTop]$ for the category whose objects are the equivariant $\Gamma$-spaces and morphisms are natural transformations. 
The second model is as follows. Let $\Gamma_{G}$ denote a skeletal category of finite $G$-sets with morphisms all pointed set maps. The category $\Gamma_{G}[\Top_{G}]$ has as objects equivariant functors $X:\Gamma_{G}\to \Top_{G}$ such that $X(0) = *$ and maps are equivariant natural transformations. A useful observation due to Shimakawa and May \cite{Shimakawa:note} is that there is an adjoint pair of functors
\begin{equation}\label{eqn:gammaeq}
i:\Gamma_{G}[\Top_{G}]\rightleftarrows  \Gamma[\GTop]:P
\end{equation}
which are an equivalence of categories. Here $i$ is induced by the inclusion functor $i:\Gamma\to \Gamma_{G}$.

In this paper we will generally work with the objects of $\Gamma[\GTop]$ which we refer to simply as equivariant $\Gamma$-spaces. We refer to objects of $\Gamma_{G}[\Top_{G}]$ as $\Gamma_{G}$-spaces. The equivalence $P$ is defined as follows. Let $X:\Gamma\to \GTop$ be an equivariant $\Gamma$-space and $S$ a finite $G$-set. Write $S$ also for the contravariant functor  $\Map(-,S)$ which it represents. 
 The value of $PX:\Gamma_{G}\to \Top_{G}$ on a $G$-set is defined via the left Kan extension $PX(S) = S\otimes_{\Gamma}X$. Alternatively, $PX(S)$ can be described as follows. 
A $G$-set $S$ corresponds to a group homomorphism $\rho:G\to \Sigma_{n}$ where $|S|=n$.
Given a homomorphism $\rho:G\to \Sigma_{n}$ one defines a new $G$-action on $X(n)$ via the formula $g\cdot_{\rho}x = X(\rho(g))(gx)$ for $x\in X(n)$. Write $X(n)_{\rho}$ for this $G$-space. Then $PX(S) = X(n)_{\rho}$.

\begin{definition}
\begin{enumerate}
\item A $\Gamma_{G}$-space $X:\Gamma_{G}\to \Top_{G}$ is said to be \textit{special} provided $X(S) \to \Map_{cts*}(S,X(\underline{1}))$ is a $G$-weak equivalence for any $S$. 

\item Say that an equivariant $\Gamma$-space is \textit{special} if for every subgroup $H\subseteq G$ and homomorphism $\rho:H\to \Sigma_{n}$ the map 
$$
X(n)_{\rho} \to (X(1)^{n})_{\rho}
$$
is an $H$-weak equivalence, where $(X(1)^{n})_{\rho}$ is the $G$-space with action given by $g(x_{1},\ldots, x_{n}) = (gx_{\rho(g)(1)},\ldots, gx_{\rho(g)(n)})$. 
\end{enumerate}
\end{definition}
One easily checks that the two notions correspond to each other under the above equivalence. 

Segal introduced $\Gamma$-spaces in order to produce homotopy group completions. 
\begin{definition}
\begin{enumerate}
\item A map $A\to B$ of homotopy associative, homotopy commutative $H$-spaces is said to be a \textit{homotopy group completion} provided that
\begin{enumerate}
 \item $\pi_{0}B$ is an abelian group and the map $\pi_{0}A\to \pi_{0}B$ is a group completion of the abelian monoid $\pi_{0}A$, and
\item $H_{*}(A,R)\to H_{*}(B,R)$ is the localization mapping
$$
H_{*}(A,R) \to \Z[\pi_{0}B]\otimes_{\Z[\pi_{0}A]}H_{*}(A,R),
$$
for any commutative ring $R$.
\end{enumerate}

\item Say that a $G$-space $A$ is an \textit{equivariant homotopy commutative, associative $H$-space} if it is a homotopy commutative, associative $H$-space,  the $H$-space structure map is equivariant, and the homotopies for associativity and commutativity can be taken to be equivariant. Say that an equivariant $H$-space map $A\to B$ is an \textit{equivariant homotopy group completion} provided that $A^{K}\to B^{K}$ is a homotopy group completion for all subgroups $K\subseteq G$. 
\end{enumerate}
\end{definition}
Our basic example occurs when $X(-)$ is a special equivariant $\Gamma$-space (in $G$-$CW$-complexes).

There is a functor from equivariant $\Gamma$-spaces
to $G$-spectra generalizing the classical one for
the trivial group $G$ as follows. Given an equivariant $\Gamma$-space $X$ we obtain a $\mcal{W}_{G}$-functor, which we denote $\widehat{X}$ via 
$$
\widehat{X}(M) = |B(M,\Gamma_{G}, PX)|
$$
where $M$ is viewed as the functor $\Map(-,M):\Gamma_{G}^{op}\to \Top_{G}$ which it represents and $B(-,-,-)$ denotes the two-sided bar construction. Write 
$$
\mathbb{S}X = \{\widehat{X}(S^{V})\}
$$ 
for the spectrum $\mathbb{U}\widehat{X}$ associated to the $\mcal{W}_{G}$-space $\widehat{X}$.

\begin{lemma}\label{lem:Gspc}
Let $X$ be an equivariant $\Gamma$-space.
\begin{enumerate}
\item  View $X(n)\to X(1)^{\times n}$ as a map of $G\times\Sigma_{n}$-spaces, where $(g,\sigma)$ acts on $X(n)$ via $(g,\sigma)\cdot x = X(\sigma)(gx)$ and on the $X(1)^{\times n}$ by $(g,\sigma)(x_{1},\ldots, x_{n}) = (gx_{\sigma(1)},\ldots, gx_{\sigma(n)})$. 

If $X(n)\to X(1)^{\times n}$ is a $G\times\Sigma_{n}$-weak equivariant equivalence for all $n$, then $X$ is special.
\item  If $X$ is a special equivariant
$\Gamma$-space, then $\mathbb{S}X$ is a positive $G-\Omega$-spectra and
the map
$$
X(1)\wkeq \widehat{X}(S^{0}) \to \Omega\widehat{X}(S^{1})=\Omega \mathbb{S}X_{1} 
$$
is an equivariant group completion.
\end{enumerate}
\end{lemma}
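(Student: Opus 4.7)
For part (1), the argument is essentially tautological. Given a homomorphism $\rho\colon H\to \Sigma_{n}$, consider the graph homomorphism $\gamma_{\rho}\colon H\to G\times\Sigma_{n}$, $h\mapsto (h,\rho(h))$. The $H$-spaces $X(n)_{\rho}$ and $(X(1)^{n})_{\rho}$ are precisely the restrictions of the $G\times\Sigma_{n}$-actions on $X(n)$ and $X(1)^{\times n}$ along $\gamma_{\rho}$. Since restriction of a $G\times\Sigma_{n}$-equivariant weak equivalence along any group homomorphism is again an equivariant weak equivalence for the restricted group (because the fixed-point set for $H'\subseteq H$ coincides with the $\gamma_{\rho}(H')$-fixed-point set of the ambient $G\times\Sigma_{n}$-space), the hypothesis immediately supplies the required $H$-weak equivalence.

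For part (2), my plan is to argue fixed-point-wise and reduce to Segal's classical group-completion and delooping theorem. Fix a subgroup $H\subseteq G$. Since $\Gamma_{G}$ is discrete and the two-sided bar construction is a proper simplicial $G$-space in each bar degree, $H$-fixed points commute with geometric realization and with the levelwise operations, giving $\widehat{X}(S^{V})^{H}\iso |B(S^{V},\Gamma_{G},PX)^{H}|$. A direct inspection of the $H$-fixed points of $PX$ using the presentation $PX(S)=X(|S|)_{\rho}$ produces a non-equivariant $\Gamma$-space $Y_{H}$ whose associated spectrum is naturally identified with $(\mathbb{S}X)^{H}$ (via $(S^{V})^{H}\iso S^{V^{H}}$). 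Once $Y_{H}$ is shown to be special, the classical Segal theorem yields both that $(\mathbb{S}X)^{H}$ is a positive $\Omega$-spectrum and that $X(1)^{H}\wkeq Y_{H}(1)\to \Omega(\widehat{X}(S^{1})^{H})$ is a homotopy group completion; as this holds for every $H$, we obtain the two claims.

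The main obstacle is verifying specialness of $Y_{H}$. The key combinatorial step is the orbit decomposition: for a homomorphism $\rho\colon H\to \Sigma_{n}$, partition $\{1,\ldots,n\}$ into $H$-orbits $O_{i}$ with stabilizers $H_{i}\subseteq H$ and induced transitive pieces $\rho_{i}\colon H_{i}\to \Sigma_{|O_{i}|}$. Applying the specialness hypothesis on $X$ within each orbit should yield a weak equivalence $X(n)_{\rho}^{H}\wkeq \prod_{i}(X(1)^{|O_{i}|})_{\rho_{i}}^{H_{i}}$, which matches $(X(1)^{n})_{\rho}^{H}=Y_{H}(1)^{n}$ under the orbit-stabilizer identification. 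This step is essentially the fixed-point analysis underlying Shimakawa's equivariant Segal machine, so one could alternatively cite that machinery directly rather than re-deriving the combinatorial lemma.
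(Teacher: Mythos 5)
Part (1) of your argument is correct: the twisted actions in the definition of specialness are exactly the restrictions of the $G\times\Sigma_{n}$-actions along the graph homomorphisms $\gamma_{\rho}\colon H\to G\times\Sigma_{n}$, and since the $H'$-fixed points of the restricted action are the $\gamma_{\rho}(H')$-fixed points, a $G\times\Sigma_{n}$-weak equivalence restricts to an $H$-weak equivalence. (The paper itself gives no argument at all for this lemma; it simply cites Shimakawa's paper, so for (1) your observation is a legitimate filling-in.)

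Part (2), however, contains a genuine gap. The assertion that $\mathbb{S}X$ is a \emph{positive $\Omega$-$G$-spectrum} is a statement about deloopings along all representation spheres: for $V$ with $V^{G}\neq 0$ and arbitrary $W$ one must show $\widehat{X}(S^{V})\to \Omega^{W}\widehat{X}(S^{V\oplus W})$ is a $G$-weak equivalence. On $H$-fixed points the target is the space of $H$-equivariant maps from $S^{W}$ into $\widehat{X}(S^{V\oplus W})$, which for $W$ with nontrivial $H$-action is \emph{not} a loop space of fixed points; no levelwise fixed-point $\Gamma$-space sees it. Correspondingly, your key identification of $(\mathbb{S}X)^{H}$ with the spectrum of a non-equivariant $\Gamma$-space $Y_{H}$ ``via $(S^{V})^{H}\iso S^{V^{H}}$'' is false in general: the $H$-fixed points of $|B(S^{V},\Gamma_{G},PX)|$ are not computed by an ordinary bar construction over $\Gamma$ evaluated at $S^{V^{H}}$. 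The comparison you call a ``direct inspection'' is available only for trivial spheres $S^{n}$, and there it is precisely Shimakawa's Proposition 1.2(c) (this is what the paper's Lemma \ref{lem:spcfix} invokes, and note that Lemma \ref{lem:spcfix} also uses the positive-$\Omega$ property of $\mathbb{S}X$ as an \emph{input} to identify $\pi_{n}^{H}$ with levelwise fixed points, so your argument would be circular if it leaned on that). Even granting the trivial-sphere comparison, Segal's classical theorem applied to the special $\Gamma$-spaces $X^{H}$ only yields deloopings along trivial spheres; the genuinely equivariant deloopings are the substance of Shimakawa's theorem in [Shi89], which requires an equivariant argument (the Segal machine over finite $G$-sets) and cannot be recovered by fixed-point-wise appeal to the non-equivariant statement. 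The group-completion half of (2) is different: since $S^{1}$ has trivial action, $(\Omega\widehat{X}(S^{1}))^{H}=\Omega(\widehat{X}(S^{1})^{H})$, so once you cite Shimakawa's Proposition 1.2(c) for the comparison $|B(S^{1},\Gamma,X^{H})|\wkeq |B(S^{1},\Gamma_{G},PX)|^{H}$ (and note that specialness of the ordinary $\Gamma$-space $n\mapsto X(n)^{H}$ needs only the case $\rho$ trivial, no orbit decomposition), the classical group-completion theorem does give that $X(1)^{H}\to \Omega\widehat{X}(S^{1})^{H}$ is a group completion; so that half of your plan can be carried out, but the positive $\Omega$-$G$-spectrum half cannot.
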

\begin{proof}
See \cite{Shimakawa:Gloop}.
\end{proof}

If $X$ is an equivariant $\Gamma$-space or a $\Gamma_{G}$-space, $\underline{n}_{+}\mapsto X(n)^{H}$ defines an ordinary $\Gamma$-space. Given a $\Gamma$-space $\mcal{A}(-)$ write $\mathbf{B}\mcal{A} = (\mcal{A}(\underline{1}_{+}), B\mcal{A}(\underline{1}), B^{2}\mcal{A}(\underline{1}),\ldots )$ for the associated spectrum as in \cite{Segal:gamma}. The condition in the following guarantees that the simplicial space $n\mapsto X(n)$ is good in the sense of \cite{Segal:gamma}

\begin{lemma}\label{lem:spcfix}
 Let $X$ be a special equivariant $\Gamma$-space of the form $X(-) = |X^{\prime}(-)|$ where $X^{\prime}:\Gamma\to G\sSet$ and $H\subseteq G$ a subgroup. Then $\pi_{n}^{H}(\mathbb{S}X) \iso 
\pi_{n}(\mathbf{B}X^{H})$.
\end{lemma}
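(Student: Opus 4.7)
The plan is to identify $\pi_n^H(\mathbb{S}X)$ by reducing to non-equivariant homotopy groups using the $G$-$\Omega$-spectrum property, taking $H$-fixed points at the level of spaces, and then recognizing the result as the Segal spectrum of the ordinary $\Gamma$-space $X^H$.

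First I would use Lemma~\ref{lem:Gspc}(2), which gives that $\mathbb{S}X$ is a positive $G$-$\Omega$-spectrum. Consequently, for $k$ sufficiently large (depending on $n$),
$$
\pi_n^H(\mathbb{S}X) \iso \pi_{n+k}^H(\widehat{X}(S^k)) \iso \pi_{n+k}\bigl(\widehat{X}(S^k)^H\bigr),
$$
where we use that for a $G$-$CW$-complex, equivariant homotopy groups are ordinary homotopy groups of fixed points. So the task becomes identifying $\widehat{X}(S^k)^H$ with the corresponding space of the Segal spectrum $\mathbf{B}X^H$.

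Next I would analyze the bar construction $\widehat{X}(S^k) = |B(S^k,\Gamma_{G},PX)|$ when $S^k$ carries the trivial $G$-action. The point is that $S^k$ factors through $\Gamma \hookrightarrow \Gamma_{G}$ (trivial $G$-sets), so the bar construction reduces to one indexed on $\Gamma$, and on trivial $G$-sets one has $PX(\underline{m}_{+}) = X(\underline{m}_{+})$ with its original $G$-action. Since fixed points commute with geometric realization of simplicial spaces (using that $n\mapsto X(n) = |X'(n)|$ yields a good simplicial $G$-space so that no derived-realization corrections are needed) and with the coend/bar construction built from free functors, one gets
$$
\widehat{X}(S^k)^H \iso |B(S^k,\Gamma,X^H)| = \widehat{X^H}(S^k),
$$
the $k$-th space in the Segal construction applied to the ordinary $\Gamma$-space $X^H:\underline{m}_{+}\mapsto X(\underline{m}_{+})^H$.

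Then I would verify that $X^H$ is a special ordinary $\Gamma$-space: the specialness of $X$ says that for every subgroup and every homomorphism $\rho:H\to \Sigma_n$ the map $X(n)_{\rho} \to (X(1)^n)_{\rho}$ is an $H$-equivalence; taking $H$-fixed points and specializing $\rho$ to the trivial homomorphism yields that $X^H(n) \to X^H(1)^n$ is a weak equivalence. Finally, Segal's theorem \cite{Segal:gamma} applied to the good special $\Gamma$-space $X^H$ identifies
$$
\pi_{n+k}\bigl(\widehat{X^H}(S^k)\bigr) \iso \pi_{n+k}\bigl(B^k X^H(\underline{1}_{+})\bigr) = \pi_n(\mathbf{B}X^H),
$$
for $k$ large, completing the proof.

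The main obstacle I anticipate is the second step: carefully commuting $H$-fixed points with the two-sided bar construction $B(S^k,\Gamma_{G},PX)$. The hypothesis $X(-) = |X'(-)|$ is exactly what is needed so that $n \mapsto X(n)^H$ is again the realization of a simplicial set and the simplicial space $n\mapsto X(n)$ has nondegenerate basepoints, making the bar construction homotopically well-behaved and letting fixed points pass through realization. Once this commutation is established, the remaining steps are formal consequences of specialness and Segal's recognition theorem.
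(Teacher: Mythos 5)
Your overall strategy is the same as the paper's (use the positive $\Omega$-$G$-spectrum property to compute $\pi_n^H$ from fixed points of the spaces $\widehat{X}(S^k)$, then identify these fixed points with Segal's construction on $X^H$), but the crucial middle step has a genuine gap. You claim that because $S^k$ has trivial $G$-action, ``the bar construction reduces to one indexed on $\Gamma$,'' and that commuting $H$-fixed points with realization then gives $\widehat{X}(S^k)^H \cong |B(S^k,\Gamma,X^H)|$. This reasoning fails: the $p$-simplices of $B(S^k,\Gamma_G,PX)$ run over \emph{all} finite $G$-sets $S_0,\ldots,S_p$ in $\Gamma_G$, not just trivial ones, and the terms $\Map(S_p,S^k)$ (with conjugation action) and $PX(S_0)=X(n)_\rho$ (with twisted action) contribute $H$-fixed points that are simply not present in the $\Gamma$-indexed bar construction on $X^H$. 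Commuting fixed points with geometric realization and coproducts does nothing to remove these extra terms, and no purely formal cofinality or Kan-extension cancellation identifies the two constructions equivariantly on the nose. The statement you need is true, but it is exactly the content of Shimakawa's Proposition 1.2(c), which the paper cites: the natural map $|B(S^k,\Gamma,X^H)|\to |B(S^k,\Gamma_G,PX)|^H$ is a homotopy equivalence. Without that input (or an equivalent argument) your step two is unjustified.

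A secondary, more minor point: your final appeal to ``Segal's theorem'' conflates two different delooping models. The space $|B(S^k,\Gamma,X^H)|$ is a homotopy coend, while $\mathbf{B}X^H$ is built from Segal's iterated construction $B^kX^H(\underline{1}_+)$; the paper bridges them by quoting Segal's Proposition 3.2, which gives $B^kX^H = S^k\otimes_\Gamma X^H$ as a strict coend, and then using that the overcategory $(\Gamma\downarrow S^k)$ is filtered to conclude that the map from the homotopy coend to the strict coend is a weak equivalence. This comparison is citable, but as written your proof treats it as automatic; it should be made explicit, since in general homotopy coends and strict coends differ. Your remarks on the role of the hypothesis $X(-)=|X'(-)|$ (goodness of the simplicial spaces involved) and on the specialness of $X^H$ are fine.
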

\begin{proof}
Since $X$ is special, $\mathbb{S}X$ is a positive-$\Omega$-$G$-spectrum and so by \cite[Proposition V.3.2]{MM:O} we have that $\pi_{n}^{H}(\mathbb{S}X) = \pi_{n}(\mathbb{S}X^{H})$. Recall that if $A$ is an orthogonal $G$-spectrum and $V=\R^{n}$ has trivial action then $A^{H}(\R^{n}) = A(\R^{n})^{H}$ (in terms of underlying pre-spectra: $(A^{H})_{n} = (A_{n})^{H}$). 
  By \cite[Proposition 1.2(c)]{Shimakawa:Gloop} the map $|B(S^{n},\Gamma, X^{H})| \to |B(S^{n},\Gamma_{G}, X)^{H}|$ is a homotopy equivalence. By \cite[Proposition 3.2]{Segal:gamma}, $B^{n}X^{H} = S^{n}\otimes_{\Gamma}X^{H}$ and because the overcategory $(\Gamma\downarrow S^{n})$ is filtered, the map $|B(S^{n},\Gamma, X^{H})|\to S^{n}\otimes_{\Gamma}X^{H}$ is a weak equivalence.  Therefore $\pi_{n}(\mathbb{S}X)^{H} = \pi_{n}\mathbf{B}(X^{H})$. 
\end{proof}

\subsection{Pairings}\label{subsectionpairings}
The external product $X\barwedge Y$ of two  $\mcal{W}_{G}$-spaces $X$ and $Y$ is the $\mcal{W}_{G}\times \mcal{W}_{G}$-space given by $(X\barwedge Y)(A,B) = X(A)\wedge Y(B)$. The \textit{smash product} $X\wedge Y$ of $\mcal{W}_{G}$-spaces is defined as the left Kan extension of $X\barwedge Y$ along the functor $\wedge:\mcal{W}_{G}\times \mcal{W}_{G}\to \mcal{W}_{G}$ given by $(A,B)\mapsto A\wedge B$, see \cite{AB}. 
 By the universal property of Kan extension, giving a pairing $X\wedge Y \to Z$ of $\mcal{W}_{G}$-spaces is equivalent to 
giving a map $X\barwedge Y \to Z\circ \wedge$ of $(\mcal{W}_{G}\times\mcal{W}_{G})$-spaces. Since $\mathbb{U}$ is lax symmetric monoidal, the pairing $X\barwedge Y \to Z\circ \wedge$ defines a pairing of associated orthogonal $G$-spectra $\mathbb{U}X\wedge\mathbb{U}Y \to \mathbb{U}Z$.

Let $X$, $Y$ be equivariant $\Gamma$-spaces and $X\barwedge Y$ is the equivariant $\Gamma\times\Gamma$-space $(\underline{p}_{+},\underline{q}_{+})\mapsto X(\underline{p}_{+})\wedge Y(\underline{q}_{+})$.
Let $\wedge:\Gamma\times \Gamma\to \Gamma$ be defined by identifying $\underline{p}_{+}\wedge \underline{q}_{+}$ with $\underline{pq}_{+}$ via the lexicographical ordering.
 A map of equivariant $(\Gamma\times \Gamma)$-spaces 
 $X\barwedge Y \to Z\circ \wedge$ determines a pairing of associated spectra, as we now explain. The functor $P$ which associates a $\Gamma_{G}$-space to an equivariant $\Gamma$-space, described in the previous section, can be extended in the evident way to a functor taking equivariant $(\Gamma\times\Gamma)$-spaces to $(\Gamma_{G}\times \Gamma_{G})$-spaces and we again denote this functor by $P$. A straightforward inspection shows that $P(X\barwedge Y) = PX\barwedge PY$. Together with the natural map $P(Z\circ \wedge) \to PZ\circ \wedge$, this implies that a map of equivariant $(\Gamma\times \Gamma)$-spaces $X\barwedge Y \to Z\circ\wedge$ gives rise to a map $PX\barwedge PY \to PZ\circ\wedge$ of $(\Gamma_{G}\times\Gamma_{G})$-spaces. This map in turn gives rise to a map $\widehat{X}\barwedge\widehat{Y}\to \widehat{Z}\circ\wedge$ of $(\mcal{W}_{G}\times\mcal{W}_{G})$-spaces and therefore we obtain a pairing $\mathbb{S}X\wedge\mathbb{S}Y\to \mathbb{S}Z$.

\subsection{Homotopy colimits of $G$-spaces}
Homotopy colimits can be viewed as the derived functors of the colimit functor. For our purposes it is important to use a functorial model for the homotopy colimit of a diagram of $G$-simplicial sets or spaces and we take the ``standard model''. Explicitly, let $X:D\to \mcal{C}$ be a functor, where $\mcal{C}$ is the category of $G$-simplicial sets or spaces, then
$$
\hocolim_{D}X = |B(*,D,X)|
$$
where $B(-,-,-)$ denotes the two-sided bar construction. Observe that this formula shows that $(\hocolim_{D}X)^{H} = \hocolim_{D}X^{H}$ for any subgroup $H\subseteq G$.

\subsection{{$G$}-modules}
If $X$ is a $G$-scheme then a coherent $G$-module on $X$ is a coherent $\mcal{O}_{X}$-module $\mcal{M}$ together with isomorphisms $\phi_{g}:\mcal{M} \to g_{*}\mcal{M}$ for each $g\in G$ such that $\phi_{e} = id$ and $\phi_{gh} = h_{*}\phi_{g}\phi_{h}$. If $X = \spec(R)$ then $R$ has a $G$-action which we write as a left-action. Specifying a coherent $G$-module $\mcal{M}$ on $X$ is equivalent to specifying an $R$-module $M$ together with a $G$-action on $M$ which is compatible with the action on $R$ in the sense that $(g\cdot r)m = g\cdot (r\cdot (g^{-1}m))$ (i.e. $M$ is a module over the skew-group ring $R^{*}G$).

\section{Bivariant $K$-theories}
In this section we introduce the algebraic, semi-topological and topological bivariant $K$-theories with which we work in the paper. All of these are constructed as $G$-spectra. The construction of the algebraic bivariant $K$-theory spectrum as a $G$-spectrum makes use of the equivariant Grayson-Walker theorem proved by {\O}stv{\ae}r \cite{Ostvaer}. The bivariant semi-topological equivariant $K$-theory is constructed and studied in \ref{sub:sst} and its topological counterparts are introduced and studied in \ref{sec:top} and \ref{sec:tel}. The comparison map between the semi-topological and topological $K$-theories is constructed and studied in the next section. The material in these sections corresponds mostly to  material in Sections 3 and 4 in \cite{Walker:Thomason}. While the overall picture of the results presented here corresponds nicely to that in Walker's paper, there are parts of the picture which differ. Before beginning,
we point out some of the global differences of significance between our presentation of this material and the corresponding material there. First, Walker defines the semi-topological bivariant theory via topological spaces of algebraic maps while we use Friedlander-Walker's simplicial model for this space, see Remark \ref{rem:mor} below. Second, Walker makes use of $\Gamma$-spaces produced by taking nerves of certain topological categories while we prefer to simply describe our $\Gamma$-spaces as being obtained from a homotopy colimit of a certain diagram, see Remark \ref{rem:hocol}.

\subsection{Algebraic $K$-theory}\label{sec:algK}
In this subsection we work with quasi-projective $G$-varieties over an arbitrary field $k$, where $G$ is a finite group whose order is coprime to $char(k)$ (though, only $k=\C$ is used in later sections). Write $\mcal{P}(G;X,Y)$ for the category of coherent $G$-modules on $X\times Y$ which are finite and flat over $X$ (this is the category $\mcal{P}^{0}(G;X,Y)$ in the notation of \cite{Ostvaer}). This is an exact category, and we write $\mcal{K}(G;X,Y)$ for the associated $K$-theory spectrum, as produced  by Waldhausen's construction. We explain how, when specialized to the case of finite groups, the material in \cite{Ostvaer} yields a $G$-spectrum  $\mcal{K}_{G}(X,Y)$ with the property that $\mcal{K}_{G}(X,Y)^{H} \wkeq \mcal{K}(H;X\times\Delta^{\bullet}_{k},Y)$.

Let $V$ be a finite dimensional  $G$-representation over $k$. Then $V$ defines a $G$-bundle  on $\spec(k)$ and we write $\mcal{V}$ for this $G$-bundle. For any $G$-variety $Y$ over $k$, the pullback of $\mcal{V}$ via the structure map $Y\to \spec(k)$ is a $G$-bundle on $Y$ which we denote as $\mcal{V}_{Y}$. 

\begin{definition}\label{defn:G}
Let $X$,$Y$ be a quasi-projective $G$-schemes over $k$ and $V$ a representation of $G$. Write $\pi_{X}:X\times Y\to X$ for the projection map. Define the sheaf of sets $\GG^{V}_{Y}(n):\Sch/k^{op} \to \Set$ to be the sheaf whose value on $X$ is the collection of quotient objects $[\mcal{V}^{n}_{X\times Y}\onto \mcal{M}]$ which satisfy the following conditions.
\begin{enumerate}
\item The support of $\mcal{M}$ is finite over $X$ and $(\pi_{X})_{*}\mcal{M}$ is locally free.
\item The composition $\mcal{V}^{n}_{X}\to (\pi_{X})_{*}\mcal{V}^{n}_{X\times Y}\to (\pi_{X})_{*}\mcal{M}$ is surjective.
\end{enumerate}
\end{definition}

Forgetting the $G$-action, the underlying sheaf $\GG^{V}_{Y}(n)$ is denoted by $G^{mn}_Y$ in \cite{Walker:Thomason}, where $m=\dim V$. When condition (1) is satisfied we say that $\mcal{M}$ is \textit{finite and flat over $X$}. As remarked by Walker,
condition (2) could be omitted and one would still obtain the same bivariant theory, see Remark \ref{rem:WGW} and \cite[Theorem A.10]{Ostvaer}. Its inclusion has several advantages, one of which is that it allows the construction of functorial maps, e.g. Lemma \ref{lem:covariant} below, which otherwise would merely be functorial up to homotopy.

Let $Y$ be a $G$-variety over $k$. 
For each $g\in G$ we have isomorphisms $\phi_{g}:\mcal{V}^{n}_{Y} \to g_{*}\mcal{V}^{n}_{Y}$ such that $\phi_{e} = id$ and $\phi_{gh} = h_{*}\phi_{g}\phi_{h}$. For any $G$-variety $X$, we have a natural $G$-action on $\GG^{V}_{Y}(n)(X)$  given by 
$$
g\cdot[\mcal{V}^{n}_{X\times Y}\onto \mathcal{M}] = [\mcal{V}^{n}_{X\times Y}\xrightarrow{\phi_{g}}g_{*}\mcal{V}^{n}_{X\times Y}\onto g_{*}\mathcal{M}].
$$
This defines a $G$-action on the functor $\mcal{G}^{V}_{Y}(n)$.
By \cite[Lemma 2.2]{Walker:Thomason} (which is written for $k=\C$, but it holds verbatim over any field $k$) the functor $\mcal{G}^{V}_{Y}(n)$ is represented by a quasi-projective variety which we denote by the same symbol. In fact $\mcal{G}^{V}_{Y}(n)$ is an open invariant subscheme of the Quot-scheme $\coprod_{r=0}^{n}Quot^{r}_{\mcal{V}^{n}_{Y}/Y/\spec(k)}$, where $Quot^{r}_{\mcal{V}^{n}_{Y}/Y/\spec(k)}$ is the functor which sends $U$ to the set of  quotient objects $[\mcal{V}^{n}_{U\times Y} \onto \mcal{M}]$ such that $\mcal{M}$ is finite and flat over $U$ and $\pi_{*}\mcal{M}$ is locally free of rank $r$ on $U$. 
The $G$-action on the functor $\GG_{Y}^{V}(n):\Sch/k^{op}\to \Set$ defines an action on the representing scheme. Thus for a $G$-scheme $X$, we have an action on $\Hom_{\Sch/k}(X,\,\GG^{V}_{Y}(n))$ defined by the usual formula, $g\cdot f$ is the function $x\mapsto gf(g^{-1}x)$. This action agrees with the previously described action on $\GG^{V}_{Y}(n)(X)$.

We now describe the equivariant $\Gamma$-space which will define the spectrum $\mcal{K}_{G}(X,Y)$. Write $I$ for the category whose objects are $\underline{n}=\{1,2,\ldots, n\}$ for each $n\geq 0$ ( $\underline{0}$ is the empty set) and morphisms are injective, but not necessarily order-preserving, maps of sets.  An injection $j:\underline{m}\hookrightarrow \underline{n}$ determines a map of coherent $G$-modules $j_{*}:\mcal{V}^{m} \to \mcal{V}^{n}$ where $j_{*}(e_{i}) = e_{j(i)}$. Write $j^{*}:\mcal{V}^{n}\onto \mcal{V}^{m}$ for the transpose of this map. Explicitly we have
$$
j^{*}(e_{i})=  \begin{cases}
e_{k} & \textrm{if} \,\, j(k) = i,\,\,\textrm{and} \\
0 & \textrm{if}\,\, i\notin \im(j) .               
              \end{cases}
$$
Note that $j^{*}j_{*} = id$. We have induced morphisms  $j^{*}:\mcal{V}^{n}_{X\times Y}\onto \mcal{V}^{m}_{X\times Y}$ and precomposition with $j^{*}$ defines a natural transformation $j^{*}:\GG_{Y}^{V}(m)(X)\to G_{Y}^{V}(n)(X)$.
Note that  $j^{*}[p:\mcal{V}_{X\times Y}^{m}\onto \mcal{M}]=[q:\mcal{V}_{X\times Y}^{n}\onto \mcal{N}]\in \GG^{V}_{Y}(n)(X)$   exactly when there exists an isomorphism $\mcal{M}\xrightarrow{\iso} \mcal{N}$ such that the diagram commutes
$$
\xymatrix{
\mcal{V}^{m}_{X\times Y} \ar@{->>}[r]^{p} & \mcal{M} \ar@{-->}[d]^{\iso} \\
\mcal{V}^{n}_{X\times Y} \ar@{->>}[u]^{j^{*}} \ar@{->>}[r]^{q} & \mcal{N} .
}
$$
 
The equivariant maps $j^{*}$ make the assignment
$$
\underline{n}\mapsto \Hom_{\Sch/k}(X,\GG_{Y}^{V}(n))
$$ 
into a functor $I\to GSet$.

\begin{lemma}[{\cite[Lemma 2.3]{Walker:Thomason}}] \label{lem:covariant}
Suppose that $f:Y\to Y'$ is an equivariant map of quasi-projective $G$-varieties over $k$.
There is an equivariant morphism of $G$-varieties
$f_{*}:\mcal{G}^{V}_{Y}(n)\to \mcal{G}^{V}_{Y'}(n)$, 
defined via the equivariant natural transformation of functors, 
$f_{*}:\mcal{G}^{V}_{Y}(n)(-)\to \mcal{G}^{V}_{Y'}(n)(-)$, 
which on $U$ sends the quotient $[q:\mcal{V}^{n}_{U\times Y}\onto \mcal{M}]$ to $[\mcal{V}^{n}_{U\times Y'}\onto (id\times f)_{*}\mcal{M}]$. Moreover $f_{*}$ is natural with respect to maps in $I$ and thus defines  a transformation of $I$-functors.
\end{lemma}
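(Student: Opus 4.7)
The plan is to first construct the natural transformation of functors of points, then verify that the resulting rule indeed lands in $\mcal{G}^V_{Y'}(n)(-)$, and finally check equivariance and $I$-naturality. The morphism of schemes $f_*$ will then be produced by Yoneda, since the functors $\mcal{G}^V_Y(n)(-)$ and $\mcal{G}^V_{Y'}(n)(-)$ are representable (by the discussion preceding the statement). First I would spell out the map on the target. Given a quotient $[q:\mcal{V}^n_{U\times Y}\onto \mcal{M}]$, one forms $(id\times f)_{*}\mcal{M}$ and then the map $\mcal{V}^n_{U\times Y'}\to (id\times f)_{*}\mcal{M}$ as the composition of the adjunction unit $\mcal{V}^n_{U\times Y'}\to (id\times f)_{*}(id\times f)^{*}\mcal{V}^n_{U\times Y'} = (id\times f)_{*}\mcal{V}^n_{U\times Y}$ with $(id\times f)_{*}q$. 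Equivalently, this is the $\mcal{O}_{U\times Y'}$-linear extension of the adjoint map, using that $(id\times f)^{*}\mcal{V}^n_{U\times Y'}\iso \mcal{V}^n_{U\times Y}$ since $\mcal{V}^n$ is pulled back from $\spec(k)$.

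Next I would verify that the result lies in $\mcal{G}^V_{Y'}(n)(U)$. Let $Z\subseteq U\times Y$ be the (scheme-theoretic) support of $\mcal{M}$; by assumption the restriction $\pi_U|_Z$ is finite. Since the restriction of $id\times f$ to $Z$ factors through its image $Z'\subseteq U\times Y'$ as a morphism of $U$-schemes, and both $Z$ and $Z'$ are finite over $U$, this restriction is a finite (hence affine) morphism. Consequently $(id\times f)_{*}\mcal{M}$ is coherent, supported on $Z'$, and finite over $U$, and one has the equality $\pi_{U*}'(id\times f)_{*}\mcal{M} = \pi_{U*}\mcal{M}$, which is locally free; this gives condition~(1) of Definition~\ref{defn:G}. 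For condition~(2), the composition $\mcal{V}^n_U\to \pi_{U*}'\mcal{V}^n_{U\times Y'}\to \pi_{U*}'(id\times f)_{*}\mcal{M}$ agrees with $\mcal{V}^n_U\to \pi_{U*}\mcal{V}^n_{U\times Y}\to \pi_{U*}\mcal{M}$, which is surjective by hypothesis on $q$. Pulling this surjection back along $\pi_U'$ and composing with the counit $\pi_U'^{*}\pi_{U*}'(id\times f)_{*}\mcal{M}\to (id\times f)_{*}\mcal{M}$, which is surjective because $\pi_U'|_{Z'}$ is affine, shows that the constructed map $\mcal{V}^n_{U\times Y'}\to (id\times f)_{*}\mcal{M}$ is itself surjective.

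Naturality in $U$ reduces to compatibility of $(id\times f)_{*}$ with pullback along arbitrary morphisms $U'\to U$; this holds because $\mcal{M}$ is flat over $U$ (so flat base change applies) and because everything of interest lives on the affine $U$-scheme $Z$. Equivariance is a direct check: when $U$ is a $G$-scheme the action on $\mcal{G}^V_Y(n)(U)$ is implemented by composing $q$ with $\phi_g:\mcal{V}^n_{X\times Y}\to g_*\mcal{V}^n_{X\times Y}$, and since $f$ is $G$-equivariant the pushforward $(id\times f)_{*}$ intertwines the maps $\phi_g$ on $Y$ and on $Y'$, so $f_{*}(g\cdot[q]) = g\cdot f_{*}[q]$. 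Finally, naturality with respect to an injection $j:\underline{m}\hookrightarrow \underline{n}$ is immediate from the construction: applying $j^{*}$ precomposes with $j^{*}:\mcal{V}^n\onto \mcal{V}^m$ while leaving $\mcal{M}$ unchanged, and $(id\times f)_{*}\mcal{M}$ depends only on $\mcal{M}$, so the two natural transformations $f_{*}j^{*}$ and $j^{*}f_{*}$ visibly coincide.

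I expect the only genuine technical obstacle to be the surjectivity of $\mcal{V}^n_{U\times Y'}\to (id\times f)_{*}\mcal{M}$, together with the base-change check needed for naturality in $U$. Both rely on the observation that everything relevant is supported on a closed subscheme that is finite over $U$, which converts the pushforward into an essentially affine operation and makes the categorical manipulations with units and counits of $(id\times f)^{*}\dashv (id\times f)_{*}$ tractable.
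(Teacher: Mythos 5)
Your argument is correct and is essentially the paper's own proof written out in full: the paper's one-line justification ("well-defined as a consequence of condition (2)") is precisely your central step that condition (2) of Definition \ref{defn:G}, together with the support of $\mcal{M}$ being finite (hence affine) over $U$ and over $U\times Y'$, forces the pushed-forward map $\mcal{V}^{n}_{U\times Y'}\to (id\times f)_{*}\mcal{M}$ to be surjective, while the equivariance and $I$-naturality checks are the routine verifications you sketch. One small slip worth fixing: naturality in $U$ follows from base change for pushforward along the affine (finite) morphism on supports, as you also note, not from ``flat base change,'' which requires flatness of the base-change morphism $U'\to U$ rather than of $\mcal{M}$.
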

\begin{proof}
The map $f_{*}$ is well-defined as a consequence of condition (2) in Definition \ref{defn:G}. The required naturality and equivariance statements are clear.
\end{proof}

Define the presheaf of $G$-simplicial sets $\AA_{G}(-,\,Y)^{V}$ by
$$
\AA_{G}(X,\,Y)^{V} = \hocolim_{I}\Hom_{\Sch/k}(X\times\Delta^{\bullet}_{k},\,\GG_{Y}^{V}(-)).
$$

We extend this definition to pairs $(Y,y_{0})$ consisting of a quasi-projective $G$-variety together with an invariant $k$-rational basepoint $y_{0}\in Y$. This is useful later, when $k=\spec(\C)$, to discuss comparisons to the topological setting. Usually the pointed variety will be $Y_{+}= Y\coprod \spec(k)$, where we have adjoined a disjoint basepoint.  Note that $\GG_{y_{0}}^{V}(n)\subseteq \GG_{Y}^{V}(n)$ and this inclusion is functorial in the variable $n$. Define 
$$
\GG_{(Y,y_{0})}^{V}(n)(X) = \GG_{Y}^{V}(n)(X)/\GG_{y_{0}}^{V}(n)(X)
$$ 
and
$$
\AA_{G}(X,\,(Y,y_{0}))^{V} = \hocolim_{I}\Hom_{\Sch/k}(X\times\Delta^{\bullet}_{k},\,\GG_{(Y,y_{0})}^{V}(-)).
$$

Note that $\AA_{G}(X,\,(Y_{+},*))^{V} = \AA_{G}(X,\,Y)^{V}$. Usually we omit the base-point from the notation when the context makes it clear what is meant. The $G$-simplicial sets $\AA_{G}(X,\,Y)^{V}$ are clearly contravariantly natural in the first variable and are covariantly natural in the second variable by Lemma \ref{lem:covariant}. Using the covariant naturality in the second variable we obtain equivariant $\Gamma$-spaces
$$
\underline{n}_{+}\mapsto |\AA_{G}(X,\,\underline{n}_{+}\wedge Y_{+})^{V}|.
$$

\begin{remark}
More generally the assignment 
$S\mapsto |\AA_{G}(X,\, S\wedge Y_{+})^{V}|$ 
for a based finite $G$-set $S$, extends the above assignment to a $\Gamma_{G}$-space. By the equivalence of categories (\ref{eqn:gammaeq}) it makes no difference whether we work with the equivariant $\Gamma$-space displayed above or with this $\Gamma_{G}$-space.
\end{remark}

We are most interested in the case of the regular representation $V=k[G]$.
Recall that an equivariant $\Gamma$-space $\mcal{A}$ naturally gives rise to a $G$-spectrum (see Lemma \ref{lem:Gspc}) which we denote $\mathbb{S}\mcal{A}$.
\begin{definition}\label{defKalg}
Let $X$,$Y$ be quasi-projective $G$-schemes. Define the $G$-spectrum $\mcal{K}_{G}(X,Y)$ by
$$
\KK_{G}(X,Y)=\S|\AA_{G}(X,\,-\wedge Y_+)^{k[G]}|,
$$
the spectrum associated 
 to the equivariant $\Gamma$-space $|\AA_{G}(X,\,-\wedge Y_+)^{k[G]}|$.
Write $K^{G,\,\A^{1}}_{*}(X,Y) = \pi^{G}_{*}\mcal{K}_{G}(X,Y)$ 
for the homotopy groups of this spectrum. More generally if $H\subseteq G$ is a subgroup, write 
$K^{H,\,\A^{1}}_{*}(X,Y) = \pi^{H}_{*}\mcal{K}_{G}(X,Y)$.
\end{definition}
\begin{remark}
The spectra $\mcal{K}_{G}(X,Y)$ are contravariantly natural in the first variable and are covariantly natural in the second variable by Lemma \ref{lem:covariant}.
\end{remark}
Later in this section we will see that $K^{G,\,\A^{1}}_{*}(X,Y) = \pi_{*}\mcal{K}(G;X\times\Delta^{\bullet}_{k},Y)$.

We now introduce a slightly different model which is often convenient to work with. Write 
$$
\Hom_{\Sch/k}(X,\,\GG_{Y}^{V}(\infty))=\colim_{n}\Hom_{\Sch/k}(X ,\,\GG_{Y}^{V}(n))
$$
where the colimit is over the standard inclusions $\underline{n}\subseteq \underline{n+1}$ given by $i\mapsto i$ and the transition maps are induced by precomposition with the canonical surjections $\mcal{V}^{n+1} \to \mcal{V}^{n}$. Thus an element is a quotient object $[\mcal{V}^{\infty}_{X \times Y} \xrightarrow{p} \mcal{M}]$ where $\mcal{V}^{\infty}_{X\times Y} \to \mcal{M}$ factors as $\mcal{V}^{\infty}_{X\times Y} \to \mcal{V}^{N}_{X \times Y}\to \mcal{M}$ for some $N$. If $j:\mathbb{N}\to \mathbb{N}$ is an injection we have an induced map given by $j^{*}[\mcal{V}^{\infty}_{X \times Y} \xrightarrow{p} \mcal{M}] = [p\tilde{j}^{*}:\mcal{V}^{\infty}_{X \times Y} \xrightarrow{}\mcal{M}]$, and similarly for pointed varieties $(Y,y_{0})$.

In what follows it will be convenient to write $\infty$ for the set $\mathbb{N}$. Let $\tilde{I}$ be the category whose objects are the finite sets $\underline{n}$ together with the set $\infty$ and whose morphisms are injections. Let $M\subseteq \tilde{I}$ be the full subcategory containing the object $\infty$. The category $M$ consists of one object $\infty$ and $\Hom_{M}(\infty,\infty)$ is the monoid (under composition) of injective set maps $\mathbb{N}\to \mathbb{N}$. We will abuse notation and also write $M$ for this monoid.
\begin{proposition}\label{prop:alginf}
 Let $X$ and $(Y,y_{0})$ be quasi-projective $G$-varieties over $k$. The natural maps
\begin{multline*}
\hocolim_{I}\Hom(X\times\Delta^{\bullet}_{k},\,\GG_{Y}^{V}(-)) \to \hocolim_{\tilde{I}} \Hom(X\times\Delta^{\bullet}_{k},\,\GG_{Y}^{V}(-)) \\
\leftarrow \hocolim_{M}\Hom(X\times\Delta^{\bullet}_{k},\,\GG_{Y}^{V}(\infty))
\end{multline*}
are equivariant weak equivalences.
\end{proposition}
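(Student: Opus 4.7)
The three homotopy colimits in the statement all arise from the functor $F(-) := \Hom_{\Sch/k}(X \times \Delta^{\bullet}_{k}, \GG^{V}_{(Y,y_{0})}(-))$, with $F(\infty) = \colim_{n} F(\underline{n})$ by the definition of $\GG^{V}_{(Y,y_{0})}(\infty)$. I would prove the two natural maps are equivariant weak equivalences by a cofinality analysis applied separately to each.

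For the right map $\hocolim_{M} F(\infty) \to \hocolim_{\tilde{I}} F$, I would verify the homotopy-cofinality criterion that $c \downarrow M$ has contractible nerve for every $c \in \tilde{I}$. The case $c = \infty$ is immediate, since the identity morphism is an initial object of $\infty \downarrow M$. For $c = \underline{n}$, the overcategory has as objects the injections $f \colon \underline{n} \hookrightarrow \mathbb{N}$, with morphisms $f_{1} \to f_{2}$ given by $g \in M$ satisfying $g f_{1} = f_{2}$; I would show contractibility by constructing a natural zig-zag from the identity endofunctor to the constant functor at the standard injection $f_{0}(i) = i$, using that any injection lies in the $M$-orbit of $f_{0}$ and any two choices of connecting $g$ can be interpolated through a third.

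For the left map $\hocolim_{I} F|_{I} \to \hocolim_{\tilde{I}} F$, the standard cofinality criterion fails over $c = \infty$ since $\infty \downarrow I$ is empty. Instead, I would filter the bar construction $B(*, \tilde{I}, F)$ by the position of the first occurrence of $\infty$ in a chain of morphisms, using the key observation that once a chain hits $\infty$ it stays there because $\infty$ admits no outgoing map to a finite set. The zeroth filtration piece is exactly $B(*, I, F|_{I}) = \hocolim_{I} F|_{I}$, while every later piece involves chains of the form $c_{0} \to \cdots \to c_{k-1} \to \infty \to \cdots \to \infty$; by a second cofinality argument using that the overcategories $\underline{n} \downarrow \infty$ (poset of injections into $\mathbb{N}$) are filtered, together with the identification $F(\infty) = \colim_{n} F(\underline{n})$, these extra pieces collapse onto the image already present from the zeroth level.

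Equivariance throughout is automatic: the $G$-actions on $\GG^{V}_{(Y,y_{0})}(\underline{n})$ come from the naturality construction of Section~\ref{sec:algK}, commute with all restriction maps $j^{*}$ and with the $M$-action on $\GG^{V}_{(Y,y_{0})}(\infty)$, and $G$-fixed points commute with the bar constructions defining each of the hocolims. The main obstacle is the contractibility of $\underline{n} \downarrow M$, which has neither initial nor terminal object; carrying out the zig-zag contraction carefully---exploiting that any two injections $\underline{n} \hookrightarrow \mathbb{N}$ differ by postcomposition with a (highly non-unique) element of $M$, and that the indeterminacy is controlled by the contractible stabilizer of $f_{0}$---is the essential technical step.
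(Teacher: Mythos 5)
Your overall architecture parallels the paper's: the paper handles the left-hand map by comparing the left Kan extension with the homotopy left Kan extension of $F|_{I}$ along $I\subseteq\tilde{I}$, using precisely the two inputs you isolate (the comma category $(I\downarrow\infty)$, whose objects are injections $\underline{n}\hookrightarrow\mathbb{N}$ for varying $n$, is filtered, and $F(\infty)=\colim_{n}F(\underline{n})$), and it handles the right-hand map by asserting that $M\subseteq\tilde{I}$ is right cofinal, referring the whole statement to Shipley's Proposition 2.2.9. Your filtration of $B(*,\tilde{I},F)$ by the first occurrence of $\infty$ is a hands-on substitute for the Kan-extension comparison and rests on the same filteredness fact, though as written it is only an outline ("the extra pieces collapse" is exactly what must be proved), and note that for fixed $n$ the category you call $\underline{n}\downarrow\infty$ is just the discrete set $\Hom_{\tilde{I}}(\underline{n},\infty)$; the filtered category you need is $(I\downarrow\infty)$.

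The genuine gap is in the step you yourself flag as essential: contractibility of $(\underline{n}\downarrow M)$. The mechanism you sketch---a zig-zag between the identity endofunctor and the constant functor at $f_{0}$, built from transitivity of the $M$-action and "interpolating" between different choices of connecting $g$---cannot work in that form, because there is \emph{no} natural transformation between the identity functor and any constant functor, in either direction. Indeed, a transformation $\mathrm{const}_{c_{0}}\Rightarrow\mathrm{id}$ would assign to each object $f$ an element $g_{f}\in M$ with $g_{f}c_{0}=f$ and $hg_{f}=g_{f}$ for every $h\in M$ with $hf=f$; since such $h$ may move any point outside $\im(f)$, this forces $\im(g_{f})\subseteq\im(f)$, which is finite, contradicting injectivity of $g_{f}\colon\mathbb{N}\to\mathbb{N}$. (The direction $\mathrm{id}\Rightarrow\mathrm{const}_{c_{0}}$ fails similarly: $g_{f}=g_{f}h$ for all stabilizing $h$ forces $h=\mathrm{id}$.) So transitivity of the action only gives connectivity of the comma category, and the indeterminacy of $g$ is not controlled by a contractible stabilizer. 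The standard repair is a two-step zig-zag through an auxiliary endofunctor: with $e,o\in M$ the even and odd embeddings $i\mapsto 2i$, $i\mapsto 2i-1$, let $T$ send $f\mapsto e\circ f$ and a morphism $h$ to the injection acting as $h$ on the evens and as the identity on the odds; then $e$ gives $\mathrm{id}\Rightarrow T$, and one writes down an explicit natural transformation from the constant functor at $o|_{\underline{n}}$ to $T$, choosing its components to send everything outside $o(\underline{n})$ into odd numbers (this is what kills the stabilizer obstruction). Alternatively one can simply quote Shipley's Proposition 2.2.9, as the paper in effect does. Once that step is supplied, your equivariance remarks are fine, since the indexing categories carry trivial $G$-action and fixed points commute with the bar-construction models of the homotopy colimits.
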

\begin{proof}
 The proposition could be obtained as a particular case of \cite[Proposition 2.2.9]{Shipley:THH} but for the convenience of the reader we sketch the full argument. Write $F(-) = \Hom(X\times\Delta^{\bullet}_{k},\,\GG_{Y}^{V}(-))$.  Because the overcategory $(I\downarrow\infty)$ is filtered, we have the equivariant weak equivalence $L_{hK}F(\infty)\to L_{K}F(\infty)$ where $L_{K}F(-)$ and $L_{hK}(-)$ respectively denote the left Kan extension and the homotopy left Kan extension of $F|_{I}$ along $I\subseteq \tilde{I}$. Moreover, we have that the functors $F$,$L_{K}F:\tilde{I}\to G\sSet$ are equal. Since $\hocolim_{I} F \to \hocolim_{\tilde{I}}L_{hK}F$ is an equivariant weak equivalence we have that
$\hocolim_{I} F \to \hocolim_{\tilde{I}}F$
is an equivariant weak equivalence as well. Since $M\subseteq \tilde{I}$ is right cofinal we have that $\hocolim_{M}F(\infty) \to \hocolim_{\tilde{I}}F$ is an equivariant weak equivalence as well.
\end{proof}

\begin{proposition}\label{prop:Malgwkeq}
The map 
$$
\Hom(X\times\Delta^{\bullet}_{k},\,\GG_{Y}^{V}(\infty)) \to \hocolim_{M}\Hom(X\times\Delta^{\bullet}_{k},\,\GG_{Y}^{V}(\infty))
$$ 
is an equivariant weak equivalence.
\end{proposition}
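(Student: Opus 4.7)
My plan is to show the equivariant weak equivalence by constructing a $G$-equivariant simplicial deformation retraction of the bar construction $|B_{\bullet}(*,M,F)|\simeq\hocolim_{M}F$ onto its $0$-skeleton $F:=\Hom(X\times\Delta^{\bullet}_{k},\,\GG^{V}_{Y}(\infty))$, following the template of Shipley's \cite[Proposition 2.2.9]{Shipley:THH}. The geometric input is a pointwise triviality statement for the $M$-action coming from the filtration $F=\colim_{N}F^{N}$, where $F^{N}:=\Hom(X\times\Delta^{\bullet}_{k},\,\GG^{V}_{Y}(N))$. Every simplex of $F$ lies in some $F^{N}$, since any morphism from a quasi-projective scheme into the ind-scheme $\GG^{V}_{Y}(\infty)=\colim_{N}\GG^{V}_{Y}(N)$ factors through a finite stage.

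The crux is the following stabilizer calculation: the submonoid $M_{N}\subseteq M$ consisting of injections $j:\mathbb{N}\to\mathbb{N}$ with $j(k)=k$ for all $k\le N$ acts trivially on $F^{N}\subseteq F$. Indeed, for $j\in M_{N}$, the transpose $j^{*}:\mcal{V}^{\infty}\to\mcal{V}^{\infty}$ fixes $e_{l}$ for $l\le N$ and sends $e_{l}$ into the span of $\{e_{k}:k>N\}$ for $l>N$; hence for any quotient $p:\mcal{V}^{\infty}_{X\times Y}\onto\mcal{M}$ supported on the first $N$ coordinates one has $p\circ j^{*}=p$. In particular, each simplex of $F$ has an open-type stabilizer in $M$, and these stabilizers are compatible with the filtration.

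With these stabilizers in place, the deformation retraction is produced by the combinatorial argument of \cite[Proposition 2.2.9]{Shipley:THH}: given a bar-simplex $(j_{1},\ldots,j_{p};f)\in B_{p}(*,M,F)$ with $f\in F^{N}$, one replaces the $j_{i}$ by representatives in $M_{N}$-cosets (which do not alter the action on $f$) and uses the resulting degeneracies of the bar construction to contract the simplex to its vertex $f$. The main obstacle is to perform this contraction coherently in both the simplicial and filtration parameters, so that simplices detecting different stages $F^{N}\subseteq F^{N'}$ are contracted compatibly; this bookkeeping is the technical content of Shipley's result. Equivariance with respect to the $G$-action is automatic, since the $G$- and $M$-actions on $\GG^{V}_{Y}(\infty)$ commute by construction (the $G$-action arises from the $G$-structure on $\mcal{V}$ while the $M$-action is by precomposition along the trivial $G$-representation $\mathbb{N}$).
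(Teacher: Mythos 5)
The two facts you isolate are correct but they are not sufficient, and the contraction you sketch does not exist as described. Your stabilizer computation (that the submonoid $M_{N}\subseteq M$ of injections fixing $\underline{N}$ pointwise acts trivially on the image of $\Hom(X\times\Delta^{\bullet}_{k},\GG^{V}_{Y}(N))$) is fine, but a general bar simplex $(j_{1},\ldots,j_{p};f)\in B_{p}(*,M,F)$ has entries $j_{i}$ that are arbitrary elements of $M$; they cannot be ``replaced by representatives in $M_{N}$-cosets'' without changing the simplex, and the edges of such a simplex join $f$ to $j\cdot f\neq f$. Contracting those edges into $F$ requires knowing that each $j\in M$ acts on $F$ by a map ($G$-equivariantly) homotopic to the identity, and this does not follow from your two inputs. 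Indeed, the discrete $M$-set $F$ of finite subsets of $\mathbb{N}$, with $\alpha\cdot A=\alpha(A)$ and filtration $F^{N}=\{A\subseteq\underline{N}\}$, satisfies both of your conditions, yet $F\to\hocolim_{M}F$ identifies all subsets of equal cardinality on $\pi_{0}$ and so is not a weak equivalence. Moreover \cite[Proposition 2.2.9]{Shipley:THH} is the input for the \emph{preceding} step (comparing $\hocolim_{I}$, $\hocolim_{\tilde{I}}$ and $\hocolim_{M}$, as in Proposition \ref{prop:alginf}); it does not provide a deformation retraction of $\hocolim_{M}F$ onto $F$, and no such retraction can exist in general, since $\hocolim_{M}F$ is a homotopy orbit construction.

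The missing geometric ingredient is exactly what the paper supplies: every $\alpha\in M$ acts on $F(\infty)=\Hom(X\times\Delta^{\bullet}_{k},\GG^{V}_{Y}(\infty))$ by a map that is $G$-equivariantly \emph{simplicially homotopic to the identity}. The homotopy is explicit and uses the algebraic simplices in an essential way: choosing $t:\Delta^{1}_{k}\iso\A^{1}_{k}$, one sends a pair $([q],\delta)$ to the quotient whose value on $e_{i}$ is $q(e_{j})$ when $\alpha(j)=i$ and $\delta^{*}(t)\cdot q(e_{i})$ when $i\notin\im(\alpha)$, interpolating between $\alpha^{*}[q]$ and $[q]$. (That some algebraic path is genuinely needed is visible already on $\pi_{0}$, where $[q]$ and $\alpha^{*}[q]$ are distinct points identified only up to algebraic equivalence; compare Theorem \ref{thm:algeq}.) Granting this, the proposition follows not by a retraction but from the equivariant homotopy fiber sequence $F(\infty)\to\hocolim_{M}F(\infty)\to BM$ provided by Quillen's lemma \cite[Lemma p.90]{Quillen:Ktheory}, together with the contractibility of $BM$ from \cite[proof of Lemma 3.1]{GW:Kmodels}; note that even after knowing each $\alpha$ acts homotopically trivially, the contractibility of $BM$ is still needed (the monoid $\mathbb{N}$ acting trivially on a point gives $\hocolim\simeq S^{1}$). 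Your filtration and stabilizer observations are consistent with all of this, but they replace neither the explicit homotopy nor the fiber-sequence argument.
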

\begin{proof}
Write $F(\infty)=\Hom(X\times\Delta^{\bullet}_{k},\,\GG_{Y}^{V}(\infty))$.  It suffices to show that $M$ acts on $F(\infty)$ by equivariant weak equivalences. Indeed, in this case it follows from \cite[Lemma p.90]{Quillen:Ktheory} that we have an equivariant homotopy fiber sequence 
$$
F(\infty) \to \hocolim_{M}F(\infty) \xrightarrow{\pi} BM.
$$
The map $\pi$ is surjective and as shown in \cite[proof of Lemma 3.1]{GW:Kmodels} $BM$ is contractible, from which the result follows. 

To see that $M$ acts by equivariant weak equivalences we proceed as follows. Let $\alpha\in M$ be an injection. Then $\alpha$ acts via $\alpha^{*}:F(\infty)\to F(\infty)$ where the quotient object $\alpha^{*}[q]$ is given by
$$
\alpha^{*}q(e_{i}) = \begin{cases}
q(e_{j}) & \textrm{if} \,\, \alpha(j) = i,\,\,\textrm{and} \\
0 & \textrm{if}\,\, i\notin \im(\alpha) .               
              \end{cases}
$$
Write $t:\Delta^{1}_{k}\iso\A^{1}_{k}$ for any map sent to $0$ and $1$ under the face maps. An $n$-simplex of $F(\infty)\times \Delta^{1}$ is a pair 
$([q:\mcal{V}^{\infty}_{X\times\Delta^{n}_{k}\times Y}\to \mcal{M}], \delta:[n]\to [1])$.
Associate to the above pair the quotient $[H]=[H([q],\delta)]$ defined by
$$
H([q],\delta) = \begin{cases}
q(e_{j}) & \textrm{if} \,\, \alpha(j) = i,\,\,\textrm{and} \\
\delta^*(t) \cdot q(e_{i}) & \textrm{if}\,\, i\notin \im(j) .               
              \end{cases}
$$
Here $t:\Delta_{k}^{1}\to \A_{k}^{1}$ is viewed as a global section of $\mcal{O}_{\Delta_{k}^{1}}$ and $\delta^*(t)$ is thus viewed as a global section of $\mcal{O}_{X\times \Delta^{n}_{k} \times Y}$ via pullback. This is easily seen to be a well-defined, equivariant  map of simplicial sets and satisfies $H([q],0) = \alpha^{*}[q]$ and $H([q],1) = [q]$ as desired.
\end{proof}

\begin{remark}\label{rem:WGW}
The bivariant $K$-theories constructed above follow Walker's constructions in \cite{Walker:Thomason} in the nonequivariant setting. In \cite{GW:Kmodels} Grayson-Walker use a slightly different construction and \cite{Ostvaer} is written following this version. These are related as follows. Let $V$ be any representation such that $V^{\infty}$ is a complete universe for $G$ (in particular the regular representation $k[G]$ satisfies this property). The $G$-ind-variety $K_{Y,0}^{(n)}$ used in \cite{Ostvaer} is the variety parameterizing $n$-tuples 
$[p_{1}:\mcal{V}_{X\times Y}^{\infty}\onto \mcal{M}_{1}], \ldots, [p_{n}:\mcal{V}_{X\times Y}^{\infty}\onto \mcal{M}_{n}]$ 
of finite and flat quotient objects which are in ``general position'' in the sense that $(p_{1},\ldots,p_{n})^{t}:\mcal{V}_{X\times Y}^{\infty}\to \oplus\mcal{M}_{i}$ is surjective.

An element of $\Hom(X,\,\GG_{\underline{n}_{+}\wedge Y_{+}}^{V}(\infty))$ is a quotient object $[\mcal{V}^{\infty}_{X\times (\underline{n}\times Y)}\onto \mcal{M}]$ which is finite and flat over $X$ and 
satisfies the additional condition that $\mcal{V}^{\infty}_{X}\to (\pi_{X})_{*}\mcal{M}$ remains surjective. As remarked by Walker \cite[p. 219]{Walker:Thomason}, giving such a quotient object naturally yields an $n$-tuple in general position. In other words we have a natural inclusion of $G$-ind-varieties
$G^{V}_{\underline{n}\times Y}(\infty) \hookrightarrow K_{Y,0}^{(n)}$, identifying $\Hom(X,G^{V}_{\underline{n}\times Y}(\infty))$ with $n$-tuples of quotient objects $([p_{i}:\mcal{V}^{\infty}_{X\times Y} \onto \mcal{M}_{i}])$ which are general position and satisfy the extra condition that $\mcal{V}_{X}^{\infty}\onto (\pi_{X})_{*}\mcal{M}_{i}$ remains surjective.
We thus have natural inclusions of equivariant $\Gamma$-spaces
$$
\Hom(X\times\Delta^{\bullet}_{k}, G^{V}_{-\times Y}(\infty)) \hookrightarrow \Hom(X\times\Delta^{\bullet}_{k},K_{Y,0}^{(-)})
$$
which 
induces an equivalence of the associated $K$-theory spectra by \cite[Theorem A.10]{Ostvaer}.
\end{remark}

\begin{proposition}\label{prop:algspecial}
 For quasi-projective $G$-varieties $X$ and $Y$, the equivariant $\Gamma$-spaces
$$
\underline{n}_{+}\mapsto |\mcal{A}_{G}(X,\,\underline{n}_{+}\wedge Y_{+})^{V}|
$$
and 
$$
\underline{n}_{+}\mapsto |\Hom(X\times\Delta^{\bullet}_{k},\,\GG_{\underline{n}_{+}\wedge Y_{+}}^{ V}(\infty))|
$$
are degreewise equivariantly weakly equivalent, and are both special.
\end{proposition}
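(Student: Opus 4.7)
The plan is as follows. For the first claim — degreewise equivariant equivalence — I observe that Propositions \ref{prop:alginf} and \ref{prop:Malgwkeq}, taken together, produce for every pointed quasi-projective $G$-variety $(Y,y_{0})$ a natural zig-zag
\[
|\mcal{A}_{G}(X,(Y,y_{0}))^{V}| \xleftarrow{\wkeq} \bigl|\hocolim_{M}\Hom(X\times\dk,\GG_{(Y,y_{0})}^{V}(\infty))\bigr| \xleftarrow{\wkeq} |\Hom(X\times\dk,\GG_{(Y,y_{0})}^{V}(\infty))|
\]
of equivariant weak equivalences. Since each of the three functors is natural in $(Y,y_{0})$, evaluating the zig-zag on $\underline{n}_{+}\wedge Y_{+}$ produces a natural zig-zag of degreewise equivariant weak equivalences connecting the two equivariant $\Gamma$-spaces in the statement.

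Specialness is preserved under such a zig-zag (the criterion in Lemma \ref{lem:Gspc}(1) tests against $G\times\Sigma_{n}$-equivariance, which the zig-zag respects), so it suffices to prove specialness of the second $\Gamma$-space $X(-):=|\Hom(X\times\dk,\GG^{V}_{(-)\wedge Y_{+}}(\infty))|$. By Lemma \ref{lem:Gspc}(1), this amounts to checking that the natural restriction map
\[
\pi:|\Hom(X\times\dk,\GG^{V}_{\underline{n}_{+}\wedge Y_{+}}(\infty))|\longrightarrow |\Hom(X\times\dk,\GG^{V}_{Y_{+}}(\infty))|^{n}
\]
induced by the $n$ projections $\underline{n}_{+}\to \underline{1}_{+}$ is a $G\times\Sigma_{n}$-equivariant weak equivalence for each $n$, where $\Sigma_{n}$ acts on the source by permuting the copies of $Y$ and on the target by permuting factors.

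To construct an equivariant homotopy inverse I would adapt the infinity-shift trick from the proof of Proposition \ref{prop:Malgwkeq}. Fix injections $\alpha_{1},\ldots,\alpha_{n}:\mathbb{N}\to \mathbb{N}$ with pairwise disjoint images, chosen symmetrically so that $\Sigma_{n}$ permutes the set $\{\alpha_{i}\}$ — for instance $\alpha_{i}(k)=n(k-1)+i$. Given an $n$-tuple $([p_{i}:\mcal{V}^{\infty}_{X\times Y}\onto \mcal{M}_{i}])_{i=1}^{n}$ representing a point of the target, precomposing the $i$-th component with $\alpha_{i}^{*}$ moves its support into disjoint coordinate strata, and the resulting combined map $\mcal{V}^{\infty}_{X\times\underline{n}\times Y}\onto\bigoplus_{i}\mcal{M}_{i}$ automatically satisfies conditions (1) and (2) of Definition \ref{defn:G}, defining a $G\times\Sigma_{n}$-equivariant map $\phi$ in the reverse direction.

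The main obstacle is verifying that $\pi\circ\phi$ and $\phi\circ\pi$ are each $G\times\Sigma_{n}$-equivariantly homotopic to the identity after passage to the $\Hom(X\times\dk,-)$ construction and geometric realization. I would carry this out by adapting the explicit $\A^{1}$-interpolation formula appearing in the proof of Proposition \ref{prop:Malgwkeq}: using a global section $t\in \Gamma(\Delta^{1}_{k},\mcal{O})$ coming from an isomorphism $\Delta^{1}_{k}\iso \A^{1}_{k}$, one scales the ``missing'' basis vectors by $t$ to interpolate between a shifted quotient and the original. The delicate points will be to verify that the interpolant remains finite and flat and that condition (2) persists along the whole homotopy, and to arrange the construction to respect both the $G$- and the $\Sigma_{n}$-actions simultaneously; the symmetric choice of the $\alpha_{i}$ is designed precisely to achieve the latter.
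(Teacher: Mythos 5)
Your proposal follows essentially the same route as the paper: the degreewise comparison via Propositions \ref{prop:alginf} and \ref{prop:Malgwkeq}, reduction of specialness to the $G\times\Sigma_{n}$-equivalence criterion of Lemma \ref{lem:Gspc}(1), and then the shift-by-disjoint-injections construction with an $\A^{1}$-interpolation homotopy. The only difference is that the paper does not carry out this last step by hand but cites \cite[Lemma 2.2]{GW:Kmodels}, observing (as in Remark \ref{rem:WGW}) that the maps and the explicit homotopy there are $G\times\Sigma_{n}$-equivariant and preserve the extra surjectivity condition (2) of Definition \ref{defn:G} --- precisely the verifications you flag as the remaining work.
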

\begin{proof}
Propositions \ref{prop:alginf} and \ref{prop:Malgwkeq} imply that the $\Gamma_{G}$-space $|\mcal{A}_{G}(X,\,-\wedge Y_{+})^{V}|$ is equivariantly weakly equivalent to the  $\Gamma_{G}$-space $|\Hom(X\times\dk,\,\GG_{- \wedge Y_{+}}^{ V}(\infty))|$. Therefore it suffices to show that the latter is special. By Lemma \ref{lem:Gspc} it suffices to show that
$$
\Hom(X\times\dk,\,\GG_{\underline{n}_{+} \wedge Y}^{ V}(\infty))\to \Hom(X\times\dk,\,\GG_{Y}^{ V}(\infty))^{\times n}
$$
is a $G\times \Sigma_{n}$-equivalence. This follows by observing that
the previous remark allows the argument given in \cite[Lemma 2.2]{GW:Kmodels} to carry over to our 
setting. That is, the maps defined there are equivariant, preserve the additional surjectivity condition and the explicit homotopy written there is 
$G\times\Sigma_{n}$-equivariant and preserves the additional surjectivity condition.
\end{proof}

 We write $\mcal{P}(G;X,Y)$ for the exact category of coherent $G$-modules on $X\times Y$ which are finite and flat over $X$ and write $\mcal{K}(G;X,Y)$ for the associated the $K$-theory spectrum and $\mcal{K}(G;X\times\Delta^{\bullet}_{k},Y)$ for the realization of the 
 simplicial spectrum $d\mapsto \mcal{K}(G;X\times\Delta^{d}_{k},Y)$. Note that when $X$ is smooth and $Y=\spec(k)$ then $\mcal{K}(G;X)\xrightarrow{\wkeq} \mcal{K}(G;X\times\Delta^{\bullet}_{k},\spec(k))$ where $\mcal{K}(G;X)$ is the equivariant algebraic $K$-theory spectrum introduced by Thomason \cite{Thomason:algKgroup}.

\begin{proposition}\label{prop:fix}
Let $X$, $Y$ be quasi-projective $G$-varieties and $H\subseteq G$ a subgroup. There are natural isomorphisms
$$
K_{n}^{H,\A^{1}}(X,Y)= \pi_{n}^{H}\mcal{K}_{G}(X,Y) \iso \pi_{n}\mcal{K}(H;X\times\Delta^{\bullet}_{k},Y).
$$
\end{proposition}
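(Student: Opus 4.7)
The strategy is to reduce the statement, via the specialness of the defining equivariant $\Gamma$-space and the identification of fixed point spectra with Segal spectra, to an instance of the equivariant Grayson--Walker theorem of \cite{Ostvaer} applied to the subgroup $H$.

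First, I would combine Propositions \ref{prop:algspecial} and Lemma \ref{lem:spcfix} to reduce to an ordinary-$\Gamma$-space computation. Since $|\mcal{A}_G(X,-\wedge Y_+)^{k[G]}|$ is a special equivariant $\Gamma$-space of the form $|X'(-)|$ with $X':\Gamma\to G\sSet$, Lemma \ref{lem:spcfix} gives a natural isomorphism
$$
\pi_n^H\,\mcal{K}_G(X,Y) \iso \pi_n\,\mathbf{B}\bigl(\mcal{A}_G(X,-\wedge Y_+)^{k[G]}\bigr)^H.
$$
It then suffices to identify this $H$-fixed $\Gamma$-space, up to weak equivalence, with a $\Gamma$-space whose associated Segal spectrum is $\mcal{K}(H;X\times\dk,Y)$.

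Second, I would unwind the $H$-fixed points in the infinite model. By Propositions \ref{prop:alginf} and \ref{prop:Malgwkeq}, the $H$-fixed $\Gamma$-space above is naturally weakly equivalent to the levelwise fixed points $|\Hom_{\Sch/k}(X\times\dk,\,\GG^{k[G]}_{-\wedge Y_+}(\infty))|^H$. Under the standard identification, an $H$-fixed morphism $X\times \Delta^d_k \to \GG^{k[G]}_{\underline{n}_+\wedge Y_+}(\infty)$ is precisely a quotient $[\mcal{V}^{\infty}_{X\times\Delta^d_k\times(\underline{n}_+\wedge Y_+)}\onto \mcal{M}]$ in coherent $H$-modules which is finite and flat over $X\times\Delta^d_k$ and whose defining surjection is $H$-equivariant. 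Crucially, $V=k[G]$ restricted to $H$ contains $k[H]$ as a summand, so $V^{\infty}$ remains a complete $H$-universe; thus all the Quot-scheme constructions and the associated $H$-action behave as in \cite{Ostvaer}.

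Third, I would apply the equivariant Grayson--Walker theorem. Exactly as in Remark \ref{rem:WGW}, with $G$ replaced by $H$, the natural inclusion
$$
|\Hom_{\Sch/k}(X\times\dk,\,\GG^{k[G]}_{-\wedge Y_+}(\infty))|^H \hookrightarrow |\Hom_{\Sch/k}(X\times\dk,\,K^{(-)}_{Y,0})|^H
$$
of ordinary $\Gamma$-spaces induces a weak equivalence of Segal spectra, and by \cite[Theorem A.10]{Ostvaer} applied with the group $H$, this Segal spectrum is naturally weakly equivalent to the realization $\mcal{K}(H;X\times\dk,Y)$ of the $K$-theory of $\mcal{P}(H;X\times\Delta^{\bullet}_k,Y)$. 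Composing the three equivalences gives the required natural isomorphism.

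The principal technical point will be the second step: verifying that the $H$-fixed locus of the Quot-type scheme $\GG^{k[G]}_{Y}(\infty)$ is genuinely the $H$-equivariant analogue of the same construction (so that the inclusion into $K^{(-)}_{Y,0}$ survives taking $H$-fixed points and remains $\Gamma$-structured), and that the regular representation of $G$ still provides a complete $H$-universe so that \cite[Theorem A.10]{Ostvaer} is directly applicable. Once these compatibilities are in place the conclusion is immediate.
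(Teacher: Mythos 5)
Your proposal is correct and takes essentially the same route as the paper: reduce via Proposition \ref{prop:algspecial} (together with Propositions \ref{prop:alginf} and \ref{prop:Malgwkeq}) and Lemma \ref{lem:spcfix} to the Segal spectrum of the $H$-fixed $\Gamma$-space of the infinite model $|\Hom(X\times\Delta^{\bullet}_{k},\GG^{k[G]}_{-\wedge Y_{+}}(\infty))|^{H}$, and then invoke {\O}stv{\ae}r's equivariant Grayson--Walker results to identify this with $\mcal{K}(H;X\times\Delta^{\bullet}_{k},Y)$. The only cosmetic differences are the order of the two reductions and that the paper cites both Theorems A.4 and A.10 of \cite{Ostvaer} where you cite A.10 together with the inclusion of Remark \ref{rem:WGW}.
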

\begin{proof}
By Proposition \ref{prop:algspecial}, we have that $\mcal{K}_{G}(X,Y)$ is equivariantly weakly equivalent to the $G$-spectrum associated to $|\Hom(X\times\Delta^{\bullet}_{k},\,\GG_{-\wedge Y_{+}}^{k[G]}(\infty))|$. Therefore by Lemma \ref{lem:spcfix}, we have  $\pi_{n}^{H}\mcal{K}_{G}(X,Y)=\pi_{n}\mathbf{B}|\Hom(X\times\Delta^{\bullet}_{k},\,\GG_{-\wedge Y_{+}}^{k[G]}(\infty))^{H}|$. Now \cite[Theorems A.4 and A.10]{Ostvaer} show that this last spectrum is naturally weakly equivalent to the spectrum $\mcal{K}(H;X\times\Delta^{\bullet}_{k},Y)$. 
\end{proof}
\begin{remark}
 In particular we have that 
$$
K_{0}^{G,\A^{1}}(X,Y) = \coker(i_{1}^{*}-i_{0}^{*}:K_{0}^{G,\,alg}(X\times\A^{1},Y)\to K_{0}^{G,\,alg}(X,Y)),
$$
where $i_{k}$ is the inclusion at $k\in\A^{1}$ and $K_{0}^{G,\,alg}(X,Y)=\pi_{0}\mcal{K}(G;X,Y)$.
\end{remark}

\subsection{Semi-topological $K$-theory}\label{sub:sst}
We now explain how to construct an equivariant version of the bivariant semi-topological $K$-theory introduced by Walker in \cite[section 2]{Walker:Thomason}. We begin by recalling a construction of Friedlander-Walker.

If $F$ is a presheaf of sets on $\Sch/\C$ and $T$ is a topological space then $F(T)$ is defined as the left Kan extension of $F$ along the functor $\Sch/\C\to Top$ given by $X\mapsto X^{an}$. Explicitly $F(T)$ is the filtered colimit
$$
F(T) = \colim_{T\to U^{an}}F(U)
$$ 
where the colimit is over continuous maps $T\to U^{an}$ with $U$ a (possibly singular) variety. Applying this in particular to the standard topological simplices $\Delta^{n}_{top}$ yields a presheaf of simplicial sets  $n\mapsto F(\Delta^{n}_{top})$. More generally $F$ could be a presheaf of simplicial sets or spectra and we obtain a presheaf of bisimplicial sets or simplicial spectra and write $F(\Delta^{\bullet}_{top})$ for its realization.

Using this construction we obtain the functor $I\to G\sSet$,
$$
\underline{n}\mapsto \Hom_{\Sch/\C}(X\times\dtop,\,\GG_{Y}^{V}(n)).
$$ 

Define the presheaf of $G$-simplicial sets $\AA^{\sst}_{G}(-,\,Y)^{V}$ by
$$
\AA^{\sst}_{G}(X,\,Y)^{V} = \hocolim_{I}\Hom_{\Sch/\C}(X\times\dtop,\,\GG_{Y}^{V}(-)).
$$

\begin{remark}\label{rem:mor}
 We make use of the simplicial mapping spaces $\Hom(X\times\dtop, \, Y)$ rather than the topological spaces $\Mor(X,Y)$, which are used in \cite{Walker:Thomason}. Shortly after \cite{Walker:Thomason} was written, Friedlander-Walker developed techniques, especially in \cite{FW:ratisos}, which make the spaces $\Hom(X\times\dtop, \, Y)$ easier to work with than the conceptually attractive space $\Mor(X,\,Y)$. When $X$ and $Y$ are projective, \cite[Corollary 4.3]{FW:sstfct} shows that we have a natural isomorphism of simplicial sets $\Hom(X\times\dtop,\,Y) \iso\sing_{\bullet}\Mor(X,\,Y)$. 
\end{remark}

\begin{remark}\label{rem:hocol}
For $G$ trivial, Walker \cite[section 3.1]{Walker:Thomason} introduces $\AA^{\sst}_{G}(X,\,Y)^{V}$ (or rather $\AA^{\semi}_{G}(X,\,Y)^{V}$) as the nerve of a topological category. As noted in Lemma 3.2  of loc. cit., this agrees with the construction as above.
\end{remark}

It will also be convenient to extend this definition to pairs $(Y,y_{0})$ consisting of a quasi-projective $G$-variety together with an invariant basepoint $y_{0}\in Y$. Note that $\GG_{y_{0}}^{V}(n)\subseteq \GG_{Y}^{V}(n)$ and this inclusion is functorial as well in the variable $n$. Define 
$$
\GG_{(Y,y_{0})}^{V}(n)(X) = \GG_{Y}^{V}(n)(X)/\GG_{y_{0}}^{V}(n)(X)
$$ 
and
$$
\AA^{\sst}_{G}(X,\,(Y,y_{0}))^{V} = \hocolim_{I}\Hom_{\Sch/\C}(X\times\dtop,\,\GG_{(Y,y_{0})}^{V}(-)).
$$

Note that $\AA^{sst}_{G}(X,\,(Y_{+},*))^{V} = \AA^{sst}_{G}(X,\,Y)^{V}$. Usually we omit the base-point from the notation when the context makes it clear what is meant. The $G$-simplicial sets $\AA^{sst}_{G}(X,\,Y)^{V}$ are clearly contravariantly natural in the first variable and are covariantly natural in the second variable by Lemma \ref{lem:covariant}. By the covariant naturality in the second variable we obtain equivariant $\Gamma$-spaces
$$
\underline{n}_{+}\mapsto |\AA^{sst}_{G}(X,\,\underline{n}_{+}\wedge Y_{+})^{V}|.
$$

We are most interested in the case where $V=\C[G]$.
Recall that an equivariant $\Gamma$-space $\mcal{A}$ gives rise to a $G$-spectrum (see Lemma \ref{lem:Gspc}) which we write $\mathbb{S}\mcal{A}$.
\begin{definition}\label{defKsst}
Let $X$ and $Y$ be quasi-projective $G$-varieties. The \textit{bivariant semi-topological $K$-theory spectrum} $\KK^{sst}_{G}(X,Y)$ is the $G$-spectrum defined by
$$
\KK^{sst}_{G}(X,Y)=\S |\AA^\sst(X,\,-\wedge Y_+)^{\C[G]}|.
$$
The \textit{bivariant semi-topological $K$-theory groups}  defined to be the homotopy groups
$$
K^{G,sst}_{*}(X,Y) = \pi^{G}_{*}\mcal{K}^{sst}_{G}(X,Y)
$$ 
of this spectrum.
\end{definition}
\begin{remark}
The spectra $\KK^{sst}_{G}(X,Y)$ are evidently contravariantly natural in the first variable and are covariantly natural in the second variable by Lemma \ref{lem:covariant}.
\end{remark}

\begin{lemma}\label{lem:delsst}
For any quasi-projective $G$-varieties $X$,$Y$ we have a natural equivariant weak equivalence
$$
\mcal{K}_{G}^{sst}(X,Y)\xrightarrow{\wkeq} \mcal{K}_{G}(X\times\Delta^{\bullet}_{top},Y) 
$$
\end{lemma}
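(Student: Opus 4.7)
The plan is to unpack both $G$-spectra in terms of their underlying equivariant $\Gamma$-spaces and to reduce the claim to a Friedlander--Walker style compatibility between the topological and algebraic simplicial directions $\dtop$ and $\dk$.

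First, both spectra arise from applying $\mathbb{S}$ to equivariant $\Gamma$-spaces. The LHS is $\mathbb{S}$ applied to $\underline{n}_+\mapsto |\hocolim_I \Hom_{\Sch/\C}(X\times\dtop,\,\GG^{\C[G]}_{\underline{n}_+\wedge Y_+}(-))|$. For the RHS, the Friedlander--Walker Kan extension commutes with $\mathbb{S}$, with $\hocolim_I$, with filtered colimits and with realization, so $\mcal{K}_G(X\times\dtop,Y)$ is the $\mathbb{S}$ of the $\Gamma$-space $\underline{n}_+\mapsto |\hocolim_I \Hom_{\Sch/\C}(X\times\dtop\times\dk,\,\GG^{\C[G]}_{\underline{n}_+\wedge Y_+}(-))|$, realized in both the topological and algebraic simplicial directions. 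The comparison map is induced by the augmentation $\spec(\C)=\Delta^0_\C\hookrightarrow \Delta^\bullet_\C$, which includes the LHS as the constant-in-$m$ part of the bisimplicial RHS.

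By functoriality of $\mathbb{S}$ it suffices to show the $\Gamma$-space map is a level-wise equivariant weak equivalence. Since all constructions commute appropriately with $\hocolim_I$ and realizations, and since the $G$-action is trivial on $\dtop$ and $\dk$, this in turn reduces (via Fubini for bisimplicial realizations) to showing that for each scheme $F=\GG^{\C[G]}_{\underline{n}_+\wedge Y_+}(k)$ and each $m\ge 0$, the map of simplicial sets
\[
\Hom(X\times\dtop,\,F) \longrightarrow \Hom(X\times\dtop\times\Delta^m_\C,\,F),
\]
induced by the projection $\Delta^m_\C\to\spec(\C)$, is an equivariant weak equivalence. The key observation is that $\Delta^m_\C\cong\A^m_\C$, and hence its analytification $(\Delta^m_\C)^{an}\cong\C^m$ is topologically contractible onto the vertex $v_0$. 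Given an $i$-simplex on the right, represented (after filtering) by a continuous map $X\times\Delta^i_{top}\to U^{an}$ together with a morphism $g\colon U\times\Delta^m_\C\to F$, the topological contraction $H\colon(\Delta^m_\C)^{an}\times\Delta^1_\R\to(\Delta^m_\C)^{an}$ produces a continuous homotopy between $g^{an}$ and $g^{an}\circ(id\times v_0)$, the latter factoring through $U^{an}$. The homotopy parameter $\Delta^1_\R$ is absorbed into an additional $\dtop$-direction, so after the usual prism subdivision this continuous homotopy becomes a bona fide simplicial homotopy in the $\dtop$-direction between our RHS $i$-simplex and the image of the LHS $i$-simplex $[X\times\Delta^i_{top}\to U^{an},\,g\circ(id\times v_0)\colon U\to F]$.

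The main technical obstacle is rigorously implementing this absorption of the contracting homotopy into the $\dtop$ simplicial direction, compatibly with the $\hocolim_I$ construction and with the filtered colimits defining the Friedlander--Walker Kan extension, so as to upgrade the point-wise continuous contraction to a simplicial homotopy of simplicial sets. Equivariance is automatic since $G$ acts trivially on both $\dtop$ and $\dk$, so the essential content is a nonequivariant Friedlander--Walker argument showing that the algebraic simplicial direction $\dk$ is absorbed into the topological simplicial direction $\dtop$.
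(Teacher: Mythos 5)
Your global strategy is coherent but genuinely different from the paper's. You aim at a space-level statement: after writing both sides as $\mathbb{S}$ of equivariant $\Gamma$-spaces and viewing the target as a bisimplicial object in the $\dtop$- and $\dC$-directions, you reduce by the realization lemma to showing that, for each fixed $m$, the projection-induced map $\Hom(X\times\dtop,\GG)\to\Hom(X\times\dtop\times\Delta^{m}_{\C},\GG)$ is an equivariant weak equivalence. The paper never proves such a level statement before group completion: it uses Lemma \ref{lem:spcfix} to reduce to $H$-fixed points of the associated (special) $\Gamma$-spaces and then invokes \cite[Lemma 1.2]{FW:compK} to see that the map of group-completed infinite loop spaces is a homology equivalence, hence a weak equivalence of simple spaces. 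Your reduction steps (commuting $\mathbb{S}$, $\hocolim_{I}$ and realizations with the filtered colimits, and checking that a levelwise equivariant equivalence of these $\Gamma$-spaces induces an equivalence of spectra) are fine, and the level claim you need is in fact true; if completed, your argument would prove something slightly stronger than the paper does.

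However, as written the proof of that level claim has a genuine gap, exactly at the step you yourself flag as the main technical obstacle, and the mechanism you sketch would not work literally. An $i$-simplex of $\Hom(X\times\dtop\times\Delta^{m}_{\C},\GG)$ is a pair consisting of a continuous map $\Delta^{i}_{top}\to U^{an}$ and an \emph{algebraic} morphism $X\times U\times\Delta^{m}_{\C}\to\GG$ (your description misplaces the factor $X$, but that is minor). Composing $g^{an}$ with the topological contraction of $(\Delta^{m}_{\C})^{an}$ produces, at intermediate times, continuous maps that are not analytifications of algebraic morphisms, so no amount of prism subdivision of that continuous homotopy yields simplices of the simplicial set in question. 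The correct implementation never leaves the algebraic category: the contraction to the vertex $v_{0}$ is an algebraic map $c:\A^{1}\times\Delta^{m}_{\C}\to\Delta^{m}_{\C}$, so one enlarges $U$ to $U\times\A^{1}$, pulls the morphism back along $\mathrm{id}\times c$, and feeds the homotopy parameter through a continuous path $\Delta^{1}_{top}\to(\A^{1})^{an}$ into the Friedlander--Walker colimit variable; this gives a genuine simplicial homotopy between $\mathrm{pr}^{*}s_{v_{0}}^{*}$ and the identity (equivariant, since $G$ acts trivially on $U$, $\A^{1}$ and $\Delta^{m}_{\C}$), exhibiting $\mathrm{pr}^{*}$ as a simplicial homotopy equivalence. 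This is precisely the trick used in Propositions \ref{prop:Malgwkeq} and \ref{prop:sstinf} and Lemma \ref{lem:ehpt} of the paper, so your approach is repairable; but without carrying it out, and with the analytic-contraction mechanism as stated, the central step of your proof is missing.
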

\begin{proof}
Let $V$ denote the regular representation. Note that if $\mcal{A}= \colim_{i}\mcal{A}_{i}$ is a filtered colimit of equivariant $\Gamma$-spaces then $\colim_{i}\mathbb{S}\mcal{A}_{i} \iso \mathbb{S}\mcal{A}$. Therefore we have
\begin{align*}
\mcal{K}_{G}(X\times \Delta^{n}_{top},Y) & \iso \mathbb{S}|\mcal{A}_{G}(X\times\Delta^{n}_{top}, - \wedge Y_{+})^{V}|\\
& = \mathbb{S}|\hocolim_{I}\Hom(X\times\Delta^{\bullet}_{\C}\times\Delta^{n}_{top},\mcal{G}^{V}_{-\wedge Y_{+}}(-))|.
\end{align*}
The projection $\Delta^{\bullet}_{\C}\to \Delta^{0}_{\C}$ induces a natural map
$\mcal{A}^{sst}_{G}(X,Y)^{V}_{n} \to \mcal{A}_{G}(X\times\Delta^{n}_{top},Y)^{V}$
and allowing $n$ to vary yields a natural map of bisimplicial sets. Taking realizations and associated spectra yields the map
$$
\mathbb{S}|\mcal{A}^{sst}_{G}(X,Y)^{V}| \to \mathbb{S}|\mcal{A}_{G}(X\times\Delta^{\bullet}_{top}, Y)^{V}|
$$ 
and it remains to show that this is an equivariant weak equivalence. Note that
$\mcal{A}_{G}(X\times\Delta^{n}_{top},-\wedge Y_{+})^{V}$ is a special equivariant $\Gamma$-space, being a filtered colimit of such. By Lemma \ref{lem:spcfix}, it suffices to show that 
$$
\mathbf{B}|\hocolim_{I}\Hom(X\times\Delta^{\bullet}_{top},\mcal{G}^{V}_{ Y})^{H}| \to  \mathbf{B}|\hocolim_{I}\Hom(X\times\Delta^{\bullet}_{\C}\times\Delta^{\bullet}_{top},\mcal{G}^{V}_{ Y})^{H}|
$$
is an equivalence of spectra for any subgroup $H\subseteq G$.  The map of associated infinite loop spaces 
$$
\Omega B|\hocolim_{I}\Hom(X\times\Delta^{\bullet}_{top},\mcal{G}^{V}_{ Y})^{H}| \to  
\Omega B|\hocolim_{I}\Hom(X\times\Delta^{\bullet}_{\C}\times\Delta^{\bullet}_{top},\mcal{G}^{V}_{ Y})^{H}|
$$
is a homology equivalence by \cite[Lemma 1.2]{FW:compK} and the result follows.
\end{proof}

The above together with the map induced by the projection $\Delta^{d}_{top}\to \Delta^{0}_{top}$ give us the natural transformations
\begin{equation}\label{eqn:algsst}
\mcal{K}_{G}(X, Y) \to \mcal{K}_{G}(X\times\Delta^{\bullet}_{top},Y)\xleftarrow{\wkeq}\mcal{K}_{G}^{sst}(X,Y).
\end{equation}

Write 
$$
\Hom_{\Sch/\C}(X,\,\GG_{Y}^{V}(\infty))=\colim_{n}\Hom_{\Sch/\C}(X ,\,\GG_{Y}^{V}(n))
$$
where the colimit is over the standard inclusions $\underline{n}\subseteq \underline{n+1}$ given by $i\mapsto i$ and the transition maps are induced by precomposition with the canonical surjections $\mcal{V}^{n+1} \to \mcal{V}^{n}$. Thus an element is a quotient object $[\mcal{V}^{\infty}_{X \times Y} \xrightarrow{p} \mcal{M}]$ where $\mcal{V}^{\infty}_{X\times Y} \to \mcal{M}$ factors as $\mcal{V}^{\infty}_{X\times U\times Y} \to \mcal{V}^{N}_{X\times U \times Y}\to \mcal{M}$ for some $m$. If $j:\mathbb{N}\to \mathbb{N}$ is an injection we have an induced map given by $j^{*}[\mcal{V}^{\infty}_{X \times Y} \xrightarrow{p} \mcal{M}] = [p\tilde{j}^{*}:\mcal{V}^{\infty}_{X \times Y} \xrightarrow{}\mcal{M}]$. Similarly for pointed varieties $(Y,y_{0})$. 

\begin{remark}\label{rem:twomodels}
We will see in Corollary \ref{niceKsst} below that  $\Hom(X\times\dtop,\,\GG_{Y}^{V}(\infty))$ gives rise to a $G$-spectrum that is weakly equivalent to $\mcal{K}^{sst}_{G}(X,Y)$ which was obtained from $\hocolim_{I}\Hom(X\times\dtop,\,\GG_{Y}^{V}(-))$. Because the first model is defined using a filtered colimit it is in many ways easier to work with and indeed we rely on this model to deduce many properties of our bivariant $K$-theory spectra. However the second model is better suited for the 
pairings and operations appearing in Section \ref{sec:pair} which is crucial for this paper and so it is crucial to have both models available. 
In fact, there is a third model, namely the one provided by
Lemma \ref{lem:delsst}, which allows for a convenient comparison map
to equivariant algebraic $K$-theory as used in section \ref{sec:algThom}.  
\end{remark}

 Recall that we write $\tilde{I}$ for the category whose objects are $\underline{n}$ and the set $\mathbb{N}$, which we denote by $\infty$, and whose morphisms are injections. Let $M\subseteq \tilde{I}$ be the full subcategory containing the object $\infty$. We also write $M$ for  $\Hom_{M}(\infty,\infty)$, which is the monoid (under composition) of injective maps $\mathbb{N}\to \mathbb{N}$.
\begin{proposition}\label{prop:sstinf}
 Let $X$ and $(Y,y_{0})$ be quasi-projective complex $G$-varieties. The natural maps
\begin{multline*}
\hocolim_{I}\Hom(X\times\dtop,\,\GG_{Y}^{V}(-)) \to \hocolim_{\tilde{I}} \Hom(X\times\dtop,\,\GG_{Y}^{V}(-)) \\
\leftarrow \hocolim_{M}\Hom(X\times\dtop,\,\GG_{Y}^{V}(\infty))\leftarrow \Hom(X\times\dtop,\,\GG_{Y}^{V}(\infty))
\end{multline*}
are equivariant weak equivalences.
\end{proposition}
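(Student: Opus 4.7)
The plan is to establish each of the three natural maps as an equivariant weak equivalence by adapting the arguments of Propositions \ref{prop:alginf} and \ref{prop:Malgwkeq} to the semi-topological setting. Write $F: \tilde I \to G\sSet$ for the functor $\underline{n} \mapsto \Hom(X \times \dtop, \GG_{(Y,y_0)}^V(n))$ and $\infty \mapsto \Hom(X \times \dtop, \GG_{(Y,y_0)}^V(\infty))$, where the latter is by construction the filtered colimit of the former along the transition maps used in the definition.

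For the two leftmost maps I would argue exactly as in Proposition \ref{prop:alginf}. Since the overcategory $(I \downarrow \infty)$ is filtered, the natural map from the homotopy left Kan extension $L_{hK}(F|_I)$ to the ordinary left Kan extension $L_K(F|_I)$ along $I \subseteq \tilde I$ is an equivariant weak equivalence in each simplicial degree, and $L_K(F|_I)$ coincides with $F$ on $\tilde I$ by definition of $F(\infty)$. This gives the first equivalence, and the second follows from the right cofinality of $M \subseteq \tilde I$.

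For the rightmost map I would follow the proof of Proposition \ref{prop:Malgwkeq}. By Quillen's Lemma applied to the action of $M$ on $F(\infty)$, together with the contractibility of $BM$, it suffices to show that each $\alpha \in M$ acts on $F(\infty)$ by an equivariant weak equivalence. I would exhibit an explicit equivariant simplicial homotopy $F(\infty) \times \Delta^1 \to F(\infty)$ from $\alpha^*$ to the identity, adapting the formula used in \ref{prop:Malgwkeq}. An $n$-simplex of $F(\infty) \times \Delta^1$ is represented by a triple $(\phi: \Delta^n_{top} \to U^{an},\, [q:\mcal{V}^\infty_{X \times U \times Y} \onto \mcal{M}],\, \delta:[n] \to [1])$ with $U$ a complex variety. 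Writing $\delta_{top}: \Delta^n_{top} \to \Delta^1_{top} \hookrightarrow (\Delta^1_\C)^{an}$ for the geometric realization of $\delta$ composed with the standard embedding, I send the triple to the $n$-simplex represented by Walker's quotient on $X \times U \times \Delta^1_\C \times Y$, with the parameter $t$ now a global section of the structure sheaf of $U \times \Delta^1_\C$, together with the composite $(\phi, \delta_{top}): \Delta^n_{top} \to (U \times \Delta^1_\C)^{an}$.

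The main step to verify is that this construction descends to the filtered colimit presenting $F(\infty)(\Delta^n_{top})$ and yields $\alpha^*$ and the identity at the endpoints $\delta = 0, 1$; both reductions follow from the corresponding algebraic verifications in \ref{prop:Malgwkeq} applied to each variety $U$ representing a given simplex, together with the functoriality of the construction in $U$. Equivariance of the homotopy is inherited from the equivariance of the underlying algebraic construction, so the whole argument reduces cleanly to the algebraic case already handled.
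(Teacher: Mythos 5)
Your proposal is correct and follows essentially the same route as the paper: the first two maps are handled exactly as in Proposition \ref{prop:alginf} (filteredness of $(I\downarrow\infty)$ plus right cofinality of $M$), and the last map is handled via Quillen's Lemma, contractibility of $BM$, and an explicit equivariant simplicial homotopy from $\alpha^{*}$ to the identity obtained by pulling the quotient back over an extra affine-line factor and scaling the complementary coordinates by its parameter. The only cosmetic difference is that you phrase the homotopy parameter via $\Delta^{1}_{\C}$ where the paper uses $\A^{1}$ with a chosen path $g:\Delta^{1}_{top}\to(\A^{1})^{an}$, which is the same construction.
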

\begin{proof}
The proof that the first two arrows are weak equivalences is exactly as for Proposition \ref{prop:alginf}. The proof of the last one is a variant of the argument in the proof of Proposition \ref{prop:Malgwkeq}. 
That is, it suffices to show that $M$ acts by equivariant weak equivalences on $\Hom(X\times\dtop,\,\GG_{Y}^{V}(\infty))$. We write $F(\infty)=\Hom(X\times\dtop,\,\GG_{Y}^{V}(\infty))$. 

Let $\alpha\in M$ be an injection. Then $\alpha$ acts via $\alpha^{*}:F(\infty)\to F(\infty)$ where the quotient object $\alpha^{*}[q]$ is given by
$$
\alpha^{*}q(e_{i}) = \begin{cases}
q(e_{j}) & \textrm{if} \,\, \alpha(j) = i,\,\,\textrm{and} \\
0 & \textrm{if}\,\, i\notin \im(\alpha) .               
              \end{cases}
$$

Let $g:\Delta^{1}_{top}\to (\A^{1})^{an}$ be a map  which sends $0$ to $0$ and $1$ to $1$. We define a simplicial homotopy $H:F(\infty)\times \Delta^{1} \to F(\infty)$ between $\alpha^{*}$ and $id$ as follows. An $n$-simplex of $F(\infty)\times \Delta^{1}$ is represented by a triple 
$$
(f:\Delta^{n}_{top}\to U^{an},[q:\mcal{O}^{\infty}_{X\times U \times Y}\to \mcal{M}], \delta:[n]\to [1]).
$$
 We denote by $\mcal{M}'$ the pullback of $\mcal{M}$ to $X\times U\times \A^{1} \times Y$ and associate to the above triple the element $(f\times \delta^{*}(g):\Delta^{n}_{top}\to (U\times \A^{1})^{an}, H)\in F(\infty)$ where $[H=H(f,[q],\delta):\mcal{O}^{\infty}_{X\times U\times \A^{1}\times Y} \to \mcal{M}']$ is 
$$
H(f,[q],\delta) = \begin{cases}
q(e_{j}) & \textrm{if} \,\, \alpha(j) = i,\,\,\textrm{and} \\
t \cdot q(e_{i}) & \textrm{if}\,\, i\notin \im(j) .               
              \end{cases}
$$
Here $t=id:\A^{1}\to \A^{1}$ is viewed as a global section of $\mcal{O}_{\A^{1}}$ and hence of $\mcal{O}_{X\times U\times \A^{1} \times Y}$ via pullback. This is easily seen to be a well-defined, equivariant  map of simplicial sets and satisfies $H([q],0) = \alpha^{*}[q]$ and $H([q],1) = [q]$ as desired.
\end{proof}

\begin{proposition}\label{prop:sstspecial}
 For quasi-projective $G$-varieties $X$ and $Y$ the equivariant $\Gamma$-spaces
$$
\underline{n}_{+}\mapsto |\mcal{A}^{sst}_{G}(X,\,\underline{n}_{+}\wedge Y_{+})^{V}|
$$
and 
$$
\underline{n}_{+}\mapsto |\Hom(X\times\dtop,\,\GG_{\underline{n}_{+}\wedge Y}^{ V}(\infty))|
$$
are degreewise equivariantly weakly equivalent, and they are both special.
\end{proposition}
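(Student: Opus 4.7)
The plan is to follow closely the proof of Proposition \ref{prop:algspecial}, with Proposition \ref{prop:sstinf} playing the role that Propositions \ref{prop:alginf} and \ref{prop:Malgwkeq} played in the algebraic setting.

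First, Proposition \ref{prop:sstinf} applied levelwise in $\underline{n}_{+}$ shows that the two displayed equivariant $\Gamma$-spaces are connected by a natural zig-zag of degreewise equivariant weak equivalences, so it suffices to verify that the second one, $\underline{n}_{+}\mapsto |\Hom(X\times\dtop,\,\GG_{\underline{n}_{+}\wedge Y_{+}}^{V}(\infty))|$, is special. By Lemma \ref{lem:Gspc}(1), this reduces to checking that for every $n$ the canonical map
$$
\Hom(X\times\dtop,\,\GG_{\underline{n}_{+}\wedge Y_{+}}^{V}(\infty))\to \Hom(X\times\dtop,\,\GG_{Y}^{V}(\infty))^{\times n}
$$
is a $G\times \Sigma_{n}$-equivalence, where $\Sigma_{n}$ acts by permuting the $n$ copies of $Y$ (resp.\ the $n$ factors on the right).

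The argument for this is exactly the equivariant enhancement of \cite[Lemma 2.2]{GW:Kmodels} already used in the proof of Proposition \ref{prop:algspecial}, but applied with the Friedlander--Walker extension $F(\dtop)=\colim_{\dtop\to U^{an}}F(U)$ in place of $F(\dk)$. A quotient object $[\mcal{V}^{\infty}_{X\times (\underline{n}\times Y)}\onto \mcal{M}]$ satisfying the surjectivity condition in Definition \ref{defn:G} decomposes as $\mcal{M}=\bigoplus_{i=1}^{n}\mcal{M}_{i}$ with each $[\mcal{V}^{\infty}_{X\times Y}\onto \mcal{M}_{i}]$ in general position, and the inverse construction reassembles such an $n$-tuple by combining the quotients along disjoint blocks of coordinates of $\mcal{V}^{\infty}$ via an injection $\mathbb{N}\sqcup\cdots\sqcup\mathbb{N}\hookrightarrow\mathbb{N}$; these maps, and the explicit linear homotopy relating the reassembled tuple to the identity, are all $G\times\Sigma_{n}$-equivariant and preserve the additional condition (2) of Definition \ref{defn:G}. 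Since the Friedlander--Walker extension is a filtered colimit of ordinary schematic Hom-sets, this argument goes through verbatim after applying $\Hom(X\times\dtop,-)$, yielding the required $G\times \Sigma_{n}$-equivalence.

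The main point to verify carefully is that the general-position homotopy, which in the algebraic case is built from a coordinate $t$ on $\A^{1}_{k}$, can be run inside the Friedlander--Walker extension: given a representative $(f:\Delta^{m}_{top}\to U^{an},[q])$ of a simplex, one enlarges $U$ to $U\times\A^{1}$ and uses a path $\Delta^{1}_{top}\to (\A^{1})^{an}$ from $0$ to $1$, exactly as in Proposition \ref{prop:sstinf}. This makes the equivariant homotopy realize on realizations of the bisimplicial sets, and the specialness follows. The specialness of the first $\Gamma$-space then follows from the zig-zag of equivariant weak equivalences already established.
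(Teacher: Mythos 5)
Your proposal is correct and follows essentially the same route as the paper: reduce via Proposition \ref{prop:sstinf} to the $\GG^{V}(\infty)$-model, then apply Lemma \ref{lem:Gspc} and carry over the $G\times\Sigma_{n}$-equivariant version of \cite[Lemma 2.2]{GW:Kmodels} as in Proposition \ref{prop:algspecial}. Your extra remark about running the general-position homotopy inside the Friedlander--Walker extension (enlarging $U$ to $U\times\A^{1}$ and using a path $\Delta^{1}_{top}\to(\A^{1})^{an}$, as in Proposition \ref{prop:sstinf}) is exactly the detail the paper leaves implicit when it says the proof is ``similar''.
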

\begin{proof}
Using Proposition \ref{prop:sstinf}, the proof is similar to the one of
Proposition \ref{prop:algspecial}.
\end{proof}

\begin{corollary}\label{niceKsst}
 For quasi-projective $G$-varieties $X$ and $Y$
there are equivariant weak equivalences of $G$-spectra 
$$\mcal{K}^{sst}_{G}(X,Y) \simeq  
\mathbb{S} |\Hom(X\times\dtop,\,\GG_{-\wedge Y}^{ V}(\infty))| 
$$
and hence equivariant weak equivalences of associated infinite loop spaces
$$
\Omega^{\infty}\mcal{K}^{sst}_{G}(X,Y) \simeq
\Omega B|\Hom(X\times\dtop,\,\GG_{Y}^{ V}(\infty))|.
$$
\end{corollary}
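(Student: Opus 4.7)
The plan is to derive both assertions directly from Proposition \ref{prop:sstspecial} together with the general properties of $\mathbb{S}$ recorded in Lemma \ref{lem:Gspc}. First, Proposition \ref{prop:sstspecial} provides a levelwise equivariant weak equivalence of equivariant $\Gamma$-spaces
$$
|\mcal{A}^{sst}_{G}(X,\,-\wedge Y_{+})^{V}| \xrightarrow{\wkeq} |\Hom(X\times\dtop,\,\GG_{-\wedge Y}^{V}(\infty))|,
$$
and both sides are special. Since $\mathbb{S}X = \mathbb{U}\widehat{X}$ is built from the two-sided bar construction $|B(-,\Gamma_{G},PX)|$, and the simplicial spaces involved are good (their constituents are realizations of $\Gamma$-simplicial sets, as in Lemma \ref{lem:spcfix}), a levelwise equivariant weak equivalence of special equivariant $\Gamma$-spaces is sent by $\mathbb{S}$ to an equivariant weak equivalence of $G$-spectra. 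This yields the first equivalence.

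For the second equivalence, write $X = |\Hom(X\times\dtop,\,\GG_{-\wedge Y}^{V}(\infty))|$. Since $X$ is a special equivariant $\Gamma$-space, Lemma \ref{lem:Gspc}(2) guarantees that $\mathbb{S}X$ is a positive equivariant $\Omega$-spectrum and that $X(\underline{1}_{+})\to \Omega (\mathbb{S}X)_{1}$ is an equivariant group completion. In particular,
$$
\Omega^{\infty}\mathbb{S}X \wkeq \Omega(\mathbb{S}X)_{1} = \Omega \widehat{X}(S^{1}) = \Omega|B(S^{1},\Gamma_{G},PX)|.
$$
As in the proof of Lemma \ref{lem:spcfix}, the cofinality arguments of Segal (applied equivariantly via Shimakawa) identify $|B(S^{1},\Gamma_{G},PX)|$ with the usual bar construction $BX(\underline{1}_{+})$ on the equivariant $H$-space $X(\underline{1}_{+}) = |\Hom(X\times\dtop,\,\GG^{V}_{Y}(\infty))|$. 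Combining these identifications yields
$$
\Omega^{\infty}\mcal{K}^{sst}_{G}(X,Y) \wkeq \Omega B|\Hom(X\times\dtop,\,\GG^{V}_{Y}(\infty))|.
$$

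The only nontrivial verification is that the bar construction definition of $\mathbb{S}X$ interacts with special $\Gamma$-spaces as expected, both in preserving levelwise equivalences and in identifying its structure maps with the classical $\Gamma$-space delooping; but this is precisely the content of \cite{Shimakawa:Gloop} (in particular Proposition 1.2 there), which has already been invoked in the preceding lemmas. No new ideas are required beyond bookkeeping of these standard tools.
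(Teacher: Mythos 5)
Your proposal is correct and follows essentially the same route as the paper, whose proof simply cites Proposition \ref{prop:sstspecial} for the first equivalence and Lemma \ref{lem:Gspc} for the second; you have merely filled in the standard bookkeeping (that $\mathbb{S}$ preserves levelwise equivalences of special equivariant $\Gamma$-spaces, and the bar-construction identification of $\widehat{X}(S^{1})$ with the classical delooping) that the paper leaves implicit.
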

\begin{proof}
The first weak equivalence follows from Proposition 
\ref{prop:sstspecial} and the second one from
Lemma \ref{lem:Gspc}.
\end{proof}

We finish this section by showing that the group $K_{0}^{G,\,sst}(X,Y)$ has the expected description in terms of certain coherent $G$-modules modulo algebraic equivalence. Let $\mcal{M}_{1}$ and $\mcal{M}_{2}$ be two coherent $G$-modules on $X\times Y$ which are finite and flat over $X$. We say that $\mcal{M}_{1}$ and $\mcal{M}_{2}$ are  \textit{algebraically equivalent} if there is a smooth, connected curve $C$ (without $G$-action), two closed points $c_{1},c_{2}\in C$, a coherent $G$-module $\mcal{N}$ on $X\times C \times Y$ which is finite and flat over $X\times C$ such that $\iota_{k}^{*}\mcal{N} = \mcal{M}_{k}$, where $\iota_{k}$ is inclusion $X\times \{c_{k}\}\times Y \subseteq X\times C \times Y$. Write $\sim_{alg}$ for this equivalence relation. We write $K_{*}^{G,\,alg}(X,Y)$ for the algebraic $K$-theory of the exact category $\mcal{P}_{G}(X,Y)$ of coherent $G$-modules on $X\times Y$ which are finite and flat over $X$. 

\begin{theorem}\label{thm:algeq}
 Let $X$ and $Y$ be quasi-projective $G$-varieties. We have an isomorphism
$$
K_{0}^{G,\,sst}(X,Y) \iso K_{0}^{G,\,alg}(X,Y)/\sim_{alg},
$$
which is contravariantly natural in the first variable and covariantly in the second.
\end{theorem}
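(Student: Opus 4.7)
My plan is to start from the group-completion presentation of $K_0^{G,sst}(X,Y)$ supplied by Corollary \ref{niceKsst}. Combining it with Lemma \ref{lem:spcfix} and Segal's theorem (noting that the relevant $\Gamma$-space is special by Proposition \ref{prop:sstspecial}, so passing to $G$-fixed simplicial sets again yields a special $\Gamma$-space and $\pi_{0}$ of the group completion agrees with the group completion of $\pi_{0}$ of the underlying space), I would identify
\[
K_{0}^{G,sst}(X,Y) \;\iso\; M^{\mathrm{gp}}, \qquad M := \pi_{0}\bigl|\Hom(X\times\dtop,\GG^{\C[G]}_{Y}(\infty))^{G}\bigr|,
\]
with the monoid structure on $M$ induced by direct sum of quotients.

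Next I would unpack the right-hand side. A $0$-simplex of $\Hom(X\times\dtop,\GG^{V}_{Y}(\infty))^{G}$ is a $G$-equivariant quotient $\mcal{V}^{\infty}_{X\times Y}\onto \mcal{M}$ with $\mcal{M}$ coherent, finite and flat over $X$ (plus the pushforward surjectivity of Definition \ref{defn:G}(2)). By the left Kan extension formula $F(T)=\colim_{T\to U^{an}}F(U)$, and because $G$-fixed points commute with filtered colimits, a $1$-simplex is represented by a triple $(U,\,f\colon \Delta^{1}_{top}\to U^{an},\,\mcal{N})$ with $U$ a complex variety (trivial $G$-action), $f$ continuous, and $\mcal{N}$ a $G$-equivariant coherent module on $X\times U\times Y$, finite and flat over $X\times U$; the face operators restrict $\mcal{N}$ to its fibers over $f(0)$ and $f(1)$. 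Hence $M$ is the abelian monoid of isomorphism classes of finite and flat coherent $G$-modules on $X\times Y$ modulo the equivalence relation of being connected through such a family. The key technical step is to show this relation coincides with $\sim_{alg}$. The easy direction is immediate: for any smooth connected curve $C$ over $\C$, the Riemann surface $C^{an}$ is path-connected, so any $\sim_{alg}$-witness $(C,c_{0},c_{1},\mcal{N})$ furnishes such a triple. For the converse, given $(U,f,\mcal{N})$, the image $f([0,1])$ lies in a single Zariski connected component of $U$; a standard Bertini-type argument produces an irreducible curve $C'\subseteq U$ through $u_{0}$ and $u_{1}$ (iterating along a chain of irreducible components if necessary), and passing to the normalization $C\to C'$ yields a smooth connected curve with lifts $c_{i}$ of $u_{i}$. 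Pullback of $\mcal{N}$ along $X\times C\to X\times U$ preserves finiteness and flatness (both are stable under arbitrary base change), producing the desired $\sim_{alg}$-witness. I expect this Bertini/normalization step to be the main obstacle; everything else is essentially formal.

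Finally, I would identify $M^{\mathrm{gp}}$ with $K_{0}^{G,alg}(X,Y)/\!\sim_{alg}$. By definition $K_{0}^{G,alg}(X,Y)$ is the Grothendieck group of $\mcal{P}(G;X,Y)$, that is the free abelian group on isomorphism classes modulo the short exact sequence relations $[B]=[A]+[C]$ for $0\to A\to B\to C\to 0$. Such a relation is already implied by $\sim_{alg}$: the deformation to the normal cone supplies a $G$-equivariant coherent module on $X\times\A^{1}\times Y$, finite and flat over $X\times\A^{1}$, with fibers $A\oplus C$ over $0$ and $B$ over $1$, and $\A^{1}$ is a smooth connected curve. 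Consequently, $K_{0}^{G,alg}(X,Y)/\!\sim_{alg}$ is the group completion of isomorphism classes modulo $\sim_{alg}$, which matches $M^{\mathrm{gp}}$ by the previous step. Naturality in both variables follows from the naturality of the underlying constructions, using Lemma \ref{lem:covariant} for the covariant variable.
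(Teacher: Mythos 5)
Your overall route coincides with the paper's: identify $K_{0}^{G,\,sst}(X,Y)$ with the group completion of $\pi_{0}$ of the $G$-fixed points of $\Hom(X\times\dtop,\GG_{Y}^{\C[G]}(\infty))$ via Corollary \ref{niceKsst}, describe that $\pi_{0}$ by a chain-of-curves argument (this is the paper's Lemma \ref{lem:pi0}), and then compare with $K_{0}^{G,\,alg}(X,Y)/\sim_{alg}$; your degeneration of a short exact sequence to the direct sum over $\A^{1}$ is also part of the standard argument. The genuine gap is in the sentence where you conclude that $M$ is ``the abelian monoid of isomorphism classes of finite and flat coherent $G$-modules on $X\times Y$'' modulo the connecting relation. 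A vertex of the relevant space is not a module but a quotient object $[\mcal{V}^{\infty}_{X\times Y}\onto\mcal{M}]$ subject to condition (2) of Definition \ref{defn:G}, and the $1$-simplices are likewise presented families. Hence $M$ only records modules $\mcal{M}$ that admit an equivariant surjection $\mcal{V}^{N}_{X\times Y}\onto\mcal{M}$ with $\mcal{V}^{N}_{X}\to(\pi_{X})_{*}\mcal{M}$ surjective, and one must in addition check that the $\pi_{0}$-class is independent of the chosen surjection. Not every finite flat module qualifies: already for trivial $G$, $X=\P^{1}$, $Y=\spec(\C)$, the module $\mcal{O}_{\P^{1}}(-1)$ is finite and flat over $X$ but receives no surjection from $\mcal{O}^{N}_{\P^{1}}$, so it is represented by no vertex at all; similarly, witnesses of $\sim_{alg}$ need not carry such presentations.

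Because of this, your final step ``which matches $M^{\mathrm{gp}}$ by the previous step'' is not yet a proof. One still has to show that $M^{\mathrm{gp}}\to K_{0}^{G,\,alg}(X,Y)/\sim_{alg}$ is surjective (every class is a difference of classes of presentable modules, as in $[\mcal{O}(-1)]=2[\mcal{O}]-[\mcal{O}(1)]$ in $K_{0}(\P^{1})$) and injective (relations coming from algebraic-equivalence witnesses without presentations, or from short exact sequences involving non-presentable modules, must be realized inside $M$). This is precisely the nontrivial content that the paper imports by running the arguments of \cite[Propositions 2.10 and 2.12]{FW:sstfct} in the equivariant setting; your proposal needs those arguments or a substitute for them. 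The remaining parts of your outline --- the reduction via Lemmas \ref{lem:Gspc} and \ref{lem:spcfix}, the filtered-colimit description of simplices, the chain-of-curves/normalization step, the $\A^{1}$-degeneration of extensions, and naturality via Lemma \ref{lem:covariant} --- are correct and agree with the paper's proof.
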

\begin{proof}
In this proof we let $V=\C[G]$. It follows from Corollary \ref{niceKsst} that 
$$
K_{0}^{G,\,sst}(X,Y)= [\pi_{0}\Hom(X\times\dtop,\,\GG_{Y}^{V}(\infty))^{G}]^{+}.
$$
Using Lemma \ref{lem:pi0} below together with the same argument as in \cite[Proposition 2.10]{FW:sstfct} shows that  $\pi_{0}\Hom(X\times\dtop,\,\GG_{Y}^{V}(\infty))^{G}$ consists of equivalence classes of coherent $G$-modules $\mcal{M}$ on $X\times Y$ which are finite and flat over $X$ and admit a surjection of the form $\mcal{V}^{N}_{X\times Y}\onto \mcal{M}$ (the equivalence class of $\mcal{M}$ is independent of the surjection). 
Here  $\mcal{M}_{1}$ and $\mcal{M}_{2}$ are equivalent if there is a smooth, connected curve $C$, two closed points $c_{1},c_{2}\in C$, and a coherent $G$-module $\mcal{N}$ on $X\times X\times Y$ such that $\mcal{M}_{i} =\mcal{N}|_{c_{i}}$ where  $\mcal{N}$ is  finite and flat over $X\times C$ and it admits a surjection of the form $\mcal{V}_{X\times C\times  Y}^{K}\onto \mcal{N}$.
The monoid structure on $\pi_{0}\Hom(X\times\dtop,\,\GG_{Y}^{V}(\infty))^{G}$ induced by the $H$-space structure is given by direct sum of modules. We thus have a natural map
$$
K_{0}^{G,\,sst}(X,Y) = [\pi_{0}\Hom(X\times\dtop,\,\GG_{Y}^{V}(\infty))^{G}]^{+} \to K_{0}^{G}(X,Y)/\sim_{alg}.
$$

The argument given in \cite[Proposition 2.12]{FW:sstfct} applies here to show that this map is an isomorphism. The claim regarding the functorialities is easily verified.
\end{proof}

\begin{lemma}\label{lem:pi0}
 Let $\mcal{F}$ be a presheaf of sets on $\Sch/\C$. Then $\pi_{0}\mcal{F}(\Delta^{\bullet}_{top}) = \mcal{F}(\C)/\sim$ where $\sim$ is the equivalence relation generated by $x_{1}\sim x_{2}$ if there is a smooth connected curve $C$, an element $z\in \mcal{F}(C)$, two closed points $c_{0}, c_{1}\in C$ such that $x_{k}= \epsilon_{k}^{*}z$ where $\epsilon_{k}:c_{i}\to C$ is the inclusion.
\end{lemma}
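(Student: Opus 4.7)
The plan is to identify $\pi_0 F(\dtop)$ directly as $F(\C)$ modulo the equivalence relation generated by the $0$-faces of $1$-simplices, and then show this coincides with $\sim$. Writing $F(T) = \colim_{(U,f)}F(U)$ for the Kan extension, indexed by pairs $(U, f:T\to U^{an})$ with morphisms $(U,f)\to (U',f')$ given by scheme maps $\phi:U'\to U$ such that $\phi^{an}\circ f'=f$, one checks that for $T=\Delta^0_{top}$ the pair $(\spec(\C), \mathrm{id})$ is a terminal object in the indexing category, hence $F(\Delta^0_{top})=F(\C)$. A $1$-simplex is represented by a triple $(U,\,f:\Delta^1_{top}\to U^{an},\,z\in F(U))$, and its $i$-th face is $u_i^*z\in F(\C)$, where $u_i:\spec(\C)\to U$ is the complex point corresponding to the vertex $f(i)$. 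Thus $\pi_0 F(\dtop) = F(\C)/\approx$, where $\approx$ is generated by identifications $u_0^*z\approx u_1^*z$ whenever $u_0,u_1\in U(\C)$ are the endpoints of a continuous path in $U^{an}$ and $z\in F(U)$.

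For $\sim\subseteq \approx$: given a smooth connected curve $C$, points $c_0,c_1\in C$, and $z\in F(C)$, the analytification $C^{an}$ is a connected Riemann surface and thus path-connected. Any continuous path $\gamma:\Delta^1_{top}\to C^{an}$ with $\gamma(i)=c_i$ produces a $1$-simplex $(C,\gamma,z)$ whose faces are $c_0^*z$ and $c_1^*z$, yielding $c_0^*z\approx c_1^*z$.

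For $\approx\subseteq \sim$: given a $1$-simplex $(U,f,z)$ with faces $u_0^*z, u_1^*z$, the image $f(\Delta^1_{top})$ is compact and path-connected in $U^{an}$. Since $U$ has finitely many irreducible components, the path passes from one to the next only through their pairwise intersections, so we may subdivide $f$ into finitely many subpaths each lying in a single irreducible component and whose break-points are closed points common to two consecutive components. This reduces us to the case where $U$ is irreducible. For irreducible $U$, a standard Bertini-type argument, cutting by hyperplane sections through $u_0$ and $u_1$ iterated until the dimension drops to one, followed by passage to the normalization, produces a smooth connected curve $C$ together with a morphism $\phi:C\to U$ and points $c_0,c_1\in C$ satisfying $\phi(c_i)=u_i$. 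Then $\phi^*z\in F(C)$ and the equality $c_i^*\phi^*z=u_i^*z$ exhibits $u_0^*z\sim u_1^*z$.

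The main obstacle will be the geometric step in the last paragraph: extracting from a purely topological path $f$ a chain of algebraic curve-morphisms that witnesses the same identification in $F(\C)$. The irreducibility reduction and the existence of a smooth connected curve through two prescribed closed points of an irreducible complex quasi-projective variety are classical, but they must be applied with some care, in particular to ensure that the break-points used when passing between irreducible components are actually complex points of each of the relevant components, so that the chain of curve relations closes up.
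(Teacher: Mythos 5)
Your route is essentially the paper's: identify $\mcal{F}(\Delta^{0}_{top})$ with $\mcal{F}(\C)$, present $\pi_{0}\mcal{F}(\dtop)$ as $\mcal{F}(\C)$ modulo the relation generated by the two faces of $1$-simplices, obtain the inclusion $\sim\,\subseteq\,\approx$ from path-connectedness of $C^{an}$, and for the converse convert a topological $1$-simplex $(U,f,z)$ into a chain of smooth connected curves mapping to $U$. Those first steps are correct and usefully spell out what the paper leaves implicit.

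The gap is in your reduction to the irreducible case. The assertion that $f$ can be cut into finitely many subpaths, each lying in a single irreducible component, is false in general: for $U=V(xy)\subseteq\A^{2}$ a path may cross the origin infinitely often, alternating between the two axes, so the closed sets $f^{-1}(U_{i}^{an})$ admit no finite subordinate subdivision of $[0,1]$. The fix is to not follow the path at all, which is exactly what the paper's proof does: since $f(\Delta^{1}_{top})$ is connected and the analytification of each Zariski connected component of $U$ is open and closed in $U^{an}$, the endpoints $u_{0},u_{1}$ lie in one and the same connected component $U'$ of $U$; restrict $z$ to $U'$ and invoke the standard fact that any two closed points of a connected quasi-projective complex variety are joined by a finite chain of smooth connected curves $g_{i}:C_{i}\to U'$ with $g_{i}(d_{i})=g_{i+1}(c_{i+1})$, $g_{1}(c_{1})=u_{0}$, $g_{r}(d_{r})=u_{1}$ (chain through irreducible components meeting in closed points, an irreducible curve through the two relevant points inside each component, then normalize). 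Pulling $z$ back along the $g_{i}$ gives the chain of $\sim$-relations between $u_{0}^{*}z$ and $u_{1}^{*}z$. A smaller caveat: your Bertini sketch for irreducible $U$ (iterated hyperplane sections through $u_{0}$ and $u_{1}$) also needs care, since such sections can be reducible with the two points on different pieces; but because $\sim$ is only the generated equivalence relation, a chain of curves suffices, so the classical chain-connectivity statement is all that is needed.
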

\begin{proof}
 Write $\iota_{0},\iota_{1}:\Delta^{0}\to \Delta^{1}$ for the inclusions at $0$ and $1$. If $x_{1}\sim x_{2}$ then they are in the same path component of $\pi_{0}\mcal{F}(\Delta^{\bullet}_{top})$. Suppose that $x_{1}$ and $x_{2}$ lie in the same path component. This means that there is an $(f:\Delta^{1}_{top}\to T^{an}, y\in \mcal{F}(T))$ such that $\iota_{k}^{*}y= x_{i}$ for some $T$. Since we may assume that $T$ is connected, there is a sequence smooth connected curves $C_{1},\ldots C_{r}$, maps $g_{i}:C_{i}\to T$, and points $c_{i}, d_{i}\in C_{i}$ such that $g_{i}(d_{i}) = g_{i+1}(c_{i+1})$ with $g_{1}(c_{1})=f(0)$ and $g_{r}(d_{r})=f(1)$, from which the lemma follows.
\end{proof}

\subsection{Topological $K$-theory}\label{sec:top}
We now introduce the model for bivariant equivariant $K$-theory with which we work. We restrict attention to $G$-$CW$-complexes. 
For a $G$-$CW$ complex $T$ write $\mcal{C}(T)$ for the nonunital topological ring of all continuous complex valued functions on $T$. When $(T,t_{0})$ is a based $G$-$CW$ complex $T$ write $\mcal{C}_{0}(T)$ for the nonunital topological ring of continuous complex valued functions on $T$ which vanish at the base-point. Note when $T$ is unbased, that $\mcal{C}(T) = \mcal{C}_{0}(T_{+})$.

Complex conjugation defines a natural involution on $\mcal{C}_{0}(T)$. When $T$ is compact this makes $\mcal{C}_{0}(T)$ into a $C^{*}$-algebra. Additionally the $G$-action on $T$ induces a $G$-action on $\mcal{C}_{0}(T)$, where $G$ acts by continuous $\C$-algebra homomorphisms.

If $V$ is a unitary complex $G$-representation then $\End_{\C}(V^{\oplus n})$ is also a $C^{*}$-algebra and $G$ acts on it via $C^{*}$-algebra homomorphisms. For a pointed $G$-$CW$-complex $T$ let $\underline{\Hom}_{*}(\mcal{C}_{0}(T),\,\End_{\C}(V^{\oplus n}))$ be the space of involution-preserving, continuous, $\C$-algebra homomorphisms ($*$-map for short). We write
$$
\mcal{F}_{T}^{V}(n) = \underline{\Hom}_{*}(\mcal{C}_{0}(T),\,\End_{\C}(V^{\oplus n}))
$$
for this space. 
We have that $\Map(W,\End_{\C}(V^{\oplus n})) \iso \mcal{C}_{0}(W)\otimes\End_{\C}(V^{\oplus n})$, which together with adjointness gives
$$
\Map(W,\mcal{F}_{T}^{V}(n)) \iso \underline{\Hom_{*}}(\mcal{C}_{0}(T),\mcal{C}_{0}(W)\otimes\End_{\C}(V^{\oplus n})).
$$
The space $\mcal{F}_{T}^{V}(n)=\underline{\Hom}_{*}(\mcal{C}_{0}(T),\,\End_{\C}(V^{\oplus n}))$ has a $G$-action given by the usual formula. A $*$-map $f:\mcal{C}_{0}(T)\to \End_{\C}(V^{\oplus n})$ factors as a composition of $*$-maps $\mcal{C}_{0}(T)\to \mcal{C}_{0}(\{t_{1},\ldots, t_{k}\})\to \End_{\C}(V^{n}))$ where $\{t_{1},\ldots, t_{k}\}\subseteq T$  is a finite set of points. Thus a point of $\mcal{F}_{T}^{V}(n)$ is identified with a finite (unordered) list of points $t_{1},\ldots, t_{k}$ of $T$ together with a list of pairwise orthogonal subspaces $W_{1},\ldots, W_{k}$ of $V^{\oplus n}$. Then $g\in G$ acts by $g\cdot(t_{1},\ldots, t_{k}; W_{1},\ldots, W_{k}) = (g\cdot t_{1},\ldots g\cdot t_{k};g\cdot W_{1},\ldots, g\cdot W_{k})$. Thus for $H\subseteq G$, an $H$-invariant point of $\mcal{F}_{T}^{V}(n)$ is specified by an $H$-set of points $\{t_{1},\ldots, t_{k}\}$ of $T$ a sub-$H$-space $W\subseteq V^{\oplus n}$ and a vector space decomposition $W = W_{1}\perp \cdots \perp W_{k}$ such that $h\cdot W_{
i} = W_{j}$ for some $j$.

Given an injection $\alpha:\underline{m}\hookrightarrow \underline{n}$ write $\tilde{\alpha}:\End_{\C}(V^{\oplus m}) \to \End_{\C}(V^{\oplus n})$ for the map 
$$
\tilde{\alpha}(f) = \alpha_{*}f\alpha^{*}.
$$
Let $A$, $B$ be based $G$-$CW$ complexes. Using the maps $\tilde{\alpha}$, the assignment
$$
\underline{n}\mapsto \Hom_{cts*}(A\wedge\Delta^{\bullet}_{top\,+},\mcal{F}_{B}^{V}(n)),
$$
defines a functor $I\to G\sSet$.
Now define for based $G$-$CW$-complexes $(A,a_{0})$ and $(B,b_{0})$
$$
\mcal{A}^{top}_{G}(A, B)^{V} = \hocolim_{I}\Hom_{cts*}(A\wedge\Delta^{\bullet}_{top\,+},\mcal{F}_{B}^{V}(-)).
$$
Note that we omit the base point from the notation, leaving it implicit. For each $V$ we now have $\Gamma$-spaces
$$
\underline{n}_{+}\mapsto |\mcal{A}_{G}^{top}(A,\underline{n}_{+}\wedge B)^{V}|.
$$
\begin{remark}
We will work mostly with unpointed spaces when comparing with the algebraic theories. In this context, if $A$ is unpointed it is convenient to still write $|\mcal{A}_{G}^{top}(A, B)^{V}|$, which is to be interpreted as $|\mcal{A}_{G}^{top}(A_{+}, B)^{V}|$, and similarly for $B$ unpointed. This abuse of notation should not lead to confusion.
\end{remark}

Again the main case of interest is $V=\C[G]$. 

\begin{definition}
 Let $A$, $B$ be based $G$-$CW$ complexes. Define 
$$
\bu^{\mf{c}}_{G}(A,\,B)
$$ 
to be the $G$-spectrum associated to the equivariant $\Gamma$-space  $|\mcal{A}^{top}(A,-\wedge B)^{\C[G]}|$. Write 
$$
\bu^{G,\mf{c}}_{*}(A,B) = \pi^{G}_{*}\bu^{\mf{c}}_{G}(A,B)
$$
for the homotopy groups of this spectrum. More generally for a subgroup $H\subseteq G$, write $\bu^{H,\mf{c}}_{*}(A,B) = \pi^{H}_{*}\bu^{\mf{c}}_{G}(A,B)$.
\end{definition}

\begin{remark}
Let $V$ be a real representation. Combining Corollary \ref{cor:cc} and Proposition \ref{prop:conn} we have  that $\bu^{G,\mf{c}}_{V}(A_{+},S^{0}) = KU^{-V}_{G}(A_{+})$.
\end{remark}

As before, it is convenient to introduce a variant of the construction above. We define 
$$
\Hom_{cts*}(A\wedge\Delta^{\bullet}_{top\,+},\,\mcal{F}_{B}^{V}(\infty)) = \colim_{n}\Hom_{cts*}(A\wedge\Delta^{\bullet}_{top\,+},\,\mcal{F}_{B}^{V}(n))
$$ 
where the colimit is over the standard inclusions
 $\underline{n}\subseteq \underline{n+1}$ given by $i\mapsto i$. 
As in the algebraic and semi-topological cases we have the following (see the paragraph preceding Proposition \ref{prop:alginf} for a reminder on the indexing categories used below). 
\begin{proposition}\label{prop:topinf}
For based $G$-$CW$-complexes $A$, $B$, the natural maps
\begin{multline*}
\hocolim_{I}\Hom(A\wedge\Delta^{\bullet}_{top\,+},\,\mcal{F}_{B}^{V}(-)) \to \hocolim_{\tilde{I}} \Hom(A\wedge\Delta^{\bullet}_{top\,+},\,\mcal{F}_{B}^{V}(-)) \\
\leftarrow \holim_{M}\Hom(A\wedge\Delta^{\bullet}_{top\,+},\,\mcal{F}_{B}^{V}(\infty))\leftarrow \Hom(A\wedge\Delta^{\bullet}_{top\,+},\,\mcal{F}_{B}^{V}(\infty)).
\end{multline*}
are equivariant weak equivalences.
\end{proposition}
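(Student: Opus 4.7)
The strategy parallels that of Propositions \ref{prop:alginf} and \ref{prop:Malgwkeq}. Write $F(-) = \Hom_{cts*}(A\wedge \Delta^{\bullet}_{top\,+},\mcal{F}_B^V(-))$. For the first two equivalences I would run the same formal argument as in the proof of Proposition \ref{prop:alginf}: since the overcategory $(I\downarrow \infty)$ is filtered, the canonical map from the homotopy left Kan extension of $F|_I$ along $I\subseteq \tilde I$ to the ordinary left Kan extension is an equivariant weak equivalence, and the latter coincides with $F:\tilde I\to G\sSet$; moreover $M\subseteq \tilde I$ is right cofinal, which gives the second equivalence. These steps only use the combinatorics of the indexing categories and are insensitive to whether the underlying target is algebraic, semi-topological, or topological.

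For the remaining comparison $F(\infty)\to \hocolim_M F(\infty)$, I would mimic the proof of Proposition \ref{prop:Malgwkeq}: it suffices to show that the monoid $M$ acts on $F(\infty)$ through equivariant weak equivalences, for then Quillen's lemma yields an equivariant homotopy fiber sequence $F(\infty)\to \hocolim_M F(\infty)\to BM$ with $BM$ contractible and the projection to $BM$ surjective, so the inclusion of the fiber is the desired equivalence.

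The main task, and the main obstacle, is to exhibit for each $\alpha\in M$ an equivariant simplicial homotopy between $\alpha^{*}$ and the identity on $F(\infty)$. The algebraic and semi-topological proofs scale the coordinates outside $\im(\alpha)$ by a global affine-line coordinate, an argument with no direct topological counterpart since scaling a $\ast$-homomorphism does not itself produce a $\ast$-homomorphism. Instead, I would observe that the isometric embedding $\tilde\alpha:V^{\oplus\infty}\to V^{\oplus\infty}$ induced by $\alpha$ is $G$-equivariant (the $G$-action on $V^{\oplus\infty}$ is componentwise while $\alpha$ only permutes summand indices), and construct an explicit $G$-equivariant continuous path $t\mapsto u_t$ of isometries with $u_0 = \mathrm{id}$ and $u_1 = \tilde\alpha$ via an Eilenberg-swindle type rotation in $V^{\oplus\infty}$. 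The formula $H(q,t) = u_t\, q\, u_t^{*}$ then defines an equivariant path of $\ast$-homomorphisms from $q$ to $\tilde\alpha q\tilde\alpha^{*} = \alpha^{*}q$, and assembling this over $\Delta^n_{top}$ by pulling back along $\delta:[n]\to [1]$ yields the required simplicial homotopy $F(\infty)\times \Delta^1 \to F(\infty)$. Equivariance and the boundary conditions $H(q,0)=q$, $H(q,1)=\alpha^{*}q$ then follow by inspection.
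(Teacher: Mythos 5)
Your skeleton coincides with the paper's, whose entire proof is a one-line appeal to Propositions \ref{prop:alginf} and \ref{prop:sstinf}: filteredness of $(I\downarrow\infty)$ together with $F=L_{K}F$ and right cofinality of $M\subseteq\tilde I$ handle the first two maps, and the third (reading the $\holim_{M}$ in the statement as the intended $\hocolim_{M}$, as in Proposition \ref{prop:sstinf}) reduces via Quillen's lemma and the contractibility of $BM$ to showing that $M$ acts on $F(\infty)$ by equivariant weak equivalences. Where you genuinely depart from the template is at that last point, and your diagnosis is right: the scaling homotopies of Propositions \ref{prop:Malgwkeq} and \ref{prop:sstinf} have no verbatim $C^{*}$-analogue, since multiplying a $*$-homomorphism by $t$ destroys multiplicativity, whereas here $\alpha\in M$ acts by conjugation $q\mapsto \alpha_{*}q\alpha^{*}$ by the isometry $\alpha_{*}$ of $V^{\oplus\infty}$, so the natural fix is to conjugate along a path of isometries $u_{t}$ from $\mathrm{id}$ to $\alpha_{*}$; taking $u_{t}=R_{t}\otimes\mathrm{id}_{V}$ with $R_{t}$ acting only on the index coordinates makes $G$-equivariance automatic, and the endpoint identifications $H(q,0)=q$, $H(q,1)=\alpha^{*}q$ are as you say.

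The one step you assert rather than prove is the existence of such a path, and as phrased (``an Eilenberg-swindle type rotation'') it is not yet an argument: the rotation $\cos(\tfrac{\pi t}{2})f+\sin(\tfrac{\pi t}{2})g$ is an isometry only when the images of $f$ and $g$ are orthogonal, and nothing is orthogonal to the image of the identity, so no single rotation starts at $u_{0}=\mathrm{id}$. What you are really invoking is the standard contractibility (in particular path-connectedness) of the space of linear isometries of a countably infinite-dimensional inner product space into itself -- equivariantly, of the universe $V^{\oplus\infty}$ -- which is proved by a block-by-block inductive construction or can simply be cited (e.g.\ the linear-isometries results in \cite{LLM}). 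In addition, since $F(\infty)$ is by definition the colimit $\colim_{N}\Hom_{cts*}(A\wedge\Delta^{\bullet}_{top\,+},\mcal{F}_{B}^{V}(N))$, for your $H$ to be a well-defined continuous (hence simplicial) homotopy compatible with stabilization you should arrange, for each $N$, that all the subspaces $u_{t}(V^{\oplus N})$, $t\in[0,1]$, lie in a single finite block $V^{\oplus M}$, so that conjugation carries the $N$-th stage continuously into the $M$-th stage; this holds for the standard constructions but is exactly where an unspecified path in the isometry space could fail. With these two points supplied, your argument is complete and accomplishes what the paper's terse ``similar to Propositions \ref{prop:alginf} and \ref{prop:sstinf}'' leaves implicit; indeed the paper itself later relies on the same unstated fact (in the proof of Theorem \ref{thm:tel}) that injections $\mathbb{N}\to\mathbb{N}$ act on $\mcal{F}^{V}(\infty)$ by equivariant homotopy equivalences.
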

\begin{proof}
 This is similar to the proofs of Propositions \ref{prop:alginf} and \ref{prop:sstinf}.
\end{proof}

\begin{proposition}\label{prop:topspecial}
 For based $G$-$CW$-complexes, the equivariant $\Gamma$-space
$$
\underline{n}_{+}\mapsto |\mcal{A}^{top}_{G}(A,\underline{n}_{+}\wedge B)^{V}|
$$
and 
$$
\underline{n}_{+}\mapsto |\Hom(A\times\dtop,\,\GG_{\underline{n}_{+}\wedge B}^{ V}(\infty))|
$$
are degreewise equivariantly weakly equivalent, and they are both special.
\end{proposition}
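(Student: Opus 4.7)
The plan is to follow exactly the template of Propositions \ref{prop:algspecial} and \ref{prop:sstspecial}, now in the topological setting.

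First, Proposition \ref{prop:topinf} already provides the degreewise equivariant weak equivalence between the two equivariant $\Gamma$-spaces once one applies it with $B$ replaced by $\underline{n}_{+}\wedge B$ for each $n$. This naturality in the $\Gamma$-variable is clear, since all the comparison maps in \ref{prop:topinf} come from the inclusion of indexing categories $I\subset \tilde{I}\supset M$ and from the canonical map into the homotopy colimit over $M$, none of which interact with the $\underline{n}_{+}$ variable. Consequently, it suffices to establish the specialness of one of the two equivariant $\Gamma$-spaces; since the colimit model is easier to analyze explicitly, I will establish that the $\Gamma$-space $\underline{n}_{+}\mapsto |\Hom(A\wedge\Delta^{\bullet}_{top\,+},\mcal{F}^{V}_{\underline{n}_{+}\wedge B}(\infty))|$ is special.

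By Lemma \ref{lem:Gspc}(1), specialness will follow once I show that for every $n$ the canonical map
$$
\Hom(A\wedge\Delta^{\bullet}_{top\,+},\mcal{F}^{V}_{\underline{n}_{+}\wedge B}(\infty))\to \Hom(A\wedge\Delta^{\bullet}_{top\,+},\mcal{F}^{V}_{B}(\infty))^{\times n}
$$
is a $G\times\Sigma_{n}$-equivariant weak equivalence, where $G\times\Sigma_{n}$ acts on the source by permuting the $n$ copies of $B$ (and acting on $V$), and on the target by the diagonal action combined with permutation of factors. The map sends a $*$-homomorphism $f:\mcal{C}_{0}(\underline{n}_{+}\wedge B)\to\End_{\C}(V^{\oplus\infty})$ to the tuple $(f_{1},\ldots,f_{n})$ obtained by restricting to the $n$ summands, where $f_{i}$ is the composition with the inclusion $\mcal{C}_{0}(B)\hookrightarrow\mcal{C}_{0}(\underline{n}_{+}\wedge B)$ onto the $i$-th wedge summand. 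Both equivariance checks are immediate from the definitions.

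To construct an equivariant inverse up to homotopy I would follow the argument in \cite[Lemma 2.2]{GW:Kmodels} (used also in the proof of \ref{prop:algspecial}), translated into the $C^{*}$-algebra language. Fix a $G\times\Sigma_{n}$-equivariant isometric isomorphism $V^{\oplus\infty}\cong (V^{\oplus\infty})^{\oplus n}$ given by splitting $\mathbb{N}$ into $n$ infinite blocks permuted by $\Sigma_{n}$. Given a tuple $(f_{1},\ldots,f_{n})$ of $*$-maps into $\End_{\C}(V^{\oplus\infty})$, the orthogonal direct sum $f_{1}\perp\cdots\perp f_{n}$ lands in $\End_{\C}((V^{\oplus\infty})^{\oplus n})\cong\End_{\C}(V^{\oplus\infty})$ and defines a $*$-map out of $\mcal{C}_{0}(\underline{n}_{+}\wedge B)$; by construction it is $G\times\Sigma_{n}$-equivariant. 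The composition in one direction is the identity, and in the other direction one uses the convex path in the unitary group between $\mathrm{id}_{V^{\oplus\infty}}$ and a chosen isometry, implemented by simplicial homotopies via maps $\Delta^{1}_{top}\to U(V^{\oplus\infty})$, to produce an equivariant simplicial homotopy to the identity, in the spirit of the homotopies written in Propositions \ref{prop:Malgwkeq} and \ref{prop:sstinf}.

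The main obstacle will be the verification that this homotopy is genuinely $G\times\Sigma_{n}$-equivariant at the level of $*$-maps: one must choose the path of isometries so that both the $G$-action (which permutes coordinates of $V^{\oplus\infty}$ according to the regular action) and the $\Sigma_{n}$-action (which permutes the $n$ infinite blocks) are respected, which forces the path to be chosen equivariantly in the product unitary group of the summands. Once the existence of such an equivariant path is observed, which is possible because the relevant unitary groups are equivariantly contractible for the regular action on $V^{\oplus\infty}$, the resulting homotopy of $*$-maps is equivariant and restricts to a $G\times\Sigma_{n}$-equivariant simplicial homotopy between the composite and the identity, completing the proof.
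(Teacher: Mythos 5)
Your overall strategy is the paper's: reduce via Proposition \ref{prop:topinf} to the colimit model $\mcal{F}^{V}_{-\wedge B}(\infty)$ and verify specialness through Lemma \ref{lem:Gspc}(1) by adapting \cite[Lemma 2.2]{GW:Kmodels}. The gap lies in your implementation of the homotopy inverse at level $\infty$. First, neither composite of your block-sum map with the restriction map $\epsilon$ is the identity: a point of $\mcal{F}^{V}_{\underline{n}_{+}\wedge B}(\infty)$ is an $n$-tuple of $*$-maps whose images are mutually orthogonal but sit anywhere in $V^{\oplus\infty}$, not inside your chosen blocks, so both composites are conjugates by the block isometries and both need homotopies. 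More seriously, the equivariance you assert ``by construction'' is with respect to the wrong action: in Lemma \ref{lem:Gspc}(1) the group $\Sigma_{n}$ acts only by permuting the wedge summands of $\underline{n}_{+}\wedge B$ (equivalently the factors of the $n$-fold product); it does not act on $V^{\oplus\infty}$. Declaring that $\Sigma_{n}$ permutes your $n$ infinite blocks of $\mathbb{N}$ changes the action, and for the actual action the block-sum map is not $\Sigma_{n}$-equivariant. Moreover a componentwise conjugation homotopy $f_{i}\mapsto v_{i,t}f_{i}v_{i,t}^{*}$ is strictly $\Sigma_{n}$-equivariant only if the path $v_{i,t}$ is (up to phase) independent of $i$, which is impossible since at $t=1$ it must equal the $n$ distinct block isometries. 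Equivariant contractibility of the unitary group of the complete universe $\C[G]^{\infty}$ handles the $G$-direction but not this $\Sigma_{n}$-direction, so the step you yourself flag as the main obstacle is not resolved by the argument you give.

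By contrast, the paper never constructs an inverse at level $\infty$: for each finite $M$ it defines $\eta:\mcal{F}^{V}_{B}(M)^{\times n}\to\mcal{F}^{V}_{\underline{n}_{+}\wedge B}(nM)$ using the block injections $\alpha_{i}:\underline{M}\hookrightarrow\underline{nM}$, and shows that the two composites with $\epsilon$ are equivariantly homotopic not to identities but to the standard stabilization inclusions $\mcal{F}^{V}(M)\to\mcal{F}^{V}(nM)$ of the ind-system, via an explicit homotopy interpolating between $\tilde{\alpha_{1}}f_{i}$ and $\tilde{\alpha_{i}}f_{i}$; passing to the colimit then yields that $\epsilon$ induces a $G\times\Sigma_{n}$-weak equivalence. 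If you wish to keep your level-$\infty$ formulation, you must either redo this finite-stage comparison or argue directly on $H$-fixed points for every subgroup $H\subseteq G\times\Sigma_{n}$; as written, the equivariance claims on which your homotopies rest do not hold.
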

\begin{proof}
Using Proposition \ref{prop:topinf} and proceeding once more
as in the proof of Proposition \ref{prop:algspecial}, it suffices to show the second equivariant $\Gamma$-space is special. The argument for this is again an adaptation of \cite[Lemma 2.2]{GW:Kmodels} to our present context. 
For any integer $M>0$, we will define a map 
$$
\eta:\mcal{F}_{B}^{V}(M)^{\times n}\to \mcal{F}_{\underline{n}_{+}\wedge B}^{V}(nM).
$$
Composing $\eta$ and the inclusions $\epsilon:\mcal{F}_{\underline{n}_{+}\wedge B}^{V}(-) \to \mcal{F}_{B}^{V}(-)^{\times n}$  yields the maps
$$
\mcal{F}_{\underline{n}_{+}\wedge B}^{V}(M)\to \mcal{F}_{\underline{n}_{+}\wedge B}^{V}(nM) \,\,\textrm{and} \,\, \mcal{F}_{B}^{V}(M)^{\times n}\to \mcal{F}_{ B}^{V}(nM)^{\times n},
$$
and the result follows by showing these are $G\times \Sigma_{n}$-equivariantly homotopic to the standard inclusions.

We may identify $\mcal{F}_{\underline{n}_{+}\wedge B}^{V}(M)$ with the subspace of $\mcal{F}_{B}^{V}(M)^{\times n}$ 
consisting of those $n$-tuples $(f_{i})$ of $*$-maps which satisfy $f_{i}(a)f_{j}(b) = 0$ for all $a$,$b\in \mcal{C}_{0}(B)$.
For an injection $\beta:\underline{p}\hookrightarrow \underline{q}$, recall that we write $\tilde{\beta}:\End_{\C}(V^{\oplus p})\to\End_{\C}(V^{\oplus q})$ for the induced map $\psi\mapsto \beta_{*}\psi\beta^{*}$.
Let $\alpha_{i}:\underline{M} \hookrightarrow \underline{nM}$ be the injection $k\mapsto (i-1)M+k$. 
Define $\eta$ by sending the tuple $(f_{i})$ of $*$-maps to the tuple  $(\tilde{\alpha_{i}}f_{i})$ which lies in $\mcal{F}_{\underline{n}_{+}\wedge B}^{V}(nM)$ as desired.

Define the $G\times \Sigma_{n}$-equivariant homotopy  $\mcal{F}_{\underline{n}_{+}\wedge B}^{V}(M)\times I\to \mcal{F}_{\underline{n}_{+}\wedge B}^{V}(nM)$ between $\eta\epsilon$ and the standard inclusion by $((f_{i}), t) \mapsto (t\cdot\tilde{\alpha_{1}}f_{i} + (1-t)\cdot\tilde{\alpha_{i}}f_{i})$. 
Similarly one sees that $\epsilon\eta$ is equivariantly homotopic to the standard inclusion.
\end{proof}

As Walker observes in the nonequivariant case, if $B$ is not connected then for $A$ pointed and connected 
this is not a reasonable spectrum to work with. For example if $B = S^{0}$, $A$ is pointed and connected, 
and $V=\C[G]$ then for any subgroup $K\subseteq G$ the space $|\mcal{A}_{G}^{top}(A,\underline{n}_{+})^{V}|^{K}$ is equivalent to the space of $n$-tuples of $K$-bundles on $A$, each of which 
has rank zero at the base-point. Thus the equivariant $\Gamma$-space  $|\mcal{A}_{G}^{top}(A,\underline{n}_{+}\wedge S^{0})^{V}|$ is contractible. As in \cite{Walker:Thomason} we can remedy this by  replacing $B$ with its suspensions.  To do this we view $|\mcal{A}_{G}^{top}(A,-\wedge B)^{V}|$ as a $\mcal{W}_{G}$-space and consider the associated spectrum.
\begin{definition}
 Let $A$, $B$ be based $G$-$CW$ complexes. Define 
$$
\bu_{G}(A,\,B)
$$ 
to be the $G$-spectrum associated to the $\mcal{W}_{G}$-space  $Y\mapsto |\mcal{A}^{top}(A,Y\wedge B)^{\C[G]}|$. Write 
$$
\bu^{G}_{*}(A,B) = \pi^{G}_{*}\bu_{G}(A,B).
$$
for the homotopy groups of this spectrum. More generally for a subgroup $H\subseteq G$, write $\bu^{H}_{*}(A,B) = \pi^{H}_{*}\bu_{G}(A,B)$
\end{definition}

Recall from Section \ref{sec:pre} that to an equivariant $\Gamma$-space $Y$ we naturally associate a $\mcal{W}_{G}$-space $\widehat{Y}$, which may be described by the formula 
$$
\widehat{Y}(X) = \hocolim_{S\to X}PY(S),
$$
where $PY$ is the $\Gamma_{G}$-space associated to $Y$, and $S\to X$ is an object of the overcategory $(\Gamma_{G}\downarrow X)$. If $X$ is a $\mcal{W}_{G}$-space and $Y$ is the equivariant $\Gamma$-space obtained from $X$ by restriction then we have a natural map $\widehat{Y}\to X$. We thus have a natural map of spectra $\mathbb{S}X \to \mathbb{U}X$. In particular we have a natural map of spectra
\begin{equation}\label{eqn:c2nc}
\bu^{\mf{c}}_{G}(A,\,B) \to \bu_{G}(A,\,B).
\end{equation}

\begin{proposition}[c.f. {\cite[Theorem 3.14]{Walker:Thomason}}]\label{prop:ev}
Let $B$ be a based $G$-$CW$-complex. For a based $G$-$CW$-complex $X$ there is a natural equivariant equivalence 
$$
\hocolim_{S\to X}|\mcal{A}_{G}^{top}(S^{0},S\wedge B)^{V}|\xrightarrow{\wkeq}|\mcal{A}_{G}^{top}(S^{0},X\wedge B)^{V}|.
$$
In particular, $\bu^{\mf{c}}_{G}(S^{0},\,B)$ and the spectrum 
$\{|\mcal{A}_{G}^{top}(S^{0},S^{W}\wedge B)^{\C[G]}|\}$ are equivariantly equivalent.
\end{proposition}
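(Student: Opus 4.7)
The plan is to prove the first statement by an equivariant adaptation of Walker's argument for \cite[Theorem 3.14]{Walker:Thomason}; the ``in particular'' clause then follows formally. Using Proposition \ref{prop:topinf} I would first replace both sides with the $\mcal{F}_{-}^{V}(\infty)$-model. Observe that the left hand side is by definition $\widehat{Y}(X)$ for the equivariant $\Gamma$-space $Y:\underline{n}_{+}\mapsto |\mcal{A}_{G}^{top}(S^{0},\underline{n}_{+}\wedge B)^{V}|$, so the map under consideration is the natural transformation $\widehat{Y}(X)\to Y'(X)$ from the $\mcal{W}_{G}$-space extension of $Y$ to the ``plug-in'' functor $Y'(X)=|\mcal{A}_{G}^{top}(S^{0},X\wedge B)^{V}|$.

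The strategy is a cellular induction on $X$, which reduces to three checks: (i) the map is an equivariant equivalence when $X$ is itself a finite based $G$-set (in which case $(X,id)$ is a terminal object of $(\Gamma_{G}\downarrow X)$, so the source collapses to $PY(X)$, which equals $Y'(X)$ tautologically); (ii) both functors convert pushouts of $G$-CW-complexes along equivariant cofibrations into homotopy pushouts; and (iii) both commute with filtered colimits of closed equivariant inclusions. For (iii), the source commutes since $\hocolim$ does, and the target commutes because any $*$-algebra map $\mcal{C}_{0}(X\wedge B)\to \End_{\C}(V^{\oplus N})$ factors through $\mcal{C}_{0}(K\wedge B)$ for a finite $G$-subcomplex $K\subseteq X$ containing the (finite) set of points appearing in its image.

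The main obstacle is (ii). The key input will be the combinatorial description of $\mcal{F}_{X\wedge B}^{V}(\infty)$ as a pair (finite set of points in $X\wedge B$, pairwise orthogonal subspaces of $V^{\oplus\infty}$): for a pushout $X=X_{1}\cup_{A}X_{2}$, each such $*$-map splits as an orthogonal sum of contributions from points lying in $X_{1}$ and in $X_{2}$, and the $\Delta^{\bullet}_{top}$-direction supplies the homotopies needed to move points transversally across the gluing locus, yielding the homotopy pushout property on both sides; this is essentially the same mechanism that makes the $\Gamma$-space in Proposition \ref{prop:topspecial} special, now applied at the $\mcal{W}_{G}$-space level. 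The equivariant subtlety is that the splittings and interpolating homotopies must be chosen $G$-equivariantly, which is arranged by replacing each chosen finite set of points by its $G$-orbit throughout.

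Finally, the ``in particular'' statement is immediate: by construction $\bu^{\mf{c}}_{G}(S^{0},B)$ is the $G$-spectrum associated via $\mathbb{U}$ to the $\mcal{W}_{G}$-space $\widehat{Y}$, so its $W$-th space is $\widehat{Y}(S^{W})$. The first part of the proposition identifies this equivariantly with $|\mcal{A}_{G}^{top}(S^{0},S^{W}\wedge B)^{\C[G]}|$, and naturality in $W$ gives the asserted equivalence of $G$-spectra.
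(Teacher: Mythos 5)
Your step (ii) is where the argument breaks down, and it cannot be repaired: neither functor takes pushouts along equivariant cofibrations to homotopy pushouts. These configuration-space models are only ``excisive'' after group completion, i.e.\ at the level of associated spectra, not at the space level. Concretely, take $G$ trivial, $V=\C$, $B=S^{0}$, and the pushout $S^{1}=D^{1}\cup_{S^{0}}D^{1}$. Up to the equivalences of Proposition \ref{prop:topinf}, the right-hand functor is $X\mapsto \mcal{F}^{\C}_{X}(\infty)$, the space of finite configurations of points of $X$ away from the basepoint labelled by pairwise orthogonal finite-dimensional subspaces of $\C^{\infty}$. By homotopy invariance $\mcal{F}^{\C}_{D^{1}}(\infty)\simeq \ast$, while $\mcal{F}^{\C}_{S^{0}}(\infty)\simeq \coprod_{k\geq 0}BU(k)$, so the homotopy pushout is a suspension of $\coprod_{k}BU(k)$ and has $H_{1}\iso \Z^{(\infty)}$ (namely $\tilde{H}_{0}$ of that disjoint union); but $\mcal{F}^{\C}_{S^{1}}(\infty)$ is a connected $H$-space whose group completion is $\Omega^{\infty}(S^{1}\wedge \bu)$, so its $H_{1}$ is $\Z$. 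Hence the comparison map from the homotopy pushout to the value on $S^{1}$ is not an equivalence, for either side (the two sides agree by the proposition itself). Since $G$-CW complexes are built by exactly such pushouts, the cellular induction collapses; and the appeal to ``the same mechanism that makes the $\Gamma$-space special'' does not help, because specialness concerns decompositions indexed by finite discrete $G$-sets (wedges), not gluing along a nontrivial subspace.

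The intended proof is more elementary and avoids excision entirely. The indexing category $(\Gamma_{G}\downarrow X)$ is filtered, so the source is equivalent to the honest colimit $\colim_{S\to X}|\mcal{A}_{G}^{top}(S^{0},S\wedge B)^{V}|$, and it then suffices to show that $\colim_{S\to X}\mcal{F}^{V}_{S\wedge B}(n)\to \mcal{F}^{V}_{X\wedge B}(n)$ is a homeomorphism for each $n$. Bijectivity is essentially your observation from step (iii): a $*$-homomorphism $\mcal{C}_{0}(X\wedge B)\to \End_{\C}(V^{\oplus n})$ is supported on finitely many points, hence factors through $\mcal{C}_{0}(S\wedge B)$ for a finite based $G$-subset $S\subseteq X$ (note one wants a finite $G$-set of points here, not merely a finite subcomplex). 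To see the continuous bijection is a homeomorphism, one writes it as a colimit of maps indexed by compact subspaces $W\subseteq X$, $T\subseteq B$ and uses that $\mcal{F}^{V}_{C}(n)$ is compact for compact $C$, so these maps are closed. Your steps (i) and (iii) are sound, and together with the filteredness of $(\Gamma_{G}\downarrow X)$ they already carry the proof; no induction over cells is needed, which is fortunate since step (ii) is false.
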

\begin{proof}
Since the indexing category  $(\Gamma_{G}\downarrow X)$ is filtered, we have that
$$
\hocolim_{S\to X}|\mcal{A}_{G}^{top}(S^{0},-\wedge B)^{V}| \xrightarrow{\wkeq} \colim_{S\to X}|\mcal{A}_{G}^{top}(S^{0},S\wedge B)^{V}|
$$ 
is an equivariant weak equivalence. 
It thus suffices to show that  
$$
\colim_{S\to X}|\mcal{A}_{G}^{top}(S^{0},S\wedge B)^{V}| \xrightarrow{\iso} |\mcal{A}_{G}^{top}(S^{0},X\wedge B)^{V}|
$$ 
is an isomorphism. For this, it suffices to show that $\colim_{S\to X}\mcal{F}_{S\wedge B}^{V}(n) \xrightarrow{\iso} \mcal{F}^{V}_{X\wedge B}(n)$ is an isomorphism for any $n$. This follows by an argument similar to the argument given in \cite[Theorem 3.14]{Walker:Thomason}, which we briefly sketch. That this map is bijective follows from the fact that for a based $G$-$CW$-complex $W$, a $*$-map $\mcal{C}_{0}(W)\to \End_{\C}(V^{n})$ factors as $\mcal{C}_{0}(W)\to \mcal{C}_{0}(S) \hookrightarrow \End_{\C}(V^{n})$ where $S\subseteq W$ is a based finite $G$-set. 
The map
$\colim_{S\to X}\mcal{F}_{S\wedge B}^{V}(n) \to \mcal{F}^{V}_{X\wedge B}(n)$ is the colimit of maps $\colim_{S\to W}\mcal{F}^{V}_{S\wedge T}(n)\to \mcal{F}^{V}_{W\wedge T}(n)$ where $T\subseteq B$ and $W\subseteq X$ range over all compact subspaces. 
When $T$, $W$ are compact we have that $\colim_{S\to W}\mcal{F}^{V}_{S\wedge T}(n)\to \mcal{F}^{V}_{W\wedge T}(n)$ is closed since $\mcal{F}^{V}_{C}(n)$ is compact whenever $C$ is compact. It is therefore an equivariant homeomorphism and we are done.
\end{proof}

We have  natural equivariant maps 
\begin{equation}\label{eqn:adjmaps}
|\mcal{A}_{G}^{top}(A,\,B)^{V}|\xrightarrow{}\Map(A,\hocolim_{I}\mcal{F}_{B}^{V}(-))\xleftarrow{}\Map(A,\,|\mcal{A}_{G}^{top}(S^{0},\,B)^{V}|)
\end{equation}
obtained as the composite
\begin{align*} 
|\mcal{A}_{G}^{top}(A,B)^{V}| & = |\hocolim_{I}\sing_{\bullet}\Map(A,\mcal{F}_{B}^{V}(-))| \\
 & \xrightarrow{\iso} \hocolim_{I}|\sing_{\bullet}\Map(A,\mcal{F}_{B}^{V}(-))| 
  \to \hocolim_{I}\Map(A,\mcal{F}_{B}^{V}(-)) \\
& \to \Map(A,\hocolim_{I}\mcal{F}_{B}^{V}(-)) 
 \xleftarrow{\wkeq} \Map(A,\hocolim_{I}|\sing_{\bullet}\mcal{F}_{B}^{V}(-)|) \\
& \xleftarrow{\iso}\Map(A,\,|\mcal{A}_{G}^{top}(S^{0},\,B)^{V}|),
\end{align*}
where the two displayed isomorphisms follow from \cite[Theorem 18.9.10]{Hirschhorn}.

\begin{proposition}\label{prop:adjmaps}
 Let $A$, $B$ be based $G$-$CW$ complexes, with $A$ compact. 
Then the  maps (\ref{eqn:adjmaps}) are equivariant weak equivalences. 
\end{proposition}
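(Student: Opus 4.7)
The plan is to verify that every arrow in the displayed six-step composite defining \eqref{eqn:adjmaps} is either an equivariant homeomorphism or an equivariant weak equivalence. The three identifications in the composite are essentially formal: the first equality unwinds the definition of $\mcal{A}_G^{top}(A,B)^V$ after identifying $\Hom_{cts*}(A\wedge\Delta^\bullet_{top\,+},-)$ with $\sing_\bullet\Map(A,-)$; the displayed isomorphism $|\hocolim_I-|\iso\hocolim_I|-|$ is the standard commutation of geometric realization with the two-sided bar construction, as cited from Hirschhorn; and the final isomorphism unwinds the definition of $\mcal{A}_G^{top}(S^0,B)^V$ using $\Map(S^0,X)\iso X$ in the based category. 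Equivariance in each case is clear from the construction.

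This leaves two types of non-trivial arrows. The first type, comprising the map $\hocolim_I|\sing_\bullet\Map(A,\mcal{F}_B^V(-))|\to \hocolim_I\Map(A,\mcal{F}_B^V(-))$ and its backwards counterpart $\Map(A,\hocolim_I|\sing_\bullet\mcal{F}_B^V(-)|)\to\Map(A,\hocolim_I\mcal{F}_B^V(-))$, is obtained by applying the counit $|\sing X|\to X$ level-wise. For each $n$, this counit is an equivariant weak equivalence provided the space in question has the equivariant homotopy type of a $G$-$CW$ complex. Using the explicit description of $\mcal{F}_B^V(n)$ as a space of pairs $(\{t_i\},\{W_i\})$ given before Proposition \ref{prop:topspecial}, this holds for $\mcal{F}_B^V(n)$; and since $A$ is a $G$-$CW$ complex, it holds for $\Map(A,\mcal{F}_B^V(n))$ as well. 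Because $\hocolim_I$ preserves level-wise equivariant weak equivalences, both arrows are then equivariant weak equivalences.

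The main obstacle is the arrow $\hocolim_I\Map(A,\mcal{F}_B^V(-))\to\Map(A,\hocolim_I\mcal{F}_B^V(-))$, where the hypothesis that $A$ is compact enters crucially. The approach is to use Proposition \ref{prop:topinf} to pass to the cofinal diagram $M$: the target is equivariantly equivalent to $\Map(A,\mcal{F}_B^V(\infty))$ via the weak equivalences of Proposition \ref{prop:topinf}, and $\mcal{F}_B^V(\infty)=\colim_n\mcal{F}_B^V(n)$ is a sequential colimit along closed embeddings; since $A$ is compact, $\Map(A,-)$ commutes with such colimits, giving $\Map(A,\mcal{F}_B^V(\infty))\iso\colim_n\Map(A,\mcal{F}_B^V(n))$. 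A parallel application of Proposition \ref{prop:topinf} and its analogue for $\Map(A,\mcal{F}_B^V(-))$ (using again that $A$ is compact so that each $M$-action acts by weak equivalences) identifies the source with the same colimit, and one checks the natural map intertwines the two identifications. The technical heart is this compactness-based interchange, which one reduces to the level-wise statement that $\Map(A,-)$ commutes with the filtered colimit $\mcal{F}_B^V(\infty)$ using that a compact space has image in finitely many summands at each stage.
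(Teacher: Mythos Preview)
Your proposal is correct and follows essentially the same approach as the paper: the paper's terse proof invokes Proposition \ref{prop:topinf} together with the compactness-based homeomorphism $\colim_{n}\Map(A,\mcal{F}^{V}_{B}(n)) \cong \Map(A,\colim_{n}\mcal{F}^{V}_{B}(n))$, which is exactly the content of your third paragraph, while your second paragraph simply unpacks what the paper records with the decorations $\wkeq$ and $\iso$ in the displayed composite. One minor point: the counit $|\sing_{\bullet} X|\to X$ is always an equivariant weak equivalence (since $(|\sing_{\bullet} X|)^{H}=|\sing_{\bullet}(X^{H})|$), so no $G$-$CW$ hypothesis on $\mcal{F}^{V}_{B}(n)$ is needed there.
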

\begin{proof}
The right-hand map in (\ref{eqn:adjmaps}) is always a $G$-equivalence.  Proposition \ref{prop:topinf} together with the  equivariant homeomorphism 
$$
\colim_{n}\Map(A,\mcal{F}^{V}_{B}(n)) \xrightarrow{\wkeq} \Map(A,\colim_{n}\mcal{F}^{V}_{B}(n)).
$$
imply that the left-hand map is an equivariant weak equivalence. 
\end{proof}

\begin{corollary}[c.f. {\cite[Theorem 3.17]{Walker:Thomason}}]\label{cor:3.17}
 Let $A$, $B$ be based $G$-$CW$ complexes, with $A$ compact. 
Then the  maps (\ref{eqn:adjmaps}) induce an equivariant equivalence of $G$-spectra
$$
\bu_{G}(A,\, B)\wkeq\Map(A,\bu_{G}^{\mf{c}}(S^{0},B)) \wkeq \Map(A,B\wedge \bu_{G}),
$$ 
where $\bu_{G} = \bu^{\mf{c}}_{G}(S^{0},S^{0})$.
\end{corollary}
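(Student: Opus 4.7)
The plan is to prove the two claimed equivalences in turn, the first levelwise via Propositions \ref{prop:ev} and \ref{prop:adjmaps}, the second via the assembly map for the underlying $\mcal{W}_G$-space of $\bu_G$.

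For the first equivalence $\bu_G(A,B) \wkeq \Map(A, \bu_G^{\mf{c}}(S^0, B))$, by definition the $V$-th space of $\bu_G(A,B)$ is $|\mcal{A}_G^{top}(A, S^V\wedge B)^{\C[G]}|$, while Proposition \ref{prop:ev} identifies the $V$-th space of $\bu_G^{\mf{c}}(S^0,B)$ (up to equivariant weak equivalence) with $|\mcal{A}_G^{top}(S^0, S^V\wedge B)^{\C[G]}|$. Applying Proposition \ref{prop:adjmaps} with $S^V\wedge B$ in place of $B$ then gives an equivariant weak equivalence $|\mcal{A}_G^{top}(A, S^V\wedge B)^{\C[G]}| \wkeq \Map(A, |\mcal{A}_G^{top}(S^0, S^V\wedge B)^{\C[G]}|)$. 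Naturality of the composite (\ref{eqn:adjmaps}) in the second variable guarantees that these levelwise equivalences respect the spectrum structure maps, producing the required equivalence of $G$-spectra.

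For the second equivalence $\bu_G^{\mf{c}}(S^0, B) \wkeq B \wedge \bu_G$, I would view $\bu_G$ also as the underlying $\mcal{W}_G$-space $\bar\bu_G(X) = |\mcal{A}_G^{top}(S^0, X)^{\C[G]}|$; Proposition \ref{prop:ev} specialized to $B = S^0$ confirms that the associated $G$-spectrum is $\bu_G$. Then Proposition \ref{prop:ev} identifies the $V$-th space of $\bu_G^{\mf{c}}(S^0,B)$ with $\bar\bu_G(S^V\wedge B)$, and the $\mcal{W}_G$-space structure provides natural assembly maps $B\wedge\bar\bu_G(S^V)\to\bar\bu_G(B\wedge S^V)$ which lift to a map $B\wedge\bu_G\to\bu_G^{\mf{c}}(S^0,B)$ of $G$-spectra. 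To show this assembly is an equivariant weak equivalence for compact $B$, I would argue that both sides define equivariant homology theories in $B$ (homotopy invariance from Proposition \ref{prop:ev}; the wedge and cofiber sequence axioms by direct inspection), and they agree tautologically at $B=S^0$, so a cellular induction on $B$ yields the equivalence for all finite based $G$-$CW$-complexes.

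The main obstacle is verifying that $\bu_G^{\mf{c}}(S^0,-)$ satisfies the cofiber sequence axiom in the second variable. This rests ultimately on the specialness of the defining equivariant $\Gamma$-space (Proposition \ref{prop:topspecial}), which by Lemma \ref{lem:Gspc} ensures that $\bu_G$ is a positive equivariant $\Omega$-spectrum, so that smashing with a compact $B$ commutes with spectrification up to equivariant equivalence and the $\mcal{W}_G$-space assembly map is an equivariant weak equivalence.
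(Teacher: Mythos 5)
Your treatment of the first equivalence is exactly the paper's: Proposition \ref{prop:adjmaps} applied with $S^{W}\wedge B$ in place of $B$, combined with Proposition \ref{prop:ev}, identifies $\bu_{G}(A,B)$ with the spectrum $\{\Map(A,|\mcal{A}^{top}_{G}(S^{0},S^{W}\wedge B)^{\C[G]}|)\}$. The divergence is in the second equivalence: the paper does not argue it by hand but invokes \cite[Proposition 3.6]{AB}, which says precisely that the spectrum $\{|\mcal{A}^{top}_{G}(S^{0},S^{W}\wedge B)^{\C[G]}|\}$ obtained by evaluating the $\mcal{W}_{G}$-space on $S^{W}\wedge B$ is equivariantly equivalent to $B\wedge\bu_{G}$.

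Your proposed replacement of that citation has a genuine gap. An induction over a $G$-CW structure on $B$ attaches cells of the form $G/K_{+}\wedge D^{n}$, so the base of the induction must be $B=G/K_{+}$ for every subgroup $K\subseteq G$, not $B=S^{0}$ alone, and the orbit case is not tautological: by specialness (Proposition \ref{prop:topspecial}) the target at $B=G/K_{+}$ is equivalent levelwise to $\Map(G/K_{+},\bu_{G})$, so what must be shown is that the assembly map realizes an equivalence $G/K_{+}\wedge\bu_{G}\wkeq\Map(G/K_{+},\bu_{G})$, i.e. the comparison of induction with coinduction (a Wirthm\"uller-type self-duality of orbits). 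This is exactly the genuinely equivariant content that the appeal to \cite[Proposition 3.6]{AB} supplies, and it cannot be absorbed into ``agree tautologically at $S^{0}$.'' Two further points are asserted rather than proved: the cofiber-sequence (excision) axiom for $B\mapsto\bu^{\mf{c}}_{G}(S^{0},B)$, which you yourself flag as the main obstacle --- specialness and Lemma \ref{lem:Gspc} control the $S^{W}$-direction (positive $\Omega$-$G$-spectrum) but do not by themselves give excision in the $B$-variable; and the corollary does not assume $B$ compact, so after the finite case you would still need a filtered-colimit argument over finite subcomplexes (available, since $\colim_{S\to X}\mcal{F}^{V}_{S\wedge B}(n)\iso\mcal{F}^{V}_{X\wedge B}(n)$ as in the proof of Proposition \ref{prop:ev}), which your sketch omits. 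With these inputs supplied your outline would amount to reproving the cited result of Blumberg; as written, the key equivariant step is missing.
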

\begin{proof}
By the previous proposition the maps in (\ref{eqn:adjmaps}) are equivariant weak equivalences.  
It follows that  $\bu_{G}(A,\,B)$ is  equivariantly weakly equivalent to the spectrum $\{\Map(A, |\mcal{A}_{G}^{top}(S^{0},\,S^{W}\wedge B)^{\C[G]}|)\}$. By Proposition \ref{prop:ev}, we have that $\bu_{G}^{\mf{c}}(S^{0},B)$ is equivariantly weakly equivalent to the spectrum $\{|\mcal{A}_{G}^{top}(S^{0},\,S^{W}\wedge B)^{\C[G]}|\}$ and by \cite[Proposition 3.6]{AB} this spectrum is equivariantly weakly equivalent to $B\wedge \bu_{G}$.
\end{proof}

\begin{remark}\label{onemore}
 It follows from Proposition \ref{prop:conn} that there is a map of $G$-spectra $\bu_{G}\to KU_{G}$, where $KU_{G}$ is the spectrum representing equivariant complex $K$-theory. Furthermore $\bu_{G}$ is its connective cover, in the sense that this map induces isomorphisms $\pi^{H}_{n}\bu_{G} \xrightarrow{\iso} \pi_{n}^{H}KU_{G}$ for all subgroups $H\subseteq G$ and $n\geq 0$ and $\pi^{H}_{n}\bu_{G} = 0$ for $n<0$.
\end{remark}

\subsection{Group completions via mapping telescopes}\label{sec:tel}
We finish this section by giving an alternate description of the equivariant homotopy group completion of the equivariant homotopy monoid $|\mcal{A}_{G}^{top}(S^{0},\,S^{0})^{V}|$.  This is the equivariant analogue of only a part of the results in \cite[Section 4]{Walker:Thomason}. It seems somewhat more complicated to establish the analogous result for the monoids $|\mcal{A}_{G}^{top}(S^{0},\,B)^{V}|$ when $B$ has nontrivial action and we do not need it. Similarly we do not need the semi-topological analogues.
An important consequence of the description of the equivariant homotopy group completion as a mapping telescope appears below in Corollary \ref{cor:cc}. This is used in Section \ref{sec:pair} in order to define a natural map of rings
$$
\bu^{G}_{*}(W,S^{0}) \to KU_{G}^{-*}(W).
$$ 
This natural transformation is crucial for our main results in Sections \ref{sec:sstThom} and \ref{sec:algThom}.

Recall that $\mcal{F}^{V}_{S^{0}}(n)$ is isomorphic to the space of linear subspaces in $V^{n}$. Below, we write $\mcal{F}_{B}^{V}(\infty)=\colim_{n}\mcal{F}_{B}^{V}(n)$ where the colimit is over the standard inclusions  $\underline{n}\subseteq \underline{n+1}$ given by $i\mapsto i$. 

Consider the equivariant maps $\eta:\mcal{F}^{V}_{S^{0}}(n)\to \mcal{F}^{V}_{S^{0}}(n+1)$ given by $(W\subseteq V^{n}) \mapsto (V\oplus W \subseteq V\oplus V^{n})$. Taking colimits defines an equivariant map
 $\eta:\mcal{F}^{V}_{S^{0}}(\infty)\to \mcal{F}^{V}_{S^{0}}(\infty)$. 
Write $X_{i} = \mcal{F}^{V}_{S^{0}}(\infty)$ considered as a based $G$-space
 with basepoint $x_{i}$, where $x_{0} = 0$ and $x_{i}= \eta^{i}x_{0}$. Then $\eta:X_{i}\to X_{i+1}$ is a based map, Write 
$$
Tel(X_{i},\eta) = \hocolim(X_{0}\xrightarrow{\eta} X_{1}\xrightarrow{\eta} X_{2}\xrightarrow{\eta}\cdots )
$$
\begin{theorem}\label{thm:tel}
The map 
$$
\mcal{F}^{V}_{S^{0}}(\infty) \to Tel(X_{i},\eta)
$$
is an equivariant homotopy group completion.
\end{theorem}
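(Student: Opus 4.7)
The plan is to reduce to the classical non-equivariant telescope group-completion theorem applied separately on each fixed-point set. By definition, an equivariant homotopy group completion is a map restricting to a homotopy group completion on the $H$-fixed sets for every subgroup $H \subseteq G$. Since $Tel(X_i, \eta)$ is a homotopy colimit and the standard (bar-construction) model for homotopy colimit commutes with $H$-fixed points, we have $Tel(X_i,\eta)^H = Tel(X_i^H, \eta|_{X_i^H})$, and the problem reduces to verifying, for each $H$, that the non-equivariant map $\mcal{F}^V_{S^0}(\infty)^H \to Tel(X_i^H,\eta)$ is a homotopy group completion.

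For this, identify the $H$-space structure on $\mcal{F}^V_{S^0}(\infty)^H$ arising from the special $\Gamma$-space structure of Proposition \ref{prop:topspecial}: it is homotopy commutative, homotopy associative, with $\pi_0$-monoid equal to the monoid of isomorphism classes of $H$-subrepresentations of $V^\infty$ under orthogonal direct sum. Modulo the monoid $M$ of self-injections of $\mathbb{N}$ (whose action is trivial up to homotopy, since $BM$ is contractible, as exploited in the proof of Proposition \ref{prop:Malgwkeq}), this multiplication is direct sum along a chosen splitting $\mathbb{N} = \mathbb{N} \sqcup \mathbb{N}$, and the shift $\eta$ (prepending a copy of $V$) is then homotopic to left multiplication by $[V] \in \pi_0(\mcal{F}^V_{S^0}(\infty))^H$. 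With this identification in hand, the approach is to invoke the classical McDuff--Segal/Quillen telescope theorem (cf.\ \cite[Section 4]{Walker:Thomason} for the non-equivariant statement used here): for a homotopy commutative topological monoid $M$ and $m \in \pi_0(M)$, the canonical map $M \to Tel(M \xrightarrow{\cdot m} M \xrightarrow{\cdot m} \cdots)$ induces $\pi_0(M)[m^{-1}] \xrightarrow{\iso} \pi_0(Tel)$ and realizes the homological localization $H_*(M;R) \to H_*(M;R)[m^{-1}]$ for every coefficient ring $R$.

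Finally, to upgrade this localization to the group completion, verify cofinality of $[V]$ in $\pi_0(\mcal{F}^V_{S^0}(\infty))^H$. In the case $V = \C[G]$ of ultimate interest, $V|_H \cong \C[H]^{[G:H]}$, and Maschke's theorem implies every $H$-subrepresentation of $V^\infty$ is a direct summand of some $V^{\oplus k}$; hence every element of $\pi_0$ divides a power of $[V]$, and localization at $[V]$ coincides with group completion. The hard part will be the careful identification in step two of $\eta$ with $H$-space multiplication by $[V]$: this requires constructing an explicit homotopy between the block inclusion used to define $\eta$ and the $\Gamma$-space multiplication map, which boils down to contracting the relevant permutation of coordinates using the $M$-action on $V^\infty$.
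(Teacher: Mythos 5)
Your overall route is the one the paper takes: identify $\pi_{0}\bigl(\mcal{F}^{V}_{S^{0}}(\infty)^{H}\bigr)$ with the monoid of isomorphism classes of $H$-modules embedding in $V^{\infty}$, observe that its group completion is obtained by inverting $[V]$ and that this is exactly $\pi_{0}$ of the telescope on $H$-fixed points, and get the homology condition from $H_{*}(Tel(X_{i},\eta)^{H};A)=\colim_{i}H_{*}(X_{i}^{H};A)$; the technical heart in both cases is relating $\eta$ to the $\Gamma$-space multiplication (the paper proves $\eta^{2}\wkeq\mu\circ(\eta\times\eta)$ equivariantly, you propose $\eta\wkeq\mu([V],-)$).

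There is, however, a genuine gap in your opening reduction. In the paper's definition an equivariant homotopy group completion is a map of \emph{equivariant} homotopy commutative, associative $H$-spaces (structure map and homotopies equivariant) which is a homotopy group completion on each fixed-point set; it is not merely a fixed-point-wise $\pi_{0}$ and homology condition. Your plan never constructs the equivariant $H$-space structure on $Tel(X_{i},\eta)$, nor checks that $\mcal{F}^{V}_{S^{0}}(\infty)\to Tel(X_{i},\eta)$ is an equivariant $H$-space map; applying the non-equivariant telescope theorem separately to each $Tel(X_{i},\eta)^{H}$ produces $H$-space structures subgroup by subgroup, but not one equivariant structure compatible with the map, and that structure is exactly what is needed (and used afterwards, e.g.\ to identify $Tel(X_{i},\eta)$ with $\Omega\widehat{\mcal{A}}(S^{1})$ in Corollary \ref{cor:gc}). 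This is where the bulk of the paper's proof lives: it uses $Tel(X_{i}\times X_{i},\eta\times\eta)\wkeq Tel(X_{i},\eta)\times Tel(X_{i},\eta)$ and $Tel(X_{i},\eta)\wkeq Tel(X_{2i},\eta^{2})$ together with an equivariant homotopy $\eta^{2}\wkeq \mu\circ(\eta\times\eta)$, and then verifies unit, commutativity and associativity equivariantly. Your own ``hard part'' would supply the needed input, but note the mechanism: the fact that injections $\mathbb{N}\to\mathbb{N}$ act by maps equivariantly homotopic to the identity comes from the explicit affine homotopies in the proof of Proposition \ref{prop:sstinf}, not from contractibility of $BM$ (which enters only in the fibration argument of Proposition \ref{prop:Malgwkeq}). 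Finally, your cofinality step need not be restricted to $V=\C[G]$: since $\pi_{0}$ consists only of $H$-modules embedding in $V^{\infty}$, every class is a direct summand of some $V^{\oplus k}$ for arbitrary $V$, so the argument gives the theorem as stated.
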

\begin{proof}
The proof is modeled on that of \cite[Proposition 3.3]{FW:sstfct}, where $H$-space structures arising from operad actions rather than through $\Gamma$-spaces are used.
We need to show that $\mcal{F}^{V}_{S^{0}}(\infty) \to Tel(X_{i},\eta)$ is a map of equivariant homotopy commutative, associative $H$-spaces such that for any subgroup $K\subseteq G$ the map $\mcal{F}^{V}_{S^{0}}(\infty)^{K} \to Tel(X_{i},\eta)^{K}$ is a homotopy group completion. Recall that this means that for each subgroup $K\subseteq G$,
\begin{enumerate}
\item the map $\pi_{0}(\mcal{F}^{V}_{S^{0}}(\infty)^{K}) \to \pi_{0}(Tel(X_{i},\eta)^{K})$ is a group completion of the monoid $\pi_{0}(\mcal{F}^{V}_{S^{0}}(\infty)^{K})$, and 
\item $H_{*}(\mcal{F}^{V}_{S^{0}}(\infty)^{K};A) \to H_{*}(Tel(X_{i},\eta)^{K};A)$ is localization of the action of $\Z[\pi_{0}(\mcal{F}^{V}_{S^{0}}(\infty)^{K})]$ for any commutative ring $A$. 
\end{enumerate}

First we have to show that $Tel(X_{i},\eta)$ has the structure of an equivariant homotopy commutative and associative $H$-space. We have equivariant homotopy equivalences $Tel(X_{i}\times X_{i}, \eta\times\eta) \wkeq Tel(X_{i},\eta)\times Tel(X_{i},\eta)$ and $Tel(X_{i},\eta)\wkeq Tel(X_{2i}, \eta^{2})$. Thus to define the pairing it suffices to give a map $\mu:X_{n}\times X_{n} \to X_{2n}$ such that $\eta^{2}$ and $\mu\circ(\eta\times\eta)$ are equivariantly homotopic. We take $\mu$ to be the $H$-space product map $\mu:\mcal{F}^{V}_{S^{0}}(\infty)\times \mcal{F}^{V}_{S^{0}}(\infty)\to \mcal{F}^{V}_{S^{0}}(\infty)$. Recall that the $H$-space structure on $\mcal{F}^{V}_{S^{0}}(\infty)$ arises as follows. The equivariant $\Gamma$-space $\underline{n}_{+}\mapsto \mcal{F}^{V}_{\underline{n}_{+}}(\infty)$ is special and $\mu$ arises by choosing a homotopy inverse to $\mcal{F}_{\underline{2}_{+}}^{V}(\infty)\subseteq \mcal{F}^{V}_{S^{0}}(\infty)\times \mcal{F}^{V}_{S^{0}}(\infty)$ together with the multiplication 
map induced by $\underline{2}_{+}\to \underline{1}_{+}$ given by sending both $1$ and $2$ to $1$. Consider the commutative square
$$
\xymatrix{
\mcal{F}^{V}_{S^{0}}(n)\times\mcal{F}^{V}_{S^{0}}(n) \ar[r]^-{\eta\times \eta} & \mcal{F}^{V}_{S^{0}}(n+1)\times\mcal{F}^{V}_{S^{0}}(n+1) \ar[r]^-{\epsilon\gamma\times\gamma} & \mcal{F}^{V}_{S^{0}}(n+2)\times\mcal{F}^{V}_{S^{0}}(n+2) \\
\mcal{F}^{V}_{\underline{2}_{+}}(n) \ar[r]^{\beta}\ar[d]^{m}\ar[r]\ar[u]^{\subseteq}_{\wkeq} &  \mcal{F}^{V}_{\underline{2}_{+}}(n+2) \ar[d]^{m}\ar[ur]^{\subseteq}_{\wkeq} & \\
\mcal{F}^{V}_{S^{0}}(n) \ar[r]^{\eta^{2}} &  \mcal{F}^{V}_{S^{0}}(n+2) , &
}
$$
where $\beta$ sends the pair $(W_{1}, W_{2})$ to $(V\oplus 0 \oplus W_{1}, 0\oplus V \oplus W_{2})$
The map $\gamma$ is induced by $\gamma:\underline{k}_{+}\to \underline{k+1}_{+})$ where $\gamma(i) = i+1$ and $\epsilon$ is induced by $\epsilon:\underline{k}_{+}\to \underline{k}_{+}$ which interchanges $1$ and $2$ and is the identity on the other elements.
As shown in the proof of Proposition \ref{prop:sstinf}, injections $\mathbb{N}\to \mathbb{N}$ induce equivariant homotopy equivalences $\mcal{F}^{V}(\infty) \to \mcal{F}^{V}(\infty)$. In particular $\gamma$,$\epsilon:\mcal{F}^{V}(\infty) \to \mcal{F}^{V}(\infty)$ are both equivariant homotopy equivalences. We thus have that $\eta^{2} \wkeq \mu\circ(\eta\times \eta)$ and thus we obtain the required pairing  giving a multiplication on $Tel(X_{i},\eta)$.

Now we need to show that $\mu$ gives $Tel(X_{i},\eta)$ the structure of a homotopy commutative and homotopy associative $H$-space. First we show that the basepoint $x\in Tel(X_{i},\eta)$ is a right identity up to homotopy. The maps $\eta^{n}:X_{n}\to X_{2n}$ induce the homotopy equivalence 
$Tel(X_{n},\eta)\wkeq Tel(X_{2n},\eta^{2})$ and so to show that $x$ is a right homotopy identity it suffices to show that the maps $\alpha^{n}$, $\mu(-,x_{n}):X_{n}\to X_{2n}$ are homotopic. 
To show homotopy commutativity it suffices to show that the two maps $X_{i}\times X_{i}\to X_{2i}$ given by $\mu$ and $\mu\tau$ are homotopic, where $\tau$ is the map switching the factors. This follows from the fact that the maps $m$,$m\tau:\mcal{F}^{V}_{\underline{2}_{+}}(n)\to \mcal{F}^{V}_{S^{0}}(n)$ are equal, where $\tau$ is the map interchanging $1$ and $2$. Homotopy associativity follows in a similar fashion.

The map $\mcal{F}^{V}_{S^{0}}(\infty) \to Tel(X_{i},\eta)$ is an $H$-space map. 
The monoid $\pi_{0}(\mcal{F}^{V}_{S^{0}}(\infty)^{K})$ is the monoid (under direct sum) of isomorphism classes of $K$-modules which embed in $V^{\infty}$. Observe that $\pi_{0}(\mcal{F}^{V}_{S^{0}}(\infty)^{K})^{+}$ is obtained obtained by inverting the class of $V$ in $\pi_{0}(\mcal{F}^{V}_{S^{0}}(\infty)^{K})$. But this is exactly $\pi_{0}(Tel(X_{i},\eta)^{K})$. The condition on homology follows immediately since $H_{*}(Tel(X_{i},\eta)^{K};A)= \colim_{i}H_{*}(X_{i};A)$.
\end{proof}

The following corollaries are used later in Proposition \ref{prop:conn} to define natural transformations to $KU_{G}^{*}(-)$. 
\begin{corollary}\label{cor:gc}
 Let $W$ be a compact, unbased $G$-$CW$ complex.  Write  $\mcal{A}(-)$ for the equivariant $\Gamma$-space $\underline{n}_{+}\mapsto |\mcal{A}_{G}^{top}(S^{0},\,\underline{n}_{+}\wedge S^{0})^{V}|$. The natural map
$$
\big[W_{+}, |\mcal{A}_{G}^{top}(S^{0},\,S^{0})^{V}|\big]_{G}^{+} \to \big[W_{+}, \Omega \widehat{\mcal{A}}(S^{1})\big]_{G}
$$
is an isomorphism.
\end{corollary}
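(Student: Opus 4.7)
The natural map in the statement is induced by the equivariant $H$-space map $\mcal{A}(1) = |\mcal{A}_{G}^{top}(S^{0},S^{0})^{V}| \to \Omega\widehat{\mcal{A}}(S^{1})$ of Lemma \ref{lem:Gspc}(2); since the target is group-like, the map of monoids $[W_{+},\mcal{A}(1)]_{G} \to [W_{+},\Omega\widehat{\mcal{A}}(S^{1})]_{G}$ factors uniquely through the Grothendieck completion $[W_{+},\mcal{A}(1)]_{G}^{+}$. The strategy is to identify both sides with the same localization of this monoid by using Theorem \ref{thm:tel}.

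First I would invoke Proposition \ref{prop:topinf} (in the case $B=S^{0}$) to identify $\mcal{A}(1)$ up to equivariant weak equivalence with $\mcal{F}_{S^{0}}^{V}(\infty)$, so that Theorem \ref{thm:tel} applies: the canonical map $\mcal{F}_{S^{0}}^{V}(\infty) \to Tel(X_{i},\eta)$ is an equivariant homotopy group completion. By Lemma \ref{lem:Gspc}(2), the map $\mcal{A}(1) \to \Omega\widehat{\mcal{A}}(S^{1})$ is also an equivariant homotopy group completion. Both targets are group-like equivariant $H$-spaces, and since $K$-fixed points of both compute the same Grothendieck completion on $\pi_{0}$ and the same homology localization of $\mcal{A}(1)^{K}$ for every subgroup $K \subseteq G$, uniqueness of group completion provides a zig-zag of equivariant $H$-maps between $\Omega\widehat{\mcal{A}}(S^{1})$ and $Tel(X_{i},\eta)$ compatible with the structure maps from $\mcal{A}(1)$. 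This gives an identification
\[
[W_{+},\Omega\widehat{\mcal{A}}(S^{1})]_{G} \iso [W_{+},Tel(X_{i},\eta)]_{G}
\]
compatible with the maps from the monoid $[W_{+},\mcal{A}(1)]_{G}$.

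Second, since $W_{+}$ is compact, the mapping set out of $W_{+}$ commutes with the mapping telescope, so
\[
[W_{+},Tel(X_{i},\eta)]_{G} \iso \colim_{i}[W_{+},X_{i}]_{G} = M[V^{-1}],
\]
where $M := [W_{+},\mcal{F}_{S^{0}}^{V}(\infty)]_{G}$ denotes the abelian monoid of equivariant homotopy classes with the $H$-space multiplication, and the transition maps in the colimit are multiplication by the class $[V]$ representing the orthogonal direct sum with the trivial bundle of fiber $V$. The final step is to check $M^{+} \iso M[V^{-1}]$: a class in $M$ is represented by a $G$-equivariant subbundle $E$ of the trivial bundle $W \times V^{\infty}$, and since $V = \C[G]$ makes $V^{\infty}$ a complete $G$-universe and $W$ is compact, $E$ admits an equivariant complement $E'$ with $E \oplus E' \iso W \times V^{n}$ for some $n$, so $[E] + [E'] = n[V]$ in $M$. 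Hence every element of $M$ becomes invertible after inverting $[V]$, giving $M^{+} \iso M[V^{-1}]$. Chaining the three identifications gives the corollary.

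The main obstacle is the uniqueness-of-group-completion step in paragraph two; to avoid appealing to a general uniqueness statement, the cleanest workaround is to construct the equivalence $\Omega\widehat{\mcal{A}}(S^{1}) \wkeq Tel(X_{i},\eta)$ directly from the bar construction defining $\widehat{\mcal{A}}(S^{1})$, using that the special $\Gamma$-space condition (Proposition \ref{prop:topspecial}) makes the natural map $B^{n}\mcal{A}(1) \to \widehat{\mcal{A}}(S^{n})$ a weak equivalence in each fixed-point category, reducing the comparison to a telescope identification of $\Omega B\mcal{A}(1)^{K}$ with $Tel(X_{i},\eta)^{K}$ that already follows from the proof of Theorem \ref{thm:tel}.
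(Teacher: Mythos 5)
Your proposal is correct, and its skeleton is the one the paper uses: identify $|\mcal{A}_{G}^{top}(S^{0},S^{0})^{V}|$ with $\mcal{F}^{V}_{S^{0}}(\infty)$ via Proposition \ref{prop:topinf}, combine Lemma \ref{lem:Gspc} with Theorem \ref{thm:tel} to replace $\Omega\widehat{\mcal{A}}(S^{1})$ by $Tel(X_{i},\eta)$ (the paper is exactly as terse as you are about why two group completions of the same $H$-space agree, so your flagged workaround via the bar construction is a reasonable patch rather than a correction), and use compactness of $W$ to compute $[W_{+},Tel(X_{i},\eta)]_{G}$ as the colimit of $[W_{+},X_{i}]_{G}$ with transition maps given by adding the class of $V$, i.e.\ as the localization $M[[V]^{-1}]$. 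The only genuine divergence is the last step. You argue geometrically that $M[[V]^{-1}]\iso M^{+}$: every class is a $G$-subbundle of $W\times V^{n}$, hence has an equivariant orthogonal complement, so $[E]+[E^{\perp}]=n[V]$ and inverting $[V]$ inverts everything; this is fine, but it quietly invokes the classification of equivariant bundles over a compact base by the Grassmannian of the complete universe $\C[G]^{\infty}$ (to know that the $H$-space sum of the two classifying maps is $G$-homotopic to the map classifying the trivial bundle $W\times V^{n}$). The paper sidesteps any bundle-theoretic input with a purely formal observation: the localization $M[[V]^{-1}]$ is identified with $[W_{+},\Omega\widehat{\mcal{A}}(S^{1})]_{G}$, which is already a group since the target is group-like, and a localization of a monoid at one element that happens to be a group is automatically its group completion. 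So your route costs an extra standard fact from equivariant bundle theory but makes the invertibility concrete, while the paper's trick is shorter and self-contained.
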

\begin{proof}
The equivariant $\Gamma$-space $\mcal{A}$ is special by Proposition \ref{prop:topspecial}. By Lemma \ref{lem:Gspc} the map $|\mcal{A}_{G}^{top}(S^{0},\,S^{0})^{V}|\to \Omega \widehat{\mcal{A}}(S^{1})$ is an equivariant homotopy group completion. By Proposition \ref{prop:topinf} we have an equivariant equivalence $|\mcal{A}_{G}^{top}(S^{0},\,S^{0})^{V}|\wkeq \mcal{F}^{V}_{S^{0}}(\infty)$ and so the previous theorem implies that we have an equivariant weak equivalence $\Omega\widehat{\mcal{A}}(S^{1})\wkeq Tel(X_{i},\eta)$. Therefore we have natural isomorphisms $\colim_{i}[W_{+}, X_{i}]_{G}= [W_{+},\Omega\widehat{\mcal{A}}(S^{1})]_{G}$. As unbased spaces we have that $X_{i} = X_{0}$. Thus we have the natural identification $[W_{+}, X_{i}]_{G}\iso [W_{+}, X_{0}]_{G}$ and under this identification the transition maps in $\colim_{i}[W_{+}, X_{i}]_{G} = \colim_{i}[W_{+}, X_{0}]_{G}$ are addition by the same element and so $\colim_{i}[W_{+}, X_{i}]_{G} =[W_{+}, \Omega \mcal{A}(S^{1})]_{G}$ is obtained by inverting 
this class in $[W_{+}, X_{0}]_{G}$. But since $[W_{+}, \Omega \widehat{\mcal{A}}(S^{1})]_{G}$ is a group, this must be the group completion of $[W_{+}, X_{0}]_{G}$.
\end{proof}

\begin{corollary}\label{cor:cc}
 Let $X$ be a compact, unbased $G$-$CW$ complex. The natural map
$$
\bu^{\mf{c}}_{G}(X_{+}, S^{0}) \to \bu_{G}(X_{+},S^{0})
$$
induces an equivalence
$$
\Omega^{\infty}\bu^{\mf{c}}_{G}(X_{+}, S^{0}) \to \Omega^{\infty}\bu_{G}(X_{+},S^{0})
$$
of associated equivariant infinite loop spaces. In particular, 
$$
\pi^{G}_{0}\bu^{\mf{c}}_{G}(X_{+}, S^{0}) = KU_{G}^{0}(X) = \pi^{G}_{0}\bu_{G}(X_{+},S^{0}).
$$
\end{corollary}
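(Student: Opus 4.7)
The plan is to combine the description of $\Omega^\infty \bu_G(X_+, S^0)$ provided by Corollary \ref{cor:3.17} with the telescope model for the equivariant homotopy group completion established in Theorem \ref{thm:tel}.

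First I would use Corollary \ref{cor:3.17} to obtain the equivariant equivalence $\Omega^\infty \bu_G(X_+, S^0) \wkeq \Map(X_+, \Omega^\infty \bu_G)$. Applying Theorem \ref{thm:tel} together with Proposition \ref{prop:topinf} identifies $\Omega^\infty \bu_G$ with the telescope $Tel(X_i, \eta)$, where $X_i = \mcal{F}^{\C[G]}_{S^0}(\infty)$ and $\eta$ is the shift by the regular representation $V = \C[G]$. Since $X$ is compact, $\Map(X_+, -)$ commutes with filtered homotopy colimits, yielding
$$
\Omega^\infty \bu_G(X_+, S^0) \wkeq Tel(\Map(X_+, X_i), \eta_*).
$$

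Next, I would identify $\Omega^\infty \bu^{\mf{c}}_G(X_+, S^0)$ explicitly. By Lemma \ref{lem:Gspc}(2) and Proposition \ref{prop:topspecial}, this infinite loop space is the equivariant homotopy group completion of $|\mcal{A}^{top}_G(X_+, S^0)^{\C[G]}|$. Combining Propositions \ref{prop:adjmaps} and \ref{prop:topinf} identifies the latter equivariantly with $\Map(X_+, X_0)$, so the natural map of the corollary is identified with the tautological comparison $\Map(X_+, X_0) \to Tel(\Map(X_+, X_i), \eta_*)$.

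The central step, and the main expected obstacle, is then to verify that this comparison is itself an equivariant homotopy group completion. I would adapt the argument of Theorem \ref{thm:tel} to this setting: for each subgroup $K \subseteq G$, the telescope realizes the localization of the monoid $[X_+, X_0]_K$ at the class of the trivial $V$-bundle. Since $V = \C[G]$ contains every irreducible $K$-representation as a summand, the equivariant Swan-type theorem for compact $G$-spaces ensures that every class in $[X_+, X_0]_K$ becomes invertible after inverting $[V]$, so this localization coincides with the group completion. The required homology localization is automatic because mapping telescopes realize homological localization at a central class acting on $\pi_0$. A more direct route to the $\pi_0^K$-verification would be to apply Corollary \ref{cor:gc} with $W$ running over $X \times G/K$ and analogous compact $G$-$CW$-complexes.

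The ``In particular'' statement is then immediate: under the equivalence just established, $\pi_0^G \bu^{\mf{c}}_G(X_+, S^0) = \pi_0^G \bu_G(X_+, S^0) = \pi_0^G \Map(X_+, \bu_G) = [X_+, KU_G]_G = KU_G^0(X)$, where the last identification uses Remark \ref{onemore}.
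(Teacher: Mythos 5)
Your main argument is essentially the paper's own proof, repackaged: the paper likewise uses Proposition \ref{prop:adjmaps} (equivalently Corollary \ref{cor:3.17}) to replace both sides by $\Map(X_{+},-)$ applied to the $\Gamma$-space $\mcal{A}(-)=|\mcal{A}^{top}_{G}(S^{0},-)^{\C[G]}|$, uses speciality (Proposition \ref{prop:topspecial} and Lemma \ref{lem:Gspc}) to recognize $\Omega^{\infty}\bu^{\mf{c}}_{G}(X_{+},S^{0})$ as the equivariant group completion of $\Map(X_{+},\mcal{A}(S^{0}))$, and then shows that $\Map(X_{+},\mcal{A}(S^{0}))\to \Map(X_{+},\Omega^{W}\mcal{A}(S^{W}))\wkeq Tel(\Map(X_{+},X_{i}),\eta_{*})$ is itself an equivariant group completion, checking $\pi_{0}^{K}$ via Corollary \ref{cor:gc} and the homology condition via the telescope description coming from Theorem \ref{thm:tel} and compactness of $X$. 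Your Swan-type argument for the $\pi_{0}^{K}$-condition (every $K$-bundle on compact $X$ is a summand of a trivial bundle, which embeds in $\underline{\C[G]^{n}}$, so inverting $[\C[G]]$ already group-completes) is a legitimate alternative to citing Corollary \ref{cor:gc}, and your suggested application of Corollary \ref{cor:gc} to $W=G/K\times X$ is exactly how the subgroup case is obtained. One small imprecision: the corollary's map has source the group completion $\Omega^{\infty}\mathbb{S}\mcal{M}$, not $\Map(X_{+},X_{0})$ itself, so what you actually verify is that the composite out of $\Map(X_{+},X_{0})$ is a group completion and then conclude by uniqueness of group completions; your ``central step'' shows you intend exactly this.

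The one genuine flaw is the justification of the ``in particular'' clause via Remark \ref{onemore}: that remark is justified by Proposition \ref{prop:conn}, whose proof in turn invokes Corollary \ref{cor:cc}, so as the paper is organized this citation is circular. It is also unnecessary: you have already identified $\Omega^{\infty}\bu_{G}(X_{+},S^{0})$ with $Tel(\Map(X_{+},X_{i}),\eta_{*})$, where $X_{0}=\mcal{F}^{\C[G]}_{S^{0}}(\infty)$ is the Grassmannian of finite-dimensional subspaces of $\C[G]^{\infty}$, so $\pi_{0}^{G}$ of either side is the localization of the monoid of $G$-vector bundles on $X$ at the class of $\underline{\C[G]}$, which by your own Swan argument is its group completion, i.e.\ $KU_{G}^{0}(X)$. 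This is exactly how the paper concludes ($\pi_{0}^{G}\bu_{G}(X_{+},S^{0})=[X_{+},\mcal{F}^{\C[G]}_{S^{0}}(\infty)]_{G}^{+}\iso KU_{G}^{0}(X)$); replace the appeal to Remark \ref{onemore} by this direct identification and the proof is complete.
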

\begin{proof}
 By Proposition \ref{prop:adjmaps} we have that the equivariant $\Gamma$-space (resp.~ $\mcal{W}_{G}$-space) $|\mcal{A}^{top}_{G}(X_{+}, -\wedge S^{0})^{\C[G]}|$ is equivariantly weakly equivalence to the equivariant $\Gamma$-space (resp.~ $\mcal{W}_{G}$-space) $\Map(X_{+}, |\mcal{A}^{top}_{G}(S^{0}, -\wedge S^{0})^{\C[G]}|)$.

For convenience write $\mcal{M}(-) = \Map(X_{+}, |\mcal{A}^{top}_{G}(S^{0}, -\wedge S^{0})^{\C[G]}|)$ and $\mcal{A}(-) = |\mcal{A}^{top}_{G}(S^{0}, -)^{\C[G]}|$.  We show that 
$$
\Omega^{W}\widehat{\mcal{M}}(S^{W}) \to 
\Omega^{W}\Map(X_{+}, \mcal{A}(S^{W}))
$$
is an equivariant equivalence for any representation $W$ with $W^{G}\neq 0$. It follows from Proposition \ref{prop:topspecial} that the equivariant $\Gamma$-space $\mcal{M}$ is special and so  by Lemma \ref{lem:Gspc} the associated spectrum $\mathbb{S}\mcal{M}$ is a positive $\Omega$-$G$-spectrum and $\mcal{M}(S^{0})\to \Omega\widehat{\mcal{M}}(S^{1})$ is an equivariant homotopy group completion. It therefore suffices to show that
$$
\Map(X_{+}, \mcal{A}(S^{0}))  \to \Omega^{W}\Map(X_{+}, \mcal{A}(S^{W}) 
 = \Map(X_{+}, \Omega^{W}\mcal{A}(S^{W}))
$$
is an equivariant homotopy group completion whenever $W^{G}\neq 0$. Using Proposition \ref{prop:ev} and that $\mcal{A}(-)$ is special, we have an equivariant weak equivalence $\Omega^{W}\mcal{A}(S^{W})\wkeq \Omega^{1}\mcal{A}(S^{1}) \wkeq \Omega^{1}\widehat{A}(S^{1})$. It follows from Corollary \ref{cor:gc} that 
$$
\pi_{0}\Map(X_{+}, \mcal{A}(S^{0}))^{K}\to \pi_{0}\Map(X_{+}, \Omega^{W}\mcal{A}(S^{W}))^{K}
$$
is a group completion for any subgroup $K\subseteq G$. Furthermore, from Theorem \ref{thm:tel} it follows that we have an equivariant weak homotopy equivalence
$$
\Map(X_{+}, \Omega^{W}\mcal{A}(S^{W})) \wkeq Tel(\Map(X_{+}, \mcal{A}(S^{0})),\eta_{*})
$$ 
from which the condition on homology with coefficients follows.

For the last statement, we have that 
$$
\pi^{G}_{0}\bu_{G}(X_{+},S^{0}) = \big[X_{+},\Omega\mcal{A}(S^{1})\big]_{G} = \big[X_{+}, \mcal{F}^{\C[G]}_{S^{0}}(\infty)\big]_{G}^{+}.
$$
Since $\mcal{F}^{\C[G]}_{S^{0}}$ is the Grassmannian of linear subspaces inside of $\C[G]^{\infty}$ it follows that this group is naturally identified with $KU_{G}^{0}(X)$.
\end{proof}

\section{Walker's comparison theorem}\label{sec:comp}
In this section we define a comparison map between our semi-topological and topological bivariant equivariant $K$-theories. We present a proof of the equivariant version of Walker's Fundamental Comparison Theorem \cite[Theorem 5.1]{Walker:Thomason}. Namely, we have the following result, which is obtained as a special case of Corollary \ref{cor:qtopeq} below.

\begin{theorem}\label{mainthm}
Let $Y$ be a smooth quasi-projective $G$-variety. Then the natural map
(\ref{eqn:comp}) induces a weak equivalence of $G$-spectra,
$$
\bu^{\mf{c}}_{G}(S^{0},Y^{an})\xrightarrow{\wkeq} \mcal{K}^{sst}_{G}(\C,Y).
$$
\end{theorem}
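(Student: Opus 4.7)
The first step is to set up the comparison at the level of equivariant $\Gamma$-spaces. Following Walker's nonequivariant construction, an algebraic quotient $[\mcal{V}^{n}_{Y}\onto \mcal{M}]$ which is finite and flat over its base analytifies to a continuous family of finite point configurations in $Y^{an}$ decorated with subspaces of $\C[G]^{n}$; choosing a $G$-invariant Hermitian structure on $\C[G]$ and orthogonalising inside $\C[G]^{\infty}$ converts these into $*$-homomorphisms $\mcal{C}_{0}(Y^{an}_+)\to \End_{\C}(\C[G]^{n})$, producing a natural map
\begin{equation*}
\bigl|\AA^{sst}_{G}(\C,\underline{n}_+\wedge Y_+)^{\C[G]}\bigr| \longrightarrow \bigl|\mcal{A}^{top}_{G}(S^{0},\underline{n}_+\wedge Y^{an}_+)^{\C[G]}\bigr|
\end{equation*}
of equivariant $\Gamma$-spaces. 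The claim I would prove is that the induced map of associated $G$-spectra is a weak equivalence, with the map of the theorem obtained by inversion in the stable homotopy category.

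Both $\Gamma$-spaces are special by Propositions \ref{prop:sstspecial} and \ref{prop:topspecial}, so Lemma \ref{lem:spcfix} reduces the theorem to showing that for every subgroup $H\subseteq G$ the comparison map induces a weak equivalence of simplicial sets on $H$-fixed points at level $\underline{1}_+$. Using Propositions \ref{prop:sstinf} and \ref{prop:topinf} I would replace the $\hocolim_{I}$ presentations with their stabilised versions and then try to establish
\begin{equation*}
\Hom(\dtop,\GG^{\C[G]}_{Y}(\infty))^{H} \xrightarrow{\wkeq} \Hom_{cts*}(\Delta^{\bullet}_{top\,+},\mcal{F}^{\C[G]}_{Y^{an}}(\infty))^{H}.
\end{equation*}
An $H$-fixed algebraic quotient splits according to the orbit types of its support in $Y$, and the topological $H$-fixed space admits a parallel stratification via the point-configuration description of $\mcal{F}^{\C[G]}_{B}(\infty)$. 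With the two stratifications matched, the equivalence on each stratum reduces to the nonequivariant Fundamental Comparison Theorem of Walker \cite[Theorem 5.1]{Walker:Thomason} applied to the smooth variety $Y^{H'}$, together with the fact that $\C[G]^{\infty}$ is a complete $G$-universe, so that further representation-theoretic stabilisation is inessential after group completion.

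The main obstacle is precisely this stratification step. One has to upgrade the orbit-type decomposition of closed $H$-fixed points of $\GG^{\C[G]}_{Y}(\infty)$ to a filtration compatible both with the simplicial structure coming from $\dtop$ and with its analytic counterpart from $\Delta^{\bullet}_{top}$, and to check that the comparison map carries the algebraic stratification to the topological one in a way compatible with the closed/open inclusions of strata. Smoothness of $Y$ enters here essentially, ensuring that each $Y^{H'}$ is smooth quasi-projective and thus amenable to Walker's nonequivariant theorem, and the devissage assembling the strata must be controlled uniformly in the simplicial parameter. Given the acknowledgement that an earlier proof contained an error, this compatibility of algebraic and analytic orbit-type stratifications in families is very likely the genuinely delicate point.
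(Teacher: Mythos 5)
Your first step already contains the essential gap: you propose to construct the comparison map from the semi-topological side to the topological side, by ``orthogonalising'' an algebraic quotient $[\mcal{V}^{n}\onto\mcal{M}]$ into a $*$-homomorphism. No such natural map exists (and none is constructed in the paper or in Walker's work). A point of $\GG^{\C[G]}_{Y}(n)^{an}$ is a quotient vector space together with an action of $\mcal{O}_{Y}$ which need not be normal with respect to any Hermitian structure, and in families the semisimplification/orthogonalisation is not continuous: e.g.\ for $Y=\A^{1}$ the family where the coordinate acts on $\C^{2}$ by $\bigl(\begin{smallmatrix}0&1\\ t&0\end{smallmatrix}\bigr)$ is normalizable for $t\neq 0$ but its eigenspaces collapse to a single line as $t\to 0$, so no continuous family of orthogonal decompositions (of the correct rank) exists. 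This is precisely why the natural map of (\ref{eqn:comp}) goes the other way: Walker's map is the subspace inclusion $\mcal{F}^{V}_{Y^{an}}(n)\subseteq \GG^{V}_{Y}(n)^{an}$, characterized by normality of the $\C$-algebra action, and the paper's comparison $\bu^{\mf{c}}_{G}(S^{0},Y^{an})\to\mcal{K}^{sst}_{G}(\C,Y)$ is induced directly by this inclusion (via the intermediate theory $\mcal{K}^{qtop}_{G}$, using $\mcal{K}^{sst}_{G}(\C,Y)=\mcal{K}^{qtop}_{G}(S^{0},Y)$); no inversion in the stable homotopy category is needed, but also no retraction $\GG^{an}\to\mcal{F}$ is available to run your construction.

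The second gap is the reduction you leave as the ``main obstacle''. Beyond being unresolved, the proposed devissage into orbit-type strata reducing to nonequivariant Walker on the fixed loci $Y^{H'}$ would not suffice even if the stratification could be organised: an $H$-fixed point of $\GG^{\C[G]}_{Y}(\infty)$ may be supported on non-trivial $H$-orbits, where the datum is an induced module structure, and the decorating vector spaces are $H'$-representations, so the stratum-wise comparison is an equivariant statement about spaces of $H'$-module decompositions versus orthogonal $H'$-module decompositions, not the nonequivariant theorem for $Y^{H'}$. The paper instead proves Theorem \ref{thm:ew} (that $\mcal{F}^{V}_{Y}(n)\to\GG^{V}_{Y}(n)^{an}$ is an equivariant weak equivalence) by verifying, for each subgroup $H$, the hypotheses of the local-to-global Lemma \ref{lem:triangle} over $(\coprod_{k}\Sym^{k}Y^{an})^{H}$: equivariant \'etale slices $H\times^{H_{i}}T_{y_{i}}Y$ together with Lemma \ref{lem:lociso} localize the problem, Lemma \ref{lem:ehpt} contracts to the orbit itself, and the remaining content is an explicit fibration-theoretic comparison of $\Grass$ versus $\Grass^{\orth}$ of $H$-module decompositions. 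Your outline (specialness, Lemma \ref{lem:spcfix}, stabilisation to $\GG(\infty)$) matches the formal reductions in the paper, but the two substantive steps --- the existence and direction of the comparison map, and the local equivariant analysis replacing your stratification --- are missing or misdirected as written.
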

\begin{remark}
 By Corollary \ref{cor:3.17} we have an equivariant weak equivalence of $G$-spectra, $\bu^{\mf{c}}_{G}(S^{0}, Y^{an}) \wkeq Y^{an}_{+}\wedge \bu_{G}$, where $\bu_{G} = \bu^{\mf{c}}_{G}(S^{0},S^{0})$ is the connective cover of $KU_{G}$ (see Remark \ref{onemore}). Thus this theorem says that semi-topological equivariant $K$-homology agrees with $\bu_{G}$-homology.
\end{remark}

The strategy of proof follows the nonequivariant one in \cite{Walker:Thomason}.
The key is to show that we have a natural equivariant homotopy equivalence
$\mcal{F}_{Y^{an}}\to (\mcal{G}_{Y})^{an}$.

A map $f:X\to Y$ of varieties defines also a continuous map $f:X^{an}\to Y^{an}$ of associated analytic spaces. This gives rise to a natural map of $G$-simplicial sets
\begin{equation}\label{eqn:qtop}
\Hom_{\Sch/\C}(X\times\dtop,\,\GG_{Y}^{V}(n)) \to \Hom_{cts}(X^{an}\times\dtop,\,\GG_{Y}^{V}(n)^{an}).
\end{equation}

To aid the comparison between the topological and semi-topological $K$-theories we introduce a bivariant theory based on the spaces $\mcal{G}^{V}_{Y}(n)^{an}$. Let $A$ be a based $G$-$CW$-complex and $Y$ a quasi-projective $G$-variety and define
$$
\mcal{A}_{G}^{qtop}(A,Y)^{V} = \hocolim_{I}\Hom_{cts*}(A\wedge\Delta^{\bullet}_{top\,+},\mcal{G}_{Y}^{V}(-)^{an}).
$$

\begin{definition}\label{defKqtop}
We define $\mcal{K}^{qtop}_{G}(A,Y)$ to be the $G$-spectrum associated to the equivariant $\Gamma$-space
$$
\underline{n}_{+}\mapsto |\mcal{A}_{G}^{qtop}(A, \underline{n}_{+}\wedge Y_{+})^{\C[G]}|.
$$
\end{definition}

The map (\ref{eqn:qtop})  above induces a natural
transformation $\mcal{K}^{sst}_{G} \to \mcal{K}^{qtop}_{G}$. Write 
$$
\Hom_{cts*}(A\wedge\Delta^{\bullet}_{top\,+},\,\mcal{G}_{Y}^{V}(\infty)^{an})=\colim_{n}\Hom_{cts*}(A\wedge\Delta^{\bullet}_{top\,+},\,\mcal{G}_{Y}^{V}(n)^{an})
$$
where the colimit is over the standard inclusions $\underline{n}\subseteq \underline{n+1}$ given by $i\mapsto i$.

As before we have the following (see the paragraph preceding Proposition \ref{prop:alginf} for a reminder on the indexing categories used below).
\begin{proposition}\label{prop:qtopinf}
For based $G$-$CW$-complexes $A$ and a quasi-projective $G$-variety $Y$, the natural maps
\begin{multline*}
\hocolim_{I}\Hom(A\wedge\Delta^{\bullet}_{top\,+},\,\mcal{G}_{Y}^{V}(-)) \to \hocolim_{\tilde{I}} \Hom(A\wedge\Delta^{\bullet}_{top\,+},\,\mcal{G}_{Y}^{V}(-)) \\
\leftarrow \hocolim_{M}\Hom(A\wedge\Delta^{\bullet}_{top\,+},\,\mcal{G}_{Y}^{V}(\infty))\leftarrow \Hom(A\wedge\Delta^{\bullet}_{top\,+},\,\mcal{G}_{Y}^{V}(\infty)).
\end{multline*}
are equivariant weak equivalences.
\end{proposition}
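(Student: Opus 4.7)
The plan is to follow the template of Propositions \ref{prop:alginf}, \ref{prop:sstinf}, and \ref{prop:topinf}, with only minor adjustments to accommodate that our target is now the analytification $\GG_Y^V(\infty)^{an}$ and our source a $G$-$CW$-complex smashed with $\dtop_{+}$. Throughout write $F(-) = \Hom_{cts*}(A\wedge\dtop_{+},\,\GG_Y^V(-)^{an})$.

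First I would handle the two left-hand maps exactly as in the proof of Proposition \ref{prop:alginf}. The overcategory $(I\downarrow\infty)$ is filtered, so the canonical map $L_{hK}F(\infty)\to L_KF(\infty)$ from the homotopy left Kan extension to the ordinary left Kan extension of $F|_{I}$ along $I\subseteq\tilde{I}$ is an equivariant weak equivalence, and $L_KF$ agrees with $F$ on all of $\tilde{I}$. This yields the first equivalence $\hocolim_I F \xrightarrow{\wkeq} \hocolim_{\tilde{I}}F$, and the second follows because the inclusion $M\subseteq\tilde{I}$ is right cofinal.

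The heart of the argument is the rightmost map, which I would treat as in Proposition \ref{prop:Malgwkeq} and the last paragraph of Proposition \ref{prop:sstinf}. By Quillen's observation (\cite[Lemma p.~90]{Quillen:Ktheory}) there is an equivariant homotopy fiber sequence
$$
F(\infty)\to\hocolim_{M}F(\infty)\xrightarrow{\pi}BM,
$$
provided $M$ acts on $F(\infty)$ by equivariant weak equivalences; since $BM$ is contractible, the result then follows. To see that each $\alpha\in M$ acts by an equivariant equivalence, I would construct an equivariant simplicial homotopy $H\colon F(\infty)\times\Delta^{1}\to F(\infty)$ between $\alpha^{*}$ and the identity in complete analogy with the semi-topological case. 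The key input is that $\GG_Y^V(\infty)$ carries a scaling action by $\A^1$ coming from scalar multiplication on $\mcal{V}^{\infty}_{X\times Y}$, so if $g\colon\Delta^1_{top}\to(\A^1)^{an}$ is a continuous path from $0$ to $1$, one obtains a continuous homotopy by the rule
$$
H([q],\delta)(e_{i}) \;=\;
\begin{cases}
q(e_{j}) & \text{if } \alpha(j)=i,\\
\delta^{*}(g)\cdot q(e_{i}) & \text{if } i\notin\im(\alpha),
\end{cases}
$$
interpreted on the analytification via the continuous scaling action.

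The main (very mild) obstacle is checking that this definition is genuinely continuous on $\GG_Y^V(n)^{an}$ for each $n$ (so that after passing to the colimit it lives in $F(\infty)$) and is $G$-equivariant; but $\GG_Y^V(n)$ is an invariant open subscheme of a Quot-scheme and the scaling action is induced by the evident algebraic $\A^1$-action on coherent modules, so its analytification is automatically continuous and commutes with the $G$-action on quotient objects. The endpoint conditions $H([q],0)=\alpha^{*}[q]$ and $H([q],1)=[q]$ are immediate from $g(0)=0$, $g(1)=1$, exactly as in Proposition \ref{prop:sstinf}.
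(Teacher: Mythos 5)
Your proposal is correct and is essentially the paper's own argument: the paper gives no separate proof of Proposition \ref{prop:qtopinf} beyond ``as before,'' meaning exactly the transposition you carry out of Propositions \ref{prop:alginf}, \ref{prop:Malgwkeq}, and \ref{prop:sstinf} (Kan extension and cofinality for the first two maps, Quillen's fiber-sequence lemma with $BM$ contractible plus the scaling homotopy showing $M$ acts by equivariant weak equivalences for the third). The one genuinely new point in the $qtop$ setting --- that the homotopy is now implemented by composing with the analytification of the algebraically defined partial-scaling map, which is automatically continuous, equivariant, and compatible with the finite stages of the colimit --- is exactly what you address, so nothing is missing relative to the paper.
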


Let $V$ be a unitary complex representation. By \cite[Section 5]{Walker:Thomason} there is a natural map of spaces
\begin{equation}\label{eqn:comp}
\mcal{F}_{Y^{an}}^{V}(n) \to  \mcal{G}^{V}_{Y}(n)^{an},
\end{equation}
which will be shown in Theorem \ref{thm:ew} to be an equivariant weak equivalence.
This map can be described as follows. A point of $\mcal{F}_{Y^{an}}^{V}(n)$ is specified by a list of distinct points $y_{1},\ldots, y_{k}$ of $Y^{an}$ and a list of pairwise orthogonal subspaces $W_{1},\ldots, W_{k}$ of $V^{n}$. To this point we associate the surjection
$$
\mcal{V}^{n}_{Y} \xrightarrow{(eval_{y_{1},\ldots,y_{k}})} \bigoplus_{i=1}^{k}V^{n} \longrightarrow \bigoplus_{i=1}^{k}W_{i}^{*}.
$$
By \cite[Lemma 5.3]{Walker:Thomason} this defines a map $\mcal{F}^{V}_{Y^{an}}(n)\subseteq \mcal{G}^{V}_{Y}(n)^{an}$ which is a subspace inclusion and the 
image of $\mcal{F}_{Y^{an}}^{V}(n)$ can be characterized as follows.
When $Y=\spec(R)$ is affine, a point of $\mcal{G}^{V}_{Y}(n)$ is represented by a pair $(V^{n}\to P, \rho:R\to \End_{\C}(P))$ where $P$ is a quotient vector space and $\rho$ is a map of $\C$-algebras.  Two pairs $(V^{n}\to P, \rho:R\to \End_{\C}(P))$ and $(V^{n}\to Q, \rho:R\to \End_{\C}(Q))$ represent the same point when there is a vector-space isomorphism $P\iso Q$ which make the evident triangles commute. 
A pair  $( V^{n}\to P, \rho:R\to \End_{\C}(P))$ is in the image of $\mcal{F}^{V}_{Y^{an}}(n)$ exactly when  $\rho$ is a normal map of $\C^{*}$-algebras (i.e. $\rho(r)$ and $\rho(r)^{*}$ commute with each other for all $r\in R$).

The subspace $\mcal{F}^{V}_{Y^{an}}(n)\subseteq \mcal{G}^{V}_{Y}(n)^{an}$ is $G$-invariant and is compatible with the maps in $I$. We therefore have maps of equivariant $\Gamma$-spaces
$$
\mcal{A}^{sst}(X,-\wedge Y_{+})^{V} \to \mcal{A}^{qtop}(X^{an},-\wedge Y_{+})^{V}\leftarrow\mcal{A}^{top}(X^{an}, -\wedge Y^{an}_{+})^{V}
$$
 and thus natural maps of $G$-spectra
\begin{equation}\label{eqn:compbiv}
\mcal{K}^{sst}_{G}(X, Y)\to \mcal{K}^{qtop}_{G}(X^{an}, Y) \xleftarrow{\wkeq} \bu^{\mf{c}}_{G}(X^{an},Y^{an}).
\end{equation}
We will see in Corollary \ref{cor:qtopeq} that the right-hand map is a weak equivalence (note that the left map is an equivalence for $X=\spec(\C)$).

As shown in \cite[Section 5]{Walker:Thomason}, sending a quotient $\mcal{V}^{n}_Y\to \mcal{M}$ to its support defines a map of varieties
$\theta:\mcal{G}^{V}_{Y}(n)\to \coprod_{k=0}^{n|V|} \Sym^{k}Y$ and this is equivariant. Write $\theta_{top}$ for the restriction of $\theta$ to $\mcal{F}_{Y}^{V}(n)$.

The following lemma shows that in order to establish that (\ref{eqn:comp}) is an equivariant weak equivalence, it suffices to show that for each subgroup $H\subseteq G$, the map
$\mcal{F}_{Y}^{V}(n)^{H} \to (\GG^{V}_{Y}(n)^{an})^{H}$
is a weak equivalence  locally on $(\coprod_{k=0}^{n|V|} \Sym^{k}Y^{an})^{H}$, in a suitable sense.  This lemma and its proof are a slight modification of \cite[Lemma 5.4]{Walker:Thomason} in order to conclude a weak equivalence rather than a homology equivalence.
\begin{lemma}\label{lem:triangle}
Let 
$$
\xymatrix{
W\ar[dr]_-{p}\ar[rr]^{f} & & X \ar[dl]^-{q}\\
 & Z &
}
$$
be a commutative triangle of topological spaces
with the property that for every $z\in U\subseteq Z$ with $U$ open, there is an open $V$ with $z\in V\subseteq U$ such that the induced map $f|_{p^{-1}(V)}:p^{-1}(V) \to q^{-1}(V)$ is a weak equivalence. Then $f:W\to X$ is a weak equivalence.
\end{lemma}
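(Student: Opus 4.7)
Call an open $V \subseteq Z$ \emph{good} if the restriction $f|_{p^{-1}(V)}:p^{-1}(V)\to q^{-1}(V)$ is a weak equivalence. The hypothesis says precisely that the good opens form a basis for the topology of $Z$, and I must deduce that $Z$ itself is good. The overall strategy is local-to-global descent: pick a cover of $Z$ by basic good opens, refine it to a hypercover all of whose simplicial levels stay good, and compare the resulting hypercovers of $W$ and $X$ induced by $p$ and $q$.

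First I would choose a cover $\mcal{V}=\{V_\alpha\}$ of $Z$ by basic good opens. Since $p=q\circ f$, the collections $\{p^{-1}(V_\alpha)\}$ and $\{q^{-1}(V_\alpha)\}$ are open covers of $W$ and $X$, and $f$ induces a simplicial map between the corresponding \v{C}ech nerves. Under mild point-set hypotheses (paracompact, locally contractible, or $CW$-like, which holds in the intended application to $\mcal{F}_{Y^{an}}^V(n)\to\GG^V_Y(n)^{an}$) the realization of the \v{C}ech nerve (and more generally a hypercover) of an open cover recovers the covered space up to weak equivalence. If at every simplicial degree each component of the nerve were itself good, then $f$ would induce a levelwise weak equivalence of simplicial spaces, and the result would follow by taking realizations.

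The genuine technical obstacle—and the only real modification compared to Walker's homological version \cite[Lemma 5.4]{Walker:Thomason}—is that intersections $V_{\alpha_0}\cap\cdots\cap V_{\alpha_k}$ of basic good opens need not themselves be good, but are only \emph{covered} by basic good opens. I would handle this by replacing the \v{C}ech nerve by a genuine hypercover: at each simplicial degree, refine each intersection by a disjoint union of basic good opens chosen to cover it, iterating the refinement if higher intersections still require it. The resulting hypercover of $Z$ has every simplicial level a disjoint union of basic good opens, so its pullbacks along $p$ and $q$ are hypercovers of $W$ and $X$ between which $f$ is a levelwise weak equivalence; passing to realizations yields the desired equivalence $f:W\wkeq X$. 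An equivalent hands-on alternative is to work simplex by simplex: any map $\sigma:S^n\to X$ has compact image whose projection to $Z$ lies in finitely many basic good opens, so after a sufficiently fine simplicial subdivision (Lebesgue's lemma) each simplex maps into one $q^{-1}(V_\alpha)$, and cell-by-cell lifting using the weak equivalences $f|_{p^{-1}(V_\alpha)}$—with the same refinement trick on overlaps—produces the required homotopy lifts and shows $\pi_n(f)$ is bijective for all $n$.
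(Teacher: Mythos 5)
Your proposal follows essentially the same route as the paper's proof: from the hypothesis one builds a hypercover $\mcal{V}_{\bullet}\to Z$ each of whose levels is a disjoint union of good opens, pulls it back along $p$ and $q$, and deduces the result from the levelwise weak equivalences together with the fact that realizing a hypercover recovers the space up to weak equivalence. One remark: your hedge about paracompactness/local contractibility is unnecessary, and since the lemma is stated for arbitrary topological spaces it would otherwise leave a gap — the comparison between a space and the homotopy colimit of any topological hypercover of it is a weak equivalence with no point-set hypotheses, by \cite[Theorem 1.3]{DI:tophyp}, which is exactly what the paper invokes.
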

\begin{proof}
The hypothesis implies that we can construct a hypercover $\mcal{V}_\bullet\to Z$ with the property that each $\mcal{V}_{n}$ is a disjoint union of open subsets
 $V\subseteq Z $ with the property that $p^{-1}(V) \to q^{-1}(V)$ is a weak equivalence. 
Write $(\mcal{V}_{n})_{X}$ and $(\mcal{V}_{n})_{W}$ for the pullback of $\mcal{V}_{n}$ to $X$ and $W$ respectively. We thus have weak equivalences $(\mcal{V}_{n})_{W}\to (\mcal{V}_{n})_{X}$ for each $n$, which induces a weak equivalence upon taking homotopy colimits. The result follows from the commutative diagram
$$
\xymatrix{
\hocolim_{\Delta}(\mcal{V}_{n})_{W} \ar[r]^{\simeq}\ar[d]_{\simeq} & \hocolim_{\Delta}(\mcal{V}_n)_{X} \ar[d]^{\simeq} \\
W \ar[r] & X.
}
$$  
where the vertical maps are weak equivalences by \cite[Theorem 1.3]{DI:tophyp}. 
\end{proof}

\begin{lemma}\label{lem:locet}
 Let $f:X\to Y$ be an equivariant \'etale map between quasi-projective complex $G$-varieties and $[q:\mcal{V}^{n}_{X}\to \mcal{M}]\in \mcal{G}_{X}^{V}(n)(\C)$ a point such that 
$\theta([q])=m_{1}x_{1}+\cdots + m_{v}x_{v}$ (with $x_{i}\neq x_{j}$ for $i\neq j$) has the property that $f(x_{i})\neq f(x_{j})$ for $i\neq j$. Then the equivariant map 
$f_{*}:\mcal{G}_{X}^{V}(n)\to \mcal{G}_{Y}^{V}(n)$ is \'etale at each point in $G\smash\cdot[q]$. 
\end{lemma}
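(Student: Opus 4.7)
The plan is to localize the problem around the support points $x_i$ and $f(x_i)$, to factor $f_*$ as a product of more easily-analyzed maps on Quot schemes, and then to invoke a general \'etaleness result for the Quot construction under \'etale base change.

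First, because $f_*$ is $G$-equivariant, the condition of being \'etale at $g\cdot[q]$ is equivalent to being \'etale at $[q]$, so it suffices to treat $[q]$ itself. To localize, since $Y$ is separated and the points $f(x_1),\ldots,f(x_v)$ are distinct, we may choose pairwise disjoint Zariski open neighborhoods $V_i\subseteq Y$ of $f(x_i)$. Since $f$ is \'etale (hence quasi-finite and open), after shrinking the $V_i$ if necessary we may assume $f^{-1}(V_i)$ splits as a disjoint union of Zariski opens in $X$, one component of which, call it $U_i$, is a neighborhood of $x_i$ with $f|_{U_i}\colon U_i\to V_i$ \'etale; the $U_i$ are then pairwise disjoint in $X$.

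Next, let $W_X\subseteq \mcal{G}^V_X(n)$ be the open subvariety of quotients whose support lies in $\coprod_i U_i$, and define $W_Y\subseteq \mcal{G}^V_Y(n)$ analogously. Because the $U_i$ (resp.\ $V_i$) are pairwise disjoint, any such quotient splits canonically as a direct sum of pieces supported in the individual $U_i$ (resp.\ $V_i$), yielding Zariski decompositions
$$
W_X \iso \bigsqcup_{(n_i)}\prod_{i=1}^{v}\mcal{G}^V_{U_i}(n)^{(n_i)}, \qquad W_Y \iso \bigsqcup_{(n_i)}\prod_{i=1}^{v}\mcal{G}^V_{V_i}(n)^{(n_i)},
$$
where the superscript $(n_i)$ denotes the open-and-closed component parameterizing quotients whose pushforward to $\spec(\C)$ has rank $n_i$, and the coproduct runs over admissible tuples. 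The map $f_*$ sends $W_X$ into $W_Y$, respects these decompositions, and restricts componentwise to $(f|_{U_i})_*\colon \mcal{G}^V_{U_i}(n)^{(n_i)}\to \mcal{G}^V_{V_i}(n)^{(n_i)}$. Since $[q]$ lies in the factor indexed by $(m_1,\ldots,m_v)$, it remains to show that each such factor map is \'etale at the corresponding piece of $[q]$.

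This last assertion is an instance of the general fact that for an \'etale morphism $g\colon U\to V$ of quasi-projective varieties, the induced map $g_*$ on the Quot schemes of finite-length quotients of $\mcal{V}^n$ is \'etale. The planned verification is via the infinitesimal lifting criterion: given a square-zero extension $A\onto A/I$, an $A/I$-deformation of $[\mcal{V}^n_U\onto \mcal{M}]$, and a compatible $A$-deformation of $[\mcal{V}^n_V\onto g_*\mcal{M}]$, one produces a unique common $A$-lift. Because $g$ is \'etale and $\mcal{M}$ is supported at finitely many closed points of $U$, pushforward along $g$ induces an isomorphism $\Hom_{\mcal{O}_U}(\mcal{K},\mcal{M})\iso \Hom_{\mcal{O}_V}(g_*\mcal{K},g_*\mcal{M})$ identifying the relevant tangent and obstruction groups, from which the unique lift will follow. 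This deformation-theoretic comparison between the Quot functors on $U$ and $V$ is the step requiring the most care and is the main obstacle; once established, combining it with the product decomposition above completes the proof.
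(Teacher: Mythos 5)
There is a genuine gap, and it sits exactly at the step you yourself flag as ``the main obstacle''. The ``general fact'' you invoke --- that for any \'etale $g\colon U\to V$ the induced map $g_*$ on Quot schemes of finite-length quotients of $\mcal{V}^{n}$ is \'etale --- is false. It fails precisely at quotients whose support contains two distinct points $u\neq u'$ with $g(u)=g(u')$: take $\mcal{M}$ a sum of two skyscrapers at $u,u'$ given by transverse quotient lines of the fibre of $\mcal{V}^{n}$. Rotating one line relative to the other changes the quotient object on $U$, but the pushforward changes only by an automorphism of the length-two module sitting at the single image point, so these tangent directions die under $dg_*$ and $g_*$ is not \'etale there (this is exactly the degeneration the hypothesis $f(x_i)\neq f(x_j)$ is designed to exclude, and for the same reason your asserted isomorphism $\Hom_{\mcal{O}_U}(\mcal{K},\mcal{M})\iso\Hom_{\mcal{O}_V}(g_*\mcal{K},g_*\mcal{M})$ only holds when $g$ is injective on the support). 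Consequently, after your (correct) localization you cannot quote a general \'etaleness of $(f|_{U_i})_*\colon \mcal{G}^{V}_{U_i}(n)\to\mcal{G}^{V}_{V_i}(n)$; what you need is \'etaleness at the particular point supported at the single point $x_i$, which is nothing but the $v=1$ case of the lemma being proved, and the deformation-theoretic argument for it (square-zero deformations do not move the support, so everything is controlled by the complete local rings, on which $f$ is an isomorphism) is only sketched, not carried out. There is also a smaller inaccuracy: because of condition (2) of Definition \ref{defn:G} (joint surjectivity of $\mcal{V}^{n}\to\pi_{*}\mcal{M}$), $W_X$ is only an open subscheme of $\bigsqcup\prod_i\mcal{G}^{V}_{U_i}(n)^{(n_i)}$, not isomorphic to it; this is harmless for checking \'etaleness but should be said.

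For comparison, the paper's proof is a one-liner: \'etaleness at a point is a local condition not referring to the $G$-action, each point $g\cdot[q]$ of the orbit again satisfies the support-separation hypothesis (since $f(gx_i)=gf(x_i)$ are distinct), and then the nonequivariant result \cite[Lemma 5.5]{Walker:Thomason} is cited. Your opening reduction via equivariance of $f_*$ is correct and is exactly the paper's observation, and your localization plus formal-local comparison is in the spirit of Walker's proof of the cited lemma; so the route could be completed, but as written it either relies on a false general statement or assumes the pointwise \'etaleness it set out to prove. To close the gap you would need to actually verify the infinitesimal lifting criterion for $(f|_{U_i})_*$ at $[\mcal{V}^{n}_{U_i}\onto\mcal{M}_i]$, using that $f$ induces isomorphisms $\widehat{\mcal{O}}_{Y,f(x_i)}\iso\widehat{\mcal{O}}_{X,x_i}$ and that infinitesimal deformations of a finite-length quotient are supported infinitesimally near the original support --- or simply cite Walker's lemma as the paper does.
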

\begin{proof}
This follows from \cite[Lemma 5.5]{Walker:Thomason}, as each $g\smash\cdot[q]$ satisfies the  hypothesis of the lemma.
\end{proof}

For a subspace $D\subseteq Y^{an}$ write $(\mcal{G}^{V}_{Y}(n))^{an}_{D}$ (resp.~ $\mcal{F}^{V}_{Y}(n)_{D}$) for the inverse image of $\coprod \Sym^{k}D \subseteq \coprod \Sym^{k}Y^{an}$ under $\theta$ (resp.~ under $\theta_{top}$). Observe that this is an open subspace (resp.~ closed) when $D$ is open (resp.~ closed) and it is invariant when $D$ is invariant. 

\begin{lemma}\label{lem:lociso}
 Suppose that $f:X\to Y$ is an equivariant map of quasi-projective $G$-varieties. Let $D\subseteq X^{an}$ be an invariant analytic open subset such that $f|_{D}$ is one-to-one and $f$ is \'etale at every point of $D$. Then the equivariant map $f_{*}:\mcal{G}^{V}_{X}(n)\to\mcal{G}^{V}_{Y}(n)$ induces an equivariant homeomorphism
$$
f_{*}:(\mcal{G}_{X}^{V}(n))^{an}_{D} \xrightarrow{\iso} (\mcal{G}_{Y}^{V}(n))^{an}_{f(D)}.
$$ 
Moreover this restricts to an equivariant homeomorphism 
$$
f_{*}:(\mcal{F}_{X}^{V}(n))^{an}_{D} \xrightarrow{\iso} (\mcal{F}_{Y}^{V}(n))^{an}_{f(D)}.
$$
\end{lemma}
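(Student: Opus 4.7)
The plan is to establish this as an equivariant refinement of \cite[Lemma 5.6]{Walker:Thomason}, where the nonequivariant statement is proved. The main work is already packaged into Lemma \ref{lem:locet}, and equivariance is essentially automatic once the forgetful statement is in hand, but we still need to verify each clause carefully.

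First, I would show that $f_{*}\colon (\mcal{G}^{V}_{X}(n))^{an}_{D}\to (\mcal{G}^{V}_{Y}(n))^{an}_{f(D)}$ is a local homeomorphism. Pick $[q]\in (\mcal{G}^{V}_{X}(n))^{an}_{D}$. By definition of $(\mcal{G}^{V}_{X}(n))^{an}_{D}$, the support of $[q]$ is a finite subset of $D$, hence its image under $f$ is finite and (using $f|_{D}$ one-to-one) still consists of distinct points. Lemma \ref{lem:locet} then implies $f_{*}$ is \'etale at $[q]$, and étaleness on the underlying varieties yields a local analytic isomorphism on associated analytic spaces.

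Next, I would show $f_{*}$ is a bijection onto $(\mcal{G}^{V}_{Y}(n))^{an}_{f(D)}$. For surjectivity, take $[p\colon \mcal{V}^{n}_{Y}\to \mcal{N}]$ whose support lies in $f(D)$. Because $f|_{D}\colon D\to f(D)$ is a bijective \'etale map on analytic spaces it is a homeomorphism onto its image and a local biholomorphism; so étale descent / the local inverse of $f$ on small opens around each point of the (finite) support of $\mcal{N}$ allows one to pull $\mcal{N}$ back to a coherent $G$-module on $X$ whose support is the unique lift in $D$, giving a preimage $[q]$. Injectivity is analogous: two preimages $[q_{1}]$, $[q_{2}]$ have supports in $D$ mapping to the common support in $f(D)$, and $f|_{D}$ one-to-one forces the supports to coincide; the local étale isomorphism then identifies the quotients. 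Combined with the local homeomorphism property above, this gives that $f_{*}$ is a homeomorphism.

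For equivariance, the construction of both $f_{*}$ and its inverse is canonical (involving only pushforward/pullback along $f$ and the given $G$-structure on $\mcal{V}^{n}$), and $D$, $f(D)$ are $G$-invariant, so the resulting homeomorphism commutes with $G$. Finally, for the restriction to the $\mcal{F}^{V}$-subspaces, I would use the characterization recalled just before Lemma \ref{lem:triangle}: a point of $(\mcal{G}^{V}_{Y}(n))^{an}$, locally described by a pair $(V^{n}\twoheadrightarrow P,\rho\colon R\to \End_{\C}(P))$ over an affine chart, lies in $\mcal{F}^{V}_{Y^{an}}(n)$ iff $\rho$ is a normal $*$-algebra map. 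Since $f$ is \'etale and injective on $D$, passing through the local biholomorphism induced by $f|_{D}$ preserves and reflects the normality condition pointwise on the support, so $f_{*}$ sends $(\mcal{F}^{V}_{X}(n))^{an}_{D}$ onto $(\mcal{F}^{V}_{Y}(n))^{an}_{f(D)}$ bijectively, and inherits the homeomorphism property as a closed subspace. The only real obstacle is the surjectivity step, where one must check that the local lift of $\mcal{N}$ along $f|_{D}$ glues to a single coherent $G$-module on $X$; this is where the hypothesis that $f|_{D}$ is one-to-one (not merely locally injective) is essential, as it prevents spurious monodromy.
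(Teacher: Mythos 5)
Your proposal is correct and follows essentially the same route as the paper: the paper's entire proof is to cite Walker's nonequivariant result (\cite[Lemma 5.7]{Walker:Thomason}, not Lemma 5.6) for the homeomorphism statements and to observe that in this setting the maps are additionally equivariant, which is exactly the content your sketch reconstructs via Lemma \ref{lem:locet} and the lifting of modules along the local inverse of $f|_{D}$. Your extra detail is fine (modulo the small slip of calling the sheaf attached to a single point a coherent $G$-module), so the only difference is that you spell out Walker's argument where the paper simply invokes it.
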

\begin{proof}
 These maps are shown to be homeomorphisms in \cite[Lemma 5.7]{Walker:Thomason}, in our situation they are additionally equivariant. 
\end{proof}

\begin{theorem}\label{thm:ew}
 Let $Y$ be a smooth quasi-projective complex $G$-variety. Then $\mcal{F}^{V}_{Y}(n) \to \mcal{G}^{V}_{Y}(n)^{an}$ is an equivariant weak equivalence.  
\end{theorem}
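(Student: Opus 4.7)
The plan is to check the weak equivalence on $H$-fixed points for every subgroup $H\subseteq G$, then reduce to an essentially linear-algebraic local model using Lemma \ref{lem:triangle} and Lemma \ref{lem:lociso}. Concretely, for fixed $H\subseteq G$, I apply the triangle-lemma (Lemma \ref{lem:triangle}) to
$$
\xymatrix{
(\mcal{F}^{V}_{Y}(n))^{H}\ar[rr]\ar[rd]_-{\theta_{top}} & & (\mcal{G}^{V}_{Y}(n)^{an})^{H}\ar[ld]^-{\theta} \\
 & \bigl(\coprod_{k}\Sym^{k}Y^{an}\bigr)^{H} &
}
$$
and try to produce, around each $z$ in the base, a small $H$-invariant open over which the map of preimages becomes a weak equivalence; combined over all $H$ this yields the theorem.

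\textbf{Orbit decomposition.} A point $z\in(\coprod_{k}\Sym^{k}Y^{an})^{H}$ is an $H$-invariant effective zero-cycle; write $z=\sum_{i=1}^{r}m_{i}\gamma_{i}$ where $\gamma_{i}$ is the $H$-orbit of a representative $y_{i}\in Y^{an}$ with stabilizer $H_{i}\subseteq H$. Choose pairwise disjoint $H$-invariant analytic neighborhoods $\widetilde{D}_{i}=\bigsqcup_{hH_{i}\in H/H_{i}}h\cdot D_{i}$ of $\gamma_{i}$, where $D_{i}$ is an $H_{i}$-stable neighborhood of $y_{i}$. Since a coherent sheaf on $Y\times X$ supported on the disjoint union $D\times X$ (with $D=\bigsqcup_{i}\widetilde{D}_{i}$) breaks canonically into summands supported on each piece, and the surjection from $\mcal{V}^{n}_{Y\times X}$ factors accordingly, we obtain $H$-equivariant homeomorphisms
$$
(\mcal{G}^{V}_{Y}(n)^{an})_{D}\iso \prod_{i}(\mcal{G}^{V}_{Y}(n)^{an})_{\widetilde{D}_{i}},\qquad (\mcal{F}^{V}_{Y}(n))_{D}\iso \prod_{i}(\mcal{F}^{V}_{Y}(n))_{\widetilde{D}_{i}},
$$
and taking $H$-fixed points of each orbit-factor produces an $H_{i}$-fixed-point space indexed by a single $H_{i}$-fixed point $y_{i}$. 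This reduces the problem to the case where $z=m\cdot y$ for a single $H$-fixed point $y\in Y$.

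\textbf{Reduction to the linear case.} At an $H$-fixed point $y$ of a smooth variety, since $|H|$ is invertible in $\C$ the tangent space $T_{y}Y=:W$ carries a linear $H$-representation structure and there is an $H$-equivariant étale morphism $\phi:U\to Y$ from an $H$-invariant neighborhood of $0\in W$ satisfying $\phi(0)=y$ and inducing the identity on tangent spaces (take an $H$-invariant system of local parameters; such exists by averaging). Choosing $D_{y}$ small enough that $\phi$ is an analytic open immersion on $\phi^{-1}(D_{y})$, Lemma \ref{lem:lociso} applied $H$-equivariantly identifies the relevant preimages for $Y$ with those for $W$. Thus we are reduced to verifying the statement for $Y$ an analytic neighborhood of $0$ in the linear $H$-representation $W$.

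\textbf{Linear case and main obstacle.} In the linear case, the $\C^{*}$-action on $W$ by scalar multiplication commutes with $H$ and hence induces an $H$-equivariant deformation from any small $H$-stable neighborhood $D_{y}$ of $0$ onto $\{0\}$, where both $\mcal{F}^{V}_{\{0\}}(n)$ and $\mcal{G}^{V}_{\{0\}}(n)^{an}$ identify $H$-equivariantly with $\coprod_{k}\Grass_{k}(V^{\oplus n})$ (the comparison map being the identity). The main obstacle is upgrading the $\C^{*}$-scaling into an actual $H$-equivariant deformation retraction of $(\mcal{G}^{V}_{W}(n)^{an})_{D_{y}}$ (and of $(\mcal{F}^{V}_{W}(n))_{D_{y}}$) onto the fiber over the zero cycle; this is essentially Walker's analysis of the support stratification in the linear case adapted equivariantly, and requires checking that rescaling preserves the surjectivity condition (2) of Definition \ref{defn:G} and that at the limit $t\to 0$ the induced retraction is $H$-equivariant and continuous. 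Once this retraction is in hand, the statement follows from the computation at the central fiber, and Lemma \ref{lem:triangle} then assembles the local weak equivalences into the global one.
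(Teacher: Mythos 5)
Your overall skeleton (checking $H$-fixed points, Lemma \ref{lem:triangle} over the symmetric product, local linearization via equivariant \'etale maps and Lemma \ref{lem:lociso}, and an $\A^{1}$-scaling homotopy) matches the paper's strategy, but the reduction step at the heart of your argument is wrong. The claimed $H$-equivariant homeomorphism $(\mcal{G}^{V}_{Y}(n)^{an})_{D}\iso\prod_{i}(\mcal{G}^{V}_{Y}(n)^{an})_{\widetilde{D}_{i}}$ fails because of condition (2) in Definition \ref{defn:G}: for a quotient $[\mcal{V}^{n}\onto\mcal{M}_{1}\oplus\mcal{M}_{2}]$ supported on disjoint opens, surjectivity of $V^{n}\to H^{0}(\mcal{M}_{1})\oplus H^{0}(\mcal{M}_{2})$ is strictly stronger than surjectivity onto each summand (already for $Y=\A^{1}$, $V=\C$, $n=1$ and two skyscrapers $\C$ the pair violates (2) while each factor satisfies it). For the same reason the $H$-fixed points over an orbit neighborhood $\widetilde{D}_{i}$ are not just $H_{i}$-fixed points of the local piece at $y_{i}$: an $H$-invariant quotient supported on $H\cdot D_{i}$ is $H_{i}$-equivariant data at $y_{i}$ together with the joint-surjectivity coupling across the orbit, i.e.\ it carries an induction structure. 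So the reduction ``to a single $H$-fixed point $y$ with cycle $m\cdot y$'' is not available, and it is exactly this reduction that makes your endgame look trivial (``the comparison map being the identity'' on a Grassmannian).

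Consequently your proposal never engages with what is actually the main body of the paper's proof: after linearizing and applying the scaling lemma (Lemma \ref{lem:ehpt}, which is the deformation you leave as an acknowledged obstacle, and which the paper proves via a natural transformation $\mcal{G}^{V}_{U}(n)\times\A^{1}\to\mcal{G}^{V}_{U\times\A^{1}}(n)$), one is left comparing $\big(\mcal{F}^{V}_{\coprod H/H_{i}}(n)\big)^{H}\to\big(\mcal{G}^{V}_{\coprod H/H_{i}}(n)^{an}\big)^{H}$ for a genuine disjoint union of orbits with nontrivial stabilizers. Points here are $H$-module quotients $V^{n}\onto W$ together with decompositions $W=W_{1}\oplus\cdots\oplus W_{r}$ and $H_{i}$-decompositions $W_{i}=\oplus_{H/H_{i}}W_{i}'$ inducing $\mathrm{Ind}_{H_{i}}(W_{i}')\iso W_{i}$ (orthogonal versions on the $\mcal{F}$-side), and the weak equivalence is established by comparing $\Grass^{\orth}$ with $\Grass$ and the decomposition spaces $X^{\orth}$ with $X$ through fibrations with contractible fibers. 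None of this appears in your argument, and it cannot be bypassed by the orbitwise product decomposition you propose. To repair the proof you would need to keep the orbits together (as the paper does, via the varieties $\coprod_{i}H\times^{H_{i}}Y$ and Lemma \ref{lem:lociso}) and then supply this linear-algebraic comparison, in addition to verifying the scaling retraction you deferred.
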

\begin{proof}
 We show that for each subgroup $H\subset G$ the triangle
\begin{equation}\label{eqn:tritemp}
\xymatrix{
(\mcal{F}_{Y}^{V}(n))^{H} \ar[rr]\ar[rd]_-{\theta_{top}} & & (\mcal{G}_{Y}^{V}(n)^{an})^{H} \ar[ld]^-{\theta} \\
&(\coprod _{k=0}^{n|V|}\Sym^{k}Y^{an})^{H} & 
}
\end{equation}
satisfies the hypothesis for Lemma \ref{lem:triangle}. For a point $y\in Y^{an}$ with stabilizer $H_{y}$ write $[H/H_{y}\smash\cdot y]$ for the 
sum over the points of the $H$-orbit of $y$.
Any point in $(\coprod \Sym^{k}Y^{an})^{H}$ can be written in the form $m_{1}[H/H_{1}\smash\cdot y_{1}] + \cdots + m_{r}[H/H_{r}\smash\cdot y_{r}]$ with 
  $H/H_{i}\smash\cdot y_{i}\neq H/H_{j}\cdot y_{j}$ for $i\neq j$.
 Choose a point $P=m_{1}[H/H_{1}\smash\cdot y_{1}] + \cdots + m_{r}[H/H_{r}\smash\cdot y_{r}]$
in $(\coprod \Sym^{k}Y^{an})^{H}$ and an analytic open neighborhood $U$ of this point. As $Y$ is quasi-projective, we can find an invariant affine open subscheme of $Y$ which contains every point of the orbits  $H/H_{i}\smash\cdot y_{i}$ and so we may assume that $Y$ is affine. 
 
We have equivariant \'etale maps
$$
H\times^{H_{i}} T_{y_{i}}Y \xleftarrow{q_{i}} H\times^{H_{i}} Y \xrightarrow{\pi_{i}} Y.
$$ 
(the left-hand map is defined by using an $H_{i}$-equivariant splitting to the projection $d:m\to m/m^{2}$, where $m$ is the maximal ideal of $y_{i}$ in $\mcal{O}_{Y,Y_{i}}$). These maps
 satisfy
\begin{itemize}
 \item $\pi_{i}$, $q_{i}$ are $H$-equivariant and $q_{i}$ maps $(h, y_{i})$ to $(h,0)$,
\item $\pi_{i}$, $q_{i}$ are \'etale at every point of $H/H_{i}\times\{y_{i}\}$,
\item the images of $\pi_{i}(H/H_{i}\times y_{i})$ and $\pi_{j}(H/H_{j}\times y_{j})$ are disjoint if $i\neq j$, and
\item the images of $q_{i}(H/H_{i}\times y_{i})$ and $q_{j}(H/H_{j}\times y_{j})$ are disjoint if $i\neq j$.
\end{itemize}
Let $D_{i}\subseteq Y$ be $H_{i}$-invariant open analytic neighborhoods of $y_{i}\in Y$ such that 
 $hD_{i}\cap h'D_{i} = \emptyset$ if $h$, $h'$ belong to different cosets of $H_{i}$, $H\smash\cdot D_{i}\cap H\smash\cdot D_{j}= \emptyset$ if $i\neq j$, and $q_{i}:D_{i}\to q_iD_{i}$ is a homeomorphism with $q_{i}D_{i}\subseteq T_{y_i}Y$ an open polydisk. We may take the $D_{i}$ small enough so that
$\Sym^d (\cup H\smash\cdot D_{i})$ is contained in $U$ (where $d$ is such that $P\in\Sym^{d} Y$).
We may further assume that $\pi_i$ and $q_i$ are \'etale on 
every $H\times^{H_i}D_{i}$.

We show that $(\mcal{F}_{Y}^{V}(n))^{H}_{\cup H\smash\cdot D_{i}} \to 
(\mcal{G}_{Y}^{V}(n)^{an})^{H}_{\cup H\smash\cdot D_{i}}$ is a weak equivalence. This in turn implies that $\theta_{top}^{-1}(\Sym^{d}(\cup H\smash\cdot D_{i}))\to \theta^{-1}(\Sym^{d}(\cup H\smash\cdot D_{i}))$ is a weak equivalence, which establishes that (\ref{eqn:tritemp}) satisfies the hypothesis of Lemma \ref{lem:triangle} as required.

Applying Lemma \ref{lem:lociso} we have the following commutative diagram of homeomorphisms (where for notational convenience we surpress both $n$ and the superscript $an$ and write 
$\tilde{D}_{i}= H\times^{H_{y_i}}D_{i}$)
$$
\xymatrix{
\big(\mcal{F}^{V}_{\coprod (H\times^{H_{i}}T_{y_{i}}Y)}\big)_{\coprod q_{i}(\tilde{D}_{i})}^{H} \ar[d] & \big(\mcal{F}^{V}_{(\coprod H\times^{H_{i}}Y)}\big)_{\coprod \tilde{D}_{i}}^{H}\ar[l]_-{\iso} \ar[d] \ar[r]^-{\iso} & \big(\mcal{F}^{V}_{Y}\big)_{\cup \pi_{i}(\tilde{D}_{i})}^{H} \ar[d] \\
\big(\mcal{G}^{V}_{\coprod (H\times^{H_{i}}T_{y_{i}}Y)}\big)_{\coprod q_{i}(\tilde{D}_{i})}^{H} & \big(\mcal{G}^{V}_{\coprod H\times^{H_{i}}Y}\big)_{\coprod \tilde{D}_{i}}^{H}\ar[l]_-{\iso}  \ar[r]^-{\iso} & \big(\mcal{G}^{V}_{Y}\big)_{\cup \pi_{i}(\tilde{D}_{i})}^{H} .
}
$$
It therefore suffices to show that the left-hand vertical map is a weak equivalence. For each $i$ there is an $H_{i}$-equivariant linear map $T_{y_{i}}Y \to T_{y_{i}}Y$ mapping $q_i(D_{i})$ homeomorphically to the open unit polydisk $C_{i}\subseteq T_{y_{i}}Y$. Therefore it is enough to show that 
$$
\big(\mcal{F}^{V}_{\coprod H\times^{H_{i}} T_{y_{i}}Y}\big)^{H}_{\coprod H\times^{H_{i}}C_{i}} \to
\big(\mcal{G}^{V}_{\coprod H\times^{H_{i}} T_{y_{i}}Y}\big)^{H}_{\coprod H\times^{H_{i}}C_{i}}
$$ 
is a weak equivalence.

By Lemma \ref{lem:ehpt} below, the inclusion $H/H_{i} \subseteq H\times^{H_{i}}T_{y_{i}}Y$, at the point $0\in T_{y_i}Y$, induce weak homotopy equivalences 
$$
\big(\mcal{G}^{V}_{\coprod H/H_{i}}\big)^{H} \xrightarrow{\wkeq} \big(\mcal{G}^{V}_{\coprod H\times^{H_{i}} T_{y_{i}}Y}\big)^{H}_{\coprod C_{i}^{h}} \;\;\textrm{and} \;\; \big(\mcal{F}^{V}_{\coprod H/H_{i}}\big)^{H} \xrightarrow{\wkeq} \big(\mcal{F}^{V}_{\coprod H\times^{H_{i}} T_{y_{i}}Y}\big)^{H}_{\coprod C_{i}^{h}}.
$$ 
It thus suffices to show that $\big(\mcal{F}^{V}_{\coprod H/H_{i}}(n)\big)^{H} \to \big(\mcal{G}^{V}_{\coprod H/H_{i}}(n)^{an}\big)^{H}$ is a weak equivalence.
(These are equivariant analogues of the spaces denoted ${}^{0}(\mcal{G}^{n}_{T})^{an}_{C}$ and ${}^{0}(\mcal{F}^{n}_{T})_{C}$ in \cite{Walker:Thomason}).

A point of the space $\big(\mcal{G}^{V}_{\coprod H/H_{i}}(n)^{an}\big)^{H}$ consists of an $H$-module quotient $[V^{n}\onto W]$ and an equivariant map  $\C^{H/H_{1}}\times \cdots \times\C^{H/H_{r}}\to \End_{\C}(W)$ of $\C$-algebras (here $\C^{H/H_i}\iso\oplus_{H/H_{i}}\C$ denotes the coordinate ring of $H/H_i$). This amounts to giving an $H$-module quotient $[V^{n}\onto W]$, a direct sum decomposition $W= W_{1}\oplus \cdots \oplus W_{r}$ of $H$-modules, and $H_{i}$-module decomposition $W_{i}= \oplus_{H/H_{i}}W_{i}'$ inducing an $H$-module isomorphism  
$\mathrm{Ind}_{H_i}(W_i')\iso W_i$.
Similarly a point of $\big(\mcal{F}^{V}_{\coprod H/H_{i}}(n)\big)^{H}$  is given by an $H$-module quotient $[V^{n}\onto W]$ and an equivariant normal map of $C^{*}$-algebras $\C^{H/H_{1}}\times \cdots \times\C^{H/H_{r}}\to \End_{\C}(W)$. This amounts to giving an $H$-module quotient $[V^{n}\onto W]$, an $H$-module orthogonal sum decomposition $W= W_{1}\perp \cdots \perp W_{r}$, and an orthogonal $H_{i}$-module decomposition $W_{i}= \perp_{H/H_{i}}W_{i}'$ inducing an $H$-module isomorphism   
$\mathrm{Ind}_{H_i}(W_i')\iso W_i$.

Let $W$ be a finite dimensional Hermitian inner product $H$-module. For $H$-modules $W_1,\ldots, W_r$ such that $W\iso W_1\oplus \cdots \oplus W_r$,
write $X(W_1,\ldots, W_r)$ for the 
 space of decompositions $W=W_1\oplus \cdots \oplus W_r$ together with an $H_{i}$-module decomposition of each $W_{i}$,  $W_{i}= \oplus_{H/H_{i}}W_{i}'$ which induces an $H$-module isomorphism $W_i\iso \mathrm{Ind}_{H_i}(W')$.
Similarly, write $X^{\orth}(W_1,\ldots, W_r)$ for the space of orthogonal decompositions 
$W=W_1\perp \cdots \perp W_r$ with an orthogonal $H_{i}$-module decomposition of each $W_{i}$,  $W_{i}= \perp_{H/H_{i}}W_{i}'$ which induces an $H$-module isomorphism $W_i\iso \mathrm{Ind}_{H_i}(W')$.

First we consider the space of all decompositions of $W$ into an $r$-fold direct sum (resp.~ orthogonal sum) of $H$-modules. These spaces break up into a disjoint union of connected components $\Grass_{W}(W_1,\ldots, W_{r})$ (resp.~ $\Grass^{\orth}_{W}(W_1,\ldots, W_{r})$) of decompositions where the $i$th summand is $H$-isomorphic to $W_i$. As in \cite{Walker:Thomason}, we proceed by induction on $r$ to show that $\Grass_{W}^{\orth}(W_1,\ldots, W_{r})\subseteq \Grass_{W}(W_1,\ldots, W_{r})$ is a weak equivalence for any $W$ and a tuple  $(W_1,\ldots, W_r)$
of $G$-modules, such that $W = \oplus_iW_i$. The case $r=1$ is clear.

The map 
$\Grass_W(W_1,\ldots, W_{r}) \to \Grass_W(W_1,\ldots, W_{r-1}\oplus W_{r})$ is a fibration with fiber $\Grass_{W'}(W_{r-1},W_r)$ where $W'= W_{r-1}\oplus W_{r}$. Similarly in the orthogonal case we have a fibration $\Grass^{\orth}_W(W_1,\ldots, W_{r}) \to \Grass^{\orth}_W(W_1,\ldots, W_{r-1}\oplus W_{r})$ 
with fiber $\Grass^{\orth}_{W'}(W_{r-1},W_r)$. Now the forgetful map from $\Grass^{\orth}_{W'}(W_{r-1},W_r)$  to the space $\Grass^{H}(W_{r-1}\subseteq W')$ of sub-$H$-planes isomorphic to $W_{r-1}$ is a homeomorphism. On the other hand
$\Grass_{W'}(W_{r-1},W_r)\to \Grass^{H}(W_{r-1}\subseteq W')$ is a fibration with contractible fibers. We conclude that $\Grass_{W}^{\orth}(W_1,\ldots, W_{r})\subseteq \Grass_{W}(W_1,\ldots, W_{r})$ is a weak equivalence.

The forgetful map $X(W_1,\ldots,W_r)\to \Grass_W(W_1,\ldots, W_{r})$  is also a fibration with fiber $F$ which is the space of $H_{i}$-module decompositions $W_{i} = \oplus_{H/H_{i}}W_i'$ inducing an $H$-module isomorphism $\mathrm{Ind}_{H_i}(W_{i}')\iso W_{i}$, $i=1,\ldots, r$. Such a decomposition is determined by an $H_{i}$-equivariant embedding $W_{i}'\subseteq W_{i}$, such that the induced map $\mathrm{Ind}_{H_i}(W_{i}')\iso W_{i}$ is an isomorphism, together with a choice of $H_{i}$-equivariant splitting of the projection $W_i\to W_{i}/W_i'$.  
This implies that the fibers of $F\to Y_{H_1}(W_1'\subseteq W_1)\times\cdots\times Y_{H_r}(W_r'\subseteq W_r)$, where $Y_{H_{i}}(W_{i}'\subseteq W_{i})$ is the space of $H_i$-equivariant embeddings $W_{i}'\subseteq W_{i}$ which induce an isomorphism $\mathrm{Ind}_{H_i}W_i'\iso W_{i}$, are contractible. 
Similarly $X^{\orth}(W_1,\ldots,W_r)\to \Grass_W^{\orth}(W_1,\ldots, W_{r})$ is a fibration with fiber $F^{\orth}$, which is the space of decompositions of $H_{i}$-module decompositions $W_{i} = \perp_{H/H_{i}}W_i'$ inducing an $H$-module isomorphism $\mathrm{Ind}_{H_i}(W_{i}')\iso W_{i}$. This space is homeomorphic to $\Grass^{H_1}(W_1'\subseteq W_1)\times\cdots\times \Grass^{H_r}(W_r'\subseteq W_r)$.
\end{proof}

\begin{lemma}\label{lem:ehpt}
Let $G$ be a finite group and $V$ a representation. Let $\{H_{j}\}$ be finite set of subgroups of $G$, $W_{j}$ representations of $H_{j}$, $C_{j}$ the unit open polydisk in $W_j$, and $\tilde{C}_{j}= G\times^{H_j}C_j$.
The inclusions $\iota:\coprod G/H_{j}\to \coprod G\times^{H_{j}}W_{j}$ induce equivariant weak equivalences of topological spaces
$\iota_{*}:\mcal{G}^{V}_{\coprod G/H_{i}}(n)^{an}\to (\mcal{G}^{V}_{\coprod G\times^{H_{j}}W_{j}}(n)^{an})_{\coprod\tilde{C}_{j}}$
and
$\iota_{*}:\mcal{F}^{V}_{\coprod G/H_{i}}(n)\to (\mcal{F}^{V}_{\coprod G\times^{H_{j}}W_{j}}(n))_{\coprod\tilde{C}_{j}}$.
\end{lemma}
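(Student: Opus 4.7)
The plan is to construct an equivariant strong deformation retraction induced by fiberwise scalar multiplication on the vector bundle $\coprod G\times^{H_j}W_j \to \coprod G/H_j$. Write $X = \coprod G \times^{H_j} W_j$ and $\tilde C = \coprod \tilde C_j$, and consider the equivariant algebraic map $m: \A^1 \times X \to X$ sending $(t, [g, w])$ to $[g, tw]$; this is well-defined because scalar multiplication on $W_j$ commutes with the $H_j$-action, it is $G$-equivariant because the $G$-action only affects the first factor of $G\times^{H_j}W_j$, and it is algebraic since it is linear in $w$. Moreover $m([0,1] \times \tilde C_j) \subseteq \tilde C_j$, as the unit polydisk is preserved by multiplication by $t \in [0,1]$. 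At $t = 1$ the map is the identity, while at $t = 0$ it factors as the projection $p: X \to \coprod G/H_j$ followed by the zero-section inclusion $\iota$.

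For the $\mcal{G}$ case I will use Lemma \ref{lem:covariant} to assemble the $\A^1$-family $m_t := m(t,-): X \to X$ into a single morphism $M: \A^1 \times \mcal{G}^V_X(n) \to \mcal{G}^V_X(n)$: given $g: U \to \A^1$ and $[q: \mcal{V}^n_{U \times X} \onto \mcal{M}] \in \mcal{G}^V_X(n)(U)$, one applies the construction of Lemma \ref{lem:covariant} to the map of $U$-schemes $U \times X \to U \times X$ given by $(u, x) \mapsto (u, m(g(u), x))$. Analytifying $M$ and restricting to $[0,1] \subset \A^1(\C)$ yields a continuous equivariant homotopy $H: [0,1] \times (\mcal{G}^V_X(n))^{an}_{\tilde C} \to (\mcal{G}^V_X(n))^{an}_{\tilde C}$ between the identity (at $t = 1$) and $\iota_* \circ p_*$ (at $t = 0$), with $p_* \circ \iota_* = \mathrm{id}$ on $\mcal{G}^V_{\coprod G/H_j}(n)$ by functoriality since $p \circ \iota = \mathrm{id}$. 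This will exhibit $\iota_*$ as an equivariant strong deformation retract, hence an equivariant weak equivalence.

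The $\mcal{F}$ case proceeds in parallel, with the $C^*$-algebra pullbacks $m_t^*: \mcal{C}_0(X^{an}_+) \to \mcal{C}_0(X^{an}_+)$ replacing the algebraic pushforwards. These are continuous $*$-homomorphisms varying continuously in $t$; precomposition with $m_t^*$ preserves the $*$-property and the support condition on $\tilde C$, so the same recipe produces an equivariant deformation retraction of $(\mcal{F}^V_X(n))_{\tilde C}$ onto the image of $\iota_*$.

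The main subtlety I anticipate is verifying the joint continuity in $t$ of the pushforward on $(\mcal{G}^V_X(n))^{an}$, which is precisely why the homotopy should be constructed at the level of the $\A^1$-parameterized family in Lemma \ref{lem:covariant} rather than pointwise for each $t$. Once this is in hand, the remainder is essentially formal given that scalar multiplication preserves precisely the structures needed: the algebraic module structure for $\mcal{G}$ and the normality of the $*$-algebra map for $\mcal{F}$.
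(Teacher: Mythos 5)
Your proposal is correct and is essentially the paper's own argument: both use fiberwise scalar multiplication $(t,[g,w])\mapsto[g,tw]$, implemented algebraically over $\A^1$ (the paper packages this as a natural transformation $\mcal{G}^{V}_{U}(n)\times\A^{1}\to\mcal{G}^{V}_{U\times\A^{1}}(n)$ followed by pushforward along $(x,t)\mapsto t\cdot x$, which is literally the same map as your parameterized pushforward) so that analytification yields a continuous equivariant homotopy preserving the support condition, with the $\mcal{F}$ case handled in parallel. The only caveat is that your invocation of Lemma \ref{lem:covariant} for a $U$-morphism not of the form $\mathrm{id}_U\times f$ goes slightly beyond its literal statement, but the same verification (finiteness of support and checking surjectivity after pushing down to $U$ via condition (2)) applies verbatim.
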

\begin{proof}
The arguments in both cases are similiar, so we treat only the first case.
Let $U=\spec(A)$ be an affine $G$-variety. We have an  equivariant natural transformation of functors 
$$
\mcal{G}^{V}_{U}(n)\times \A^{1}\to \mcal{G}^{V}_{U\times \A^1}(n)
$$
defined on affine varieties as follows. An $R$-valued point of the left-hand side is a triple $(V^n\otimes_{\C} R\onto Q, \rho:A\to \End_{R}(Q), f:\C[t]\to R)$, where $Q$ is a projective $R$-module, $\rho$ and $f$ are $\C$-algebra maps. Write $\phi$ for the $\C$-algebra map defined to be the composition $\eta f:\C[t]\to R\to \End_{R}(Q)$, where $\eta(r)$ is multiplication by $r$. Now define $F$ by sending this triple to $(V^{n}\otimes R\onto Q, \rho\otimes\phi:A\otimes \C[t]\to \End_{R}(Q))$. Moreover, the resulting square
$$
\xymatrix{
\mcal{G}^{V}_{U}(n)\times \A^{1}\ar[r]\ar[d]^{\theta_U\times\mathrm{id}} & \mcal{G}^{V}_{U\times \A^1}(n) \ar[d]^{\theta_{U\times\A^1}}\\
(\coprod \Sym^k(U))\times \A^{1}\ar[r] & \coprod \Sym^{k}(U\times\A^{1})
}
$$
is commutative.

Write $\pi: \coprod G\times^{H_{j}}W_{j} \to \coprod G/H_{j}$ for the projection. Then $\pi_{*}\iota_{*} = id_{*}$.  
We obtain an equivariant homotopy between $\iota_{*}\pi_{*}$ and $id_{*}$ by using the restriction of the natural transformation $F$ to $I=[0,1]$ and the maps $(\coprod G\times^{H_{j}}W_{j})\times\A^{1}\to \coprod G\times^{H_{j}}W_{j}$ given by $(x,t)\mapsto t\smash\cdot x$. Note that the commutativity of the above square implies that the resulting homotopy restricts to an equivariant homotopy $(\mcal{G}^{V}_{\coprod G\times^{H_{j}}W_{j}}(n)^{an})_{\coprod\tilde{C}_{j}}\times I \to (\mcal{G}^{V}_{\coprod G\times^{H_{j}}W_{j}}(n)^{an})_{\coprod\tilde{C}_{j}}$.
\end{proof}

Since we have that $\mcal{K}^{sst}_{G}(\C,\, Y) = \mcal{K}^{qtop}_{G}(S^0,\, Y)$, 
Theorem \ref{mainthm} is a special case of the following equivariant generalization of
\cite[Corollary 5.9]{Walker:Thomason}.

 \begin{corollary}\label{cor:qtopeq}
  Let $Y$ be a smooth quasi-projective $G$-variety and $A$ a based $G$-$CW$-complex. Then 
 $$
 \bu^{\mf{c}}_{G}(A,\,Y^{an})\to \mcal{K}^{qtop}_{G}(A,\, Y) 
 $$
 is an equivariant weak equivalence. 
 \end{corollary}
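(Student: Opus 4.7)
The plan is to reduce via the specialness of the underlying equivariant $\Gamma$-spaces to a comparison of simplicial mapping spaces, then deduce this from Theorem \ref{thm:ew} combined with a standard mapping space argument.

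First, I would establish the $qtop$-analog of Proposition \ref{prop:topspecial}: namely, that the equivariant $\Gamma$-spaces
$$\underline{n}_{+}\mapsto |\mcal{A}^{qtop}_{G}(A,\underline{n}_{+}\wedge Y_{+})^{V}| \quad\text{and}\quad \underline{n}_{+}\mapsto |\Hom_{cts*}(A\wedge\dtop_{+},\mcal{G}^{V}_{\underline{n}_{+}\wedge Y_{+}}(\infty)^{an})|$$
are degreewise equivariantly weakly equivalent, and both special. The proof follows the pattern of Propositions \ref{prop:algspecial}, \ref{prop:sstspecial}, and \ref{prop:topspecial}, using Proposition \ref{prop:qtopinf} to reduce to the $\infty$-model, and then adapting the splitting argument from \cite[Lemma 2.2]{GW:Kmodels} to the analytified setting (exactly as in the proof of Proposition \ref{prop:topspecial}).

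Since both $\Gamma$-spaces are special, Lemma \ref{lem:Gspc} implies that their associated $G$-spectra are positive $\Omega$-$G$-spectra, so it suffices to check that the comparison map is an equivariant weak equivalence at each level $\underline{n}_{+}$ of the $\Gamma$-space. After replacing both sides with their $\infty$-models via Proposition \ref{prop:topspecial} and the $qtop$-analog above, this reduces to showing that
$$\Hom_{cts*}(A\wedge\dtop_{+},\mcal{F}^{V}_{\underline{n}_{+}\wedge Y^{an}_{+}}(\infty)) \longrightarrow \Hom_{cts*}(A\wedge\dtop_{+},\mcal{G}^{V}_{\underline{n}_{+}\wedge Y_{+}}(\infty)^{an})$$
is an equivariant weak equivalence of simplicial sets. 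Since $\underline{n}_{+}\wedge Y_{+}$ is a disjoint union of copies of the smooth $G$-variety $Y$ with a disjoint basepoint, Theorem \ref{thm:ew} gives an equivariant weak equivalence $\mcal{F}^{V}_{\underline{n}_{+}\wedge Y^{an}_{+}}(m)\xrightarrow{\wkeq}\mcal{G}^{V}_{\underline{n}_{+}\wedge Y_{+}}(m)^{an}$ for each $m$. The transition maps are equivariant closed inclusions between spaces with equivariant CW-homotopy type, so passing to the filtered colimit in $m$ preserves equivariant weak equivalences, yielding $\mcal{F}^{V}_{\underline{n}_{+}\wedge Y^{an}_{+}}(\infty)\xrightarrow{\wkeq}\mcal{G}^{V}_{\underline{n}_{+}\wedge Y_{+}}(\infty)^{an}$.

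The main obstacle will be the last step: promoting this equivariant weak equivalence of target spaces through the mapping-space functor $\Hom_{cts*}(A\wedge\dtop_{+},-)$. Since this functor is naturally identified with $\sing_{\bullet}\Map_{*}(A,-)$, and $\sing_{\bullet}$ preserves and detects weak equivalences on $H$-fixed points for each subgroup $H\subseteq G$, the question reduces to showing that $\Map_{*}^{H}(A,-)$ sends the equivariant weak equivalence $\mcal{F}^{V}(\infty)\to\mcal{G}^{V}(\infty)^{an}$ to a weak equivalence. This holds because both targets arise as filtered colimits along equivariant closed cofibrations of spaces with equivariant CW-homotopy type (the analytified Quot varieties admit $G$-CW structures by standard triangulation arguments, as do the spaces $\mcal{F}^{V}(m)$), and $A$ is a $G$-CW complex, so the standard equivariant homotopy invariance of $\Map_{*}^{H}(A,-)$ on such targets applies. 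Passing back through $\hocolim_{I}$, realization, and $\mathbb{S}(-)$ then yields the desired equivariant weak equivalence of $G$-spectra.
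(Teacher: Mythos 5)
Your proposal is correct, and it rests on the same essential inputs as the paper's proof — Theorem \ref{thm:ew} together with the homotopy invariance of $\sing_{\bullet}\Map_{*}(A,-)$ — but it takes a noticeably longer route. The paper never stabilizes: it applies $\Hom_{cts*}(A\wedge\Delta^{\bullet}_{top\,+},-)$ directly to the levelwise equivalences $\mcal{F}^{V}_{Y^{an}}(n)\to\mcal{G}^{V}_{Y}(n)^{an}$ furnished by Theorem \ref{thm:ew}, and then takes the homotopy colimit over $I$; since $\hocolim$ preserves objectwise equivariant weak equivalences with no cofibrancy or cell-structure hypotheses, this immediately gives a degreewise equivalence of the equivariant $\Gamma$-spaces $|\mcal{A}^{top}_{G}(A,(-)\wedge Y^{an}_{+})^{V}|\to|\mcal{A}^{qtop}_{G}(A,(-)\wedge Y_{+})^{V}|$ and hence of the associated $G$-spectra, with no appeal to specialness at all. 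Your detour through the $\infty$-models (Propositions \ref{prop:topinf} and \ref{prop:qtopinf}, plus a $qtop$ analogue of Proposition \ref{prop:topspecial}) obliges you to check extra point-set facts — that the stabilization maps are equivariant closed inclusions, that the analytified Quot schemes and their colimits have $G$-CW homotopy type — none of which the paper's argument needs; what it buys is essentially only the $qtop$ specialness statement, which is not required for this corollary. Note also that the triangulation claims are superfluous even within your own argument: since $A$ is a based $G$-CW complex (hence cofibrant) and every $G$-space is fibrant, the functor $\Map_{*}(A,-)$, and therefore $\sing_{\bullet}\Map_{*}(A,-)$, preserves all equivariant weak equivalences regardless of the targets, and likewise the passage to the colimit in $m$ only needs that compact sets factor through finite stages of a sequence of closed inclusions; this is also the implicit justification behind the first step of the paper's proof.
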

 \begin{proof}
 By the previous theorem the map 
 $$
 |\Hom_{cts*}(A\wedge\Delta^{\bullet}_{top\,+},\,\mcal{F}_{Y^{an}}^{V}(n))| \to 
 |\Hom_{cts*}(A\wedge\Delta^{\bullet}_{top\,+},\,\mcal{G}_{Y}^{V}(n))^{an}|
 $$
 is an equivariant weak equivalence for all $n$ and any $V$. Taking a homotopy colimit over $I$  gives an equivariant weak equivalence of equivariant $\Gamma$-spaces
 $$
 |\mcal{A}^{top}_{G}(A,(-)\wedge Y_{+}^{an})^{V}|\to |\mcal{A}^{qtop}_{G}(A,(-)\wedge Y_{+})^{V}|
 $$ 
 and thus of associated $G$-spectra. The result is the particular case $V=\C[G]$.
 \end{proof}

\section{Pairings and operations}\label{sec:pair}
In this section we establish two basic pairings on our bivariant theories generalizing those of \cite[section 6]{Walker:Thomason} to the equivariant setting. These pairings are compatible with the natural comparison maps of $G$-spectra
\begin{align}\label{eqn:manymaps}
\mcal{K}_{G}(X,Y) & \to \mcal{K}_{G}(X\times\Delta^{\bullet}_{top},Y) \xleftarrow{\wkeq} \mcal{K}^{sst}_{G}(X,\, Y) \\
\nonumber & \to \mcal{K}^{qtop}_{G}(X^{an},\, Y) \stackrel{\simeq}{\leftarrow} \bu^{\mf{c}}_{G}(X^{an},\,Y^{an}) 
\to \bu_{G}(X^{an},\,Y^{an}),
\end{align}
obtained from (\ref{eqn:algsst}), (\ref{eqn:compbiv}), and (\ref{eqn:c2nc}). The existence and compatibility of these pairings plays a crucial role in the applications in Sections \ref{sec:sstThom} and \ref{sec:algThom}. An important special case occurs when $Y=\spec(\C)$. Then the pairings specialize to give these spectra the structure of commutative ring spectra and these maps are maps of ring spectra. In particular, combined with Proposition \ref{prop:conn} below, we have that
$$
K_{*}^{G}(X) \to K_{*}^{G,\,sst}(X)\to KU^{-*}_{G}(X^{an})
$$
are graded ring homomorphisms.

 The construction of these bivariant theories all begin with the consideration of a functor $F:I\to G\sSet$, where $I$ is the category whose objects are the sets $\underline{n}=\{1,2,\ldots, n\}$ for each $n\geq 0$ (so $\underline{0}$ is the empty set) together with injective set maps. Let $\diamond:I\times I \to I$ denote the functor which sends the pair $(\underline{m},\underline{n})$ to $\underline{mn}$. Given injections $\alpha:\underline{m}\to \underline{n}$ and $\beta:\underline{p}\to \underline{q}$ in $I$ we define $\alpha\diamond\beta:\underline{mp}\to \underline{nq}$ by 
$$
\alpha\diamond\beta((i-1)p+j) =  (\alpha(i)-1)q+\beta(j)
$$ 
for $i\in\underline{m}$, $j\in \underline{p}$, which is the map obtained by identifying $\underline{m}\times\underline{p}$ and $\underline{n}\times\underline{q}$ with $\underline{mp}$ and $\underline{nq}$ via the lexicographical ordering.
\begin{definition}
Let $F$, $G$, $H:I\to G\sSet$ be $I$-diagrams of $G$-simplicial sets. A \textit{pairing  of $I$-diagrams} $F\times G\to H$, is a natural transformation $F\times G \to H\diamond$ of functors $I\times I\to G\sSet$. 
\end{definition}
Such a pairing induces a pairing of $G$-simplicial sets
$$
\hocolim_{I}F\times\hocolim_{I}G\xrightarrow{\iso} \hocolim_{I\times I}F\times G \to \hocolim_{I\times I}H\diamond \to \hocolim_{I} H .
$$

The \textit{external product} is defined as follows. First we define a pairing 
$$
\boxtimes:\Hom(X,\,\GG_{Y}^{V}(m))\times \Hom(W,\,\GG_{Z}^{V}(n)) \to \Hom(X\times W,\,\GG_{Y\times Z}^{V}(mn))
$$
of $I$-diagrams as follows.
Define
$$
[p:\mcal{V}^{m}_{X\times Y}\onto \mcal{M}]\boxtimes [q:\mcal{V}^{n}_{W\times Z}\onto \mcal{N}]
$$
to be the composition
$$
\mcal{V}^{mn}_{X\times W\times Y\times Z} \iso \pi_{X\times Y}^{*}\mcal{V}^{m}_{X\times Y}\otimes \pi^{*}_{W\times Z}\mcal{V}^{n}_{X\times Y} \to \pi^{*}_{X\times Y}\mcal{M}\otimes\pi^{*}_{W\times Z}\mcal{N},
$$
where the first isomorphism is given using the lexicographical ordering, that is $e_{i}\otimes e_{j}$ is sent to $e_{(i-1)n+j}$. The maps $\pi_{X\times Y}$ and $\pi_{W\times Z}$ are the evident projections. It is clear that $\alpha^{*}\boxtimes \beta^{*} = (\alpha\diamond\beta)^{*}$ and thus we have a natural pairing of $I$-diagrams of $G$-sets. We thus obtain the natural pairing of $I$-diagrams of $G$-simplicial sets
\begin{align*}
\boxtimes:\Hom(X\times\Delta^{\bullet}_{\C},&\,\GG_{Y}^{V}(m))\times \Hom(W\times\Delta^{\bullet}_{\C},\,\GG_{Z}^{V}(n)) \\ 
& \to \Hom(X\times W\times\Delta^{\bullet}_{\C}\times\Delta^{\bullet}_{\C},\,\GG_{Y\times Z}^{V}(mn)) \\ 
& \to \Hom(X\times W\times\Delta^{\bullet}_{\C},\,\GG_{Y\times Z}^{V}(mn)) 
\end{align*}
where the second map is induced by the diagonal $\Delta^{d}_{\C}\to \Delta^{d}_{\C}\times\Delta^{d}_{\C}$.
Similarly making use of the diagonal $\Delta^{d}_{top}\to \Delta^{d}_{top}\times\Delta^{d}_{top}$ we obtain
\begin{align*}
\boxtimes:\Hom(X\times\Delta^{\bullet}_{top},&\,\GG_{Y}^{V}(m))\times \Hom(W\times\Delta^{\bullet}_{top},\,\GG_{Z}^{V}(n)) \\ 
& \to \Hom(X\times W\times\Delta^{\bullet}_{top},\,\GG_{Y\times Z}^{V}(mn)).
\end{align*}

Taking homotopy colimits we obtain the external pairing of $G$-simplicial sets
\begin{align*}
\boxtimes&:\mcal{A}_{G}(X,Y)^{V} \times \mcal{A}_{G}(W,Z)^{V}\to \mcal{A}_{G}(X\times W,Y\times Z)^{V},\\
\boxtimes&:\mcal{A}_{G}(X\times\Delta^{\bullet}_{top},Y)^{V} \times \mcal{A}_{G}(W\times\Delta^{\bullet}_{top},Z)^{V}\to \mcal{A}_{G}(X\times W\times\Delta^{\bullet}_{top},Y\times Z)^{V},\,\,\textrm{and} \\
\boxtimes&:\mcal{A}^{sst}_{G}(X,Y)^{V} \times \mcal{A}^{sst}_{G}(W,Z)^{V}\to \mcal{A}^{sst}_{G}(X\times W,Y\times Z)^{V}.
\end{align*}
A straightforward verification shows that this pairing is associative.
To see that the three pairings are compatible with the first two natural
transformations of \ref{eqn:manymaps}, use the naturality of $\Hom$
in the first variable applied to the diagonal $\Delta^{\bullet} \to
\Delta^{\bullet} \times \Delta^{\bullet}$ and the projection
$\Delta^{\bullet} \to \Delta^{0}$, both for
$\dC$ and $\Delta^{\bullet}_{top}$. 

The pairing $\boxtimes$ extends to give an associative pairing 
$$
\boxtimes:\mcal{A}^{qtop}_{G}(X^{an}, Y)^{V} \times \mcal{A}^{qtop}_{G}(W^{an},Z)^{V} \to \mcal{A}^{qtop}_{G}((X\times W)^{an},Y\times Z)^{V}.
$$

compatible with the one on $\mcal{A}^{sst}_{G}$.

These pairings give rise to an external pairing of equivariant $\Gamma$-spaces 
\begin{multline*}
 (\underline{m}_{+}\mapsto \mcal{A}_{G}(X,m_{+}\wedge Y_{+})^{V})\times (\underline{n}_{+}\mapsto \mcal{A}_{G}(W,n_{+}\wedge Z_{+})^{V}) \\
\xrightarrow{\boxtimes} ((\underline{m}_{+},\underline{n}_{+})\mapsto \mcal{A}_{G}(X\times W, m_{+}\wedge Y_{+} \wedge n_{+}\wedge Z_{+})
\end{multline*}
and similarly for $\mcal{A}^{sst}_{G}$ and $\mcal{A}^{qtop}_{G}$. We therefore obtain by the discussion in Section \ref{subsectionpairings} pairings of natural and associative pairings of spectra.
\begin{align*}
\boxtimes&:\mcal{K}_{G}(X,Y)\wedge\mcal{K}_{G}(W, Z) \to \mcal{K}_{G}(X\times W, Y\times Z), \\
\boxtimes&:\mcal{K}_{G}(X\times\Delta^{\bullet}_{top},Y)\wedge\mcal{K}_{G}(W\times\Delta^{\bullet}_{top}, Z) \to \mcal{K}_{G}(X\times W\times\Delta^{\bullet}_{top}, Y\times Z), \\
\boxtimes&:\mcal{K}^{sst}_{G}(X,Y)\wedge\mcal{K}^{sst}_{G}(W, Z) \to \mcal{K}^{sst}_{G}(X\times W, Y\times Z)\textrm{, and} 
\\
\boxtimes&:\mcal{K}^{qtop}_{G}(X^{an},Y)\wedge\mcal{K}^{qtop}_{G}(W^{an}, Z) \to \mcal{K}^{qtop}_{G}(X^{an}\times W^{an}, Y\times Z).
\end{align*}

Now we define a pairing
\begin{multline*}
 \boxtimes:\mcal{F}^{V}_{S}(m)\times \mcal{F}^{V}_{T}(n) = \underline{\Hom}_{*}(\mcal{C}_{0}(S), \End_{\C}(V^{m}))\times \underline{\Hom}_{*}(\mcal{C}_{0}(T),\End_{\C}(V^{n})) \\
\to \underline{\Hom}_{*}(\mcal{C}(S\wedge T), \End_{\C}(V^{mn})) =  \mcal{F}^{V}_{S\wedge T}(mn).
\end{multline*}
Given $f:\mcal{C}_{0}(S)\to \End_{\C}(V^{m})$ and $g:\mcal{C}_{0}(T)\to \End_{\C}(V^{n})$ we define
$f\boxtimes g$ to be the composition
$$
f\boxtimes g:\mcal{C}_{0}(S\wedge T)\iso \mcal{C}_{0}(S)\otimes\mcal{C}_{0}(T)\xrightarrow{f\otimes g} \End_{\C}(V^{m})\otimes \End_{C}(V^{n})\to \End_{\C}(V^{mn})
$$
where the last map uses the lexicographical indexing $\ell:V^{m}\otimes V^{n}\iso V^{mn}$ via $e_{i}\otimes e_{j}\mapsto e_{n(i-1)+j}$. Given injections $\alpha:\underline{m}\to \underline{p}$ and $\beta:\underline{n}\to \underline{q}$ then under the above isomorphism we have that $\tilde{\alpha}f\otimes \tilde{\beta}g$ agrees with $\widetilde{\alpha\diamond\beta}\ell(f\otimes g)$. We obtain a natural pairing
\begin{align*}
\boxtimes:\Hom_{cts*}(A,\mcal{F}^{V}_{S}(m)) & \times \Hom_{cts*}(B,\mcal{F}^{V}_{S}(n)) \\
& \to \Hom_{cts*}(A\wedge B,\mcal{F}^{V}_{S}(m)\times\mcal{F}^{V}_{T}(n))\\
& \to \Hom_{cts*}(A\wedge B,\mcal{F}^{V}_{S\wedge T}(mn))
\end{align*}
of $I$-diagrams. This pairing is associative in the evident sense and induces the external pairing of equivariant $\Gamma$-spaces
$$
\boxtimes:\mcal{A}^{top}_{G}(A,-\wedge S)^{V}\wedge \mcal{A}^{top}_{G}(B,-\wedge T)^{V}\to \mcal{A}^{top}_{G}(A\wedge B, -\wedge S\wedge - \wedge T)^{V}
$$
and thus a pairing of $G$-spectra
$$
\boxtimes:\bu^{\mf{c}}_{G}(A,S)\wedge \bu^{\mf{c}}_{G}(B,T) \to \bu^{\mf{c}}_{G}(A\wedge B, S\wedge T).
$$
This pairing extends to an external pairing of $\mcal{W}_{G}$-spaces and thus we obtain the pairing of $G$-spectra
$$
\boxtimes:\bu_{G}(A,S)\wedge \bu_{G}(B,T) \to \bu_{G}(A\wedge B, S\wedge T).
$$ 

It is clear that the last of the natural transformations of
\ref{eqn:manymaps} is compatible with the pairings. To see this
for the second last one, one uses the naturality of the maps
discussed in section \ref{sec:comp}.
Write $\mcal{K}^{?}_{G}(X,Y)$ for any one of the six bivariant theories appearing in (\ref{eqn:manymaps}). The following proposition summarizes the preceding discussion.
\begin{proposition}
Let $X$, $Y$, $W$, $Z$, $S$, and $T$, be quasi-projective $G$-varieties. We have pairings 
$$
\boxtimes:\mcal{K}_{G}^{?}(X,Y) \wedge \mcal{K}^{?}_{G}(W,Z)\to \mcal{K}^{?}_{G}(X\times W, Y\times Z).
$$
This pairing is associative in the sense that the two evident maps
 $$
\mcal{K}^{?}_{G}(X,Y) \times \mcal{K}^{?}_{G}(W,Z)\times \mcal{K}^{?}_{G}(S,T)\to \mcal{K}^{?}_{G}(X\times W\times S,Y\times Z\times T)
$$
agree. Moreover, these pairings are compatible with the each of the natural transformations (\ref{eqn:manymaps}).
\end{proposition}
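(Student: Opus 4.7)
The proof is essentially bookkeeping that combines the constructions already carried out paragraph-by-paragraph throughout this section. The existence of each pairing has been established above: in every case one first constructs a pairing of underlying $I$-diagrams, promotes it to a pairing of equivariant $\Gamma$-spaces (or $\mcal{W}_{G}$-spaces, in the case of $\bu_{G}$), and then invokes subsection \ref{subsectionpairings} to obtain a pairing of the associated orthogonal $G$-spectra.

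For associativity, my plan is to check that the underlying pairing of $I$-diagrams is already strictly associative. Given three quotients $f_{k}:\mcal{V}^{n_{k}}_{X_{k}\times Y_{k}}\onto \mcal{M}_{k}$ for $k=1,2,3$, both associations of $\boxtimes$ produce the same quotient $\mcal{V}^{n_{1}n_{2}n_{3}}\onto \mcal{M}_{1}\otimes\mcal{M}_{2}\otimes\mcal{M}_{3}$, using the identity
\[
((i-1)n_{2}+j-1)n_{3}+\ell = (i-1)n_{2}n_{3}+(j-1)n_{3}+\ell
\]
to check that the two associations of the lexicographic identification $V^{n_{1}}\otimes V^{n_{2}}\otimes V^{n_{3}}\iso V^{n_{1}n_{2}n_{3}}$ coincide. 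Combined with the associativity of $\diamond:I\times I\to I$ under the same reindexing and coassociativity of the diagonals on $\dC$ and $\dtop$, this yields strict associativity on the $\Gamma$-space level, which then descends to the associated $G$-spectra. The arguments for the $\mcal{F}^{V}$-based pairings (for $\bu^{\mf{c}}_{G}$ and $\bu_{G}$) are entirely parallel, with the tensor product $V^{m}\otimes V^{n}\iso V^{mn}$ and the lexicographic identification on $\End_{\C}$ playing the role of the tensor product of quotient sheaves.

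For compatibility with the natural transformations of (\ref{eqn:manymaps}), it suffices by naturality of the $\Gamma$-space and $\mcal{W}_{G}$-space constructions to verify compatibility at the level of the underlying $I$-diagrams, which breaks into a short list of concrete checks. The maps $\mcal{K}_{G}(X,Y)\to\mcal{K}_{G}(X\times\dtop,Y)$ and $\mcal{K}^{sst}_{G}(X,Y)\to \mcal{K}_{G}(X\times\dtop,Y)$ are induced by the projections $\dtop\to\Delta^{0}_{top}$ and $\dC\to\Delta^{0}_{\C}$ respectively, both of which commute with the diagonals used to define $\boxtimes$. The maps $\mcal{K}^{sst}_{G}\to\mcal{K}^{qtop}_{G}$ and $\bu^{\mf{c}}_{G}\to\mcal{K}^{qtop}_{G}$ both come from the inclusion $\mcal{F}^{V}_{Y^{an}}(n)\subseteq \mcal{G}^{V}_{Y}(n)^{an}$ of (\ref{eqn:comp}); its compatibility with $\boxtimes$ follows from the explicit formula recalled after (\ref{eqn:comp}), since evaluating at the union of the support points of two factors and then projecting to the tensor product of the orthogonal subspaces gives the same surjection as the $\boxtimes$-quotient on $Y\times Z$. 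Finally, the map $\bu^{\mf{c}}_{G}\to\bu_{G}$ of (\ref{eqn:c2nc}) is induced by the natural transformation $\widehat{(-)}\to \mathbb{U}(-)$, which is lax symmetric monoidal by \cite{AB} and so automatically respects $\boxtimes$. The main obstacle, such as it is, is merely keeping the notation under control across the six theories; no genuinely new ideas are needed.
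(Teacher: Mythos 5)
Your proposal is correct and follows essentially the same route as the paper: the paper's proof is precisely the preceding construction of the $\boxtimes$-pairings on $I$-diagrams (tensor product of quotients with the lexicographic identification, promoted to equivariant $\Gamma$-/$\mcal{W}_{G}$-spaces and then to $G$-spectra via Section \ref{subsectionpairings}), with associativity left as the same straightforward lexicographic check you spell out and the compatibilities verified, as you do, from naturality of the diagonal/projection maps, the explicit description of $\mcal{F}^{V}_{Y^{an}}(n)\subseteq\mcal{G}^{V}_{Y}(n)^{an}$, and the monoidality of the passage from $\Gamma$-spaces to $\mcal{W}_{G}$-spaces. (Only a cosmetic slip: the support of the external product is the set of pairs of support points, not their union.)
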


Taking $X=W=\spec(\C)$, here and below this is to be interpreted as $S^{0}$ in the topological case, in the external product defines the \textit{external product for homology}
$$
\underline{\wedge}:\mcal{K}^{?}_{G}(\C,Y)\wedge\mcal{K}^{?}_{G}(\C,Z)\to \mcal{K}^{?}_{G}(\C,Y\times Z).
$$
Specializing to $Y=Z=\spec(\C)$  in the external pairing defines the \textit{external product for cohomology},
$$
\barwedge:\mcal{K}^{?}_{G}(X,\C)\wedge \mcal{K}^{?}_{G}(W,\C)\to \mcal{K}^{?}_{G}(X\times W,\C).
$$
We define the cup product by specializing further to $X=W$ and composing with the pullback along the diagonal $\Delta:X\to X\times X$,
\begin{equation}\label{eqn:cup}
\cup:\mcal{K}^{?}_{G}(X,\C)\wedge \mcal{K}^{?}_{G}(X,\C) \to \mcal{K}^{?}_{G}(X,\C).
\end{equation}
The cup-product turns $\pi_{*}\mcal{K}^{?}_{G}(X,\C)$ into a graded ring,
and even into a graded $\pi_{*}\mcal{K}^{?}_{G}(\C,\C)$-algebra. Immediate from the definitions we have the following.
\begin{proposition}\label{prop:cup}
 Let $X$ be a quasi-projective $G$-variety. The natural maps 
\begin{align*}
K^{G,\,\A^{1}}_{*}(X,\C) & \to K^{G}_{*}(X\times\Delta^{\bullet}_{top},\C)\xleftarrow{\iso} K^{G,\, sst}_{*}(X,\C) \\ & \to K^{G,\,qtop}_{*}(X^{an},\C) \stackrel{\iso}
{\leftarrow} \bu^{G,\mf{c}}_{*}(X^{an},S^{0}) 
\to \bu^{G}_{*}(X^{an},S^{0})
\end{align*}
induced by (\ref{eqn:manymaps}) are graded ring homomorphisms.
\end{proposition}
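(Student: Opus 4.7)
The plan is to reduce this to two compatibilities that are already essentially at hand. By the definition (\ref{eqn:cup}), the cup product factors as
$$
\cup = \Delta_{X}^{*}\circ \barwedge,
$$
where $\barwedge$ is the external cohomology product (the external product of the preceding proposition, specialized to $Y=Z=\spec(\C)$, or $Y=Z=S^{0}$ in the topological case), and $\Delta_{X}^{*}$ denotes pullback along the diagonal $\Delta_{X}:X\to X\times X$ (resp.~$\Delta_{X^{an}}:X^{an}\to (X\times X)^{an}$ in the topological setting). Consequently it suffices to verify that each comparison map appearing in (\ref{eqn:manymaps}) is compatible both with $\barwedge$ and with $\Delta^{*}$.

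The first compatibility, with $\barwedge$, is exactly the content of the preceding proposition on pairings, specialized to $Y=Z=\spec(\C)$ (resp.~$S^{0}$). For the second, each of the bivariant spectra appearing in (\ref{eqn:manymaps}) is contravariantly functorial in its first variable by construction, as noted in Sections \ref{sec:algK}, \ref{sub:sst}, \ref{sec:top}, and \ref{sec:comp}; moreover, each comparison map is itself natural with respect to equivariant morphisms in the first variable. Applying this naturality to $\Delta_{X}$, and using $(\Delta_{X})^{an}=\Delta_{X^{an}}$ for the analytification-type comparison maps, yields the commutativity of the right-hand square in
$$
\xymatrix{
\mcal{K}^{?}_{G}(X,\C)\wedge\mcal{K}^{?}_{G}(X,\C) \ar[r]^-{\barwedge} \ar[d] & \mcal{K}^{?}_{G}(X\times X,\C) \ar[d] \ar[r]^-{\Delta^{*}} & \mcal{K}^{?}_{G}(X,\C) \ar[d] \\
\mcal{K}^{?'}_{G}(X,\C)\wedge\mcal{K}^{?'}_{G}(X,\C) \ar[r]^-{\barwedge} & \mcal{K}^{?'}_{G}(X\times X,\C) \ar[r]^-{\Delta^{*}} & \mcal{K}^{?'}_{G}(X,\C)
}
$$
for each consecutive pair of theories in (\ref{eqn:manymaps}); the left-hand square commutes by the first compatibility. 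Passing to $\pi^{G}_{*}$ then gives the asserted graded ring homomorphism, and the grading is preserved automatically since everything takes place at the level of $G$-spectra.

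I do not expect any substantive obstacle: the external products on the various theories were all built uniformly from the same diagrammatic data (functors $I\to G\sSet$ together with the operation $\diamond$, and analogously for the $\mcal{W}_{G}$-spaces), and each comparison map in (\ref{eqn:manymaps}) is visibly natural in the first variable at the level of the underlying equivariant $\Gamma$-spaces or $\mcal{W}_{G}$-spaces. The only point worth explicit verification is that the topological comparison maps involving analytification commute with diagonal pullback, which reduces to the trivial identity $(\Delta_{X})^{an}=\Delta_{X^{an}}$.
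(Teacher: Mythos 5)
Your proposal is correct and follows the same route as the paper, which simply records the proposition as immediate from the definitions: the cup product is by construction $\Delta^{*}\circ\barwedge$, the compatibility with $\barwedge$ is the preceding pairing proposition, and compatibility with $\Delta^{*}$ is the naturality of the comparison maps in the first variable (with $(\Delta_{X})^{an}=\Delta_{X^{an}}$ for the analytification step). Your write-up just makes explicit the diagram chase the paper leaves implicit.
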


Our second basic pairing is the composition pairing. There is a composition pairing
$$
\theta_{X,Y,Z}:\Hom_{\Sch/\C}(X,\,\GG_{Y}^{V}(m)) \times \Hom_{\Sch/\C}(Y,\,\GG_{Z}^{V}(n)) \to \Hom_{\Sch/\C}(X,\,\GG_{Z}^{V}(mn))
$$
defined as follows. Given a pair
$$
([p:\mcal{V}^{m}_{X\times Y}\onto \mcal{M}],[q:\mcal{V}^{n}_{Y\times Z}\onto\mcal{N}])
$$
we have the quotient object
$$
[p\otimes q:\mcal{V}^{mn}_{X\times Y\times Z}\iso \mcal{V}^{m}_{X\times Y\times Z}\otimes \mcal{V}^{n}_{X\times Y\times Z} \to \pi^{*}_{X\times Y}\mcal{M}\otimes\pi_{Y\times Z}^{*}\mcal{N}]
$$
where the isomorphism is via the lexicographical ordering, as in the definition of the external product pairing above, and $\pi_{X\times Y}$ and $\pi_{Y\times Z}$ are the evident projections. Now pushforward along the projection $\pi_{X\times Z}$ define $\theta(p,q)$
$$
[\theta(p,q):\mcal{V}^{mn}_{X\times Z} \to (\pi_{X\times Z})_{*}(\pi^{*}_{X\times Y}\mcal{M}\otimes\pi_{Y\times Z}^{*}\mcal{N})].
$$
It is easily verified that $\theta_{X,Y,Z}(p,q)\in \Hom_{\Sch/\C}(X,\,\GG_{Z}^{V}(mn))$ as needed. Moreover if $\alpha:\underline{m}\to\underline{m'}$ and $\beta:\underline{n}\to \underline{n'}$ are injections then $\theta(\alpha^{*}p,\beta^{*}q) = (\alpha\diamond\beta)^{*}\theta(p,q)$ and thus $\theta$ defines a pairing of $I$-diagrams. Abusing notation slightly, we also write 
$$
\theta_{X,Y,Z}:\Hom(X\times U,\,\GG_{Y}^{V}(m)) \times \Hom(Y\times U,\,\GG_{Z}^{V}(n)) \to \Hom(X\times U,\,\GG_{Z}^{V}(mn))
$$
for the pairing obtained by composing with the  map $\mcal{G}_{Y}^{V}(n)\to \mcal{G}_{Y\times U}^{V}(n)$ defined by $[q:\mcal{V}^{n}_{X\times Y}\onto\mcal{M}]\mapsto [\mcal{V}^{n}_{X\times Y\times U}\onto \pi^{*}_{X\times Y}\mcal{M}]$. We thus obtain pairings
$$
\Hom(X\times \Delta^{\bullet}_{\C},\,\GG_{Y}^{V}(m)) \times \Hom(Y\times \Delta^{\bullet}_{\C},\,\GG_{Z}^{V}(n)) \to \Hom(X\times \Delta^{\bullet}_{\C},\,\GG_{Z}^{V}(mn))
$$
and
$$
\Hom(X\times \Delta^{\bullet}_{top},\,\GG_{Y}^{V}(n)) \times \Hom(Y\times \Delta^{\bullet}_{top},\,\GG_{Z}^{V}(m)) \to \Hom(X\times \Delta^{\bullet}_{top},\,\GG_{Z}^{V}(mn)).
$$
After taking homotopy colimits we obtain the pairing of equivariant $\Gamma$-spaces
$$
\mcal{A}_{G}(X,\underline{m}_{+}\wedge Y_{+})\times \mcal{A}_{G}(Y, \underline{n}_{+}\wedge Z_{+})\to \mcal{A}_{G}(X,\underline{mn}_{+}\wedge Z_{+})
$$
where as usual $\underline{m}_{+}\wedge\underline{n}_{+}$ is identified with $\underline{mn}_{+}$ via $(i,j)\mapsto (i-1)n+j$, and similarly for $\mcal{A}^{sst}_{G}$. For a space $T$, $\Hom_{cts}(T,G_{Y}^{an}) = \colim_{T\to U^{an}}\Hom_{\Sch/\C}(U,G_{Y}^{an})$. With this observation, the definition of $\theta$ readily extends to a pairing for $\mcal{A}^{qtop}_{top}$.
We thus obtain natural pairings of $G$-spectra
\begin{align*}
\theta&:\mcal{K}_{G}(X,Y)\wedge \mcal{K}_{G}(Y,Z) \to \mcal{K}_{G}(X,Z), \\
\theta&:\mcal{K}_{G}(X\times\Delta^{\bullet}_{top},Y)\wedge \mcal{K}_{G}(Y\times\Delta^{\bullet}_{top},Z) \to \mcal{K}_{G}(X\times\Delta^{\bullet}_{top},Z), \\
\theta&:\mcal{K}^{sst}_{G}(X,Y)\wedge \mcal{K}^{sst}_{G}(Y,Z) \to \mcal{K}^{sst}_{G}(X,Z), \,\,\textrm{and}\\
\theta&:\mcal{K}^{qtop}_{G}(X,Y)\wedge \mcal{K}^{qtop}_{G}(Y,Z) \to \mcal{K}^{qtop}_{G}(X,Z)
\end{align*}

Now for based $G$-$CW$-complexes $S$,$T$, and $U$ we define the pairing
$$
\theta_{S,T,U}:\Hom_{cts*}(S,\,\mcal{F}_{T}^{V}(m)) \times \Hom_{cts*}(T,\,\mcal{F}_{U}^{V}(n)) \to \Hom_{cts*}(S,\,\mcal{F}_{U}^{V}(mn))
$$
defined by sending a pair of $*$-maps 
$$
(p:\mcal{C}_{0}(T)\to \mcal{C}_{0}(S)\otimes\End_{\C}(V^{m}),\,\, q:\mcal{C}_{0}(U)\to \mcal{C}_{0}(T)\otimes\End_{\C}(V^{n}))
$$ 
to the composite
\begin{align*}
\mcal{C}_{0}(U)\xrightarrow{q} \mcal{C}_{0}(T)&\otimes\End_{\C}(V^{n})\xrightarrow{p\otimes 1} \mcal{C}_{0}(S)\otimes\End_{\C}(V^{m})\otimes\End_{\C}(V^{n}) \\
&\to \mcal{C}_{0}(S)\otimes \End_{\C}(V^{m}\otimes V^{n})\xrightarrow{\iso} \mcal{C}_{0}(S)\otimes \End_{\C}(V^{mn}),
\end{align*}
where in the last map we have identified $V^{m}\otimes V^{n}$ with $V^{mn}$ via the lexicographical ordering as above. It is straightforward to check that $\theta(\tilde{\alpha}p,\tilde{\beta}q) = \widetilde{\alpha\diamond\beta}\theta(p,q)$. We thus obtain a pairing of $I$-diagrams which gives a pairing of  $\Gamma$-spaces and therefore a pairing of $G$-spectra
$$
\theta:\bu^{\mf{c}}_{G}(S,T)\wedge \bu_{G}^{\mf{c}}(T,U) \to \bu^{\mf{c}}_{G}(S,U).
$$
Similarly we have a pairing of $\mcal{W}_{G}$-spaces leading to a pairing of $G$-spectra
$$
\theta:\bu_{G}(S,T)\wedge \bu_{G}(T,U)\to \bu_{G}(S,U).
$$

The pairing $\theta$ enjoys the same 
properties as in the non-equivariant case, namely naturality,
associativity and compatibility both with the pairing $\boxtimes$ and with 
the natural transformations (\ref{eqn:manymaps}). That is, the equivariant analogues of
\cite[Propositions 6.4, 6.5 and 6.6.]{Walker:Thomason} all
hold.

We define slant products and the cap product in the
usual fashion. In the topological case, $\spec(\C)$ is interpreted as $S^{0}$.

\begin{definition}
As above, we write $\mcal{K}^{?}_{G}$ for any one of the bivariant theories 
appearing in (\ref{eqn:manymaps}).
Let $X$ and $Y$ be quasiprojective $G$-varieties resp.~ $G-CW$-complexes.
\begin{enumerate}
\item
We define the slant product pairing
$$ /: \mcal{K}^{?}_{G}(X \times Y,\C) \wedge \mcal{K}^{?}_{G}(\C,Y) \to 
\mcal{K}^{?}_{G}(X,\C)$$
by $a/b:=\theta_{X,X \times Y, \C}(\tau(a \wedge (1_X \boxtimes b)))$,
where $\tau$ is the obvious involution.
\item
We define the slant product pairing 
$$ \backslash: \mcal{K}^{?}_{G}(X,\C) \wedge \mcal{K}^{?}_{G}(\C,X \times Y) 
\to \mcal{K}^{?}_{G}(\C,X)$$
by $a \backslash b:= \theta_{\C,X \times Y,Y}(\tau(a\boxtimes 1_Y)\wedge b))$.
\item
Finally, we define the cap product 
$$\cap: \mcal{K}^{?}_{G}(X,\C) \wedge \mcal{K}^{?}_{G}(\C,X) \to \mcal{K}^{?}_{G}(\C,X)$$ 
by $a \cap b := a \backslash \Delta_*(b)$, where
$\Delta: X \to X \times X$ is the diagonal embedding. 
\end{enumerate}
\end{definition}

Again by definition, these products are compatible with the natural
 transformations (\ref{eqn:manymaps}).

As in \cite[Proposition 6.10]{Walker:Thomason} we observe that the operations given here coincide with the
 ``classical'' ones. (See \cite[section XIII.5]{May:equihomotopy} for a discussion of the ``classical operations'' in the equivariant setting.) 
The results \cite[Lemma 6.12, Proposition 6.13]{Walker:Thomason} also hold equivariantly and are needed later.

We can also use these pairings to define
transfer maps for finite (but not necessarily dominant) equivariant morphisms $f:X\to Y$ between smooth projective complex $G$-varieties, for the  bivariant 
equivariant $K$-theories we consider.
Recall that we write $K_{*}^{G,\,alg}(X,Y)$ for the algebraic $K$-theory of the exact category $\mcal{P}(G;X,Y)$ of coherent $G$-modules on $X\times Y$ which are finite and flat over $X$. Write $K_{*}^{\prime}(G;X,Y)$ for the $K$-theory groups of the abelian category $\mcal{M}(G;X,Y)$ of coherent $G$-modules on $X\times Y$ which are finite over $X$.
To define these transfer maps, we make use of the following equivariant analog of \cite[Lemma 6.14]{Walker:Thomason}.
\begin{lemma}
 Let $X$ and $Y$ be smooth quasi-projective $G$-varieties, with $Y$ projective. The natural mapping
$$
\mcal{K}_{n}^{G,\,alg}(X,Y) \to \mcal{K}_{n}^{\prime}(G;X,Y)
$$
induced by the inclusion $\mcal{P}(G;X,Y)\subseteq \mcal{M}(G;X,Y)$ is an isomorphism for all $n\geq 0$.
\end{lemma}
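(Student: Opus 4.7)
The plan is to apply Quillen's resolution theorem to the exact inclusion $\mcal{P}(G;X,Y)\subseteq\mcal{M}(G;X,Y)$, following the strategy of Walker's nonequivariant analog \cite[Lemma 6.14]{Walker:Thomason}. I would verify the two standard hypotheses: (i) $\mcal{P}(G;X,Y)$ is closed under extensions and kernels of admissible surjections inside $\mcal{M}(G;X,Y)$; (ii) every object of $\mcal{M}(G;X,Y)$ admits a finite resolution by objects of $\mcal{P}(G;X,Y)$ that remains in $\mcal{M}(G;X,Y)$ at each stage.

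Condition (i) is immediate from the long exact Tor sequence: given $0\to\mcal{K}\to\mcal{P}_{1}\to\mcal{P}_{2}\to 0$ with $\mcal{P}_{1},\mcal{P}_{2}\in\mcal{P}(G;X,Y)$, flatness of $\mcal{P}_{2}$ over $\mcal{O}_{X}$ kills $\mathrm{Tor}^{1}_{\mcal{O}_{X}}(\mcal{P}_{2},-)$, forcing $\mcal{K}$ to be $\mcal{O}_{X}$-flat; finiteness over $X$ is automatic. Extensions are handled the same way. For condition (ii), it suffices to produce, for each $\mcal{M}\in\mcal{M}(G;X,Y)$, an initial $G$-equivariant surjection $\mcal{P}_{0}\onto\mcal{M}$ with $\mcal{P}_{0}\in\mcal{P}(G;X,Y)$: iteration on kernels then builds the resolution, and it terminates after at most $\dim X$ steps by the usual syzygy argument, since $X$ smooth implies that the $\mcal{O}_{X}$-pushforward of the $d$-th kernel is locally free once $d\geq\dim X$, so that kernel automatically lies in $\mcal{P}(G;X,Y)$.

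The bulk of the work, and the main obstacle, is the construction of the initial surjection in a $G$-equivariant way. The available inputs are smoothness of $X$ (so $G$-coherent sheaves on $X$ admit finite $G$-locally free resolutions via $|G|^{-1}$-averaging, which is permissible since $\mathrm{char}(\C)=0$) and projectivity of $Y$ combined with finiteness of $G$, which furnishes a $G$-equivariant very ample line bundle $\mcal{L}$ on $Y$ by averaging an ample one. Setting $\mcal{L}'=\pi_{Y}^{*}\mcal{L}$ and $\mcal{M}(n)=\mcal{M}\otimes\mcal{L}'^{n}$, relative Serre vanishing for the proper morphism $\pi_{X}$ and the $\pi_{X}$-ample bundle $\mcal{L}'$ produces, for $n\gg 0$, a $G$-equivariant surjection $\pi_{X}^{*}\mcal{F}\otimes\mcal{L}'^{-n}\onto\mcal{M}$, where $\mcal{F}$ is a $G$-locally free $\mcal{O}_{X}$-module surjecting equivariantly onto $(\pi_{X})_{*}\mcal{M}(n)$. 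The source here is $G$-locally free on $X\times Y$ but is \emph{not} finite over $X$, so the surjection must be further modified. I would adapt Walker's construction by exploiting the $G$-invariant support $Z\subseteq X\times Y$ of $\mcal{M}$ together with the coherent $G$-$\mcal{O}_{X}$-algebra structure of $(\pi_{X}|_{Z})_{*}\mcal{O}_{Z}$ in order to cut this globally generated source down to a $G$-equivariant $\mcal{O}_{X\times Y}$-module that is simultaneously finite and flat over $X$ and still surjects onto $\mcal{M}$. Simultaneously arranging $G$-equivariance, finiteness over $X$, and $\mcal{O}_{X}$-flatness is the main technical hurdle; averaging over $G$ at each step of the nonequivariant argument then delivers the equivariance, once the construction in \cite[Lemma 6.14]{Walker:Thomason} is in place.
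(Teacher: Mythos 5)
Your framework (Quillen's resolution theorem, closure of $\mcal{P}(G;X,Y)$ under extensions and kernels of surjections via the Tor argument, and termination of the iterated-kernel resolution after $\dim X$ steps using smoothness of $X$ and the syzygy criterion applied to the $\mcal{O}_X$-pushforwards) is sound and is indeed the skeleton of Walker's nonequivariant argument that the paper adapts. But the heart of the lemma is exactly the step you defer: producing, for an arbitrary coherent $G$-module $\mcal{M}$ on $X\times Y$ finite over $X$, a $G$-equivariant surjection from an object of $\mcal{P}(G;X,Y)$. Your Serre-vanishing construction only yields a $G$-equivariant surjection from a module that is locally free on $X\times Y$ but not finite over $X$, and the proposal to ``cut this down'' using the support of $\mcal{M}$ is precisely the unproven content; labelling it the main technical hurdle does not discharge it. Note also that the nonequivariant \cite[Lemma 6.14]{Walker:Thomason}, which you hope to adapt and average, does not itself construct such a surjection directly for general $Y$: it first treats $Y=\P^n$ (via \cite[Lemmas 2.2 and 2.3]{Walker:Adamsop}) and then reduces general $Y$ to $\P^n$, so your plan as stated has no actual construction behind it for either the special or the general case. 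Moreover ``averaging over $G$'' is not the right operation for the auxiliary geometric choices involved (one must take orbits/unions of subschemes, not averages), so even the equivariantization step needs a different mechanism than the one you name.

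For comparison, the paper's proof supplies the missing mechanism concretely: it equivariantizes Walker's proof of \cite[Lemma 2.2]{Walker:Adamsop} for $Y=(\P^1)^{\times s}$ by replacing the auxiliary subscheme $D$ there with its orbit $GD=\cup_{g\in G}gD$, checking that $GD\to X$ is still quasi-finite and proper (hence finite) because $G$ is finite; the case $Y=\P^n$ then follows as in \cite[Lemma 2.3]{Walker:Adamsop}. For a general smooth projective $G$-variety $Y$ one takes the finite surjective equivariant map $Y\to Y/G$ and applies Noether normalization to $Y/G$ to obtain a finite surjective equivariant map $Y\to\P^n$, and then argues as in \cite[Lemma 6.14]{Walker:Thomason} to deduce the general case from the $\P^n$ case. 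This reduction step, which uses both projectivity of $Y$ and finiteness of $G$, is absent from your proposal, and without it (or a genuine direct construction of the equivariant finite flat surjection) the proof is incomplete.
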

\begin{proof}
The proof of \cite[Lemma 2.2]{Walker:Adamsop} concerning
the nonequivariant bivariant $K$-theory with $Y=(\P ^1)^{\times s}$ generalizes 
to equivariant $K$-theory and $X$ an arbitrary smooth quasi-projective 
$G$-variety if one replaces the subscheme $D$ in the proof 
by the union $GD:=\cup_{g \in G}gD$. The map $GD \to X$ 
is quasi-finite as $D \to X$ is and because $G$ is finite.
To see that it is proper (and hence finite), one uses again 
that $G$ is finite. 
The same argument as in \cite[Lemma 2.3]{Walker:Adamsop}
then yields the equivalence for $Y=\P ^n$. For an arbitrary $Y$
 with non-trivial $G$-action we have a finite, surjective equivariant map $Y\to Y/G$. We may assume that $Y/G$ is connected and applying Noether normalization to $Y/G$ we obtain a finite, surjective equivariant map $Y\to \P^{n}$. Arguing as in
\cite[Lemma 6.14]{Walker:Thomason} we see that the result for $Y$ follows from the result for $\P^{n}$.
\end{proof}

When $X$, $Y$ are smooth with $Y$ projective, the previous lemma, Proposition \ref{prop:fix}, and the sequence of natural transformations (\ref{eqn:manymaps}) imply that a class $[\mcal{M}]\in K^{\prime}_{0}(G;X,Y)$ naturally defines a class in each of the bivariant theories in (\ref{eqn:manymaps}). We will write $[\mcal{M}]$ as well for the class induced in any one of these bivariant theories. We continue to use the notation $\mcal{K}^{?}_{G}$ for any one of the bivariant theories appearing in (\ref{eqn:manymaps}) and write $K^{G,\,?}_{*}$ for its homotopy groups.

\begin{definition}\label{transfer} 
Let $f: Y \to X$ be a finite morphism of smooth
projective complex $G$-varieties, and let
$[\Gamma_f^{t}] \in K^{G,\,alg}_0(X,Y)$ be the element
represented by the transpose of the graph of $f$.
We define transfer maps
$$
f_*:K^{G,\,?}_*(Y,Z) \to K^{G,\,?}_*(X,Z)
$$
and 
$$
f^*:K^{G,\,?}_*(Z,X) \to K^{G,\,?}_*(Z,Y)
$$
by $f_*(a):=\theta([\Gamma_f^{t}],a)$ and 
$f^*(b):=\theta(b,[\Gamma_f^{t}])$.
\end{definition}
Note that the notation $f^*$ is used both for the transfer map in the second variable as well as the 
usual contravariance in the first variable (and a similar overlap for the meaning  of $f_{*}$).  We adopt this notation to conform to \cite{Walker:Thomason}.

The compatibility of the pairings 
$\theta$ with the natural transformations (\ref{eqn:manymaps})
between 
the various equivariant $K$-theories implies that these natural transformations are also compatible
with the transfer maps.

The cup product pairing (\ref{eqn:cup}) gives $\bu^{G}_{*}(W,S^{0})$ the structure of a graded commutative ring. We now relate this ring to $KU^{-*}_{G}(W)$, the periodic equivariant complex $K$-theory. For details on equivariant $K$-theory we refer the reader to \cite[chapter XIV]{May:equihomotopy}
or \cite{Segal}. Recall that $KU_{G}^{0}(S^{0}) = R(G)$, the complex representation ring. For real representations $\alpha$, $\beta\in \Rep_{\R}(G)$, the tensor product of bundles defines a product
\begin{multline*}
 KU_{G}^{-\alpha}(W)\otimes KU_{G}^{-\gamma}(W) = KU_{G}^{0}(S^{\alpha}\wedge W)\otimes KU_{G}^{0}(S^{\gamma}\wedge W) \\
\to KU_{G}^{0}(S^{\alpha}\wedge W \wedge S^{\gamma}\wedge W) \to KU_{G}^{0}(S^{\alpha}\wedge S^{\gamma}\wedge W) = KU_{G}^{-\alpha-\gamma}(W),
\end{multline*}
 making $\oplus_{\alpha\in \Rep_{\R}(G)}KU_{G}^{-\alpha}(W)$ into a ring.

\begin{proposition}\label{prop:conn}
 For any based, compact $G$-$CW$-complex $W$, there is a natural isomorphism of graded rings
$$
\oplus_{\alpha\in \Rep_{\R}(G)} \bu^{G}_{\alpha}(W,S^{0}) \xrightarrow{\iso} \oplus_{\alpha\in \Rep_{\R}(G)}KU_{G}^{-\alpha}(W).
$$
\end{proposition}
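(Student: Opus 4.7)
My plan is to reduce the representation-graded statement to the degree-zero identification already established in Corollary \ref{cor:cc}, together with an extension from based spaces of the form $X_+$ to arbitrary based compact $G$-$CW$-complexes. First, Corollary \ref{cor:3.17} applied to the compact based $G$-$CW$-complex $S^\alpha \wedge W$ yields $\bu_G(S^\alpha \wedge W, S^0) \wkeq \Map(S^\alpha \wedge W, \bu_G)$, so that the smash-hom adjunction gives natural isomorphisms
$$
\bu^G_\alpha(W, S^0) = [S^\alpha \wedge W, \bu_G]_G \iso \pi^G_0 \bu_G(S^\alpha \wedge W, S^0).
$$
Since by convention $KU_G^{-\alpha}(W) = \widetilde{KU}_G^0(S^\alpha \wedge W)$, the problem reduces to producing a natural isomorphism $\pi^G_0 \bu_G(Z, S^0) \iso \widetilde{KU}_G^0(Z)$ for every based compact $G$-$CW$-complex $Z$, and then specializing to $Z = S^\alpha \wedge W$.

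Second, Corollary \ref{cor:cc}, via Corollary \ref{cor:gc} and Theorem \ref{thm:tel}, already supplies this identification when $Z = X_+$ for $X$ compact and unbased; the crucial intermediate step is
$$
\pi^G_0 \bu_G(X_+, S^0) = [X_+, \mcal{F}^{\C[G]}_{S^0}(\infty)]^+_G = KU_G^0(X),
$$
where $[-,-]^+$ denotes Grothendieck group completion and the second equality uses that $\mcal{F}^{\C[G]}_{S^0}(\infty)$ is the equivariant Grassmannian of finite-dimensional subspaces of the complete universe $\C[G]^\infty$, which classifies equivariant complex vector bundles. I would extend this to an arbitrary based compact $Z$ by revisiting the proofs of Corollaries \ref{cor:gc} and \ref{cor:cc}: their only use of the form $X_+$ is in the hom-adjunction, and the underlying ingredients (Proposition \ref{prop:topinf}, Theorem \ref{thm:tel}, and the Grassmannian classification of equivariant bundles) admit evident based analogues, now yielding the \emph{reduced} group $\widetilde{KU}_G^0(Z)$. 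Specializing to $Z = S^\alpha \wedge W$ then gives the additive isomorphism
$$
\bu^G_\alpha(W, S^0) \iso \widetilde{KU}_G^0(S^\alpha \wedge W) = KU_G^{-\alpha}(W).
$$

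Finally, for the ring structure, I would check that the cup product of Section \ref{sec:pair} on the left --- built from the external product $\boxtimes$ followed by pullback along the diagonal $W \to W \wedge W$ --- corresponds under the above identification to the standard tensor-product pairing on $\bigoplus_\alpha KU_G^{-\alpha}(W)$. The pairing $\boxtimes$ on $\bu^{\mf{c}}_G$ is defined via tensor product of $*$-maps together with the lexicographic identification $V^m \otimes V^n \iso V^{mn}$; under the Grassmannian description a $*$-map $\mcal{C}_0(S^\alpha) \to \End_\C(V^n)$ records a subspace of $V^n$, and tensor of $*$-maps passes to tensor of subspaces, i.e., to tensor product of the associated equivariant bundles. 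Since this is precisely the classical pairing, the additive isomorphism is automatically multiplicative.

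The main obstacle will be the extension in the second step. Both essential ingredients (the equivariant group completion of Theorem \ref{thm:tel} and the Grassmannian interpretation of equivariant $K$-theory) admit based analogues, but some care with basepoints is needed to ensure the output is the \emph{reduced} theory $\widetilde{KU}_G^0$, which is what the subsequent identification with $KU_G^{-\alpha}$ requires. A secondary, more bookkeeping-heavy issue is tracing the lexicographic identification $V^m \otimes V^n \iso V^{mn}$ through the definition of $\boxtimes$ to verify multiplicativity cleanly.
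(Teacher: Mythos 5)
Your proposal follows essentially the same route as the paper: reduce to degree zero via Corollary \ref{cor:3.17} (so that $\bu^{G}_{\alpha}(W,S^{0})=\bu^{G}_{0}(S^{\alpha}\wedge W,S^{0})$), identify the degree-zero group with equivariant $K$-theory via Corollary \ref{cor:cc} and the Grassmannian description of $\mcal{F}^{\C[G]}_{S^{0}}(\infty)$, and check multiplicativity directly from the definition of the cup product. The only real difference is in handling basepoints: where you propose re-proving based analogues of Corollaries \ref{cor:gc} and \ref{cor:cc}, the paper simply observes that for based compact $W$ both $\bu^{G}_{0}(W,S^{0})$ and the reduced group $KU^{0}_{G}(W)$ are the quotient of $KU^{0}_{G}(W_{+})$ by $KU^{0}_{G}(S^{0})$, which settles the issue you flag as the main obstacle.
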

\begin{proof}
 The argument is similar to that of \cite[Proposition 6.18]{Walker:Thomason}. Briefly, we have that $\bu^{G}_{0}(W,S^{0}) = KU^{0}_{G}(W)$ since by Corollary \ref{cor:cc} both groups are the quotient of $KU^{0}(W_{+})$ by the subgroup $KU^{0}(S^{0})$. From Corollary \ref{cor:3.17} it follows that $\bu^{G}_{\alpha}(W,S^{0}) = \bu_{0}^{G}(S^{\alpha}\wedge W, S^{0})$ therefore it suffices to show that the diagram
$$
\xymatrix{
\bu_{\alpha}^{G}(W,S^{0}) \otimes \bu_{\gamma}^{G}(W,S^{0}) \ar[r]\ar[d] & \bu_{\alpha+\gamma}(W,S^{0}) \ar[d] \\
\bu_{0}^{G}(S^{\alpha}, S^{0}) \otimes \bu_{0}^{G}(S^{\gamma},S^{0}) \ar[r] & \bu_{0}^{G}(S^{\alpha}\wedge S^{\gamma}\wedge W,S^{0})
}
$$
commutes. This is easily seen to hold by definition of the cup product.
\end{proof}

For any complex representation $V$ there is a \textit{Bott element} $\beta_{V}\in KU_{G}^{0}(S^{V})$ such that for any $X$, multiplication by $\beta_{V}$ is an isomorphism
$$
-\cup\beta_{V}:KU_{G}^{0}(X) \xrightarrow{\iso} KU_{G}^{0}(X\wedge S^{V}).
$$
In particular there is a Bott element $\beta_{2}\in KU^{-2}_{G}(S^{0}) = \bu^{G}_{2}(S^{0},S^{0})$ corresponding to the trivial one-dimensional complex representation. 
We refer the reader to \cite[Section XIV.3]{May:equihomotopy}
and of course to \cite{Segal} for details on equivariant Bott periodicity.

\begin{corollary}\label{cor:bottinv}
 Let $W$ be a based, compact $G$-$CW$-complex. There is a natural map of graded rings
$$
\oplus_{\alpha\in RO(G)}\bu^{G}_{\alpha}(W,S^{0}) \to \oplus_{\alpha\in RO(G)}KU_{G}^{-\alpha}(W).
$$
Let $\Lambda$ be a complete set of irreducible complex representations. Inverting the Bott elements corresponding to $\Lambda$ yields an isomorphism of $RO(G)$-graded rings
$$
\oplus_{\alpha\in \Rep_{\R}(G)}\bu^{G}_{\alpha}(W,S^{0})[\beta_{V}^{-1},\,V\in\Lambda] \xrightarrow{\iso} \oplus_{\alpha\in RO(G)}KU_{G}^{-\alpha}(W).
$$
\end{corollary}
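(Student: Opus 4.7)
My plan is to deduce the corollary from Proposition \ref{prop:conn} together with equivariant Bott periodicity. The natural transformation is constructed by taking $\pi^{G}_{\alpha}$ of the map of $G$-spectra $\bu_{G}\to KU_{G}$ from Remark \ref{onemore} (using the equivariant weak equivalence $\bu_{G}^{\mf{c}}(W,S^{0})\wkeq W\wedge\bu_{G}$ of Corollary \ref{cor:3.17}). Verifying that this is a graded ring map would proceed exactly as in Proposition \ref{prop:conn}: the cup products on both sides arise from the pairings $\boxtimes$ of Section \ref{sec:pair}, which are compatible with the comparison map (\ref{eqn:c2nc}) and with the external product on $KU_{G}^{*}$.

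\medskip

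For the localization isomorphism, I would first note that each $\beta_{V}$ maps to the classical Bott class in $KU_{G}^{-V}(S^{0})$, which is invertible on $KU_{G}^{-*}(W)$ by equivariant Bott periodicity; hence the natural transformation descends to the Bott-inverted ring. To conclude the isomorphism, I would identify each $RO(G)$-graded piece on the source with a suitable $\Rep_{\R}(G)$-graded piece via multiplication by a product of (invertible) Bott elements, and similarly on the target, and then apply Proposition \ref{prop:conn} to the $\Rep_{\R}(G)$-graded pieces. The key combinatorial input is that for any real irreducible $W$, the complexification $W\otimes_{\R}\C$ is a complex representation whose underlying real representation is $W\oplus W=2W$; decomposing $W\otimes_{\R}\C$ into complex irreducibles from $\Lambda$ then expresses $2W$ as a non-negative integer combination of the $V|_{\R}$, so $-W=W-2W$ lies in $\Rep_{\R}(G)+\Z\langle -V|_{\R}:V\in\Lambda\rangle$ and every degree in $RO(G)$ is accessible after Bott inversion.

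\medskip

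More concretely, I would write an arbitrary $\alpha\in RO(G)$ as $\alpha=\beta-\sum_{V\in\Lambda}n_{V}\,V|_{\R}$ with $\beta\in\Rep_{\R}(G)$ and $n_{V}\geq 0$; then multiplication by the unit $\prod_{V}\beta_{V}^{n_{V}}$ identifies the degree-$\alpha$ piece of the Bott-inverted source with $\bu_{\beta}^{G}(W,S^{0})$ and $KU_{G}^{-\alpha}(W)$ with $KU_{G}^{-\beta}(W)$, reducing to Proposition \ref{prop:conn}. The main obstacle I expect will be this bookkeeping for the $RO(G)$-grading---checking that both sides are genuinely $RO(G)$-graded after localization and that multiplication by the Bott products gives well-defined isomorphisms of homogeneous pieces. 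The substantive input (Proposition \ref{prop:conn} and equivariant Bott periodicity in the form that Bott classes are invertible in $KU_{G}^{*}$, see \cite[Section XIV.3]{May:equihomotopy}) is already in hand.
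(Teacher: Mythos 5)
Your proposal is correct and follows essentially the same route as the paper: the map comes from Proposition \ref{prop:conn} together with the invertibility of the Bott classes in $KU_{G}$, and virtual degrees are reached by the complexification trick (the paper packages this as conjugation by $\beta_{2\gamma}$, where $2\gamma=\gamma\oplus\gamma$ carries a complex structure, which is exactly your product of Bott elements for the irreducible constituents of $\gamma\otimes_{\R}\C$). One small correction: the identification you want from Corollary \ref{cor:3.17} is $\bu_{G}(W,S^{0})\wkeq \Map(W,\bu_{G})$, not $\bu^{\mf{c}}_{G}(W,S^{0})\wkeq W\wedge\bu_{G}$ (the smash-product description applies to $\bu^{\mf{c}}_{G}(S^{0},W)$, i.e.\ with the variables in the other slots), though this slip does not affect the substance of your argument.
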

\begin{proof}
For any representation $\alpha\in \Rep_{\R}(G)$ and any complex representation $V$ we have isomorphisms 
$$
-\cup \beta_{V} :\bu_{\alpha}^{G}(W, S^{0}) \xrightarrow{\iso} \bu_{\alpha+V}(W, S^{0}).
$$
If $\gamma$ is a real representation then $\gamma\oplus \gamma$ can be given the structure of a complex representation. For real representations $\alpha$, $\gamma \in \Rep(G)$ the composition 
$$
\bu^{G}_{\alpha-\gamma}(W,S^{0})  \xrightarrow{\beta_{2\gamma}} \bu^{G}_{\alpha+\gamma}(W,S^{0}) 
\xrightarrow{}KU_{G}^{-\alpha-\gamma}(W)
\xrightarrow{\beta_{2\gamma}^{-1}} KU_{G}^{-\alpha + \gamma}(W)
$$
defines the desired map $\oplus_{\alpha\in RO(G)}\bu^{G}_{\alpha}(W,S^{0}) \to \oplus_{\alpha\in RO(G)}KU_{G}^{-\alpha}(W)$. The second statement is immediate.
\end{proof}

\section{Comparing semi-topological and topological equivariant
$K$-theory} \label{sec:sstThom}

The main result of this section is Theorem \ref{mainthmbottinv} below, where we show that Bott-inverted equivariant semi-topological $K$-theory and equivariant topological $K$-theory agree for projective $G$-varieties. In the next section, we will see that this yields a new proof of the equivariant version of Thomason's theorem. Similar to \cite[Theorem 7.11]{Walker:Thomason}, Theorem \ref{mainthmbottinv}
 follows by combining three ingredients: Theorem \ref{mainthm},
the compatibility of operations established 
in the previous section,  
and Theorem \ref{walker7.10equi} below comparing
the action of certain operations with multiplication by the Bott element.
The most significant difference is that unlike in the nonequivariant case $ku^{-*}_{G}(-)$ need not satisfy Poincare duality. Consequently, we have to modify several arguments.

We will write $ku^{*}_{G}(-) = \bu^{G}_{-*}(-,S^{0})$ and $ku_{*}^{G}(-)=\bu^{G}_{*}(S^{0},-)$ in this section. By Corollary \ref{cor:3.17} these agree with the cohomology and homology theories associated to the spectrum $\bu_{G}(S^{0},S^{0})$. From Proposition \ref{prop:cup} and Corollary \ref{cor:bottinv} we have natural maps of graded rings
\begin{equation}\label{eqn:maps}
K^{G,\,sst}_{*}(X,\C)\to ku_{G}^{-*}(X^{an}) \to KU_{G}^{-*}(X^{an}).
\end{equation}
In this and the next section $\ast$ will always denote ${\bf Z}$-grading.

\begin{definition}
Write $\beta_{2}\in K^{G,\,sst}_{2}(\C,\C)$ for the element corresponding to the Bott element $\beta_{2}\in ku^{-2}_{G}(S^{0})$ (see the discussion preceding Corollary \ref{cor:bottinv}) under the isomorphism  $K^{G,\,sst}_{2}(\C,\C)\iso ku^{-2}_{G}(S^{0})$ obtained from
Theorem \ref{mainthm}. The element $\beta_{2}\in K^{G,\,sst}_{2}(\C,\C)$ is referred to as the \textit{semi-topological Bott element}.
\end{definition}

By Corollary \ref{cor:bottinv} the right map of (\ref{eqn:maps}) induces
an isomorphism of graded rings $ku_{G}^{-*}(X^{an})[\beta^{-1}_{2}] \stackrel{\simeq}{\to} KU_{G}^{-*}(X^{an})$. In Theorem \ref{mainthmbottinv} below we show that 
that when $X$ is a smooth and projective complex $G$-variety, the maps (\ref{eqn:maps}) induce isomorphisms
$$
K^{G,sst}_{*}(X,\C)[\beta^{-1}_{2}] \stackrel{\cong}{\to} 
ku_{G}^{-*}(X^{an})[\beta^{-1}_{2}]\xrightarrow{\iso} KU_{G}^{-*}(X^{an}).
$$

\begin{definition}
We define $\delta_X \in K_0^G(X \times X)$ to be the class of the coherent $G$-module $\mcal{O}_{\Delta}$. 
 We also write  $\delta_X \in K_0^{G,sst}(X \times X)$
for its image in semi-topological $K$-theory, and
$\delta_{X^{an}}$ for its image in either  $ku^{0}_G(X^{an} \times X^{an})$ or 
$KU^{0}_G(X^{an} \times X^{an})$.
\end{definition}

Note that  $\delta_{X}=\Delta_{*}(1)$, where $\Delta_{*}$ is the transfer map, defined in Definition \ref{transfer}. The remaining ingredient for the proof of Theorem \ref{mainthmbottinv} is the following.

\begin{theorem}[c.f. {\cite[Theorem 7.10]{Walker:Thomason}}]\label{walker7.10equi}
Let $X$ be a smooth complex projective $G$-variety of dimension $d$. There are classes $[X]\in KU^{G,\, sst}_{2d}(\C,X)$ and $\delta_{X}\in K_{0}^{G,\,sst}(X\times X,\C)$ such that the composition
$$
K^{G,sst}_{*}(X,\C) \xrightarrow{-\cap[X]} K^{G,sst}_{*+2d}(\C,X) \xrightarrow{\delta_{X}/-} K^{G,sst}_{*+2d}(X,\C) 
$$
coincides with multiplication by $\beta_2^{d}\cup u$ for some unit $u\in K_{0}^{G,\,sst}(X,\C)$. Similarly there are classes $[X^{an}]\in ku^{G}_{2d}(X^{an})$ and $\delta_{X^{an}}\in ku^{0}_{G}(X^{an})$ such that 
$$
ku_{G}^{*}(X^{an}) \xrightarrow{-\cap[X^{an}]} ku^{G}_{2d-*}(X^{an}) \xrightarrow{\delta_{X^{an}}/-}  ku^{G}_{*-2d}(X^{an}) 
$$ 
coincides with multiplication by $\beta^{d}\cup v$ for some unit $v\in ku_{G}^{0}(X^{an})$.
\end{theorem}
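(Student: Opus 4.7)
The plan is to follow Walker's nonequivariant proof of Theorem 7.10 in \cite{Walker:Thomason}, with modifications forced by the failure of complex stability of $\bu_G$ that is flagged in the introduction. By the compatibility of the external, slant, cap and transfer operations with the natural transformations (\ref{eqn:manymaps}) established in Section \ref{sec:pair}, combined with Theorem \ref{mainthm}, it suffices to construct $[X^{an}]$ and verify the identity in $ku_G$-theory; the class $[X] \in K^{G,sst}_{2d}(\C, X)$ and the identity for $\mcal{K}^{sst}_G$ are then obtained by transporting along the comparison $\mcal{K}^{sst}_G(\C, X) \simeq \bu^{\mathfrak{c}}_G(S^0, X^{an})$, with $u$ defined as the preimage of $v$.

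First I would produce $[X^{an}] \in ku^G_{2d}(X^{an})$. Using that $X$ is projective and $G$ is finite, choose an equivariant closed immersion $i: X \hookrightarrow \mathbb{P}(V)$ for some complex $G$-representation $V$ of dimension $N+1$. The normal bundle $N_i$ is a complex $G$-vector bundle of rank $N-d$. On $\mathbb{P}(V)^{an}$ one has an explicit fundamental class in $ku^G_{2N}(\mathbb{P}(V)^{an})$ built from the cellular decomposition of $\mathbb{P}(V)^{an}$ by complex $G$-representations together with the Thom class of the tautological quotient bundle. The Gysin map for $i^{an}$---which in $\bu_G$ exists only at the price of a Bott-element correction, since the Thom class of $N_i$ lifts to $ku_G$ only after multiplication by $\beta_2^{N-d}$---then produces $[X^{an}]$. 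The class $\delta_{X^{an}} = \Delta_*(1)$ is already available from Definition \ref{transfer}.

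The central computation is then to show that $\delta_{X^{an}} \cap [X^{an}] = \beta_2^d \cup v$ in $ku^0_G(X^{an})$ for some unit $v$. Using the projection formula for the diagonal and the naturality of the slant and cap products from Section \ref{sec:pair}, the composition $\delta_{X^{an}}/(- \cap [X^{an}])$ reduces to multiplication by this class. The equivariant Thom isomorphism for the tangent bundle $TX$ then identifies $\delta_{X^{an}} \cap [X^{an}]$ with $\beta_2^d$ times the equivariant $K$-theoretic Euler class $\lambda_{-1}(T^*X) = \sum_i (-1)^i [\Lambda^i T^*X]$; the factor $\beta_2^d$ appears precisely because the Thom class of $TX$ of complex rank $d$ is defined in $\bu_G$ only modulo $\beta_2^d$.

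The main obstacle will be verifying that $\lambda_{-1}(T^*X)$ is a genuine unit in $ku^0_G(X^{an})$. Nonequivariantly this is a classical consequence of the splitting principle for smooth projective varieties. Equivariantly, the cleanest strategy will be to argue stratum-by-stratum along the fixed-point decomposition of $X^{an}$: on each $(X^H)^{an}$, the bundle $TX|_{X^H}$ splits into $TX^H$ (where the nonequivariant result applies) plus the isotypic components of the normal bundle, whose $\lambda_{-1}$-classes are invertible in $R(H)$ since they correspond to non-trivial characters. Reassembling these local invertibilities via an equivariant Mayer--Vietoris or filtration argument along the subgroup-support stratification will then yield a global unit $v \in ku^0_G(X^{an})$. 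Transporting the resulting identity to $K^{G,sst}_*$ via the comparison map and the compatibility of operations completes the proof.
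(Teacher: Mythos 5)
Your proposal has two genuine gaps. First, the reduction in your opening paragraph does not work: the identity to be proved lives in $K^{G,sst}_{*}(X,\C)$, i.e.\ in the \emph{contravariant} variable, where the comparison map $K^{G,sst}_{*}(X,\C)\to ku_{G}^{-*}(X^{an})$ is not an isomorphism before inverting $\beta_{2}$ --- indeed the whole point of Theorem \ref{mainthmbottinv} is that it only becomes one after Bott inversion. Theorem \ref{mainthm} identifies only the homology groups $K^{G,sst}_{*}(\C,Y)$ with $\bu^{\mf{c}}_{G}(S^{0},Y^{an})$, so you may transport $[X]$, but you cannot ``transport the identity'' nor define $u$ as ``the preimage of $v$'': the map goes the wrong way and need not be surjective or injective. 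The paper handles exactly this point differently: using Lemma \ref{deltathom} and the equivariant analogue of Walker's Proposition 7.9 (compatibility of $\delta_{X}/-$ with the transfer $\pi^{*}$ of Definition \ref{transfer}), it reduces the semi-topological identity along a finite flat equivariant surjection $\pi:X\to\P^{d}_{\C}$ (obtained from $X\to X/G$ and Noether normalization) to the case $X=\P^{d}_{\C}$, where $K^{G,sst}_{*}(\P^{d}_{\C},\C)\iso \bu^{G,\mf{c}}_{*}(\P^{d}_{\C},S^{0})$ by a projective bundle argument; only there is the topological computation imported.

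Second, your central topological computation is incorrect as stated. The composition is multiplication by the slant product $\delta_{X^{an}}/[X^{an}]$, not by the diagonal restriction of $\delta_{X^{an}}$, and it is the latter, not the former, that equals (a Bott power times) the Euler class $\lambda_{-1}(T^{*}X)$. That class has augmentation zero on every positive-dimensional component and is never a unit, so your ``main obstacle'' paragraph is attempting to prove something false; moreover $\lambda_{-1}$ of a nontrivial character is not invertible in $R(H)$ integrally, so the proposed fixed-point-stratum argument would fail even for the class you should be considering. What is actually needed is the paper's Lemma \ref{deltathom}: $\delta_{X^{an}}=\beta_{2}^{d}\cup j^{*}(t)$ for a genuine Thom class $t\in KU^{2d}_{G}(X^{an}\times X^{an}, X^{an}\times X^{an}-\Delta)$, proved by lifting $\Delta_{*}(1)$ to equivariant algebraic $K$-theory with supports (Thomason's resolution and localization theorems) and pushing it into $KU_{G}$ by the ring-and-transfer-compatible transformation; slanting $j^{*}(t)$ against the fundamental class then yields the unit $v$. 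Finally, your construction of $[X^{an}]$ via an embedding into $\P(V)$ and a Bott-corrected Gysin map is both unnecessary and dubious (the Bott corrections shift the class by powers of $\beta_{2}$, so it is unclear it restricts to generators at orbits); the paper instead takes the $KU_{G}$-fundamental class, which exists because $KU_{G}$ is complex stable, and lifts it using Lemma \ref{kuisalmostKU}, namely that $ku^{G}_{i}(W)\to KU^{G}_{i}(W)$ is an isomorphism for $i\geq\dim W$, applied with $i=2d=\dim X^{an}$.
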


The proof of this theorem will occupy the remainder of this section but first, we prove the main result of this section.

\begin{theorem}\label{mainthmbottinv}
Let $G$ be a finite group, and let $X$ be a smooth complex projective
$G$-variety of dimension $d$. Then the map of (\ref{eqn:maps})
induces an isomorphism 
$$
K^{G,sst}_{*}(X,\C)[\beta^{-1}_{2}] \stackrel{\cong}{\to} 
ku_{G}^{-*}(X^{an})[\beta^{-1}_{2}] \stackrel{\cong}{\to}
KU_{G}^{-*}(X^{an}).
$$
\end{theorem}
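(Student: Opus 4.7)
The plan is to reduce the theorem to showing that the first natural map
\[
\phi \colon K_*^{G,sst}(X,\C)[\beta_2^{-1}] \longrightarrow ku_G^{-*}(X^{an})[\beta_2^{-1}]
\]
is an isomorphism, since the second map is already an isomorphism by Corollary \ref{cor:bottinv}. Following the strategy of \cite{Walker:Thomason}, the argument combines three ingredients: Theorem \ref{mainthm}, the compatibility of the cap and slant products with the natural transformations (\ref{eqn:manymaps}) established in Section \ref{sec:pair}, and the duality Theorem \ref{walker7.10equi}.

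Concretely, I would consider the diagram
\[
\xymatrix{
K^{G,sst}_*(X,\C) \ar[r]^-{-\cap [X]} \ar[d]_{\phi} & K^{G,sst}_{*+2d}(\C,X) \ar[r]^-{\delta_X/-} \ar[d]^{\iso} & K^{G,sst}_{*+2d}(X,\C) \ar[d]^{\phi} \\
ku^{-*}_G(X^{an}) \ar[r]^-{-\cap[X^{an}]} & ku^G_{*+2d}(X^{an}) \ar[r]^-{\delta_{X^{an}}/-} & ku^{-*-2d}_G(X^{an})
}
\]
whose commutativity is immediate from the compatibility of the pairings with (\ref{eqn:manymaps}) established in Section \ref{sec:pair}. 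The middle vertical map is an equivalence: Theorem \ref{mainthm} gives $\mcal{K}^{sst}_G(\C, X) \wkeq \bu^{\mf c}_G(S^0, X^{an})$ for $X$ smooth quasi-projective, and Corollary \ref{cor:3.17} identifies this with $\bu_G(S^0, X^{an}) \wkeq X^{an}_+ \wedge \bu_G$. By Theorem \ref{walker7.10equi}, after inverting $\beta_2$ both horizontal compositions become isomorphisms, being multiplication by the units $\beta_2^d\cup u$ and $\beta_2^d\cup v$ respectively.

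A standard diagram chase then shows $\phi$ is an isomorphism in each degree. For injectivity, if $\phi(a)=0$, then by commutativity of the left square $a\cap [X]$ maps to zero under the middle isomorphism, whence $a\cap [X]=0$ after inverting $\beta_2$; hence $(\beta_2^d\cup u)\cdot a = \delta_X/(a\cap [X]) = 0$, forcing $a=0$. For surjectivity, given $b$ in the bottom row, invertibility of $\psi^{top}:=(\beta_2^d\cup v)\cdot(-)$ produces $b_0$ with $\psi^{top}(b_0)=b$; lifting $b_0\cap [X^{an}]$ through the middle vertical isomorphism to some $c\in K^{G,sst}_{*+2d}(\C,X)[\beta_2^{-1}]$, commutativity of the right square gives $\phi(\delta_X/c) = \delta_{X^{an}}/(b_0\cap [X^{an}]) = \psi^{top}(b_0) = b$.

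The main obstacle is Theorem \ref{walker7.10equi} itself. In the nonequivariant case, the analogous statement relies on the fact that connective complex $K$-theory carries Thom isomorphisms for complex vector bundles and satisfies Poincar\'e duality on smooth projective varieties. In the equivariant setting, $\bu_G$ is not complex stable, so equivariant Thom isomorphisms for arbitrary complex $G$-bundles are not directly available; the requisite fundamental and Thom classes must instead be constructed at the level of bivariant equivariant semi-topological $K$-theory and then transported to $\bu_G$ using the compatibility of pairings with (\ref{eqn:manymaps}).
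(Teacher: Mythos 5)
Your proposal is correct and follows essentially the same argument as the paper: reduce to the left map via Corollary \ref{cor:bottinv}, use the commutative diagram built from $-\cap[X]$ and $\delta_X/-$ with the middle vertical isomorphism coming from Theorem \ref{mainthm} (and Corollary \ref{cor:3.17}), identify both horizontal composites as multiplication by $\beta_2^d$ times a unit via Theorem \ref{walker7.10equi}, and conclude by the diagram chase you spell out (the paper leaves the chase to the reader). Your closing aside slightly misdescribes how Theorem \ref{walker7.10equi} is actually proved in the paper --- the Thom and fundamental classes are produced in the complex stable theory $KU_G$ and lifted to $ku_G$ and the semi-topological theory via Lemma \ref{kuisalmostKU} and Theorem \ref{mainthm}, not constructed directly in the bivariant semi-topological theory --- but since you only invoke that theorem as a stated ingredient, this does not affect the proof at hand.
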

\begin{proof}
That the right map is isomorphism is the second part of
Corollary \ref{cor:bottinv}. The argument that the left map is an isomorphism is the same as  \cite[Theorem 7.11]{Walker:Thomason}, in the nonequivariant case. Namely, we consider the diagram
$$
\xymatrix{
K^{G,sst}_{*}(X,\C) \ar[r]^{-\cap[X]}\ar[d] & K^{G,sst}_{*+2d}(\C,X) \ar[r]^{\delta_{X}/-}\ar[d]^{\iso} & K^{G,sst}_{*+2d}(X,\C) \ar[d] \\
ku_{G}^{-*}(X^{an}) \ar[r]^{-\cap[X^{an}]} & ku^{G}_{*+2d}(X^{an}) \ar[r]^{\delta_{X^{an}}/-} & ku^{G}_{-*-2d}(X^{an}) 
}
$$
which commutes by the compatibility of operations established in the previous section. 
Using properties of the operations, one can see that the horizontal maps are multiplication by $\delta_{X}/[X]$. Moreover, by Theorem \ref{walker7.10equi}
they are both multiplication by $\beta_2^{d}$, up to a unit in $K^{G,sst}_{0}(X,\C)$ (resp.~ in $ku_{G}^{0}(X^{an})$). The result follows easily by a simple diagram 
chase.
\end{proof}

The remainder of this section is devoted to the proof of Theorem \ref{walker7.10equi}. After some important modifications, its proof is similar to the nonequivariant case and we focus our attention on the necessary modifications.
First, we recall some facts about equivariant complex orientation
and Poincar\'e duality. These are significantly more complicated in the equivariant setting, but we can simplify things by restricting our attention to those theories which are complex stable.
 See \cite[Chapter XVI.9]{May:equihomotopy} and \cite[Chapter III.6]{LLM} for a general and comprehensive treatment of these topics. A useful summary of Poincar\'e duality for complex stable theories may be found in \cite{GreenleesWilliams}.
Recall that an equivariant cohomology theory $E^{*}_{G}(-)$ is said to be \textit{complex stable} if for each complex representation $V$, there is a class $\sigma_{V}\in \widetilde{E}_{G}^{|V|}(S^{V})$ which gives isomorphisms 
$$
\widetilde{E}^{*}_{G}(S^{|V|}\wedge X) \xrightarrow{\iso} \widetilde{E}^{*}_{G}(S^{V}\wedge X),
$$
for any $G$-space $X$. As equivariant complex topological $K$-theory  satisfies Bott periodicity, it is complex stable.

Let $E$ be a commutative ring $G$-spectrum representing a complex stable cohomology theory and $M$ a smooth $G$-manifold. For any $x\in M$ the slice theorem 
implies that there are isomorphisms $E_{*}^{G}(M,M-G\{x\}) \iso \widetilde{E}_{*}^{G}(G_{+}\wedge_{G_{x}}S^{V_{x}})$ and $E^{*}_{G}(M,M-G\{x\}) \iso \widetilde{E}^{*}_{G}(G_{+}\wedge_{G_{x}}S^{V_{x}})$  where $G_{x}\subseteq G$ is the isotropy subgroup of $x$ and $V_{x}$ is the tangent space to $M$ at $x$ (see e.g. \cite[Lemma 3.1]{GreenleesWilliams}). 

\begin{definition}
Let $E$ be as above and $M$ a smooth $G$-manifold of dimension $n$. For an $x\in M$ let $\phi_{G\{x\}}$ denote the composition
$$
E_{*}^{G}(M)\xrightarrow{} E_{*}^{G}(M,M-G\{x\}) \iso \widetilde{E}_{*}^{G}(G_{+}\wedge_{G_{x}} S^{V_{x}}) \iso \widetilde{E}^{G_{x}}_{*}(S^{V_{x}}),
$$ 
where the first map is induced by the inclusion of pairs $(M,\emptyset)\subseteq (M,M-G\{x\})$, the second is the isomorphism from the paragraph above and the third is the change of groups isomorphism. 
An element $[M]\in \widetilde{E}^{G}_{n}(M)$ is called a \textit{fundamental class} for $M$ if $\phi_{G\{x\}}([M])$ is an  $\widetilde{E}^{G_{x}}_{*}$-module generator of $\widetilde{E}^{G_{x}}_{*}(S^{V_{x}})$ for all $x\in M$.
\end{definition}

\begin{definition}\label{Thomclass}
Let $E$ be as above and $M$ a smooth compact complex $G$-manifold of complex dimension $d$. For any orbit $i_{G\{x\}}:G\{x\}\to M$ let $\psi_{G\{x\}}$ denote the composition 
\begin{multline*}
E^{*}_G(M \times M, M \times M - \Delta) \to 
E^{*}_G(M \times G\{x\},M \times G\{x\} - \Delta(G\{x\}) ) \\
\cong E^{*}_{G}(G_{+}\wedge_{G_{x}}S^{V_{x}})\iso E^{*}_{G_{x}}(S^{V_{x}})
\end{multline*}
where the first map is obtained from the map of pairs induced by $id\times i_{G\{x\}}$.
An element $t_M \in E^{2d}_G(M \times M, M \times M - \Delta)$ is called
a {\it Thom class} for $M$ if $\psi_{G\{x\}}(t_{M})$ is an $E^*_{G_{x}}$-module generator of $E^*_{G_{x}}(S^{V_{x}})$ for all $x\in M$.
\end{definition}

\begin{lemma}
Let $M$ be a smooth complex compact $G$-manifold of complex dimension $d$ and $E$ a commutative ring $G$-spectrum representing a complex stable cohomology theory. There is a bijection between $E$-Thom classes for $M$ and $E$-fundamental classes for $M$.
\end{lemma}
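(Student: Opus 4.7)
The strategy is to reduce both the notion of Thom class and of fundamental class for $M$ to equivalent data for Thom classes of complex $G$-vector bundles over $M$, using the equivariant tubular neighborhood theorem and equivariant Atiyah duality respectively, and then to invoke the multiplicativity of Thom classes under Whitney sums. First, I would apply the equivariant tubular neighborhood theorem to the diagonal embedding $\Delta\colon M \hookrightarrow M \times M$, whose normal bundle is canonically isomorphic to $TM$ as a complex $G$-bundle. Combined with excision, this identification produces a natural isomorphism
$$E^*_G(M \times M, M \times M - \Delta) \iso \widetilde{E}^*_G(\mathrm{Th}(TM)),$$
under which the restriction map $\psi_{G\{x\}}$ of Definition \ref{Thomclass} becomes restriction to the fiber $G_+ \wedge_{G_x} S^{V_x}$ of $\mathrm{Th}(TM)$ over the orbit $G\{x\}$. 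Hence a Thom class for $M$ in the sense of Definition \ref{Thomclass} is precisely the data of an equivariant Thom class for the complex $G$-bundle $TM \to M$ in the standard sense.

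Next, by the Mostow-Palais equivariant embedding theorem, I would choose a $G$-equivariant embedding $M \hookrightarrow V$ into a complex $G$-representation $V$, with normal bundle $\nu$, so that $TM \oplus \nu \iso \underline{V}$ as complex $G$-bundles on $M$. Equivariant Atiyah duality (see \cite[Chapter XVI]{May:equihomotopy} or \cite[Chapter III]{LLM}) identifies $M_+$ with the Spanier--Whitehead $V$-dual of $\mathrm{Th}(\nu)$ in the $G$-equivariant stable category, yielding a natural isomorphism
$$E^G_{2d}(M) \iso \widetilde{E}_G^{\dim_{\R}(\nu)}(\mathrm{Th}(\nu)).$$
The equivariant slice theorem applied at the orbit $G\{x\}$, together with the complex stability of $E$, shows that this isomorphism intertwines the restriction map $\phi_{G\{x\}}$ with restriction to the fiber $G_+ \wedge_{G_x} S^{\nu_x}$ of $\mathrm{Th}(\nu)$, the latter being the defining condition for a Thom class of $\nu$. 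Consequently fundamental classes for $M$ correspond bijectively to equivariant Thom classes for the complex $G$-bundle $\nu \to M$.

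Finally, the splitting $\underline{V} \iso TM \oplus \nu$ identifies the Thom spaces $\mathrm{Th}(TM) \wedge_M \mathrm{Th}(\nu) \iso \mathrm{Th}(\underline{V}) \iso S^V \wedge M_+$, and the complex stability of $E$ supplies a canonical Thom class $t_{\underline{V}}$ for the trivial bundle $\underline{V}$. Multiplicativity of Thom classes under Whitney sums then produces a bijection between Thom classes for $TM$ and Thom classes for $\nu$, pinned down by the relation $t_{TM} \cdot t_\nu = t_{\underline{V}}$ (with either set of Thom classes empty precisely when the other is). Composing this bijection with the reductions of the first two steps yields the required correspondence. The main technical point, which pervades all three steps, is verifying that the fiberwise generator conditions are preserved under excision, Atiyah duality, and Whitney-sum multiplicativity; this is carried out orbit by orbit via the slice theorem and the complex-stability isomorphisms applied to the tangent and normal representations $V_x$ and $\nu_x$ at each point of $M$.
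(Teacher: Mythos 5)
Your proposal is correct in outline, but it takes a genuinely different route from the paper: the paper disposes of this lemma by simply citing \cite[Proposition III.6.7]{LLM}, adding only the observation (Remark \ref{allthomagree}) that the various models of the relevant Thom space --- the pair $(M\times M, M\times M-\Delta)$, the Thom space of the tangent bundle, the one-point compactification --- agree equivariantly, so that the definitions of Thom and fundamental class used here match those of Lewis--May--Steinberger. What you have written is essentially a sketch of the proof of that cited result: Thom classes for $M$ are identified with Thom classes of $TM$ via the equivariant tubular neighborhood theorem and excision, fundamental classes with Thom classes of the normal bundle $\nu$ of an equivariant embedding $M\hookrightarrow V$ via equivariant Atiyah duality, and the two are matched through the Whitney-sum relation $TM\oplus\nu\iso \underline{V}$ and complex stability. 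This buys a self-contained argument where the paper relies on the literature, but two points need repair or amplification. First, $\nu$ is \emph{not} a complex $G$-bundle: the equivariant embedding $M\hookrightarrow V$ is only smooth, so $TM\subseteq \underline{V}|_{M}$ is merely a real subbundle and $\nu$ carries only a stable complex structure; the claim ``$TM\oplus\nu\iso\underline{V}$ as complex $G$-bundles'' should be dropped. This does no real harm, since all you need fiberwise is that $\widetilde{E}^{*}_{G_{x}}(S^{\nu_{x}})$ is free of rank one over $E^{*}_{G_{x}}$, which follows from complex stability applied to $S^{\nu_{x}}\wedge S^{V_{x}}\iso S^{V}$ (both $V_{x}$ and $V$ being complex), but the argument must be phrased that way. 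Second, the statement that the relation $t_{TM}\cdot t_{\nu}=t_{\underline{V}}$ ``pins down a bijection'' silently invokes the relative Thom isomorphism induced by a Thom class for $TM$, i.e.\ that multiplication by $t_{TM}$ is an isomorphism from $\widetilde{E}^{*}_{G}$ of the Thom space of $\nu$ to that of $TM\oplus\nu$; this is itself a theorem, proved by induction over a finite $G$-CW structure on $M$ using Mayer--Vietoris, the slice theorem and complex stability, and should be stated explicitly or cited (\cite[Chapter III.6]{LLM} or \cite{GreenleesWilliams}). With these two adjustments your argument is a valid alternative to the paper's citation.
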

\begin{proof}
 This is \cite[Proposition III.6.7]{LLM}. One needs to observe that the definitions used there agree with the ones used here, as one can see using
Remark \ref{allthomagree} below.
\end{proof}

As a result of  Bott periodicity and the Thom isomorphism for $KU_{G}^{0}$,  any smooth complex compact $G$-manifold $M$ of complex dimension $d$ has a Thom class in $KU^{2d}_{G}(M)$ in the sense above and therefore it has a fundamental class.
The map 
$$
\cap[M]:KU^*_G(M) \stackrel{\cong}{\to}
KU^G_{2d-*}(M)
$$ 
is an isomorphism by \cite[III.6.4]{LLM} (or see \cite[Theorem 3.6]{GreenleesWilliams}). This is the \textit{equivariant Poincar\'e duality} isomorphism.

We know that $ku_{G}^{*}(-)$ is in general not complex stable (see e.g. \cite[Section 4]{Greenlees:equivforms}). Therefore,  unlike in \cite{Walker:Thomason},  we work with periodic rather than with connective equivariant $K$-theory from now on. This is not a problem by the following lemma.

\begin{lemma}\label{kuisalmostKU}
 Let $W$ be a finite dimensional $G$-$CW$ complex. Then
$$
ku^{G}_{i}(W) \xrightarrow{\iso} KU^{G}_{i}(W)
$$
is an isomorphism for $i\geq \dim(W)$. 
\end{lemma}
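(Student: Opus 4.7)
The plan is to reduce the statement to a vanishing result for the cofiber of the connective cover. By Remark \ref{onemore}, the map $\bu_{G} \to KU_{G}$ satisfies $\pi_{n}^{H}\bu_{G} \xrightarrow{\iso} \pi_{n}^{H} KU_{G}$ for every subgroup $H \subseteq G$ and $n \geq 0$, while $\pi_{n}^{H}\bu_{G} = 0$ for $n < 0$. Let $C$ denote the cofiber, so that $\bu_{G} \to KU_{G} \to C$ is a cofiber sequence of $G$-spectra. The long exact sequence in equivariant homotopy groups then immediately gives $\pi_{n}^{H} C = 0$ for all $H \subseteq G$ and all $n \geq 0$.

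Smashing the cofiber sequence with $W$ and passing to equivariant homotopy produces
$$
\cdots \to \pi_{i+1}^{G}(W \wedge C) \to ku_{i}^{G}(W) \to KU_{i}^{G}(W) \to \pi_{i}^{G}(W \wedge C) \to \cdots,
$$
so it suffices to show that $\pi_{i}^{G}(W \wedge C) = 0$ whenever $i \geq \dim W$; both neighboring terms then vanish and the middle map is an isomorphism.

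I would prove this vanishing by induction on the skeletal filtration $W^{(0)} \subseteq W^{(1)} \subseteq \cdots \subseteq W^{(d)} = W$, where $d = \dim W$. For the base case $W^{(0)}$ is a wedge of based orbits $(G/H_{\alpha})_{+}$, and the Wirthm\"uller isomorphism gives $\pi_{i}^{G}((G/H_{\alpha})_{+} \wedge C) \iso \pi_{i}^{H_{\alpha}} C$, which vanishes for $i \geq 0$. For the inductive step, the cofiber sequence $W^{(d-1)} \to W^{(d)} \to W^{(d)}/W^{(d-1)}$ with $W^{(d)}/W^{(d-1)} \wkeq \bigvee_{\alpha} (G/H_{\alpha})_{+} \wedge S^{d}$ yields a long exact sequence whose outer terms vanish when $i \geq d$: the inductive hypothesis kills $\pi_{i}^{G}(W^{(d-1)} \wedge C)$ since $i \geq d > d - 1$, while $\pi_{i}^{G}((W^{(d)}/W^{(d-1)}) \wedge C) \iso \bigoplus_{\alpha} \pi_{i-d}^{H_{\alpha}} C = 0$ since $i - d \geq 0$. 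Hence $\pi_{i}^{G}(W \wedge C) = 0$ for $i \geq d$, as required.

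No step here appears to be a serious obstacle, since once Remark \ref{onemore} and the Wirthm\"uller isomorphism are in hand the argument is formal; the only real care is the bookkeeping verifying that the range $i \geq \dim W$ is exactly what the skeletal induction delivers, with the top-dimensional cells contributing only in degrees $i \geq d$.
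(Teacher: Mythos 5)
Your proof is correct and takes essentially the same route as the paper: a skeletal induction that reduces, via the Wirthm\"uller/change-of-groups isomorphism on orbits $(G/H)_{+}$, to the connectivity statement of Remark \ref{onemore}. The only difference is packaging — you phrase everything as vanishing of $\pi_{*}^{G}(W\wedge C)$ for the cofiber $C$ of $\bu_{G}\to KU_{G}$, while the paper compares the two long exact sequences of the skeletal filtration directly and runs a five-lemma argument (proving the isomorphism for $i\geq n$ together with injectivity for $i=n-1$); your version sidesteps tracking that extra injectivity but rests on the same input.
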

\begin{proof}
Let $W^{(n)}$ denote the $n$-skeleton of $W$. For each $n$, $W^{(n)}/W^{(n-1)}$ is wedge of  spheres of the form $S^{n}\wedge G/H_{+}$. We show that $ku^{G}_{i}(W^{(n)}) \xrightarrow{\iso} KU^{G}_{i}(W^{(n)})$ is an isomorphism for $i\geq n$ and an injection of $i = n-1$. The map of $G$-spectra $ku^{G} \to KU^{G}$ induces a comparison of long-exact sequences
$$
\xymatrix{
\cdots \ar[r] & ku^{G}_{i}(X^{(n)}) \ar[r]\ar[d] & ku^{G}_{i}(X^{(n+1)}) \ar[r]\ar[d] & ku^{G}_{i}(X^{(n+1)}/X^{(n)}) \ar[d]\ar[r] & \cdots \\
\cdots \ar[r] & KU^{G}_{i}(X^{(n)}) \ar[r] & KU^{G}_{i}(X^{(n+1)}) \ar[r] & KU^{G}_{i}(X^{(n+1)}/X^{(n)}) \ar[r] & \cdots .
}
$$

The right-hand map is a sum of maps of the form 
$$
ku^{G}_{i}(S^{n+1}\wedge G/H_{+}) \to  KU^{G}_{i}(S^{n+1}\wedge G/H_{+}).
$$
 Via the change of groups isomorphism it is identified with 
$$
ku^{H}_{i-n-1}(S^{0}) \to KU^{H}_{i-n-1}(S^{0})
$$ 
which is an isomorphism for $i\geq n+1$ and $ku^{H}_{i-n-1}(S^{0})=0$ otherwise. 
\end{proof}

The lemma allows us to lift fundamental classes to the equivariant homology theory $ku^{G}_{*}$ and thus to the semi-topological equivariant $K$-homology as well.

\begin{definition}
Let $X$ be a smooth projective complex $G$-variety of complex dimension $d$ and $[X^{an}]\in KU^G_{2d}(X^{an})$ a fundamental class. Define classes  $[X^{an}]\in ku_{2d}^{G}(X^{an})$ 
and  $[X]\in K^{G,\,sst}_{2d}(\C,X)$ to be lifts of $[X^{an}]\in KU_{2d}(X^{an})$ under the isomorphisms $K^{G,\,sst}_{2d}(\C,X)\iso ku_{2d}^{G}(X^{an})\iso KU^{G}_{2d}(X)$ provided by Theorem \ref{mainthm} and Lemma \ref{kuisalmostKU}.
\end{definition}

\begin{remark}\label{allthomagree}
There are several equivalent descriptions of the Thom space
and thus of the cohomology groups in which 
Thom classes live. Let $p:X \to B$ be a complex vector bundle
of rank $n$ with zero section $s$ and let $E^*$ a complex orientable cohomology
theory. The Thom space $Th(p)$ is homotopy equivalent to the homotopy cofiber of $X-s(B) \to X$.
If $B$ is compact, then the one point compactification $X^+$
is homeomorphic to $Th(p)$ and 
the Thom isomorphism for topological $K$-theory  is often stated using $\widetilde{KU}{}^{*}(X^+)$. In \cite{Walker:Thomason}, Thom classes for $M$ live in the cohomology group
 $E^*_M(M \times M)$,  defined using the homotopy cofiber of $M\times M-\Delta \to M\times M$. This compares to the Thom space
of its tangent bundle as the normal bundle of $\Delta:M \to M \times M$ is isomorphic
to the tangent bundle of $M$ (see e.g. \cite[Lemma 11.5]{MilnorStasheff}). 
All of these  weak equivalences remain valid in the equivariant case, as well.
\end{remark}

We can to establish an equivariant generalization 
of \cite[Lemma 7.8]{Walker:Thomason}, with $ku$ replaced by $KU$.

\begin{lemma}\label{deltathom}
Let $X$ be a smooth projective complex $G$-variety of dimension
$d$. Let 
$j^*:KU^{2d}_G(X^{an} \times X^{an}, X^{an} \times X^{an} - \Delta)
\to KU^{2d}_G(X^{an} \times X^{an})$
be the map forgetting the support.
Then $\delta_{X^{an}}=(\beta_2)^d \cup j^*(t)$ for some
Thom class $t \in KU^{2d}_G(X^{an} \times X^{an}, X^{an} 
\times X^{an} - \Delta)$.
\end{lemma}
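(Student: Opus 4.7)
The plan is to construct the Thom class $t$ as the image of $1$ under the equivariant $KU_G$-Thom isomorphism, and then to identify $\beta_2^d\cdot j^*(t)$ with $\delta_{X^{an}}$ via a Koszul description of $[\mathcal{O}_\Delta]$. First, since $X$ is smooth, the normal bundle $N$ of the diagonal $\Delta\colon X^{an}\hookrightarrow X^{an}\times X^{an}$ is equivariantly isomorphic to the complex $G$-bundle $T_{X^{an}}$ of complex rank $d$. Complex stability of $KU_G$, the equivariant tubular neighborhood theorem, and excision together yield a Thom isomorphism
\[
\Phi\colon KU^*_G(X^{an}) \xrightarrow{\cong} KU^{*+2d}_G\bigl(X^{an}\times X^{an},\,X^{an}\times X^{an}-\Delta\bigr),
\]
and I set $t:=\Phi(1)\in KU^{2d}_G(X^{an}\times X^{an},\,X^{an}\times X^{an}-\Delta)$. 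This is a Thom class in the sense of Definition~\ref{Thomclass}, because $\Phi$ restricts on each slice $\psi_{G\{x\}}$ to the equivariant $K$-theoretic Bott isomorphism for the complex $G_x$-representation $V_x=T_{X^{an},x}$.

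Second, the algebraic class $[\mathcal{O}_\Delta]$ admits a canonical lift to algebraic $K$-theory with supports along $\Delta$, via the Koszul complex associated to the conormal sheaf $N^\vee$. Its image in topological $K$-theory produces a class $\widetilde\delta_{X^{an}}\in KU^0_G(X^{an}\times X^{an},\,X^{an}\times X^{an}-\Delta)$ with $j^*(\widetilde\delta_{X^{an}})=\delta_{X^{an}}$. The lemma will then follow by verifying the identity
\[
\widetilde\delta_{X^{an}} \;=\; \beta_2^d\cdot t \qquad \text{in } KU^0_G(X^{an}\times X^{an},\,X^{an}\times X^{an}-\Delta)
\]
and applying $j^*$.

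The main obstacle is this final identity, which is an instance of equivariant Riemann--Roch for a closed immersion of smooth complex $G$-manifolds. Using the slice theorem together with the naturality of both the Koszul resolution and the Thom isomorphism, it reduces to the linear model $0\hookrightarrow V$ with $V=V_x$ a complex $G_x$-representation: one must check that the class of $\mathcal{O}_{0\subset V}$ in $\widetilde{KU}^0_{G_x}(S^V)$ equals $\beta_2^d$ times the $KU_{G_x}$-Thom class of $V$ in $\widetilde{KU}^{2d}_{G_x}(S^V)$. In the nonequivariant case this is standard; equivariantly, it follows from Segal's equivariant Bott periodicity theorem \cite{Segal}, which identifies the Koszul class $\lambda_{-1}(V^\vee)$ with the Bott shift of the complex-oriented Thom class of $V$. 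Once this local identification is in hand, carefully tracking the Bott periodicity isomorphisms between the $RO(G)$- and $\mathbb{Z}$-gradings of equivariant $K$-theory completes the proof.
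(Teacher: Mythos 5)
Your overall strategy (lift $\delta_{X^{an}}$ to a class with supports along the diagonal and compare it, via Bott periodicity, with a Thom class) is the same as the paper's, but two steps in your write-up have genuine gaps. First, the ``canonical lift via the Koszul complex associated to the conormal sheaf'' does not exist as stated: the diagonal of a smooth projective $G$-variety is in general not the zero scheme of a section of a rank-$d$ bundle on $X\times X$, so there is no global Koszul resolution of $\mcal{O}_{\Delta}$ by bundles on $X\times X$. A lift $\tilde{\delta}$ does exist, but it has to be produced differently; the paper gets it from Thomason's equivariant resolution and localization theorems, which give $K_{0}^{G}(X)\xrightarrow{\ \Delta_{*},\ \iso\ }K_{0,\Delta}^{G}(X\times X)$, sets $\tilde{\delta}_{X}=\Delta_{*}(1)$, and then transports this to $KU^{0}_{G}$ with supports using the comparison transformation and its compatibility with the transfer of Definition \ref{transfer}. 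Second, your key identity $\tilde{\delta}_{X^{an}}=\beta_{2}^{d}\cdot\Phi(1)$ is claimed to follow ``by reduction to the linear model using the slice theorem and naturality,'' but equality of two classes in $KU^{0}_{G}(X^{an}\times X^{an},X^{an}\times X^{an}-\Delta)$ cannot be checked orbit by orbit: the restriction maps to the orbit slices are not jointly injective, so agreement on every slice does not imply the global equality. A correct proof of that stronger statement would need something like deformation to the normal cone together with the Thom isomorphism, i.e.\ a genuine equivariant Riemann--Roch-without-denominators argument, which your sketch does not supply.

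Note, however, that the lemma only asks for \emph{some} Thom class $t$, and Definition \ref{Thomclass} is a purely orbitwise condition. So the global identification with $\Phi(1)$ is unnecessary: it suffices to check that $\psi_{G\{x\}}(\beta_{2}^{-d}\cup\tilde{\delta}_{X^{an}})$ is a module generator for every orbit, and then take $t=\beta_{2}^{-d}\cup\tilde{\delta}_{X^{an}}$. This is exactly what the paper does, by showing $(id_{X}\times i_{G\{x\}})^{*}\tilde{\delta}_{X}=\Delta'_{*}(1)$ algebraically (using that $(i_{x})^{*}$ is a ring map and the section $\pi:X\to\{x\}$), pushing this through the comparison map compatible with pullbacks and transfers, and using that $KU^{0}_{G,\Delta G\{x\}}(X^{an}\times G\{x\})$ is free of rank one. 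Your local Bott-periodicity computation is precisely the input needed for this orbitwise check, so your argument becomes correct once you (i) replace the Koszul construction of the lift by the localization/resolution argument (or an equivalent), and (ii) downgrade the global identity to the orbitwise generator condition.
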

\begin{proof}
We roughly follow Walker's proof.  
We claim that the element $\delta_{X^{an}}$ lifts to an element $\tilde{\delta}_{X^{an}}\in KU^{0}_G(X^{an} \times X^{an}, X^{an} \times X^{an} - \Delta)$ such that 
the restriction of $\tilde{\delta}_{X^{an}}$   to
$KU^{0}_G(X^{an}\times G\{x\}, X^{an} - \Delta G\{x\}) =KU^{0}_{G,\Delta G\{x\}}(X\times G\{x\})$
is a generator for any orbit $i_{G\{x\}}: G\{x\} \to X$.
Using Bott periodicity, we therefore have that the element $\beta_{2}^{-d}\cup \tilde{\delta}_{X^{an}}\in KU_{G,\Delta}^{2d}(X^{an}\times X^{an})$ is a Thom class in the sense of
Definition \ref{Thomclass}, from which the result follows.

To begin we consider algebraic $K$-theory. Consider the following  diagram
$$
\xymatrix{
K_0^{G}(X) \ar[r]^-{\Delta_*} \ar[d]_{(i_x)^*} & K_0^{G}(X \times X) 
\ar[r] \ar[d]^{(id_X\times i_{G\{x\}})^*} & 
K_0^{G}(X \times X - \Delta) \ar[d]^{(id_X\times i_{G\{x\}})^*} \\
K_0^{G}(G \{x\}) \ar[r]^-{\Delta'_*}  & 
K_0^{G}(X\times G\{x\}) \ar[r]  & K^{G}_0(X\times G\{x\}-\Delta G\{x\}).
}
$$
The right-hand square is evidently commutative. The commutativity of the left-hand square follows from the commutativity of  
$$
\xymatrix{
K_0^{G_{x}}(X) \ar[r]^-{\Delta_*} \ar[d]_{(i_x)^*} & K_0^{G_{x}}(X \times X) 
 \ar[d]^{(id_X\times i_{x})^*}  \\
K_0^{G_{x}}(\{x\}) \ar[r]^-{(i_{x})_*}  & 
K_0^{G_{x}}(X ) ,
}
$$
which can be seen by using that $\pi:X\to \{x\}$ gives a $G_{x}$-equivariant section  of $i_{x}$.

The rows of the diagram are exact by
\cite[Corollary 5.8]{Thomason:algKgroup} (that is the equivariant
generalization of Quillen's resolution theorem) and by 
\cite[Theorem 2.7]{Thomason:algKgroup}.
The left hand side square induces a commutative square
$$
\xymatrix{
K_0^{G}(X) \ar[r]^-{\Delta_*}_-{\iso} \ar[d]_{(i_x)^*} 
& K_{0,\Delta}^{G}(X \times X) 
\ar[d]^{(id_X\times i_{G\{x\}})^*} \\
K_0^{G}(G \{x\}) \ar[r]^-{\Delta'_*}_-{\iso} & K_{0,\Delta G \{x\}}^{G}(X\times G\{x\}).
}
$$

Write $\tilde{\delta}_X=\Delta_*(1) \in 
K_{0,\Delta}^{G}(X \times X)$, which evidently maps to the element
$\delta_X=\Delta_*(1) \in K_{0}^{G}(X \times X)$. Moreover,
we have $(i_x)^*(1)=1 \in K_0^{G}(G \{x\})$
as $(i_x)^*$ is a ring homomorphism. We conclude that
$(id_{X}\times i_{G\{x\}})^*(\tilde{\delta}_{X})=\Delta'_*(1)$.
Now consider the  natural transformation $\epsilon:K_{0}^{G}(-)\to KU_{G}^{0}(-)$ obtained as the composite
$$
 K_{0}^{G}(X)\to K_{0}^{G,\,sst}(X)\to KU_{G}^{0}(X^{an})
$$ 
of the natural transformations from Theorem \ref{thm:algeq} and Corollary \ref{cor:qtopeq}.
The transformation $\epsilon$ is compatible with pullbacks, and it is a ring homomorphism. It is also compatible with pushforwards in both theories (where the push-forward in $K_{0}^{G}$ is the classical
push-forward in (equivariant) algebraic $K$-theory and  in $KU_{G}^{0}$ it is the one of Definition
\ref{transfer}) as a direct consequence of Definition \ref{transfer}. We therefore conclude  that we have $(id_{X^{an}}\times i_{G\{x\}})^*(\tilde{\delta}_{X^{an}})=\Delta'_*(1)$ in $KU^{0}_{G,\Delta G \{x\}}(X\times G\{x\})$.
Now since $KU^{0}_{G,\Delta G \{x\}}(X\times G\{x\})$ is a free $KU^{G}_{G}(G\{x\})$-module of rank 1 and $\Delta'_*$ is a $KU^{G}_{G}(G\{x\})$-module homomorphism we conclude that $(id_{X^{an}}\times i_{G\{x\}})^*(\tilde{\delta}_{X^{an}})$ generates $KU^{0}_{G,\Delta G \{x\}}(X\times G\{x\})$ as required.
\end{proof}

\textit{Proof of Theorem \ref{walker7.10equi}.}
The proof proceeds as in \cite[Theorem 7.10]{Walker:Thomason}. Namely the argument there shows that it suffices to show that $\delta_{X}/[X] = \beta^{d}\cup u$ and $\delta_{X^{an}}/[X^{an}] = \beta^{d}\cup v$ for units $u$ and $v$. 
The second equation follows from Lemma \ref{deltathom}. Using Lemma \ref{deltathom} and the results about equivariant pairings from
the previous section \cite[Proposition 7.9]{Walker:Thomason} 
generalizes to the equivariant setting, showing that $\pi^{*}(\delta_{Y}/a) =\delta_{X}/\pi^{*}(a)$ for a finite flat equivariant morphism $\pi:X\to Y$ of smooth projective $G$-varieties. It thus suffices to establish the result for $Y=\P^{d}_{\C}$, because we can always find a finite, surjective  (and hence flat) equivariant map $\pi:X\to \P^{d}_{\C}$. To see this, note that the quotient $X\to X/G$ is finite, surjective and equivariant and we may assume $X/G$ is connected so that Noether normalization yields a finite, surjective equivariant map $X/G\to \P^{d}_{\C}$.

It thus remains to see that $\delta_{\P^{d}}/[\P^{d}] = \beta^{d}\cup u'$, for a unit $u'\in K_{0}^{G,\,sst}(\P^{d}_{\C},\C)$. This follows from seeing that we have an isomorphism $K_{*}^{G,\,sst}(\P^{d}_{\C},\C) \iso \bu_{*
}^{G,\,\mf{c}}(\P^{d}_{\C},S^{0})$. To see this we can argue as in \cite[Proposition 2.7]{FW:real} to see that we have an equivariant equivalence $\mcal{K}_{G}(\Delta^{\bullet}_{top},\C)^{n}\wkeq \mcal{K}_{G}(\P^{1}_{\C}\times\Delta^{\bullet}_{top},\C)$ and a similar compatible equivalence for $\bu^{\mf{c}}_{G}$. The required isomorphism follows since  $\mcal{K}_{G}(\Delta^{\bullet}_{top},\C) \wkeq \bu^{\mf{c}}_{G}(S^{0},S^{0})$.    \qed

\section{Equivariant Thomason's theorem}\label{sec:algThom}

In this section, we explain how the work in the previous section gives an alternate proof of \cite[Theorem 5.9]{Thomason:famousequi}. This requires proving the expected comparison theorem
between algebraic and semi-topological equivariant
$K$-theory with finite coefficients. Working with the $\dtop$-construction rather than with the topological mapping spaces $Mor(-,-)$ makes this particularly straightforward.

Let $\mathcal{K}_G(-,-)$ be the bivariant
presheaf on quasi-projective complex $G$-varieties
with values in positive $\Omega$-$G$-spectra produced in Section \ref{sec:algK}.
By Proposition \ref{prop:fix}, we have that  
$$
\pi_{n}^{H}\mathcal{K}_G(X,Y)
\iso \pi_{n}\mathcal{K}(H;X\times\Delta^{\bullet}_{\C},Y)
$$ 
for any subgroup
$H < G$ and any $n$, where $\mathcal{K}(H;X,Y)$ is the 
$K$-theory spectrum of the category of coherent $H$-modules on $X\times Y$ which are finite and flat over $X$.
The following result, whose proof is as in \cite[Theorem 2.6]{FW:ratisos}, allows us to apply the work of the previous section to equivariant algebraic $K$-theory. Unlike Theorem \ref{mainthm},
the proof of the following theorem is rather formal
and applies to equivariant theories other than $\KK_{G}$,
provided they satisfy an appropriate equivariant rigidity theorem.

\begin{theorem}\label{KalgKsemi}
For a smooth quasi-projective complex $G$-variety $X$
and any integer $n>0$, the maps (\ref{eqn:algsst}) induce equivariant weak equivalences 
$$
\KK_G(X,\Z/n) \xrightarrow{\wkeq}\mcal{K}_{G}(X\times\Delta^{\bullet}_{top};\Z/n) \xleftarrow{\wkeq} \KK^{sst}_G(X,\Z/n)
$$
of $G$-spectra.
\end{theorem}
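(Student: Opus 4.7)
The right-hand arrow is an equivariant weak equivalence by Lemma \ref{lem:delsst}, which holds integrally; smashing with a mod-$n$ Moore spectrum preserves this conclusion, so this side requires no further work. The substance of the theorem therefore lies in showing that the left-hand arrow $\mcal{K}_G(X;\Z/n) \to \mcal{K}_G(X \times \Delta^{\bullet}_{top};\Z/n)$ is an equivariant weak equivalence.

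My plan is to reduce to a non-equivariant statement via fixed points. By Proposition \ref{prop:fix}, we have natural identifications $\pi_n^H \mcal{K}_G(X;\Z/n) \iso \pi_n \mcal{K}(H; X \times \Delta^{\bullet}_{\C};\Z/n)$ for each subgroup $H \subseteq G$. The same argument, combined with Lemma \ref{lem:delsst}, identifies $\pi_n^H \mcal{K}_G(X \times \Delta^{\bullet}_{top};\Z/n)$ with $\pi_n \mcal{K}(H; X \times \Delta^{\bullet}_{\C} \times \Delta^{\bullet}_{top};\Z/n)$, and under these identifications the map in question is induced by the projection $\Delta^{\bullet}_{top} \to *$. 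It therefore suffices to prove that for every subgroup $H \subseteq G$ the natural map
$$
\mcal{K}(H; X \times \Delta^{\bullet}_{\C};\Z/n) \to \mcal{K}(H; X \times \Delta^{\bullet}_{\C} \times \Delta^{\bullet}_{top};\Z/n)
$$
is a weak equivalence of spectra.

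For this, I will invoke the Friedlander-Walker recognition principle \cite[Theorem 2.6]{FW:ratisos}. That principle asserts, roughly, that for a presheaf of spectra $F$ on smooth quasi-projective complex varieties satisfying appropriate formal properties---most importantly a suitable form of rigidity with respect to Henselian extensions---the natural map $F(Z) \to F(Z \times \Delta^{\bullet}_{top})$ is a weak equivalence for smooth $Z$. I will apply the principle to the presheaf $U \mapsto \mcal{K}(H; U \times \Delta^{\bullet}_{\C};\Z/n)$, evaluated on $U = X$, with $H$ fixed. The crucial rigidity input is provided by the equivariant rigidity theorem of Yagunov-{\O}stv{\ae}r \cite{YO:equirigid} for equivariant algebraic $K$-theory with $\Z/n$-coefficients, applied to the finite group $H$.

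The main technical obstacle will be verifying that Yagunov-{\O}stv{\ae}r's equivariant rigidity supplies precisely the form of rigidity input demanded by the FW recognition principle, together with confirming the various base-change, normalization, and additivity hypotheses of the principle in our equivariant setting (these concern the exact category $\mcal{P}(H;-,-)$ of coherent $H$-modules finite and flat over the first variable). Once these ingredients are assembled, the remainder of the argument is formal and parallels \cite[Theorem 2.6]{FW:ratisos} essentially word for word.
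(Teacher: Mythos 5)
Your proposal is correct and follows essentially the same route as the paper: the right-hand arrow is handled by Lemma \ref{lem:delsst}, and the left-hand arrow by reducing to fixed points via Proposition \ref{prop:fix} and then feeding the Yagunov--{\O}stv{\ae}r rigidity isomorphism $\pi_*\KK(H;X,\Z/n)\iso\pi_*\KK(H;X\times\mcal{O}^h_{T,t},\Z/n)$ into the Friedlander--Walker recognition principle \cite[Theorem 2.6]{FW:ratisos}, exactly as in \cite[Theorem 2.8]{FW:ratisos}. The only step you leave implicit that the paper spells out is the intermediate sheafification argument (\'etale sheaves on $\Sm/\C$, then $uad$-sheaves on $\Sch/\C$ using resolution of singularities) needed to put the rigidity input in the form Theorem 2.6 requires; for the present (non-bivariant) case the rigidity itself is supplied directly by \cite{YO:equirigid}, so no extra base-change or additivity verification is needed.
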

\begin{proof}
The right hand map is an equivariant weak equivalence by Lemma \ref{lem:delsst}. To show the left hand map is an equivariant weak equivalence, we must show that 
for all subgroups $H \subseteq G$, the map
$\pi_*^{H}\KK_{G}(X,\Z/n)\to \pi_{*}^{H}\mcal{K}_{G}(X\times\Delta^{\bullet}_{top},\Z/n)$
is an isomorphism. Using Proposition \ref{prop:fix}, 
the proof follows along the lines of the argument given in
\cite[Theorem 2.8]{FW:ratisos}. That is, we consider the map of presheaves
$\FF:=\KK(H;X,\Z/n) 
\to \GG:=\KK(H;X \times - ,\Z/n)$
where the first presheaf is globally constant.
By \cite{YO:equirigid} the map 
$$
\pi_{n}\KK(H;X,\Z/n) 
\xrightarrow{\iso} \pi_{n}\KK(H;X \times \mcal{O}_{T,t}^{h} ,\Z/n)
$$ 
is an isomorphism, 
where $T$ is any smooth variety, $t\in T(\C)$, and $\mcal{O}_{T,t}^{h}$ is the corresponding Henselian local ring. This rigidity isomorphism allows
us to conclude the result as in the proof of \cite[Theorem 2.8]{FW:ratisos}. That is, the map of  presheaves  $\pi_{n}\mcal{F}(-) \to \pi_{n}\mcal{G}(-)$ becomes an isomorphism, upon sheafification, of \'etale sheaves on $Sm/\C$. It therefore becomes an isomorphism upon further sheafification, of sheaves in the $uad$-topology on $Sch/\C$ because resolutions of singularities are $uad$-covers. Thus one may apply \cite[Theorem 2.6]{FW:ratisos} in order to conclude the result.
\end{proof}

Using the preceding theorem, we may lift the element
$\beta_2\in K_{2}^{G,\,sst}(\C,\Z/n)$ to an element $\beta$ in algebraic $K$-theory with finite 
coefficients. Proposition \ref{prop:cup} implies that the isomorphism $K_{*}^{G}(X,\Z/n) \iso K_{*}^{G,\,sst}(X,\Z/n)$ from the previous theorem is a graded ring isomorphism and therefore 
$$
K_*^G(X,\Z/n)[\beta^{-1}] \iso K_*^{G,sst}(X,\Z/n)[\beta_{2}^{-1}].
$$ keeping in mind the usual warning (see e.g. \cite[A.6]{Thomason:famous})
concerning very small values of $n$.
Also note that by \cite[p. 503]{Thomason:famous},
the algebraic Bott element $\beta$ Thomason considers really is a 
lift of the topological one.
Combining this theorem with the one
of the previous section, we obtain a new proof
of \cite[Theorem 5.9]{Thomason:famousequi} for finite groups:

\begin{theorem}\label{thm:lastapp}
For any smooth projective complex $G$-variety $X$ and any integer $n>0$,
we have a natural isomorphism of graded rings
$$
K_*^G(X,\Z/n)[\beta^{-1}] \stackrel{\iso}{\to}KU_G^{-*}(X^{an},\Z/n).
$$
\end{theorem}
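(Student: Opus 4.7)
The plan is to chain Theorems \ref{KalgKsemi} and \ref{mainthmbottinv}, which together deliver the result once we have identified the algebraic Bott element with the semi-topological one. First I would apply Theorem \ref{KalgKsemi} to the smooth projective $G$-variety $X$; taking $G$-homotopy groups of the equivariant weak equivalence $\mcal{K}_{G}(X,\Z/n)\xrightarrow{\wkeq}\mcal{K}_{G}^{sst}(X,\Z/n)$ gives an isomorphism $K_{*}^{G}(X;\Z/n)\xrightarrow{\iso}K_{*}^{G,sst}(X;\Z/n)$, which is a graded ring isomorphism by Proposition \ref{prop:cup}. Specializing to $X=\spec(\C)$ lets us define the algebraic Bott element $\beta\in K_{2}^{G}(\C;\Z/n)$ as the preimage of $\beta_{2}\in K_{2}^{G,sst}(\C;\Z/n)$; compatibility with cup products then upgrades the above isomorphism to a ring isomorphism
$$
K_{*}^{G}(X;\Z/n)[\beta^{-1}]\xrightarrow{\iso}K_{*}^{G,sst}(X;\Z/n)[\beta_{2}^{-1}].
$$

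Next I would derive a mod $n$ version of Theorem \ref{mainthmbottinv}. Every map in (\ref{eqn:maps}) is a map of $G$-spectra, so smashing with the mod $n$ Moore spectrum and taking homotopy groups produces an analogous chain of graded ring homomorphisms with $\Z/n$-coefficients. The arguments of Theorem \ref{mainthmbottinv} and Corollary \ref{cor:bottinv} carry over verbatim (modulo the usual small-$n$ warning as in \cite[p.~503]{Thomason:famous}, handled by the standard passage through $\Z/n^{\nu}$-coefficients), yielding a ring isomorphism
$$
K_{*}^{G,sst}(X;\Z/n)[\beta_{2}^{-1}]\xrightarrow{\iso}KU_{G}^{-*}(X^{an};\Z/n).
$$
Composing the two displayed isomorphisms produces the claimed graded ring isomorphism, and it is natural in $X$ because both constituents are.

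There is no genuine obstacle here beyond bookkeeping: Theorem \ref{KalgKsemi} handles the algebraic-to-semi-topological comparison with finite coefficients, Theorem \ref{mainthmbottinv} handles the Bott inversion on the semi-topological side, and Proposition \ref{prop:cup} guarantees that the Bott elements on both sides correspond under the comparison maps so that the localizations line up. The only point needing minor care is that the class $\beta$ so constructed agrees (up to units, and up to the well-known ambiguity for very small $n$) with the algebraic Bott element Thomason uses in \cite{Thomason:famousequi}, which is immediate from the fact that our comparison map to $KU_{G}^{-*}(X^{an};\Z/n)$ sends $\beta$ to a lift of the topological Bott element.
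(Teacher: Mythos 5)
Your proposal is correct and follows essentially the same route as the paper: Theorem \ref{KalgKsemi} together with Proposition \ref{prop:cup} gives the ring isomorphism $K_*^G(X,\Z/n)\iso K_*^{G,sst}(X,\Z/n)$, the semi-topological Bott element is lifted to define $\beta$, and then Theorem \ref{mainthmbottinv} (passed to $\Z/n$-coefficients, which is standard since Bott inversion commutes with smashing with the Moore spectrum) finishes the argument, with the same caveats about small $n$ and the identification with Thomason's Bott element.
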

\begin{proof}
This follows immediately from
Theorem \ref{mainthmbottinv} and Theorem \ref{KalgKsemi}.
\end{proof}


\providecommand{\bysame}{\leavevmode\hbox to3em{\hrulefill}\thinspace}
\providecommand{\MR}{\relax\ifhmode\unskip\space\fi MR }
\providecommand{\MRhref}[2]{%
  \href{http://www.ams.org/mathscinet-getitem?mr=#1}{#2}
}
\providecommand{\href}[2]{#2}

\end{document}